\NewDocumentCommand{\makeabbrev}{mmm}
 {
  \yoruk_makeabbrev:nnn { #1 } { #2 } { #3 }
 }
\makeabbrev{\textbf}{tbf#1}{a,b,c,d,e,f,g,h,i,j,k,l,m,n,o,p,q,r,s,t,u,v,w,x,y,z,A,B,C,D,E,F,G,H,I,J,K,L,M,N,O,P,Q,R,S,T,U,V,W,X,Y,Z}
\makeabbrev{\textbf}{bf#1}{a,b,c,d,e,f,g,h,i,j,k,l,m,n,o,p,q,r,s,t,u,v,w,x,y,z,A,B,C,D,E,F,G,H,I,J,K,L,M,N,O,P,Q,R,S,T,U,V,W,X,Y,Z}
\makeabbrev{\textsf}{tsf#1}{a,b,c,d,e,f,g,h,i,j,k,l,m,n,o,p,q,r,s,t,u,v,w,x,y,z,A,B,C,D,E,F,G,H,I,J,K,L,M,N,O,P,Q,R,S,T,U,V,W,X,Y,Z}
\makeabbrev{\mathsf}{mss#1}{a,b,c,d,e,f,g,h,i,j,k,l,m,n,o,p,q,r,s,t,u,v,w,x,y,z,A,B,C,D,E,F,G,H,I,J,K,L,M,N,O,P,Q,R,S,T,U,V,W,X,Y,Z}
\makeabbrev{\mathfrak}{mf#1}{a,b,c,d,e,f,g,h,i,j,k,l,m,n,o,p,q,r,s,t,u,v,w,x,y,z,A,B,C,D,E,F,G,H,I,J,K,L,M,N,O,P,Q,R,S,T,U,V,W,X,Y,Z}
\makeabbrev{\mathrm}{mrm#1}{a,b,c,d,e,f,g,h,i,j,k,l,m,n,o,p,q,r,s,t,u,v,w,x,y,z,A,B,C,D,E,F,G,H,I,J,K,L,M,N,O,P,Q,R,S,T,U,V,W,X,Y,Z}
\makeabbrev{\mathbf}{mbf#1}{a,b,c,d,e,f,g,h,i,j,k,l,m,n,o,p,q,r,s,t,u,v,w,x,y,z,A,B,C,D,E,F,G,H,I,J,K,L,M,N,O,P,Q,R,S,T,U,V,W,X,Y,Z}
\makeabbrev{\mathcal}{mc#1}{A,B,C,D,E,F,G,H,I,J,K,L,M,N,O,P,Q,R,S,T,U,V,W,X,Y,Z}
\makeabbrev{\mathbb}{mbb#1}{A,B,C,D,E,F,G,H,I,J,K,L,M,N,O,P,Q,R,S,T,U,V,W,X,Y,Z}
\makeabbrev{\mathscr}{ms#1}{A,B,C,D,E,F,G,H,I,J,K,L,M,N,O,P,Q,R,S,T,U,V,W,X,Y,Z}
\makeabbrev{\mathrm}{#1}{
%Algebraic Ops
Id,id,ran,rk,diag,stab,ann,conv,pr,ev,tr,End,Hom,sgn,im,op,can,fin,ext,red,tot,
%
%Analytic Ops
rot,usc,lsc,Lip,LocLip,lip,bSymLip,osc,AC,loc,spec,coz,z,
%
%Measure Theory
supp,Opt,Adm,Cpl,Geo,GeoOpt,GeoAdm,GeoCpl,GeoSel,reg,
%
%Topology/Geometry
bd,co,Ric,Exp,dExp,dist,seg,Seg,cut,fcut,Cut,SDiff,Iso,Isom,diam,cl,Homeo,Diff,Der,vol,dvol,inj,relint, Graph, sub,
%
%Probability
var,law,Var,Poi,Gam,pa,so,iso,fs,inv,pqi,mix,
TestF,
%Miscellaneous
}
\makeabbrev{\mathsf}{#1}{DP,CD,BE,MCP,Ent,wMTW,MTW,RCD,EVI,Irr,IH,SC,wFe,UP}
\makeabbrev{\mathsc}{#1}{mmaf,cg}
\DeclareMathOperator{\bLip}{Lip_{\it b}}
\DeclareMathOperator{\Lipu}{Lip^1}
\DeclareMathOperator{\bLipu}{Lip_{\it b}^1}
\newcommand{\T}{\tau} %TOPOLOGY
\newcommand{\A}{\Sigma} %SIGMA-ALGEBRA
\newcommand{\Bo}[1]{\msB_{#1}} %BOREL SIGMA-ALGEBRA
\newcommand{\Ne}[1]{\msN_{#1}} %NEGLIGIBLE SIGMA-ALGEBRA
\newcommand{\eps}{\varepsilon}
\renewcommand{\complement}{\mathrm{c}}
\newcommand{\mathsc}[1]{\text{\textsc{#1}}}
\newcommand{\emparg}{{\,\cdot\,}}
\newcommand{\slo}[2][]{\abs{\mathrm{D}#2}_{#1}}
\newcommand{\Ch}[1][]{\mathsf{Ch}_{#1}}
\newcommand{\as}[1]{\quad #1\text{-a.e.}}
\newcommand{\qe}[1]{\;{#1}\text{-q.e.}}
\newcommand{\Leb}{{\mathrm{Leb}}}
\newcommand{\dom}{\mcF}
\newcommand{\domb}{\mcF_b}
\newcommand{\domdom}{\mcF_{\mathrm{dom}}}
\newcommand{\domain}[1]{\msD({#1})}
\newcommand{\dotloc}[1]{{{#1}^\bullet_{\loc}}}
\newcommand{\dotlocb}[1]{{{#1}^\bullet_{\loc,b}}}
\newcommand{\domloc}{\mcF_\loc}
\newcommand{\domext}{\mcF_e}
\newcommand{\domextb}{\mcF_{eb}}
\newcommand{\intE}{\interior_\mcE}
\newcommand{\clE}{\cl_\mcE}
\newcommand{\DzLocB}[1]{\mbbL^{#1}_{\loc,b}}
\newcommand{\DzB}[1]{\mbbL^{#1}_b}
\newcommand{\DzLoc}[1]{\mbbL^{#1}_{\loc}}
\newcommand{\Dz}[1]{\mbbL^{#1}}
\DeclareMathOperator{\eqdef}{\coloneqq}
\let\epsilon\varepsilon
\newcommand{\longrar}{\longrightarrow}
\newcommand{\rar}{\rightarrow}
\newcommand{\nlim}{\lim_{n}}								%omesso \rightarrow\infty
\newcommand{\mlim}{\lim_{m}}
\newcommand{\klim}{\lim_{k }}
\newcommand{\nliminf}{\liminf_{n }}
\newcommand{\mliminf}{\liminf_{m }}
\newcommand{\diff}{\mathop{}\!\mathrm{d}}						%Differenziale esatto
\newcommand{\ttabs}[1]{\lvert#1\rvert}	
\newcommand{\tabs}[1]{\big\lvert#1\big\rvert}	
\newcommand{\abs}[1]{\left\lvert#1\right\rvert}						%Modulo
\newcommand{\norm}[1]{\left\lVert#1\right\rVert}					%Norma
\newcommand{\set}[1]{\left\{#1\right\}}							%Insieme, graffe
\newcommand{\tset}[1]{\big\{#1\big\}}							%Insieme, graffe
\newcommand{\ttset}[1]{\{#1\}}									%Insieme, graffe
\newcommand{\tonde}[1]{\left(#1\right)}							%Tonde
\newcommand{\ttonde}[1]{\big({#1}\big)}
\newcommand{\Li}[2][]{\mathrm{L}_{#1}(#2)}						%Lipschitz
\newcommand{\rep}[1]{\hat{#1}}								%Rappresentante
\newcommand{\reptwo}[1]{\tilde{#1}}							%Rappresentante
\newcommand{\scalar}[2]{\left\langle #1 \,\middle |\, #2\right\rangle}		%Prodotto scalare
\newcommand{\sym}[1]{{\scriptscriptstyle{(#1)}}}
\newcommand{\tym}[1]{{\scriptscriptstyle{\times #1}}}
\newcommand{\otym}[1]{{\scriptscriptstyle{\otimes #1}}}
\DeclareSymbolFont{symbolsC}{U}{pxsyc}{m}{n}
\DeclareMathSymbol{\medcirc}{\mathbin}{symbolsC}{7}
\DeclareSymbolFont{symbolsZ}{OMS}{pxsy}{m}{n}
\DeclareMathOperator{\interior}{int}								%Interno
\newcommand{\seq}[1]{\tonde{#1}}								%Successione
\newcommand{\tseq}[1]{{\big(#1\big)}}
\newcommand{\ttseq}[1]{(#1)}
\DeclareMathOperator{\Cont}{\mcC}					%funzioni continue
\newcommand{\Cb}{\mcC_b}									%funzioni continue e limitate 
\newcommand{\Cc}{\mcC_c}									%funzioni continue e limitate 
\newcommand{\Cz}{\mcC_0}									%funzioni continue e evanescenti
\newcommand{\Cbinfty}{{\mcC_{b}^{\infty}}}
\newcommand{\Czinfty}{{\mcC_0^{\infty}}}
\newcommand{\Mbp}{\mfM^+_b}
\newcommand{\Msp}{\mfM^+_\sigma}
\newcommand{\Ms}{\mfM^\pm_\sigma}
\newcommand{\Mb}{\mfM^\pm_b}
\newcommand{\MbpR}{\mfM^+_{b\mathsc{r}}}
\newcommand{\MbR}{\mfM^\pm_{b\mathsc{r}}}
\newcommand{\pfwd}{\sharp}
\DeclareMathOperator*{\esssup}{esssup}
\DeclareMathOperator*{\essinf}{essinf}
\DeclareMathOperator{\car}{\mathds 1}
\DeclareMathOperator{\emp}{\varnothing} 
\newcommand{\N}{{\mathbb N}}
\newcommand{\R}{{\mathbb R}}
\newcommand{\restr}{\big\lvert}
\newcommand{\mrestr}{\mathbin{\vrule height 1.6ex depth 0pt width 0.13ex\vrule height 0.13ex depth 0pt width 1.3ex}}
\newcommand{\iref}[1]{\ref{#1}}
\newcommand{\comm}{\,\,\mathrm{,}\;\,}
\newcommand{\semicolon}{\,\,\mathrm{;}\;\,}
\newcommand{\fstop}{\,\,\mathrm{.}}
\newcommand{\cdc}{\Gamma}
\DeclareMathOperator{\zero}{{\mathbf 0}}
\newcommand{\blue}[1]{#1}
\DeclareMathOperator{\inter}{int}
\let\temp\phi
\let\phi\varphi
\let\varphi\temp
\newcommand{\hr}[1]{{\bar\mssd}_{#1}} 						%Hino-Ramirez d_A
\newcommand{\sqf}[1]{\boldsymbol\Gamma_{#1}}
\newcommand{\sq}[1]{\mu_{#1}}
\newcommand{\Rad}[2]{\mathsf{Rad}_{#1,#2}}
\newcommand{\dRad}[2]{{#1}\text{-}\mathsf{Rad}_{#2}}
\newcommand{\ScL}[3]{\mathsf{ScL}_{#1,#2,#3}}
\newcommand{\dSL}[2]{{#1}\text{-}\mathsf{SL}_{#2}}
\newcommand{\SL}[2]{\mathsf{SL}_{#1,#2}}
\newcommand{\cSL}[3]{\mathsf{cSL}_{#1,#2,#3}}
\newcommand{\Loc}[1]{\mathsf{Loc}_{#1}}
\numberwithin{equation}{section}
\theoremstyle{plain}
\newtheorem{theorem}{Theorem}[section]
\newtheorem*{theorem*}{Theorem}
\newtheorem*{mthm*}{Main Theorem}
\newtheorem{proposition}[theorem]{Proposition}%[section]
\newtheorem*{proposition*}{Proposition}%[section]
\newtheorem{lemma}[theorem]{Lemma}%[section]
\newtheorem{corollary}[theorem]{Corollary}%[section]
\theoremstyle{definition}
\newtheorem{definition}[theorem]{Definition}%[section]
\newtheorem*{defs*}{Definition}%[section]
\theoremstyle{remark}
\newtheorem{remark}[theorem]{Remark}%[section]
\newtheorem{example}[theorem]{Example}%[section]
\newtheorem*{question*}{Question}%[section]
\renewcommand{\paragraph}[1]{\medskip{\noindent\emph{#1}.\quad}}
\def\@fnsymbol#1{\ensuremath{\ifcase#1\or *\or \mathsection \or  \mathparagraph \or \dagger\or
    \ddagger \or \|\or **\or \dagger\dagger
   \or \ddagger\ddagger \else\@ctrerr\fi}}
\newcommand\thankssymb[1]{\textsuperscript{\@fnsymbol{#1}}}
\def\@settitle{\begin{center}%
  \baselineskip14\p@\relax
    %\bfseries
    \normalfont\LARGE%<- NEW
  \@title
  \end{center}%
}
\begin{document}
\title{Rademacher-type Theorems and Sobolev-to-Lipschitz Properties\\for Strongly Local Dirichlet Spaces\tnoteref{t1}}
\tnotetext[t1]{
The authors are grateful to Professor Kazuhiro Kuwae for kindly providing a copy of~\cite{Kuw96}. They are also grateful to Dr.~Bang-Xian Han for helpful discussions on the Sobolev-to-Lipschitz property on metric measure spaces.
They wish to express their deepest gratitude to an anonymous Reviewer, whose punctual remarks and comments greatly improved the accessibility and overall quality of the initial submission.
}

\author[1]{Lorenzo Dello Schiavo\fnref{fn1}}
%\address{Institut f\"ur Angewandte Mathematik, Endenicher Allee 60, 53115 Bonn, Germany}
\ead{lorenzo.delloschiavo@ist.ac.at}
%\tnotetext[t1]{IST Austria, Am Campus 1, 3400 Klosterneuburg, Austria. \\ \indent\phantom{\thankssymb{1}} lorenzo.delloschiavo@ist.ac.at}

\author[2]{Kohei Suzuki\corref{cor1}\fnref{fn2}}
%\address{Scuola Normale Superiore, Piazza dei Cavalieri 7, 56126 Pisa PI, Italy}
\ead{ksuzuki@math.uni-bielefeld.de}
%\tnotetext[t2]{Fakult\"at f\"ur Mathematik, Universit\"at Bielefeld, D-33501 Bielefeld, Germany. \\ \indent\phantom{\thankssymb{2}} ksuzuki@math.uni-bielefeld.de}

\cortext[cor1]{Corresponding author}
\fntext[fn1]{This work was completed while L.D.S.\ was a member of the Institut f\"ur Angewandte Mathematik of the University of Bonn. He acknowledges funding of his position at that time by the Collaborative Research Center 1060 at the University of Bonn.
He also acknowledges funding of his current position by the Austrian Science Fund (FWF) grant F65, and by the European Research Council (ERC, grant No.~716117, awarded to Prof.\ Dr.~Jan Maas).
}
\fntext[fn2]{
K.S.~gratefully acknowledges funding by: the JSPS Overseas Research Fellowships, Grant Nr. 290142; World Premier International Research Center Initiative (WPI), MEXT, Japan; and JSPS Grant-in-Aid for Scientific Research on Innovative Areas ``Discrete Geometric Analysis for Materials Design'', Grant Number 17H06465.
}

\affiliation[1]{organization={IST Austria}, addressline={Am Campus 1}, postcode={3400}, city={Klosterneuburg}, country={Austria}}

\affiliation[2]{organization={Fakult\"at f\"ur Mathematik, Universit\"at Bielefeld}, postcode={D-33501}, city={Bielefeld}, country={Germany}}

\begin{abstract}
We extensively discuss the Rademacher and Sobolev-to-Lipschitz properties for generalized intrinsic distances on strongly local Dirichlet spaces possibly without square field operator.
We present many non-smooth and infinite-dimensional examples.

As an application, we prove the integral Varadhan short-time asymptotic with respect to a given distance function for a large class of strongly local Dirichlet forms.
\end{abstract}

\begin{keyword}
Dirichlet space \sep intrinsic distance \sep Rademacher Theorem \sep Sobolev-to-Lipschitz property \sep
Varadhan short-time asymptotics \sep energy dominant measures

\MSC[2010]{Primary: 31C25; Secondary: 30L99, 31E05}
\end{keyword}

\maketitle

\tableofcontents

\section{Introduction and Main Results}
\subsection{Introduction}
The study of heat kernels has provided a number of fruitful results connecting the fields of probability, analysis, and geometry. Among others, one important result in the theory is Varadhan's short-time asymptotic formula:
\begin{align} \label{eq: VAR}
\lim_{t \to 0} \ttonde{-2t\log \mssp_t(x,y)}=\mssd(x,y)^2\comm
\end{align}
connecting the \emph{geometric} information -- the distance function -- on the right-hand side, to the \mbox{(\emph{stochastic-})} \emph{analytic} information -- the short-time asymptotic of the heat kernel and the Brownian motion -- on the left-hand side.

Formula \eqref{eq: VAR} and its variants have been investigated in various settings:
complete connected Riemannian manifolds~\cite{Var67}, Lipschitz manifolds~\cite{Nor97}, degenerate diffusions on Euclidean spaces~\cite{CarKusStr87}, sub-Riemannian manifolds~\cite{Lea87a,Lea87b}, 
and fractals~\cite{Kum97,Kaj11}.
All these spaces are \emph{finite-dimensional}, locally compact complete separable metric space equipped with some canonical regular Dirichlet forms as energy structures. 

In the \emph{infinite-dimensional} setting, all standard examples ---~Wiener spaces, path/loop spaces, configuration spaces, etc.~--- are typically not locally compact, possibly even non-metrizable, and the corresponding Dirichlet spaces are therefore only quasi-regular.
This fact introduces a number of technical difficulties for obtaining and even in phrasing the short-time asymptotics~\eqref{eq: VAR}.
For instance, the heat kernel $\mssp_t(x,\diff y)$ is rarely absolutely continuous with respect to the reference measure, say $\mssm$, which does not allow us to treat the density~$\mssp_t(x,y)$ in \eqref{eq: VAR}.
Furthermore, any canonical distance function~$\mssd$ is typically an extended distance ---~i.e.,~$\mssd(x,y)=+\infty$ on sets of positive $\mssm$-measure~---, $\mssd$ is not a continuous function, and metric $\mssd$-balls are $\mssm$-negligible sets, all of which cause various technical difficulties such as measurability issues and counter-intuitive phenomena compared with the finite-dimensional setting. 

In spite of these difficulties, Varadhan-type short-time formulas have been successfully established on the Wiener space and on path/loop groups by Fang~\cite{Fan94}, Fang--Zhang~\cite{FanZha99}, Aida--Kawabi~\cite{AidKaw01}, Aida--Zhang~\cite{AidZha02}, and Hino--Ram\'irez~\cite{HinRam03}.
In view of the difficulties listed above, in all of these papers Formula~\eqref{eq: VAR} is rather phrased in an integral way, as follows.
For open sets~$A$ and $B$, let~$\mssp_t(A,B)\eqdef \int_A P_t\car_B \diff\mssm$ with $\seq{P_t}_{t\geq 0}$ being the heat semigroup, and $\mssd(A,B)$ be the distance between sets induced by some canonical geometric distance $\mssd$ --- e.g, the Cameron-Martin distance in the case of the Wiener space.
Then
\begin{align} \label{eq: VAR-2}
\lim_{t \to 0} \ttonde{-2t\log P_t(A,B)}=\mssd(A,B)^2 \fstop
\end{align}

In fact, according to Hino--Ramirez \cite{HinRam03} and Ariyoshi--Hino \cite{AriHin05}, Formula \eqref{eq: VAR-2} holds for any strongly local symmetric Dirichlet space, provided one replaces the set-distance~$\mssd$ with a suitable \emph{maximal function} $\hr{}$.
However, this very general statement does not tell us whether the function~$\hr{}$ comes from some geometric distance function on the underlying space, nor even whether~$\hr{}$ is in any way related to the intrinsic distance associated with the Dirichlet space under consideration  (See Rmk.s~\ref{r:IntrinsicVsHR} and~\ref{r:dOpen}).
In other words, if we only rely on this general theorem, $\hr{}$ does not \emph{a priori} convey any geometric information about the underlying space. %nor to the analytic distance function called the intrinsic distance induced by Dirichlet forms. 
Indeed, in typical fractal spaces, heat kernels have a sub-Gaussian asymptotics rather than a Gaussian one, and, in this case, it turns out that $\hr{}\equiv 0$, i.e.\ the right-hand side in \eqref{eq: VAR-2} identically vanishes.
Therefore, in each of the above mentioned infinite-dimensional examples, various techniques are required based on stochastic analysis to identify~$\hr{}$ with a geometric distance function~$\mssd$.
 
 \medskip
 
In this paper, we present a general framework to show the Varadhan short-time asymptotics~\eqref{eq: VAR-2} for a given geometric distance~$\mssd$ on the underlying space, that is, we identify~$\hr{}$ with~$\mssd$.
Our strategy is based on (infinite-dimensional) geometric analysis rather than on stochastic analysis. 
We focus on two fundamental properties connecting Dirichlet-forms theory and metric geometry: the \emph{Rademacher property}~$(\mathsf{Rad})$, and the \emph{Sobolev-to-Lipschitz property}~$(\mathsf{SL})$.
We extensively investigate $(\mathsf{Rad})$ for forms with or without square-field operators (Theorems~\ref{thm: RAD} and~\ref{thm: LENG}).
After studying relations among several inequivalent definitions of $(\mathsf{Rad})$ and~$(\mathsf{SL})$, we finally show that, under $(\mathsf{Rad})$ and~$(\mathsf{SL})$, the maximal function~$\hr{}$ can be identified with a given geometric distance function (Theorem \ref{thm: VSA}). 

As a consequence of our approach, we are able to show~\eqref{eq: VAR-2} for a geometric distance function~$\mssd$ in a number of non-smooth and infinite-dimensional examples not covered by the existing literature (see the \emph{Examples} after Theorem~\ref{thm: VSA}). 
In addition, our theorem provides a new result even on the Wiener space (see the paragraph after Theorem \ref{thm: VSA}).

\medskip

In the process of developing the theory on quasi-regular Dirichlet spaces, we discuss many results previously available only in the regular case.
In particular, we extensively discuss localization arguments by nests,~\S\ref{ss:BroadLoc}. Furthermore, in order to deal with the intrinsic distance~$\mssd_\mu$ with respect to\ any $\mcE$-smooth measure~$\mu$, we introduce the notion of \emph{$\mcE$-moderance} of measures, \S\ref{sss:Moderance}, and discuss its relations to $\mcE$-smoothness, \S\ref{sss:Smoothness}, and to the \emph{$\mcE$-dominance} of measures, \S\ref{sss:Dominance}, introduced by Hino~\cite[Def. 2.1]{Hin09} for regular forms.

We remark that $(\mathsf{Rad})$ and~$(\mathsf{SL})$ are significant not only for the Varadhan short-time asymptotics~\eqref{eq: VAR-2}. 
Once~$(\mathsf{Rad})$ and~$(\mathsf{SL})$ are established, they have various applications, including: properties of the (possibly: \emph{extended}) metric space~$(X,\mssd_\mu)$, as the length property (see~\S\ref{ss:RadLength} and Theorem \ref{thm: LENG}), or completeness (see~\S\ref{ss:StoLCompleteness});
properties of function spaces on~$X$, as the density of some families of functions in various Sobolev-type spaces on~$X$, the quasi-regularity of~$(\mcE,\dom)$ (see~\S\ref{ss:RadQuasiReg}), or the existence of Sobolev cut-off functions (see~\S\ref{ss:Localization});
properties of operators acting on said function spaces, as the regularizing effects of the heat semigroup associated to~$(\mcE,\dom)$.

\subsection{Main Results}
We consider a quasi-regular strongly local Dirichlet form~$(\mcE,\dom)$ on~$L^2(X,\mssm)$ for a possibly non-metrizable topological Luzin space~$(X,\T)$ endowed with a $\sigma$-finite not necessarily Radon topological measure~$\mssm$.
The need for such generality in the choice of a topology is not recondite, and arises already when considering Dirichlet spaces over Banach spaces endowed with their weak topology.
Allowing for non-Radon measures further includes standard examples in the theory of quasi-regular Dirichlet forms, see e.g.~\cite{AlbMa91,RoeSch95}.

We define a class of `$\mcE$-moderate' Borel measures on~$X$, Dfn.~\ref{d:Moderance}, including (non-Radon) $\mcE$-smooth measures, Dfn.~\ref{d:Smooth}.
For any such measure~$\mu$, the broad local domain of functions with $\mu$-bounded energy is
\begin{align*}
\DzLoc{\mu}\eqdef \set{f\in\dotloc{\dom} : \sq{f}\leq \mu} \fstop
\end{align*}
Here, for~$f$ in the broad local domain~$\dotloc{\dom}$ (in the sense of~\cite{Kuw98}, see~\S\ref{ss:BroadLoc}),~$\sq{f}$ is the energy measure of~$f$.
Finally, let~$\mssd\colon X^\tym{2}\rar [0,\infty]$ be any extended pseudo-metric, possibly unrelated to~$\T$, and set 
\begin{align*}
\Lipu(\mssd)\eqdef& \set{f\colon X\rar \R : \Li[\mssd]{f}\leq 1} \comm
\end{align*}
and
\begin{align*}
\Li[\mssd]{f}\eqdef& \inf\set{L>0: \abs{f(x)-f(y)}\leq L\, \mssd(x,y)\comm x,y\in X}  \fstop
\end{align*}

In order to analyze geometric structures generated by Dirichlet forms, we study relations between the first-oder differential structure on~$X$ arising from the $\mssd$-Lipschitz algebra, and the one arising from the square-field operator (or energy measure) of~$\mcE$.
Precisely, we investigate the Rademacher and the Sobolev-to-Lipschitz properties, as follows. 
Neglecting ---~for the purpose of this introduction~--- measurability details, say that $(\mcE,\mssd,\mu)$ has:
\begin{itemize}
\item[$(\mathsf{Rad})$] the \emph{Rademacher property}, if $\Lipu(\mssd)\subset \DzLoc{\mu}$;
\item[$(\mathsf{SL})$] the \emph{Sobolev-to-Lipschitz property}, if $\DzLoc{\mu}\subset \Lipu(\mssd)$.
\end{itemize}
Spaces satisfying one or both of these properties include: Riemannian manifolds, doubling metric measure spaces carrying a Poincar\'e inequality~\cite{Che99}, $\RCD(K,\infty)$ spaces~\cite{AmbGigSav15}, configuration spaces over smooth manifolds~\cite{AlbKonRoe98,RoeSch99}, Hilbert and Wiener spaces~\cite{BogMay96,EncStr93}, spaces of probability measures~\cite{LzDS19b,vReStu09}, and many others.
In addition, the combination of~$(\mathsf{Rad})$ and~$(\mathsf{SL})$ is one main tool in the identification of the right-hand side of~\eqref{eq: VAR-2} for diffusion processes constructed via Dirichlet forms and difficult to study by other means.
This applies in particular to diffusion processes on infinite-dimensional state spaces, such as Hilbert and Wiener spaces~\cite{AidKaw01,Fan94,FanZha99,Zha00}, path and loop groups~\cite{AidZha02,HinRam03}, spaces of probability measures~\cite{LzDS17+,KonvRe18,vReStu09}, etc..
Most of these examples arise from quasi-regular Dirichlet spaces. For this reason, they do not fall within the scope of any systematic treatment and are thus handled with \emph{ad hoc} techniques.

\medskip

We consider several inequivalent formulations of the properties above, Dfn.s~\ref{d:Rad} and~\ref{d:StoL}, suitable to address extended pseudo-distances. These properties are of particular interest in the case when~$\mu$ is $\mcE$-smooth, and~$\mssd=\mssd_\mu$ is the intrinsic distance
\begin{align}\label{eq:Intro:Intrinsic2}
\mssd_\mu(x,y)\eqdef \sup\set{f(x)-f(y) : f\in \DzLoc{\mu} \cap \Cb(\T)}\fstop
\end{align}
Whereas it is common in the literature to restrict the above definition to the case~$\mu=\mssm$, the intrinsic distance~$\mssd_\mu$ for $\mcE$-smooth~$\mu$ is naturally related to ---~however different from~--- the intrinsic distance of the $\mu$-perturbation~\eqref{eq:Perturbed} of~$(\mcE,\dom)$, see Example~\ref{ese:Perturbed}.
It provides a non-trivial intrinsic distance in the case when energy measures are singular with respect to\ the reference measure, e.g.\ on fractal spaces, or for the Dirichlet form~\cite{GarRhoVar14} of Liouville Brownian Motion.

\paragraph{On the Rademacher property} Firstly, we provide sufficient conditions for this property to hold.
\begin{theorem} \label{thm: RAD}
Suppose that~$\mu$ is an $\mcE$-moderate measure. Then, the following assertions hold:
\begin{enumerate}[$(a)$]
\item if $(X,\mssd_\mu)$ is a separable extended metric space, then~$(\mcE,\mssd_\mu,\mu)$ has the Rademacher property, see Theorem~\ref{t:KuwaeProposition};
\item if $\mssd(A,\emparg)\wedge r\in \DzLoc{\mu}$ for all measurable~$A\subset X$, and each~$r>0$, then~$(\mcE,\mssd,\mu)$ has the Rademacher property, see Thm.~\ref{t:Lenz}.
\end{enumerate}
\end{theorem}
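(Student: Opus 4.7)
My plan is to handle the two parts by complementary approximation schemes for $\mssd$-Lipschitz functions.

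For part \emph{(a)}, separability of $(X,\mssd_\mu)$ provides a countable $\mssd_\mu$-dense subset $\set{x_n}_n$ of each component where $\mssd_\mu$ is finite. For $f\in\Lipu(\mssd_\mu)$, the McShane-type identity
\begin{equation*}
  f(x) = \sup_n \bigl[\, f(x_n) - \mssd_\mu(x,x_n) \,\bigr]
\end{equation*}
holds pointwise on each finite-distance component. The argument then reduces to two ingredients: first, that the "distance-from-a-point" functions $\mssd_\mu(\emparg,x_n)$, suitably truncated, lie in $\DzLoc{\mu}$; and second, that $\DzLoc{\mu}$ is stable under countable pointwise suprema of families sharing the common energy bound $\mu$. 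The first ingredient is the crux: by definition \eqref{eq:Intro:Intrinsic2} each $\mssd_\mu(\emparg,x_n)$ is a supremum of functions in $\DzLoc{\mu}\cap\Cb(\T)$, but a blind application of the closure of $\DzLoc{\mu}$ does not suffice because the supremum is uncountable and in the broad local sense. The Kuwae-type result cited as Theorem~\ref{t:KuwaeProposition} is precisely what promotes this uncountable supremum to a single element of $\DzLoc{\mu}$ controlled by $\mu$, and $\mcE$-moderance of $\mu$ furnishes the nest of cutoffs needed to carry out the broad-local exhaustion in the presence of an \emph{extended} pseudo-distance.

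For part \emph{(b)}, the hypothesis supplies the elementary building blocks $\mssd(A,\emparg)\wedge r\in\DzLoc{\mu}$ directly, and the plan is a layer-cake approximation. After reducing to the case of a bounded $f\in\Lipu(\mssd)$ by a standard cutoff in the target, fix $n\in\N$ and set $A_{j,n}\eqdef\set{f\leq j/n}$ and $g_{j,n}\eqdef\mssd(A_{j,n},\emparg)\wedge (1/n)$. An elementary computation using $\mssd(x,A_{j,n})\geq (f(x)-j/n)^+$ (from the Lipschitz bound on $f$) shows that
\begin{equation*}
  f_n \eqdef \sum_{j} g_{j,n}
\end{equation*}
satisfies $f\leq f_n\leq f+1/n$, so $f_n\to f$ uniformly. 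The decisive observation is that each $g_{j,n}$ is locally constant outside the "transition strip" $\set{0<\mssd(\emparg,A_{j,n})<1/n}$, and this strip is contained in the slab $\set{j/n<f<(j+1)/n}$; since these slabs are pairwise disjoint, strong locality of $\mcE$ forces the energy measures $\sq{g_{j,n}}$ to sit on disjoint sets. Consequently $\sq{f_n}=\sum_j \sq{g_{j,n}}\leq \mu$, uniformly in $n$, and lower semicontinuity of the energy along the broad-local limit $f_n\to f$ yields $f\in\DzLoc{\mu}$.

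The main obstacle in both parts is the global-to-local bookkeeping in the broad local domain $\dotloc{\dom}$ in the presence of a possibly extended pseudo-distance: the representations of $f$ as a supremum or sum are a priori only pointwise identities, and one must combine cutoffs supplied by the $\mcE$-moderance of $\mu$ with strong locality to produce a bona fide element of $\DzLoc{\mu}$ with energy controlled by $\mu$ rather than by an accumulating constant. This is exactly the content of the quoted Theorems~\ref{t:KuwaeProposition} and~\ref{t:Lenz}, which furnish, respectively, the Kuwae-type sup representation for the intrinsic distance and the disjoint-strip estimate for the layer-cake sum.
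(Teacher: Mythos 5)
Your part~(a) proposal goes by a McShane sup-representation over a countable $\mssd_\mu$-dense set, rather than the paper's route of first proving Theorem~\ref{t:KuwaeProposition} (that $\mssd_\mu(\emparg,A)\wedge r\in\DzLocB{\mu}$ for all~$A$) and then feeding this into Theorem~\ref{t:Lenz} (which is Corollary~\ref{c:Separable}). Both routes rest on the same two closure properties of~$\DzLocB{\mu}$: closure under finite $\vee$/$\wedge$ via the truncation property~\eqref{eq:TruncationLoc}, and closure under bounded monotone a.e.~limits via Remark~\ref{r:weak*} and Lemma~\ref{l:ConvMeasure}. You correctly identify Theorem~\ref{t:KuwaeProposition} as the crux; your alternative is fine and marginally shorter once that is in hand.

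For part~(b) there is a genuine gap. You build $f_n$ as a \emph{sum} $\sum_j g_{j,n}$ of shell functions and estimate $\sq{f_n}$ by asserting $\sq{f_n}=\sum_j\sq{g_{j,n}}\leq\mu$ on the grounds that the ``transition strips'' lie in pairwise disjoint slabs $\{j/n<f<(j+1)/n\}$, invoking strong locality. But in the quasi-regular, broad-local framework, strong locality (\eqref{eq:SLoc:1},~\eqref{eq:SLoc:2}) only localizes over $\mcE$-\emph{quasi-open} sets, and the slab $\{j/n<f<(j+1)/n\}$ is a level set of a function~$f$ that is merely $\A$-measurable (not quasi-continuous), hence not quasi-open; passing to the quasi-continuous versions of the~$g_{j,n}$ makes the strips $\mssm$-a.e.~disjoint but not $\mcE$-q.e.~disjoint, which is what the energy-measure estimate requires. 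Moreover, by Remark~\ref{r:Caveat} the polarized form $\sq{\emparg,\emparg}$ is not globally well-defined on~$\dotloc{\dom}$, so the additive decomposition $\sq{\sum_j g_{j,n}}=\sum_j\sq{g_{j,n}}$ would itself need a local Leibniz argument that is not spelled out. The paper's proof of Theorem~\ref{t:Lenz} avoids all of this by taking a \emph{max} of tent functions, $\rep f_n=\max_m\bigl(\tfrac{m}{n}-\mssd(\emparg,A_{m/n}(\rep f))\bigr)_+$: for a finite max, the truncation property~\eqref{eq:TruncationLoc} directly yields $\sq{\max_j u_j}\leq\mu$ whenever each $\sq{u_j}\leq\mu$, with no disjointness of supports ever required. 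If you replace your sum by this max (or a corresponding min), the rest of your argument aligns with the paper's.
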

\noindent In the strongly local case, the second assertion in the theorem extends Frank--Lenz--Wingert~\cite[Thm.~4.9]{FraLenWin14} to quasi-regular Dirichlet forms on spaces that are possibly not locally compact. Crucially, we also remove all continuity assumptions on~$\mssd$, thus sensibly generalizing both~\cite{FraLenWin14}, and Kuwae's~\cite[Thm.~1.1]{Kuw96}.
In the particular case~$\mssd=\mssd_\mssm$, the first assertion is an extension: of Sturm~\cite[Lem.~1]{Stu94}, removing strong regularity, see Rmk.~\ref{r:StrongRegularity};
and of some implications in Ambrosio--Gigli--Savar\'e~\cite[Thm~3.9]{AmbGigSav15}, removing the assumption on existence of continuous Sobolev cut-offs ---~\eqref{eq:Loc}, see Dfn.~\ref{d:Loc} below or~\cite[Dfn.~3.6(a)]{AmbGigSav15}~---, which we discuss extensively in~\S\ref{ss:Localization}.

\smallskip

Finally, among other generalizations from the regular to the quasi-regular case, we obtain the following characterization of the length property for intrinsic metrics. We say that~$(X,\mssd_\mssm)$ is locally complete if every point in~$X$ has a metrically complete neighborhood.

\begin{theorem}[See~Dfn.~\ref{d:StrictLoc} and Thm.~\ref{t:Stollmann}] \label{thm: LENG} Assume~$(\mcE,\dom)$ is strictly local. If~$(X,\mssd_\mssm)$ is a locally complete metric space, then it is a length space.
\end{theorem}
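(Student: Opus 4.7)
The plan is to adapt the Sturm--Stollmann strategy from the regular setting to the present strictly local, quasi-regular framework, replacing global by local completeness in the final topological step. I fix $x, y \in X$ with $r := \mssd_\mssm(x, y) < \infty$ (the case $r = +\infty$ being vacuous) and aim to establish the approximate midpoint property: for every $\epsilon > 0$, there exists $z \in X$ with $\mssd_\mssm(x, z) \leq r/2 + \epsilon$ and $\mssd_\mssm(z, y) \leq r/2 + \epsilon$. Dyadic iteration of this construction produces a rectifiable curve of length $r$, which is the length property. Local completeness enters only in the iterative step: the approximants all remain in the bounded set $\set{z : \mssd_\mssm(x, z) + \mssd_\mssm(z, y) \leq r + 1}$, which can be covered by finitely many $\mssd_\mssm$-complete neighborhoods, ensuring convergence within $X$.

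The production of the approximate midpoint will be by contradiction. Assume no $\epsilon$-midpoint exists, so that the sets $A := \set{z : \mssd_\mssm(x, z) \leq s}$ and $B := \set{z : \mssd_\mssm(z, y) \leq s}$, with $s := r/2 + \epsilon$, are disjoint. Consider the truncated distance functions $u := \mssd_\mssm(x, \cdot) \wedge s$ and $v := \mssd_\mssm(\cdot, y) \wedge s$. By Theorem~\ref{thm: RAD}, both $u, v \in \DzLoc{\mssm}$ with $\sq{u}, \sq{v} \leq \mssm$. Strict locality supplies the sharp chain rule ensuring that $\sq{u}$ vanishes on $\set{u = s}$ and $\sq{v}$ on $\set{v = s}$. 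Since $A$ and $B$ are disjoint, at every point at most one of $\sq{u}, \sq{v}$ is nonzero, so that $\sq{u - v} \leq \sq{u} + \sq{v} \leq \mssm$ globally. The function $w := u - v$ satisfies $w(x) = -s$ and $w(y) = s$, hence $w(y) - w(x) = r + 2\epsilon$. If $w$ lies in the intrinsic-distance test class of~\eqref{eq:Intro:Intrinsic2}, we deduce $\mssd_\mssm(x, y) \geq r + 2\epsilon > r$, a contradiction.

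The hardest step will be verifying that $w$ belongs to $\DzLoc{\mssm} \cap \Cb(\T)$: membership in the broad local domain follows from the construction, but $\T$-continuity of $w$ is delicate since the truncated intrinsic distance $\mssd_\mssm(x, \cdot) \wedge s$ is only $\T$-lower semicontinuous in general, whereas the local completeness hypothesis refers to the $\mssd_\mssm$-topology. I expect this gap to be bridged either by the implicit compatibility between the $\mssd_\mssm$- and $\T$-topologies provided by the locally complete metric hypothesis, or by replacing $w$ with a $\T$-continuous approximant produced from a Lipschitz cut-off composed with $u$ and $v$, for which strict locality still yields the required sharp chain-rule estimate on the energy measure. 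Handling this interface between the broad local domain $\dotloc{\dom}$ and the continuous test class in a form sufficiently clean to close the contradiction is the main technical obstacle.
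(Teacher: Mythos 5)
Your proposal takes a genuinely different route from the paper. The paper proves Theorem~\ref{t:Stollmann} by adapting Stollmann's sheaf-based dual characterization of length spaces (replacing Stollmann's Lemma~5.2 with Lemma~\ref{l:Sheaf} and Lemma~\ref{l:ConvMeasure}, and his Theorem~5.1 with Corollary~\ref{c:Separable}), and never constructs explicit curves. You instead follow the classical Sturm-type scheme: produce an approximate $\eps$-midpoint by contradiction, then iterate dyadically to build a curve. The midpoint step itself is sound, and the $\T$-continuity of $w=u-v$ that you flag as the main obstacle is in fact not an issue: by strict locality $\T=\T_{\mssd_\mssm}$, so the $1$-Lipschitz truncations $u=\mssd_\mssm(x,\emparg)\wedge s$ and $v=\mssd_\mssm(\emparg,y)\wedge s$ are automatically $\T$-continuous and bounded, hence $w\in\DzLoc{\mssm}\cap\Cb(\T)$. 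The disjointness of the closed balls $A$, $B$ together with strong locality then shows $\sq{u}$ and $\sq{v}$ are carried by the disjoint sets $A$ and $B$, so $\sq{w}=\sq{u}+\sq{v}\leq\mssm$, while $w(y)-w(x)=r+2\eps>\mssd_\mssm(x,y)$ gives the desired contradiction.

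The genuine gap is in the iteration step. You assert that the bounded set $\set{z : \mssd_\mssm(x,z)+\mssd_\mssm(z,y)\leq r+1}$ ``can be covered by finitely many $\mssd_\mssm$-complete neighborhoods''. Boundedness alone does not yield a finite subcover; some form of (pre)compactness would be needed, and this is exactly the hypothesis the theorem is designed to drop (it improves on Stollmann by removing local compactness). More fundamentally, the implication ``approximate midpoints $+$ local completeness $\Rightarrow$ length space'' is \emph{false} for general metric spaces: the slit plane $\R^2\setminus\ttonde{\set{0}\times[0,\infty)}$ with the restricted Euclidean metric is locally complete and has approximate midpoints (perturb the Euclidean midpoint off the slit), yet its length distance strictly exceeds the Euclidean distance, so it is not a length space. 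Consequently, the dyadic-midpoint iteration cannot be closed by metric considerations alone; one must exploit the special structure of $\mssd_\mssm$ as the intrinsic distance of a Dirichlet form. This is precisely what the paper's sheaf argument accomplishes, by characterizing the length distance variationally via locally-defined functions in $\DzLoc{\mssm,\T}$ and thereby bypassing any explicit curve construction and any completeness requirement stronger than the local one.
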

\noindent This result generalizes the analogous statements of Stollmann~\cite[Thm.~5.2]{Sto10}, for regular forms under local compactness and local completeness of~$(X,\mssd_\mssm)$; and of Ambrosio--Gigli--Savar\'e~\cite[Thm~3.10]{AmbGigSav15} under completeness of~$(X,\mssd_\mssm)$ and~\eqref{eq:Loc}.
For the importance of the locally complete non-complete case, see Remark~\ref{r:Stollmann} and Example~\ref{ese:Stollmann} below.
The Theorem also provides a partial answer to the question raised in~\cite[Rmk.~2.7]{HinRoeTep13}, by removing the local-compactness assumption on the topology.

\paragraph{Examples}
Concerning the metric properties of intrinsic distances, an interesting and simple class of examples for Theorems~\ref{thm: RAD} and~\ref{thm: LENG} is the perburbation by non-integrable Coulomb-type potentials of the standard Dirichlet forms on either Euclidean spaces (Example~\ref{ese:Coulomb1}), or Hilbert spaces endowed with log-concave measures (Example~\ref{ese:Coulomb2}).
We note that ---~strictly speaking: even on finite-dimensional Euclidean spaces~--- such perturbed Dirichlet spaces are not covered by the aforementioned literature, since the reference measure is \emph{neither} Radon \emph{nor} locally finite, and, therefore, the Dirichlet form is not regular (even in the Euclidean case).

\paragraph{On the Sobolev-to-Lipschitz property and the Varadhan short-time asymptotic}
Under various Sobolev-to-Lipschitz- and Rademacher-type assumptions, we compare the point-to-set distance~$\mssd(\emparg, A)$, with~$A\subset X$, to the function~$\hr{\mu, A}$, the $\mssm$-a.e.\ maximal function in~$\DzLoc{\mu}$ satisfying~$\hr{\mu, A}\equiv 0$ $\mssm$-a.e.\ on~$A$.
When~$\mu=\mssm$, maximal functions of this type were considered in Hino--Ram\'irez~\cite{HinRam03} and Ariyoshi--Hino~\cite{AriHin05}.
We expand these considerations to the case of general~$\mu$ in~\S\ref{ss:StoLMaxFunc}.
Maximal functions of this type identify the Varadhan short-time asymptotics for the heat kernel bi-measure~$P_t(A_1, A_2)$~\eqref{eq:PtAB} associated to~$(\mcE,\dom)$, viz.\
\begin{align}\label{eq:Intro:AriHino}
\lim_{t\rar 0} \ttonde{-2t \log P_t(A_1,A_2)}=\tonde{\mssm\text{-}\essinf_{y\in A_2} \hr{\mssm, A_1}(y)}^2 \comm \qquad \mssm A_1, \mssm A_2>0\fstop
\end{align}
Remarkably, their result shows that any strongly local Dirichlet space has Gaussian short-time behavior, thus suggesting ---~at first sight~--- that the Gaussian behavior is canonical.
However,~\eqref{eq:Intro:AriHino} holds without any topology, and therefore the maximal function $\hr{\mssm, A}$, defined only $\mssm$-a.e.\ and in a non-constructive way, is not a geometric object, in the sense that it bears no topological information on the underlying space.
In particular, this is the case for canonical diffusions on fractals, typically displaying \emph{sub}-Gaussian short-time behavior in terms of some geometric distances~$\mssd$, whereas having vanishing maximal functions~$\hr{\mssm,A}\equiv 0$.
We emphasize here that, in general, the maximal function $\hr{\mssm, A}$ does \emph{not} coincide with the point-to-set distance function~$\mssd_\mssm(\emparg, A)$ associated with the intrinsic distance~$\mssd_\mssm$ for each measurable~$A$ (See Rmk.s~\ref{r:IntrinsicVsHR} and~\ref{r:dOpen}).

In light of this observation, the identification of~$\hr{\mssm, A}$ with a given geometric distance~$\mssd(\emparg, A)$ can provide the necessary geometric understanding of the short-time asymptotics of heat kernels and Brownian motions.
The~$(\mathsf{Rad})$ and $(\mathsf{SL})$ properties discussed in the previous paragraphs play a significant role in linking these two objects together:

\begin{theorem}[Thm.s~\ref{t:Full},~\ref{t:UniversallyM} and Cor.s~\ref{c:Special},~\ref{c:AriyoshiHino}] \label{thm: VSA}
Assume that~$(\mcE,\mssd,\mu)$ possesses both~$(\mathsf{Rad})$ and $(\mathsf{SL})$. Then, for each measurable~$A\subset X$ there exists a measurable~$\tilde A\subset X$ with~$\mssm (A\triangle \tilde A)=0$, so that we have~$\hr{\mu, A}=\mssd(\emparg, \tilde A)$.
In particular, if~$\mu=\mssm$, then for arbitrary measurable non-negligible $A_i$ and suitable~$\tilde A_i$ (identified in the proofs) the following Varadhan short-time asymptotics holds:
\begin{align}\label{eq:Intro:Varadhan}
\lim_{t\rar 0} \ttonde{-2t \log P_t(A_1,A_2)}=\mssd(\tilde A_1,\tilde A_2)^2\fstop
\end{align}
\end{theorem}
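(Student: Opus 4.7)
The plan is to sandwich $\hr{\mu,A}$ between two copies of a suitable point-to-set distance function $\mssd(\emparg,\tilde A)$, using $(\mathsf{Rad})$ and $(\mathsf{SL})$ in complementary ways, and then to feed the resulting identification into the abstract Ariyoshi--Hino formula~\eqref{eq:Intro:AriHino} to recover the geometric Varadhan asymptotic.

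First, I would fix a measurable $A\subset X$ and check that (a truncated version of) the point-to-set distance $\mssd(\emparg,A)\colon X\rar[0,\infty]$ lies in $\DzLoc{\mu}$. Indeed $\mssd(\emparg,A)$ is $1$-Lipschitz with respect to $\mssd$, so after truncation at arbitrary height $r>0$ it belongs to $\Lipu(\mssd)$, and by $(\mathsf{Rad})$ to $\DzLoc{\mu}$. A monotone-limit argument in the spirit of~\S\ref{ss:StoLMaxFunc} yields the unbounded version inside the broad local domain. Since $\mssd(\emparg,A)$ vanishes on $A$, the maximality definition of $\hr{\mu,A}$ gives $\mssd(\emparg,A)\le\hr{\mu,A}$ $\mssm$-a.e.

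For the reverse inequality, I would apply $(\mathsf{SL})$ to $\hr{\mu,A}\in\DzLoc{\mu}$, obtaining a $1$-Lipschitz representative $\rep h$. Setting $\tilde A\eqdef A\cap\{\rep h=0\}$ one has $\mssm(A\triangle\tilde A)=0$ because $\hr{\mu,A}$ vanishes $\mssm$-a.e.\ on $A$; and the $1$-Lipschitz estimate $\rep h(y)=\rep h(y)-\rep h(z)\le\mssd(y,z)$ for $y\in X$ and $z\in\tilde A$, after passing to the infimum over $z\in\tilde A$, yields $\hr{\mu,A}\le\mssd(\emparg,\tilde A)$ $\mssm$-a.e. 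Combined with the Rademacher step (applied with $\tilde A$ in place of $A$, using $\mssm(A\triangle\tilde A)=0$), this closes the sandwich and establishes $\hr{\mu,A}=\mssd(\emparg,\tilde A)$ $\mssm$-a.e., and in fact pointwise after choosing the Lipschitz representative. For the second assertion, taking $\mu=\mssm$ and plugging the identification into~\eqref{eq:Intro:AriHino} gives
\begin{equation*}
\lim_{t\rar 0}\ttonde{-2t\log P_t(A_1,A_2)}=\tonde{\mssm\text{-}\essinf_{y\in A_2}\mssd(y,\tilde A_1)}^2\comm
\end{equation*}
and a further $\mssm$-null modification of $A_2$ to $\tilde A_2$, on which $\mssd(\emparg,\tilde A_1)$ attains its essential infimum, rewrites the right-hand side as $\mssd(\tilde A_1,\tilde A_2)^2$.

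The main obstacle is the delicate interplay between pointwise and $\mssm$-a.e.\ notions: $\mssd$ is only an extended pseudo-distance possibly unrelated to the topology $\T$, $\mssd$-balls may be $\mssm$-negligible, and $\hr{\mu,A}$ itself is only defined modulo $\mssm$-null modification. One must therefore carefully distinguish between pointwise Lipschitz representatives, universally measurable versions, and $\mssm$-a.e.\ equivalence classes---which is precisely why the theorem is split in the paper into Thm.s~\ref{t:Full},~\ref{t:UniversallyM} and Cor.s~\ref{c:Special},~\ref{c:AriyoshiHino}, each targeting one facet of this measurability bookkeeping.
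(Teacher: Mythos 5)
Your overall strategy mirrors the paper's exactly: Lemma~\ref{l:StoLdA} gives the lower bound $\mssd(\emparg, A)\leq \hr{\mu,A}$ from $(\mathsf{Rad})$, Lemma~\ref{l:RaddA} gives the upper bound $\hr{\mu,A}\leq\mssd(\emparg,\tilde A)$ from $(\mathsf{SL})$ after an $\mssm$-negligible modification, Theorem~\ref{t:Full} combines them, and Corollary~\ref{c:AriyoshiHino} feeds the identification into~\eqref{eq:Intro:AriHino}. Your closing step, shaving a further $\mssm$-null set off $A_2$ so that $\mssm\textrm{-}\essinf_{y\in A_2}\mssd(y,\tilde A_1)$ becomes $\inf_{y\in\tilde A_2}\mssd(y,\tilde A_1)=\mssd(\tilde A_1,\tilde A_2)$, is also what the paper leaves implicit when it writes ``suitable $\tilde A_i$ identified in the proofs.''

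There is one concrete imprecision in your $(\mathsf{SL})$ step that would need repair: you propose to ``apply $(\mathsf{SL})$ to $\hr{\mu,A}\in\DzLoc{\mu}$,'' but $\hr{\mu,A}$ is generally \emph{not} an element of $\DzLoc{\mu}$ --- it can equal $+\infty$ on a set of positive $\mssm$-measure (cf.\ Remark~\ref{r:dOpen}), and only the truncations $\hr{\mu,A}\wedge n$ lie in $\Dz{\mu,A}_{\loc,n}\subset\DzLocB{\mu}$ by Proposition~\ref{p:Hino}. One must therefore apply $(\mathsf{SL})$ to each truncation separately, obtain a family of $1$-Lipschitz representatives $\rep\rho_{A,n}$ of $\hr{\mu,A}\wedge n$, set $\tilde A\eqdef \bigcap_n \rep\rho_{A,n}^{-1}(\{0\})\cap A$, and then pass to the limit in $n$. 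This forces a non-trivial consistency argument: the representatives $\rep\rho_{A,n}$ only agree $\mssm$-a.e., not pointwise, so to take $\limsup_n$ one first restricts to a common $\mssm$-conegligible set $B$ on which $n\mapsto\rep\rho_{A,n}$ is monotone and compatible across truncation heights, as is done in the proof of Lemma~\ref{l:RaddA}. You flag the pointwise-versus-a.e.\ tension as ``the main obstacle'' but do not supply this specific fix; it is the crux of the upper bound, not mere bookkeeping, since without it $\tilde A$ is not well-defined as a single set valid uniformly in $n$.

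One further remark: the paper's Theorem~\ref{t:Full} additionally requires the $\A$-measurability of $\mssd(\emparg,A)$ and the $\T$-admissibility of $\mssd$, from which it deduces the extra identification $\mssd=\mssd_\mu$ via Lemma~\ref{l:RadDRad} and Proposition~\ref{p:StoL}. Since the statement of Theorem~\ref{thm: VSA} as phrased in the introduction explicitly neglects measurability and is stated only for $\mssd$, omitting those hypotheses is acceptable at this level of rigor, but you should be aware they reappear as soon as one makes the argument precise.
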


We note that Theorem~\ref{thm: VSA} is new even in the case of the Wiener space (see Examples~\ref{ese:Wiener1} and~\ref{ese:Wiener2}), since the formulation in Corollary~\ref{c:AriyoshiHino} does not require the sets~$A_1$,~$A_2$ to be open, which is instead the case in Fang~\cite{Fan94}, Fang--Zhang~\cite{FanZha99} and Aida--Kawabi~\cite{AidKaw01}; also cf.\ Aida--Zhang~\cite{AidZha02} and Hino--Ram\'irez~\cite{HinRam03} for the case of path and loop groups.

\paragraph{Examples}
By Theorem \ref{thm: VSA} and its general form Corollary~\ref{c:AriyoshiHino}, several non-smooth and infinite-dimensional spaces turn out to satisfy Varadhan short-time asymptotics~\eqref{eq: VAR-2} for some geometric distance~$\mssd$, namely: $\RCD(K,\infty)$ spaces; $p$-thick geodesic infinitesimally doubling metric measure spaces (see Example~\ref{ese:RCD} and the References therein); configuration spaces over non-smooth spaces (as will be discussed in a forthcoming work by the authors).

Finally, besides the understanding of the short-time asymptotics, we investigate some pathological examples of quasi-regular Dirichlet spaces endowed with extended distances, and the dependence of~$(\mathsf{Rad})$ and~$(\mathsf{SL})$ on their $\sigma$-algebras (e.g.\ on configuration spaces, see Examples~\ref{ese:Config1},~\ref{ese:Config2},~\ref{ese:Config3},~\ref{ese:Config4}).

\paragraph{Notation}
Throughout the paper, let~$a\vee b\eqdef \max\set{a,b}$ and $a\wedge b\eqdef \min\set{a,b}$ for $a,b\in\R$. Analogously let~$\vee_{i=1}^k a_i\eqdef \max\set{a_i: i\leq k}$ and~$\wedge_{i=1}^k a_i\eqdef \min\set{a_i: i\leq k}$
for $\set{a_i}_{i\leq k}\subset \R$.

For a measure~$\mu$ on a measurable space~$(X,\A)$ we denote by~$\mu A$, resp.~$\mu f$, the $\mu$-measure of $A\in\A$, resp.\ the integral with respect to~$\mu$ of a $\A$-measurable function~$f$ (whenever the integral makes sense).

\section{Auxiliary results}\label{s:Preliminaries}
\subsection{Metric and topological spaces}
Let~$X$ be any non-empty set. A function~$\mssd\colon X^\tym{2}\rar [0,\infty]$ is an \emph{extended pseudo-distance} if it is symmetric and satisfying the triangle inequality. Any such~$\mssd$ is: a \emph{pseudo-distance} if it is everywhere finite, i.e.~$\mssd\colon X^{\tym{2}}\rar [0,\infty)$; an \emph{extended distance} if it does not vanish outside the diagonal in~$X^{\tym{2}}$, i.e.~$\mssd(x,y)=0$ iff~$x=y$; a \emph{distance} if it is both finite and non-vanishing outside the diagonal.

Let~$x_0\in X$ and~$r\in [0,\infty]$. We write~$B^\mssd_r(x_0)\eqdef \set{\mssd_{x_0}<r}$. Note that, if~$\mssd$ is an extended pseudo-metric, then both of the inclusions~$\set{x_0}\subset \cap_{r>0} B^\mssd_r(x_0)$ and~$B^\mssd_\infty(x_0)\subset X$ may be strict ones. 
We say that an extended metric space is, \emph{complete}, resp.\ \emph{length}, \emph{geodesic}, if~$B^\mssd_\infty(x)$ is complete, resp.\ length, geodesic for each~$x\in X$.
Finally set
\begin{align*}
\mssd(\emparg, A)\eqdef& \inf_{x\in A} \mssd(\emparg,x) \colon X\longrar [0,\infty] \comm \qquad A\subset X\fstop
\end{align*}

For an extended pseudo-distance~$\mssd$ on~$X$, let~$\T_\mssd$ denote the (possibly \emph{not} Hausdorff) topology on~$X$ induced by the pseudo-distance~$\mssd\wedge 1$.
The topology~$\T_\mssd$ is Hausdorff if and only if~$\mssd$ is an extended distance.
The topology~$\T_\mssd$ is separable if and only if there exists a countable family of points~$\seq{x_n}_n\subset X$ so that~$X=\cup_n B^\mssd_\infty(x_n)$ and~$(B^\mssd_\infty(x_n),\mssd)$ is a separable pseudo-metric space for every~$n\in \N$.

If~$\mssd$ is an extended pseudo-distance, then~$\mssd(\emparg, A)=\mssd(\emparg, \cl_{\mssd} A)$ for every~$A\subset X$, where~$\cl_{\mssd}=\cl_{\T_\mssd}$ always denotes the closure of~$A$ in the topology~$\T_\mssd$.

\paragraph{Lipschitz functions} A function~$\rep f\colon X\rar \R$ is $\mssd$-Lipschitz if there exists a constant~$L>0$ so that
\begin{align}\label{eq:Lipschitz}
\tabs{\rep f(x)-\rep f(y)}\leq L\, \mssd(x,y) \comm \qquad x,y\in X \fstop
\end{align}
The smallest constant~$L$ so that~\eqref{eq:Lipschitz} holds is the (global) \emph{Lipschitz constant of $\rep f$}, denoted by~$\Li[\mssd]{\rep f}$.
For any non-empty~$A\subset X$ we write~$\Lip(A,\mssd)$, resp.~$\bLip(A,\mssd)$ for the family of all finite, resp.\ bounded, $\mssd$-Lipschitz functions on~$A$. For simplicity of notation, further let~$\Lip(\mssd)\eqdef \Lip(X,\mssd)$, resp.\ $\bLip(\mssd)\eqdef \bLip(X,\mssd)$.

The next lemma is an easy adaptation of McShane~\cite{McS34} to extended metric spaces.

\begin{lemma}[Constrained McShane extensions]\label{l:McShane}
Let~$(X,\mssd)$ be an extended metric space. Fix~$A\subset X$,~$A\neq \emp$, and let~$\rep f\colon A\rar \R$ be a function in~$\bLip(A,\mssd)$. Further set
\begin{equation}\label{eq:McShane}
\begin{aligned}
\overline{f}\colon x&\longmapsto \sup_A \rep f\wedge \inf_{a\in A} \ttonde{\rep f(a)+\Li[\mssd]{\rep f}\,\mssd(x,a)} \comm
\\
\underline{f}\colon x&\longmapsto \inf_A \rep f \vee \sup_{a\in A} \ttonde{\rep f(a)-\Li[\mssd]{\rep f}\,\mssd(x,a)} \fstop
\end{aligned}
\end{equation}

Then,
\begin{enumerate}[$(i)$]
\item\label{i:l:McShane:1} $\underline{f}=\rep f=\overline{f}$ on~$A$ and~$\inf_A \rep f\leq \underline{f}\leq \overline{f}\leq \sup_A \rep f$ on~$X$;

\item\label{i:l:McShane:2} $\underline{f}$, $\overline{f}\in \bLip(\mssd)$ with~$\Li[\mssd]{\underline{f}}=\Li[\mssd]{\overline{f}}=\Li[\mssd]{\rep f}$;

\item\label{i:l:McShane:3} $\underline{f}$, resp.~$\overline{f}$, is the minimal, resp.\ maximal, function satisfying~\iref{i:l:McShane:1}-\iref{i:l:McShane:2}, that is, for every~$\rep g\in \bLip(\mssd)$ with~$\inf_A \rep f\leq \rep g\leq \sup_A \rep f$, $\rep g\restr_A=\rep f$ on~$A$ and~$\Li[\mssd]{\rep g}\leq \Li[\mssd]{\rep f}$, it holds that~$\underline{f}\leq \rep g \leq \overline{f}$.
\end{enumerate}
\end{lemma}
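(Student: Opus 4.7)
The plan is to adapt the classical McShane--Whitney construction to the extended-metric setting, with the main care being to track what happens at pairs $x,a$ with $\mssd(x,a)=\infty$. In that case terms such as $\rep f(a)+L\,\mssd(x,a)$ become $+\infty$ and simply drop out of the infimum (and dually for the supremum), so the formulas~\eqref{eq:McShane} remain well-defined. I set $L\eqdef \Li[\mssd]{\rep f}$ throughout and write $I(x)\eqdef \inf_{a\in A}\ttonde{\rep f(a)+L\,\mssd(x,a)}$ and $S(x)\eqdef \sup_{a\in A}\ttonde{\rep f(a)-L\,\mssd(x,a)}$.

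For part~\iref{i:l:McShane:1}, I would first show $I(x)=\rep f(x)$ for $x\in A$: the choice $a=x$ yields $I(x)\leq \rep f(x)$, while the Lipschitz inequality $\rep f(x)\leq \rep f(a)+L\,\mssd(x,a)$ (trivially valid when $\mssd(x,a)=\infty$) yields the reverse, and similarly $S(x)=\rep f(x)$ on $A$; combined with the outer clippings, this gives $\underline{f}=\rep f=\overline{f}$ on $A$. For the inequalities on $X$, the outer clippings immediately give $\overline{f}\leq \sup_A\rep f$ and $\underline{f}\geq \inf_A\rep f$. The cross-bound $\underline{f}\leq \overline{f}$ follows because, for any $a,b\in A$,
\[
\rep f(a)-\rep f(b)\leq L\,\mssd(a,b)\leq L\,\mssd(x,a)+L\,\mssd(x,b)\comm
\]
so $\rep f(a)-L\,\mssd(x,a)\leq \rep f(b)+L\,\mssd(x,b)$, giving $S(x)\leq I(x)$. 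Combined with the obvious bounds $\inf_A\rep f\leq \sup_A\rep f$, $\inf_A\rep f\leq I(x)$, and $S(x)\leq \sup_A\rep f$, this yields $\underline{f}(x)=\inf_A\rep f\vee S(x)\leq \sup_A\rep f\wedge I(x)=\overline{f}(x)$.

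For part~\iref{i:l:McShane:2}, the Lipschitz bound on $\overline{f}$ reduces to the standard McShane estimate: for $x,y\in X$ and every $a\in A$,
\[
\rep f(a)+L\,\mssd(x,a)\leq \rep f(a)+L\,\mssd(y,a)+L\,\mssd(x,y)\fstop
\]
Taking the infimum over $a$ and swapping $x,y$ yields $\abs{I(x)-I(y)}\leq L\,\mssd(x,y)$; clipping with the constant $\sup_A\rep f$ preserves the Lipschitz constant, so $\Li[\mssd]{\overline{f}}\leq L$. Since $\overline{f}\restr_A=\rep f$, the restriction to $A$ already forces the Lipschitz constant to be exactly $L$. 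The argument for $\underline{f}$ is symmetric.

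For part~\iref{i:l:McShane:3}, given any $\rep g\in\bLip(\mssd)$ with $\rep g\restr_A=\rep f$, $\Li[\mssd]{\rep g}\leq L$, and $\inf_A\rep f\leq \rep g\leq \sup_A\rep f$, the Lipschitz inequality for $\rep g$ gives $\rep g(x)\leq \rep f(a)+L\,\mssd(x,a)$ for all $a\in A$, hence $\rep g(x)\leq I(x)$; combined with $\rep g(x)\leq \sup_A\rep f$, this yields $\rep g\leq \overline{f}$. The reverse bound $\rep g\geq \underline{f}$ is symmetric. The only real obstacle is bookkeeping for infinite distances, which is harmless once one notes that all relevant inequalities hold trivially (both sides $+\infty$ or $-\infty$) in those cases; no measurability or topological hypothesis on $\mssd$ is used.
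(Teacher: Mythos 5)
Your proof is correct, but it follows a genuinely different route from the paper's. The paper argues by reduction to the classical (finite-)metric McShane--Whitney theorem: after first disposing of constant~$\rep f$ and then normalizing so that $\Li[\mssd]{\rep f}=1$ and $\inf_A\rep f=0$, it replaces the extended distance~$\mssd$ by the bounded genuine distance~$\mssd'\eqdef \mssd\wedge\osc(\rep f)$, observes that this does not change~$\Li[\mssd]{\rep f}$, checks via the identity $\rep f(a)+\mssd(x,a)\wedge\sup_A\rep f$ inside the clipped infimum that the formulas~\eqref{eq:McShane} produce the same functions for~$\mssd$ as for~$\mssd'$, and then cites the classical result for~$\mssd'$. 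Your argument instead verifies assertions~\iref{i:l:McShane:1}--\iref{i:l:McShane:3} directly from the definitions in the extended setting, using the triangle inequality and the observation that pairs with $\mssd(x,a)=\infty$ contribute only vacuous inequalities. Both strategies are sound; yours is more elementary and self-contained, and it has the side benefit of treating the constant-$\rep f$ case uniformly rather than separately (the paper must exclude it before rescaling, since scaling by $\Li[\mssd]{\rep f}^{-1}$ is ill-defined when $\Li[\mssd]{\rep f}=0$), whereas the paper's reduction yields a shorter write-up at the price of a small normalization bookkeeping step and the appeal to a cited external result.

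One small point worth making explicit in your write-up: when $\rep f$ is constant you have $L=\Li[\mssd]{\rep f}=0$, and your opening claim that ``terms such as $\rep f(a)+L\,\mssd(x,a)$ become $+\infty$'' when $\mssd(x,a)=\infty$ silently assumes $L>0$. With the usual convention $0\cdot\infty=0$ the argument still goes through (in that case $I\equiv \inf_A\rep f=\sup_A\rep f$ and the clippings immediately give the constant), but it is worth flagging the convention so that the constant case is visibly covered rather than implicitly excluded.
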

\begin{proof}
If~$\rep f$ is constant, then trivially~$\underline{f}=\rep f=\overline{f}$. Thus, we may assume that~$\rep f$ is non-constant, so that~$\osc(\rep f)\eqdef \sup_A \rep f-\inf_A \rep f>0$ and~$\Li[\mssd]{\rep f}>0$.
Up to range translation and rescaling, we may and shall assume with no loss of generality that~$\Li[\mssd]{\rep f}=1$ and~$\inf_A \rep f=0$.

Define a distance on~$X$ by~$\mssd'\eqdef \mssd \wedge \osc(\rep f)$. Since~$\Li[\mssd]{\rep f}=1$, then~$\Li[\mssd]{\rep f}=\Li[\mssd']{\rep f}$. Assertions~\iref{i:l:McShane:1}-\iref{i:l:McShane:3} with~$\mssd'$ in place of~$\mssd$ follow straightforwardly from the properties of the usual McShane extensions on metric spaces, see e.g.~\cite[Rmk.~4.14]{HeiKosShaTys15}.
Thus, it suffices to show that~$\underline{f}'$ and~$\overline{f}'$, defined as in~\eqref{eq:McShane} with~$\mssd'$ in place of~$\mssd$, satisfy in fact~$\underline{f}'=\underline{f}$ and~$\overline{f}'=\overline{f}$.
This last assertion follows since
\begin{align*}
\overline{f}(x)=&\sup_A \rep f \wedge \inf_{a\in A} \ttonde{\rep f(a)+\mssd(x,a)}=\sup_A \rep f \wedge \inf_{a\in A} \ttonde{\rep f(a)\wedge \sup_A \rep f+\mssd(x,a)\wedge \sup_A \rep f}
\\
=&\sup_A \rep f \wedge \inf_{a\in A}\ttonde{\rep f(a)+\mssd'(x,a)}=\overline{f}'(x)\comm
\end{align*}
and analogously for~$\underline{f}$.
\end{proof}

\paragraph{Topological spaces}
A Hausdorff topological space~$(X,\T)$ is:
\begin{enumerate}[$(a)$]
\item\label{i:Top:1} \emph{strongly Lindel\"of} if every open cover of any open set in~$X$ has a countable sub-cover;
\item\label{i:Top:2} a \emph{topological Luzin space} if it is a continuous injective image of a Polish space;
\item\label{i:Top:3} a \emph{metrizable Luzin space} if it is homeomorphic to a Borel subset of a compact metric space.
\end{enumerate}
As a consequence of~\cite[(6) and (5), p.~104]{Sch73}, we have that~\iref{i:Top:3}$\implies$\iref{i:Top:2}$\implies$\iref{i:Top:1}.
Furthermore, every strongly Lindel\"of space is \emph{hereditarily (strongly) Lindel\"of}, i.e., every subspace of~$(X,\T)$ is strongly Lindel\"of if~$(X,\T)$ is so, e.g.~\cite[(2), p.~104]{Sch73}.

\smallskip

Let~$(X,\T)$ be a Hausdorff topological space. A family of pseudo-distances~$\UP$ is a \emph{uniformity} (\emph{of pseudo-distances}) if:
\begin{enumerate*}[$(a)$]
\item it is directed, i.e., $\mssd_1\vee \mssd_2\in\UP$ for every~$\mssd_1,\mssd_2\in \UP$;
and
\item it is order-closed, i.e., $\mssd_2\in \UP$ and~$\mssd_1\leq \mssd_2$ implies~$\mssd_1\in \UP$ for every pseudo-distance~$\mssd_1$ on~$X$.
\end{enumerate*}
A uniformity is: \emph{bounded} if every~$\mssd\in \UP$ is bounded; \emph{Hausdorff} if it separates points.

The next definition is a reformulation of~\cite[Dfn.~4.1]{AmbErbSav16}.

\begin{definition}[Extended metric-topological space]\label{d:AES}
Let~$(X,\T)$ be a Hausdorff topological space. An extended pseudo-distance~$\mssd\colon X^{\tym{2}}\rar [0,\infty]$ is $\T$-\emph{admissible} if there exists a uniformity~$\UP$ of \emph{$\T^\tym{2}$-continuous} pseudo-distances~$\mssd'\colon X^\tym{2}\rar [0,\infty)$, so that
\begin{align}\label{eq:d=supUP}
\mssd=\sup\set{\mssd':\mssd'\in\UP}\fstop
\end{align}
The triple $(X,\T,\mssd)$ is an \emph{extended metric-topological space} if~$\mssd$ is $\T$-admissible, and there exists a uniformity~$\UP$ witnessing the $\T$-admissibility of~$\mssd$, and additionally Hausdorff and generating~$\T$.
\end{definition}

\begin{remark}\label{r:AES} Our definition is equivalent to~\cite[Dfn.~4.1]{AmbErbSav16}. Indeed, let~$\msQ\eqdef \set{\mssd_i}_{i\in I}$ be as in~\cite[Dfn.~4.1]{AmbErbSav16}.
As already noted in~\cite[\S4.1]{AmbErbSav16}, possibly up to enlarging~$I$, we may assume with no loss of generality: that~$\msQ$ is directed, up to taking~$\mssd_i,\mssd_j \leq \mssd_i\vee\mssd_j \in \msQ$; and that every~$\mssd_i\in\msQ$ is bounded, up to taking~$\mssd_i\wedge r\in \msQ$,~$r>0$.
Furthermore, we may assume that~$\msQ$ is order-closed, up to taking its order-closure, and therefore that it is a bounded uniformity, hence we write~$\msQ=\UP_b$.
Thus, an extended distance~$\mssd$ is $\T$-admissible if and only if it satisfies~\cite[Dfn.~4.1(a)]{AmbErbSav16}. If~$\UP$ is additionally generating~$\T$ (see e.g.~\cite[Thm.~1.2]{Pac13}), then it is Hausdorff, since~$\T$ is so, and~$(X,\T,\mssd)$ satisfies~\cite[Dfn.~4.1(b)]{AmbErbSav16} by~\cite[Lem.~4.2]{AmbErbSav16}.
Our definition makes apparent that:
\begin{enumerate*}[$(a)$]
\item there is a surjection between Hausdorff uniformities~$\UP$ on a set~$X$ and extended metric-topological spaces, given by letting~$\T$ be the topology generated by~$\UP$, and by defining~$\mssd$ as in~\eqref{eq:d=supUP};
and that
\item the topology~$\T$ of an extended metric-topological space is completely regular Hausdorff, see e.g.~\cite[Cor.~1.23]{Pac13}.
\end{enumerate*}
\end{remark}

Finally, let us collect here the following definition, which will be of use in \S\ref{ss:RadLength}.

\begin{definition}[Local completeness]\label{d:LocalCompleteness}
Let~$(X,\T)$ be a Hausdorff topological space, and~$\mssd\colon X^\tym{2}\to [0,\infty]$ be an extended pseudo-distance on~$X$.
We say that~$(X,\T,\mssd)$ is ($\T$-)\emph{locally complete} if for every~$x\in X$ there exists a neighborhood~$U_x$ of~$x$ such that~$(U_x, \mssd_{U_x})$ is a complete extended pseudo-metric space when endowed with the restriction~$\mssd_{U_x}$ of~$\mssd$ to~$U_x^\tym{2}$.
\end{definition}

\subsection{Measure spaces}
Let~$(X,\T)$ be a Hausdorff topological space. We denote by~$\Bo{\T}$ the Borel $\sigma$-algebra of~$(X,\T)$. Given a Borel measure~$\mu$ on~$(X,\Bo{\T})$, we denote by~$\Bo{\T}^\mu$ the (Carath\'edory) completion of~$\Bo{\T}$ with respect to~$\mu$.
Given $\sigma$-finite measures~$\mu_0$,~$\mu_1$ on~$(X,\Bo{\T})$, we write~$\mu_0\leq \mu_1$ to indicate that~$\mu_0 A\leq \mu_1 A$ for every~$A\in\Bo{\T}$. 
Every Borel measure on a strongly Lindel\"of space has support,~\cite[p.~148]{MaRoe92}.

Let~$\A$ be a $\sigma$-algebra over~$X$.
We denote by~$\mcL^0(\A)$, resp.~$\mcL^\infty(\A)$, the vector space of all everywhere-defined real-valued, resp.~uniformly bounded, ($\A$-)measurable functions on~$X$.
When~$\mu$ is a measure on~$(X,\A)$, we denote  by~$L^0(\mu)$ the corresponding vector space of $\mu$-classes.
We denote by~$\mcL^2(\mu)$ the vector space of all $\mu$-square-integrable functions in~$\mcL^0(\A)$, by~$L^2(\mu)$ the corresponding space of~$\mu$-classes.
Let the corresponding definition of~$\mcL^p(\mu)$, resp.~$L^p(\mu)$, be given for all~$p\in [1,\infty)$.
%For~$\msA\subset L^0(\mssm)$ write~$\msA_b\eqdef \msA\cap L^\infty(\mssm)$.

As a general rule, we denote measurable functions by either~$\rep f$ or~$\reptwo f$, and classes of measurable functions up to a.e.\ equality simply by~$f$.
When~$\mu$ has full support on~$X$, we drop this distinction for $\T$-continuous functions, simply writing~$f$ for both the class and its unique $\T$-continuous representative.

\paragraph{Measurability and continuity of Lipschitz functions} Let~$(X,\T)$ be a Hausdorff space, $\mssd\colon X^{\tym{2}}\rar [0,\infty]$ be an extended distance on~$X$.  Let~$\rep f\colon X \rar [-\infty,\infty]$ be $\mssd$-Lipschitz with~$\rep f\not\equiv \pm\infty$. In general, $\rep f$ is \emph{neither} everywhere finite, nor $\T$-continuous, nor $\Bo{\T}^\mssm$-measurable, see Example~\ref{ese:LipNonMeas} below. For a given $\sigma$-algebra~$\A$ on~$X$, this motivates to set
\begin{align*}
\Lip(\mssd,\A)\eqdef \Lip(\mssd)\cap \mcL^0(\A)\comm \qquad \bLip(\mssd,\A)\eqdef \bLip(\mssd)\cap \mcL^\infty(\A)\fstop
\end{align*}

\paragraph{Main assumptions} Everywhere in the following, $\mbbX$ is a quadruple~$(X,\T,\A,\mssm)$ so that~$\Bo{\T}\subset \A\subset \Bo{\T}^\mssm$, the reference measure~$\mssm$ is positive $\sigma$-finite on~$(X,\A)$, and one of the following holds:
\begin{enumerate}[$(\mathsc{sp}_1)$]
\item\label{ass:Hausdorff}
$(X,\T)$ is a Hausdorff space;
\item\label{ass:Luzin}
$(X,\T)$ is a topological Luzin space and~$\supp[\mssm]=X$;
\item\label{ass:Polish}
$(X,\T)$ is a second countable locally compact Hausdorff space,~$\mssm$ is Radon and $\supp[\mssm]=X$.
\end{enumerate}

Spaces satisfying~\ref{ass:Luzin} are generally non-metrizable. Natural examples of non-metrizable such spaces that are of interest in Dirichlet forms' theory include infinite-dimensional separable Banach spaces endowed with their weak topology.
If~$\mbbX$ satisfies~\ref{ass:Luzin}, then~$(X,\T)$ is Suslin~\cite[\S{II.1} Dfn.~3, p.~96]{Sch73} and therefore strongly Lindel\"of~\cite[\S{II.1} Prop.~3, p.~104]{Sch73}.
The support of~$f\in L^0(\mssm)$ is defined as~$\supp[f]\eqdef \supp[\ttabs{\rep f}\cdot \mssm]$; it is independent of the $\mssm$-representative~$\rep f$ of~$f$, e.g.~\cite[p.~148]{MaRoe92}.

We conclude this section with the next lemma. A proof is standard.
\begin{lemma}\label{l:AeEquality}
Let~$\mbbX$ be satisfying~\iref{ass:Hausdorff}, and further assume that~$\mssm$ has $\T$-support.%, i.e.\ $\mssm(X\setminus \supp_\T[\mssm])=0$.
Further let~$U\subset X$ be $\T$-open, and~$f,g\colon X\rar \R$ be $\T$-continuous and agreeing $\mssm$-a.e.\ on~$U$.
Then,~$f=g$ everywhere on~$U$.
\end{lemma}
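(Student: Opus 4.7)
The plan is to pass to the difference $h \eqdef f-g$ and reduce the claim to the statement that a $\T$-continuous function which vanishes $\mssm$-a.e.\ on a $\T$-open set must vanish everywhere on it. The function $h$ is $\T$-continuous as a difference of $\T$-continuous functions, and by hypothesis $h=0$ $\mssm$-a.e.\ on $U$. The goal is to conclude that the set $V \eqdef \{x\in U : h(x)\neq 0\}$ is empty.

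Next, I would observe that $V$ is $\T$-open: indeed, $\R\setminus\{0\}$ is open in $\R$, so $h^{-1}(\R\setminus\{0\})$ is $\T$-open by continuity of $h$, and $V$ is the intersection of this preimage with the $\T$-open set $U$. Now $V\subset \{f\neq g\}$, so $\mssm V = 0$ by the $\mssm$-a.e.\ equality of $f$ and $g$ on $U$.

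The key input is the hypothesis that $\mssm$ has $\T$-support, equivalently $\supp[\mssm]=X$ (recall that under~\ref{ass:Hausdorff} this is explicitly assumed in the statement, and that the notion of topological support requires, under~\ref{ass:Luzin}, the strong Lindel\"ofness of $(X,\T)$ recalled in the excerpt). By definition of topological support, every non-empty $\T$-open subset of $X$ has strictly positive $\mssm$-measure. Since $V$ is $\T$-open and $\mssm V=0$, it must be that $V=\emp$, i.e.\ $h\equiv 0$ on $U$, which is the claim.

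There is no real obstacle here: the only subtle point is to make sure one is invoking the correct notion of support --- a Borel measure on a topological space need not admit a topological support in general, but this is granted here by the standing assumption that $\mssm$ has $\T$-support (and by the cited result of~\cite[p.~148]{MaRoe92} in the Luzin setting). The argument uses neither $\sigma$-finiteness of $\mssm$ in a substantive way, nor the inclusion $\Bo{\T}\subset \A\subset \Bo{\T}^\mssm$, beyond the fact that $\T$-open sets are $\A$-measurable so that $\mssm V$ is defined.
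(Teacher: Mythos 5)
Your proof is correct and is precisely the standard argument the paper alludes to without spelling out: pass to $h=f-g$, observe that $\{h\neq 0\}\cap U$ is a $\T$-open $\mssm$-null set, and conclude it is empty because $\mssm$ has full $\T$-support. The only thing worth flagging is that ``$\mssm$ has $\T$-support'' must indeed be read as $\supp[\mssm]=X$ (as you note and as the application in Remark~\ref{r:dOpen} confirms), since a merely well-defined but proper support would make the conclusion false.
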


\subsection{Dirichlet spaces}
Given a bilinear form~$(Q,\domain{Q})$ on a Hilbert space~$H$, we write
\begin{align*}
Q(h)\eqdef Q(h,h)\comm \qquad Q_\alpha(h_0,h_1)\eqdef Q(h_0,h_1)+\alpha\scalar{h_0}{h_1}\comm \alpha>0\fstop
\end{align*}

Let~$\mbbX$ be satisfying Assumption~\ref{ass:Hausdorff}. A \emph{Dirichlet form on~$L^2(\mssm)$} is a non-negative definite densely defined closed symmetric bilinear form~$(\mcE,\dom)$ on~$L^2(\mssm)$ satisfying the Markov property
\begin{align*}
f_0\eqdef 0\vee f \wedge 1\in \dom \qquad \text{and} \qquad \mcE(f_0)\leq \mcE(f)\comm \qquad f\in\dom\fstop
\end{align*}

If not otherwise stated,~$\dom$ is always regarded as a Hilbert space with norm~$\norm{\emparg}_\dom\eqdef\mcE_1(\emparg)^{1/2}=\sqrt{\mcE(\emparg)+\norm{\emparg}_{L^2(\mssm)}^2}$.
A \emph{Dirichlet space} is a pair~$(\mbbX,\mcE)$, where~$\mbbX$ satisfies~\ref{ass:Hausdorff} and~$(\mcE,\dom)$ is a Dirichlet form on~$L^2(\mssm)$. 

\subsubsection{Quasi-notions} For any~$A\in\Bo{\T}$ set~$\dom_A\eqdef \set{u\in \dom: u= 0 \text{~$\mssm$-a.e.~on~} X\setminus A}$.
A sequence $\seq{A_n}_n\subset \Bo{\T}$ is a \emph{Borel $\mcE$-nest} if $\cup_n \dom_{A_n}$ is dense in~$\dom$.
For any~$A\in\Bo{\T}$, let~$(p)$ be a proposition defined with respect to~$A$. We say that~`$(p_A)$ holds' if~$A$ satisfies~$(p)$.
A \emph{$(p)$-$\mcE$-nest} is a Borel nest~$\seq{A_n}$ so that~$(p_{A_n})$ holds for every~$n$. In particular, a \emph{closed $\mcE$-nest}, henceforth simply referred to as an \emph{$\mcE$-nest}, is a Borel $\mcE$-nest consisting of closed sets.

A set~$N\subset X$ is \emph{$\mcE$-polar} if there exists an $\mcE$-nest~$\seq{F_n}_n$ so that~$N\subset X\setminus \cup_n F_n$.
A set~$G\subset X$ is \emph{$\mcE$-quasi-open} if there exists an $\mcE$-nest~$\seq{F_n}_n$ so that~$G\cap F_n$ is relatively open in~$F_n$ for every~$n\in \N$. A set~$F$ is \emph{$\mcE$-quasi-closed} if~$X\setminus F$ is $\mcE$-quasi-open.
Any countable union or finite intersection of $\mcE$-quasi-open sets is $\mcE$-quasi-open; analogously, any countable intersection or finite union of $\mcE$-quasi-closed sets is $\mcE$-quasi-closed;~\cite[Lem.~2.3]{Fug71}.

A property~$(p_x)$ depending on~$x\in X$ holds $\mcE$-\emph{quasi-everywhere} (in short:~$\mcE$-q.e.) if there exists an $\mcE$-polar set~$N$ so that~$(p_x)$ holds for every~$x\in X\setminus N$.
Given sets~$A_0,A_1\subset X$, we write~$A_0\subset A_1$ $\mcE$-q.e.\ if~$\car_{A_0}\leq \car_{A_1}$ $\mcE$-q.e. Let the analogous definition of~$A_0=A_1$ $\mcE$-q.e.\ be given.

A function~$\rep f\in \mcL^0(\A)$ is \emph{$\mcE$-quasi-continuous} if there exists an $\mcE$-nest~$\seq{F_n}_n$ so that~$\rep f\restr_{F_n}$ is continuous for every~$n\in \N$.
Equivalently,~$\reptwo f$ is $\mcE$-quasi-continuous if and only if it is $\mcE$-q.e.\ finite and $\reptwo f^{-1}(U)$ is $\mcE$-quasi-open for every open~$U\subset \R$, see e.g.~\cite[p.~70]{FukOshTak11}.
Whenever~$f\in L^0(\mssm)$ has an $\mcE$-quasi-continuous $\mssm$-version, we denote it by~$\reptwo f\in \mcL^0(\A)$.

\begin{lemma} Let~$(\mbbX,\mcE)$ be a Dirichlet space, and~$G\subset X$ be $\mcE$-quasi-open, resp.~$\mcE$-quasi-closed. Then, there exists~$G'\in\Bo{\T}$ $\mcE$-quasi-open, resp.\ $\mcE$-quasi-closed, and so that $G\triangle G'$ is $\mcE$-polar.
\end{lemma}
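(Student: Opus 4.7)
The plan is to handle the quasi-open case constructively and then deduce the quasi-closed case by taking complements. Let me describe the construction.

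Fix $G$ quasi-open and let $\seq{F_n}_n$ be an $\mcE$-nest (i.e., a Borel nest of $\T$-closed sets) witnessing the quasi-openness of $G$, so that each $G\cap F_n$ is relatively open in $F_n$. First, I would reduce to the case where the nest is increasing: indeed, $\seq{\cup_{k\leq n} F_k}_n$ is again an $\mcE$-nest of closed sets because $\dom_{F_n}\subset \dom_{\cup_{k\leq n}F_k}$, so the union of the $\dom$-spaces stays dense, and relative openness of $G\cap F_n$ in $F_n$ is preserved upon passing to the (larger) closed set $\cup_{k\leq n}F_k$ by taking unions of the relatively open representations. Once the nest is monotone, each $F_n$ is $\T$-closed hence Borel, and the relative openness of $G\cap F_n$ in $F_n$ yields a $\T$-open set $U_n$ with $G\cap F_n=U_n\cap F_n \in \Bo{\T}$.

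Now define
\begin{align*}
G' \eqdef \bigcup_{n\in\N} (G\cap F_n) \in \Bo{\T}\fstop
\end{align*}
Clearly $G'\subset G$, and by construction $G\setminus G' \subset X\setminus \bigcup_n F_n$, which is $\mcE$-polar because $\seq{F_n}_n$ is an $\mcE$-nest. Hence $G\triangle G' = G\setminus G'$ is $\mcE$-polar. It remains to verify that $G'$ itself is $\mcE$-quasi-open, for which I would use the \emph{same} nest $\seq{F_n}_n$. By monotonicity of the nest, for every fixed $n$,
\begin{align*}
G'\cap F_n = \bigcup_{k\in\N}(G\cap F_k\cap F_n) = G\cap F_n\comm
\end{align*}
since $F_k\cap F_n=F_k\subset F_n$ for $k\leq n$ and $F_k\cap F_n=F_n$ for $k\geq n$. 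The right-hand side is relatively open in $F_n$ by hypothesis, which exhibits $G'$ as $\mcE$-quasi-open with the same witnessing nest.

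For the quasi-closed case, if $F\subset X$ is $\mcE$-quasi-closed, apply the previous construction to the $\mcE$-quasi-open set $X\setminus F$ to obtain a Borel $\mcE$-quasi-open $G'$ with $(X\setminus F)\triangle G'$ $\mcE$-polar, and set $F'\eqdef X\setminus G'\in\Bo{\T}$. Then $F'$ is $\mcE$-quasi-closed by definition, and $F\triangle F'=(X\setminus F)\triangle (X\setminus G')=(X\setminus F)\triangle G'$ is $\mcE$-polar.

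There is no real obstacle here; the only subtle point is the need to reduce to a monotone nest before taking $G'=\cup_n(G\cap F_n)$, so that the same nest continues to witness quasi-openness of $G'$ (rather than having to produce a new nest ad hoc).
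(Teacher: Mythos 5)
Your proof is correct and takes essentially the same approach as the paper: set $G'\eqdef \bigcup_n (G\cap F_n)$, observe $G\triangle G'\subset X\setminus\bigcup_n F_n$ is $\mcE$-polar, verify $G'$ is $\mcE$-quasi-open via the same nest, and obtain the quasi-closed case by complementation.

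Two small remarks on the monotonicity reduction, which you flag as the "only subtle point." First, it is in fact unnecessary: for \emph{any} nest witnessing quasi-openness of $G$ one has $G\cap F_n\subset G'\subset G$, whence
\begin{align*}
G\cap F_n\ \subset\ G'\cap F_n\ =\ \bigcup_k\ttonde{G\cap F_k\cap F_n}\ \subset\ G\cap F_n\comm
\end{align*}
so $G'\cap F_n=G\cap F_n$ holds without rearranging the nest. Second, your justification that relative openness survives passage to $\bigcup_{k\leq n}F_k$ "by taking unions of the relatively open representations" is imprecise: if $G\cap F_k=U_k\cap F_k$, the set $(U_1\cup U_2)\cap(F_1\cup F_2)$ contains $U_1\cap F_2$, which need not lie in $G$. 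The correct argument passes to complements and uses closedness: each $F_k\setminus G$ is relatively closed in the closed set $F_k$, hence closed in $X$, so $\ttonde{\bigcup_{k\leq n}F_k}\setminus G=\bigcup_{k\leq n}(F_k\setminus G)$ is closed. Since the whole reduction is superfluous, neither issue affects the validity of your proof, but it is worth knowing the shorter route.
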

\begin{proof}
We show the assertion for $\mcE$-quasi-open~$G$. The case of $\mcE$-quasi-closed~$G$ follows by complementation. Let~$\seq{F_n}_n$ be an $\mcE$-nest witnessing that~$G$ is $\mcE$-quasi-open. That is, for every~$n$ there exists~$G_n\in \T$ so that~$F_n\cap G_n=G\cap F_n$ is relatively open in~$F_n$. Set~$G'\eqdef \cup_n (G_n\cap F_n)\in \Bo{\T}$ and note that~$G'$ is $\mcE$-quasi-open. Furthermore,~$G'\triangle G\subset X\setminus \cup_n F_n$ is $\mcE$-polar.
\end{proof}

Whenever needed, we shall always assume ---~without explicit mention~--- that $\mcE$-quasi-open, resp.\ $\mcE$-quasi-closed, sets are additionally Borel measurable.

\paragraph{Spaces of measures} We write~$\Mbp(\A)$, resp.~$\Msp(\A)$, $\Mb(\A)$, $\Ms(\A)$, for the space of finite, resp.\ $\sigma$-finite, finite signed, extended $\sigma$-finite signed, measures on~$(X,\A)$. A further subscript~`$\mathsc{r}$' indicates (sub-)spaces of Radon measures, e.g.~$\MbpR(\A)$.
We write~$\Ms(\A,\Ne{\mcE})$ for the space of extended $\sigma$-finite signed measures not charging sets in the family~$\Ne{\mcE}$ of $\mcE$-polar Borel subsets of~$X$.
We write~$\lim_\alpha \mu_\alpha=\mu$ to indicate that the net~$\seq{\mu_\alpha}_\alpha\subset \Ms(\A)$ is strongly converging to~$\mu\in\Ms(\A)$, i.e.~$\lim_\alpha \mu_\alpha A=\mu A$ for every~$A\in\A$.

\paragraph{General properties} A Dirichlet space~$(\mbbX,\mcE)$ is \emph{quasi-regular} if each of the following holds:
\begin{enumerate}[$({\mathsc{qr}}_1)$]
\item\label{i:QR:1} there exists an $\mcE$-nest~$\seq{F_n}_n$ consisting of $\T$-compact sets;

\item\label{i:QR:2} there exists a dense subset of~$\dom$ the elements of which all have $\mcE$-quasi-continuous $\mssm$-versions;

\item\label{i:QR:3} there exists an $\mcE$-polar set~$N$ and a countable family~$\seq{u_n}_n$ of functions~$u_n\in\dom$ having $\mcE$-quasi-continuous versions~$\reptwo u_n$ so that~$\seq{\reptwo u_n}_n$ separates points in~$X\setminus N$.
\end{enumerate}
We will occasionally consider Dirichlet spaces satisfying only some properties out of~\iref{i:QR:1}-\iref{i:QR:3}. For meaningful examples of such spaces, see~\cite{RoeSch95}.

Let~$(\mbbX,\mcE)$ be a quasi-regular Dirichlet space,~$\seq{F_n}_n$ be an $\mcE$-nest witnessing its quasi-regularity, and set~$X_0\eqdef \cup_n F_n$, endowed with the trace topology~$\T_0$,~$\sigma$-algebra~$\A_0$, and the restriction~$\mssm_0$ of~$\mssm$ to~$\A_0$.
Then,~$\mbbX_0$ satisfies~\iref{ass:Luzin}, and the space~$L^p(\mssm)$ may be canonically identified with the space~$L^p(\mssm_0)$,~$p\in[0,\infty]$, since~$X\setminus X_0$ is $\mcE$-polar, hence~$\mssm$-negligible.
By letting~$\mcE_0$ denote the image of~$\mcE$ under this identification,~$(\mbbX_0,\mcE_0)$ is a quasi-regular Dirichlet space, and~$\dom_0$ is canonically linearly isometrically isomorphic to~$\dom$. See~\cite[Rmk.~IV.3.2(iii)]{MaRoe92} for the details of this construction.

\begin{remark}
When considering a quasi-regular Dirichlet space~$(\mbbX,\mcE)$, we may and shall therefore assume, with no loss of generality, that~$\mbbX$ satisfies~\iref{ass:Luzin}.
In particular~$(X,\T)$ is separable.
Since~$X_0$ is $\sigma$-compact by definition of~$\seq{F_n}_n$, it is in principle not restrictive ---~in discussing quasi-regular Dirichlet spaces~--- to assume~$\mbbX$ to be additionally $\sigma$-compact.
 However, we refrain from so doing, since this assumption is indeed restrictive when discussing extended distances on such spaces and in particular their completeness. This is the case for infinite-dimensional Banach spaces with their strong topology, which are never $\sigma$-compact.
\end{remark}

A Dirichlet space~$(\mbbX,\mcE)$ with~$\mbbX$ satisfying~\iref{ass:Luzin} is
\begin{itemize}
\item \emph{local} if~$\mcE(f,g)=0$ for every~$f,g\in\dom$ with~$\supp[f]$,~$\supp[g]$ compact,~$\supp[f]\cap\supp[g]=\emp$;

\item \emph{strongly local} if~$\mcE(f,g)=0$ for every~$f,g\in\dom$ with~$\supp[f]$,~$\supp[g]$  compact and~$f$ constant on a neighborhood of~$\supp[g]$;

\item \emph{regular} if~$\mbbX$ satisfies~\ref{ass:Polish}, and~$\Cz(\T)\cap \dom$ is both dense in~$\dom$ and dense in~$\Cz(\T)$.
Here and henceforth,~$\Cz(\T)$ denotes the space of $\T$-continuous functions on~$X$ vanishing at infinity.
\end{itemize}

\paragraph{Quasi-homeomorphism}
Let~$(\mbbX,\mcE)$ be a Dirichlet space, and~$(X^\sharp,\T^\sharp)$ be a Hausdorff space.
Further let~$N\in \Ne{\mcE}$ and set~$X_0\eqdef X\setminus N$, endowed with the non-relabeled subspace topology and $\sigma$-algebra inherited from~$X$. Suppose~$j\colon X_0\rar X^\sharp$ is $\Bo{\T}/\Bo{\T^\sharp}$-measurable and define a measure~$\mssm^\sharp$ on~$(X^\sharp,\Bo{\T^\sharp})$ by~$\mssm^\sharp\eqdef j_\pfwd \mssm$. Here, and everywhere in the following, we denote by~$j_\sharp \mssm$ the push-forward measure of~$\mssm$ via the map~$j$.
Note that~$\mssm^\sharp$ is positive $\sigma$-finite, which justifies the notation~$\mbbX^\sharp\eqdef (X^\sharp, \T^\sharp, \Bo{\T^\sharp},\mssm^\sharp)$, satisfying~\ref{ass:Hausdorff}.
Since~$N\in\Ne{\mcE}$, it is in particular $\mssm$-negligible, and thus
\begin{align*}
j^*\colon L^2(\mssm^\sharp)\rar L^2(\mssm)\comm f^\sharp\mapsto f^\sharp\circ j
\end{align*}
is well-defined and and an isometry. The \emph{$j$-image of~$(\mcE,\dom)$} is the quadratic form on~$L^2(\mssm^\sharp)$ defined by
\begin{align}\label{eq:Quasihomeo}
\dom^j\eqdef \set{f^\sharp\in L^2(\mssm^\sharp): j^*f^\sharp\in \dom}\comm \qquad \mcE^j(f^\sharp, g^\sharp)\eqdef \mcE(j^*f^\sharp, j^*g^\sharp) \comm f^\sharp,g^\sharp \in \dom^j \fstop
\end{align}
It is densely defined, thus a Dirichlet form, if~$j^*$ is surjective.

Two Dirichlet spaces~$(\mbbX,\mcE)$ and~$(\mbbX^\sharp,\mcE^\sharp)$ are \emph{quasi-homeomorphic}~\cite[Dfn.~3.1]{CheMaRoe94} if there exists an $\mcE$-nest~$\seq{F_n}_n$ in~$X$, resp.~an $\mcE^\sharp$-nest $\tseq{F_n^\sharp}_n$ in~$X^\sharp$, and a map~$j\colon \cup_n F_n\rar \cup_n F^\sharp_n$ so that
\begin{enumerate}[$(\mathsc{qh}_1)$]
\item\label{i:QH:1} for each~$n$, the restriction~$j\restr_{F_n}$ is a topological homeomorphism of $F_n$ onto $F^\sharp_n$;
\item\label{i:QH:2} $j_\pfwd \mssm=\mssm^\sharp$;
\item\label{i:QH:3} $(\mcE^\sharp,\dom^\sharp)=(\mcE^j,\dom^j)$.
\end{enumerate}

Quasi-homeomorphisms between Dirichlet spaces induce an equivalence relation, see e.g.\ \cite[Rmk.~3.4(i)]{CheMaRoe94}.
A Dirichlet space~$(\mbbX,\mcE)$ is quasi-regular if and only if it is quasi-homeomorphic to a regular Dirichlet space~$(\mbbX^\sharp,\mcE^\sharp)$, \cite[Thm.~3.7]{CheMaRoe94}. Finally, (strong) locality is invariant under quasi-homeomorphism, (consequence of)~\cite[Thm.~5.2]{Kuw98}.

\paragraph{Quasi-interior, quasi-closure} Let~$(\mbbX,\mcE)$ be a quasi-regular Dirichlet space. Every $f\in\dom$ has an \emph{$\mcE$-q.e.-unique} $\mcE$-quasi-continuous $\mssm$-representative, denoted by~$\reptwo f$, \cite[Prop.~IV.3.3.(iii)]{MaRoe92}.
For~$A\subset X$ set
\begin{align*}
\msU\eqdef& \set{G : G \text{ is an~$\mcE$-quasi-open subset of } A} \comm
\\
\msF\eqdef& \set{F : F \text{ is an~$\mcE$-quasi-closed superset of } A }\fstop
\end{align*}
By~\cite[Thm.~2.7]{Fug71},~$\msU$ has an $\mcE$-q.e.-maximal element denoted by~$\intE A$, $\mcE$-quasi-open, and called the $\mcE$-\emph{quasi-interior} of~$A$. Analogously,~$\msF$ has an $\mcE$-q.e.-minimal element denoted by~$\clE A$, $\mcE$-quasi-closed, and called the $\mcE$-\emph{quasi-closure} of~$A$.

\begin{lemma}[{\cite[Lem.~3.3]{Kuw98}}]\label{l:Kuwae}
Let~$(\mbbX,\mcE)$ be a quasi-regular Dirichlet space. Then, a family~$\seq{F_n}_n$ of $\T$-closed sets is an $\mcE$-nest if and only if~$\cup_n \intE F_n= X$ $\mcE$-q.e..
\end{lemma}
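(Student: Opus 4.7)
The plan is to characterize both conditions via the quasi-continuous $\mssm$-versions of elements of~$\dom$ and their quasi-open support sets.

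For the \emph{forward} implication, suppose $\seq{F_n}_n$ is an $\mcE$-nest. I would first observe that, for any $u \in \dom_{F_n}$, the quasi-continuous version $\reptwo u$ satisfies $\reptwo u = 0$ $\mcE$-q.e.\ on $X \setminus F_n$: this follows from the $\mcE$-quasi-closedness of~$F_n$ (itself implied by $\T$-closedness) together with $u = 0$ $\mssm$-a.e.\ there. Since $\reptwo u$ is quasi-continuous, the set $\{\reptwo u \neq 0\}$ is $\mcE$-quasi-open, and being contained q.e.\ in~$F_n$ it is contained q.e.\ in the quasi-interior $\intE F_n$ by its $\mcE$-q.e.-maximality. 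Hence $\reptwo u = 0$ q.e.\ on $N \eqdef X \setminus \cup_n \intE F_n$. By density of $\cup_n \dom_{F_n}$ in $\dom$ and the quasi-uniform convergence of quasi-continuous representatives along an $\mcE_1$-convergent subsequence, the same vanishing extends to every $u \in \dom$. To conclude, I would apply this to some $h \in \dom$ with $\reptwo h > 0$ q.e., constructed from a countable separating family as in~\iref{i:QR:3} via a bounded absolutely convergent series in $\dom$ (truncating the $\reptwo{u}_n$'s between rationals $k<l$ to obtain nonzero values q.e.\ and summing with appropriate weights); the resulting $\reptwo h = 0$ q.e.\ on~$N$, combined with $\reptwo h > 0$ q.e., forces $N$ to be $\mcE$-polar.

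For the \emph{reverse} implication, I assume $\cup_n \intE F_n = X$ $\mcE$-q.e. Replacing $F_n$ by $\cup_{k \leq n} F_k$, I may assume that $\seq{F_n}_n$ is increasing, so that $U_n \eqdef \intE F_n$ is an increasing sequence of $\mcE$-quasi-open sets covering $X$ q.e. The strategy is to approximate any $u \in \dom$ by elements of $\cup_n \dom_{F_n}$ via Sobolev cut-offs. For each $n$ I would produce $\chi_n \in \dom$ with $0 \leq \reptwo \chi_n \leq 1$ q.e., $\reptwo \chi_n = 0$ q.e.\ on $X \setminus U_n$, and $\reptwo \chi_n \to 1$ q.e.\ as $n \to \infty$; such cut-offs are built from the equilibrium potentials of an exhausting $\mcE$-nest of closed subsets lying inside $U_n$, whose existence follows from quasi-regularity~\iref{i:QR:1} together with the quasi-openness of $U_n$. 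Since $\chi_n = 0$ $\mssm$-a.e.\ on $X \setminus F_n$, the product $\chi_n u$ lies in $\dom_{F_n}$; and $\chi_n u \to u$ in $\dom$ follows from dominated convergence in $L^2(\mssm)$ together with a Leibniz-rule control of $\mcE(\chi_n u - u)$, first carried out on $u \in \dom \cap L^\infty(\mssm)$ and then extended to general $u \in \dom$ by truncation.

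The main obstacle is the construction of the Sobolev cut-offs $\chi_n$ adapted to an $\mcE$-quasi-open set that is in general neither open nor closed. This relies on the equilibrium-potential theory available in the quasi-regular setting; in a fully detailed proof one would either transfer the classical construction through a quasi-homeomorphism to a regular Dirichlet space, or work directly in the quasi-regular setting with a $1$-capacity relative to a reference strictly positive quasi-continuous weight in $\dom$.
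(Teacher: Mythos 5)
The paper merely cites \cite[Lem.~3.3]{Kuw98} and gives no proof of its own, so there is nothing internal to compare against; I therefore comment on your argument directly. It traces the standard route but has a gap in each direction. In the \emph{forward} direction your first two steps are sound (vanishing of $\reptwo u$ $\mcE$-q.e.\ on $N\eqdef X\setminus\cup_n\intE F_n$ for $u\in\cup_n\dom_{F_n}$, then for all $u\in\dom$ by density and quasi-uniform convergence), but the construction of $h\in\dom$ with $\reptwo h>0$ $\mcE$-q.e.\ from the separating family in~\iref{i:QR:3} does not go through: nothing prevents a common zero of all the $\reptwo u_n$'s at a single non-$\mcE$-polar point, and truncation between rationals cannot create a positive value there. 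Use~\iref{i:QR:1} instead: take a compact $\mcE$-nest $\seq{K_m}_m$ with each $K_m$ of finite $\mcE_1$-capacity (cf.\ Remark~\ref{r:AriHino}), and equilibrium potentials $e_m\in\dom$ with $\reptwo e_m=1$ $\mcE$-q.e.\ on $K_m$; since $\reptwo e_m=0$ $\mcE$-q.e.\ on $N$, each $N\cap K_m$ is $\mcE$-polar, as is $N\setminus\cup_m K_m$, hence so is $N$.

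In the \emph{reverse} direction, the ``Leibniz-rule control'' you invoke does not yield $\chi_n u\to u$ in $\dom$ for a universal sequence of cut-offs: expanding $\sq{(\chi_n-1)u}$ produces the term $\int\reptwo u^2\,\diff\sq{\chi_n}$, whose only obvious bound is $\norm{u}_\infty^2\,\mcE(\chi_n)$, and $\mcE(\chi_n)$ need not stay bounded along cut-offs subordinate to $U_n\eqdef\intE F_n$ with $\reptwo\chi_n\to 1$ $\mcE$-q.e.; one therefore cannot even conclude $\mcE_1$-boundedness of $\{\chi_n u\}_n$, let alone strong convergence. The cut-off must be adapted to $u$: first approximate $u\in\domb$ by some $v\in\domb\cap\dom_{K_m}$ using the compact nest, then use that the $\mcE_1$-capacity of $K_m\setminus U_n$ vanishes as $n\to\infty$ (since $\cap_n(K_m\setminus U_n)\subset N$ is $\mcE$-polar) to find $w_n\in\domb$ with $\reptwo w_n\geq 1$ $\mcE$-q.e.\ on $K_m\setminus U_n$ and $\norm{w_n}_\dom\to 0$; then $v_n\eqdef v\bigl(1-(w_n\wedge 1)\bigr)\in\dom_{F_n}$ and $v_n\to v$ in $\dom$ along a subsequence, the dangerous term now being $\int(\reptwo w_n\wedge 1)^2\,\diff\sq{v}\to 0$ by dominated convergence, since $\sq{v}$ is a finite measure not charging $\mcE$-polar sets. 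Your alternative fallback (transfer to the regular case via quasi-homeomorphism) is sound and arguably simpler, but the universal-cut-off version you sketch does not close as written.
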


\paragraph{Domains}
Let~$(\mbbX,\mcE)$ be a Dirichlet space. We write~$\domb\eqdef \dom\cap L^\infty(\mssm)$. The \emph{extended domain}~$\domext$ of~$(\mcE,\dom)$ is the space of all functions~$f\in L^0(\mssm)$ so that there exists an $\mcE^{1/2}$-fundamental sequence~$\seq{f_n}_n\subset \dom$ with $\mssm$-a.e.-$\nlim f_n=f$. We write~$\domextb\eqdef\domext\cap L^\infty(\mssm)$. The bilinear form~$\mcE$ on~$\mcF$ extends to a non-relabeled bilinear form on~$\domext$,~\cite[Prop.~3.1]{Kuw98}. Furthermore,
\begin{itemize}
\item $\domb$ is an algebra with respect to\ the pointwise multiplication, \cite[Prop.~I.2.3.2]{BouHir91};

\item if~$(\mbbX,\mcE)$ is quasi-regular, then $\domb$ is dense in~$\dom$, \cite[Cor.~2.1]{Kuw98}; 

\item if~$(\mbbX,\mcE)$ is quasi-regular, then $\dom$ is separable, \cite[Prop.~IV.3.3, p.~102]{MaRoe92};
\end{itemize}

\subsubsection{Energy measures}
We collect some properties quasi-regular strongly local Dirichlet spaces. Set
\begin{align*}
\sqf{f,g}(h)\eqdef \mcE(fh,g)+\mcE(gh,f)-\mcE(fg,h)\comm \qquad \sqf{f}(h)\eqdef \sqf{f,f}(h) \comm \qquad f,g,h\in\domb \fstop
\end{align*}

\begin{theorem}[Cf.~{\cite[Thm.~5.2, Lem.s~5.1, 5.2]{Kuw98}}]\label{t:Kuwae}
Let~$(\mbbX,\mcE)$ be a quasi-regular strongly local Dirichlet space. Then, a bilinear form~$\sq{\emparg,\emparg}$ is defined on~$\domb^{\tym2}$ with values in~$\MbR(\Bo{\T},\Ne{\mcE})$ by
\begin{align}\label{eq:EnergyMeas}
2\int \reptwo{h}\diff \sq{f,g}= \sqf{f,g}(h) \comm \qquad f,g,h\in\domb\fstop
\end{align}

The form~$\sq{\emparg, \emparg}$ satisfies:
\begin{enumerate}[$(i)$]
\item\label{i:p:Properties:00} the representation property
\begin{align}\label{eq:Representation}
\mcE(f,g)=\tfrac{1}{2}\sq{f,g} X \comm\qquad f,g\in\domb \semicolon
\end{align}
\item\label{i:p:Properties:0} the Cauchy--Schwarz inequality
\begin{align}\label{eq:CS}
\abs{\int \rep u \, \rep v \diff\sq{f,g}}\leq \sqrt{\int \rep u^2 \diff\sq{f}}\, \sqrt{\int \rep v^2 \diff\sq{g}} \comm \qquad f,g\in\domb \comm \qquad \rep u, \rep v\in \mcL^\infty(\Bo{\T}) \semicolon
\end{align}

\item\label{i:p:Properties:1} the truncation property
\begin{align}\label{eq:Truncation}
\sq{f\wedge g, h}=&\car_{\ttset{\reptwo f\leq \reptwo g}}\sq{f,h}+\car_{\ttset{\reptwo f> \reptwo g}} \sq{g,h} \comm && f,g,h\in\domb \semicolon
\end{align}

\item\label{i:p:Properties:2} the chain rule
\begin{align}\label{eq:ChainRule}
\sq{\phi\circ f}=(\phi'\circ \reptwo f)^2 \cdot \sq{f}\comm \qquad f\in\domb \comm \qquad \phi\in \mcC^1(\R)\comm \phi(0)=0 \semicolon
\end{align}

\item\label{i:p:Properties:3} the Leibniz rule
\begin{equation}\label{eq:Leibniz}
\begin{aligned}
\sq{f g, h}=&\reptwo f\, \sq{g,h}+\reptwo g\, \sq{f,h}\comm && f,g,h\in\domb \comm
\\
\sq{f g}=&\reptwo f^2\, \sq{g}+\reptwo g^2\, \sq{f} + 2\reptwo f \, \reptwo g\,\sq{f,g}\comm && f,g\in\domb \semicolon
\end{aligned}
\end{equation}

\item\label{i:p:Properties:4} the weak Banach algebra property for~$\domb$
\begin{align}\label{eq:Continuity}
\mcE(fg)\leq 2\norm{f}_{\infty}^2\mcE(g)+2\norm{g}_{\infty}^2\mcE(f) \comm \qquad f,g\in\domb \semicolon
\end{align}

\item\label{i:p:Properties:5} the strong locality property
\begin{align}\label{eq:SLoc:1}
\car_{G} \sq{f,g} =& 0\comm & G \text{~$\mcE$-quasi-open}\comm f,g\in\domb\comm f\equiv \text{constant~$\mssm$-a.e.\ on $G$} \fstop
\end{align}
\end{enumerate}
\end{theorem}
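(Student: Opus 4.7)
The plan is to reduce to the regular strongly local case via quasi-homeomorphism, following Kuwae's strategy. First, invoke~\cite[Thm.~3.7]{CheMaRoe94} to produce a regular Dirichlet space~$(\mbbX^\sharp,\mcE^\sharp)$ quasi-homeomorphic to~$(\mbbX,\mcE)$ through a map~$j\colon \cup_n F_n\to \cup_n F^\sharp_n$, with strong locality transferred from~$(\mcE,\dom)$ via~\cite[Thm.~5.2]{Kuw98}. For the regular strongly local space~$(\mbbX^\sharp,\mcE^\sharp)$, the classical theory (Beurling--Deny, LeJan, Bouleau--Hirsch, Fukushima--Oshima--Takeda) produces a bilinear form~$\sq{\emparg,\emparg}^\sharp\colon (\dom^\sharp_b)^{\tym 2}\to \MbR(\Bo{\T^\sharp},\Ne{\mcE^\sharp})$ defined by the analogue of~\eqref{eq:EnergyMeas}, and satisfying properties \iref{i:p:Properties:00}--\iref{i:p:Properties:5}.

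Next, I would transfer to~$X$ by push-forward. Given~$f,g\in\domb$, set~$f^\sharp\eqdef (j^*)^{-1}f$, $g^\sharp\eqdef (j^*)^{-1}g\in\dom^\sharp_b$ (well-defined by~\eqref{eq:Quasihomeo} and the algebra isomorphism~$j^*$), and define
\begin{align*}
\sq{f,g}\eqdef (j^{-1})_\pfwd \sq{f^\sharp,g^\sharp}^\sharp\,,
\end{align*}
where~$j^{-1}$ is pointwise defined on~$\cup_n F^\sharp_n$, a set of full measure for~$\sq{f^\sharp,g^\sharp}^\sharp$ since the latter does not charge the $\mcE^\sharp$-polar set~$X^\sharp\setminus \cup_n F_n^\sharp$. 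Because~$j$ maps $\mcE$-nests to $\mcE^\sharp$-nests and conversely, and thus $\mcE$-polar sets to $\mcE^\sharp$-polar sets, the measure~$\sq{f,g}$ lies in~$\MbR(\Bo{\T},\Ne{\mcE})$.

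The third step is to verify the representation~\eqref{eq:EnergyMeas} and then derive \iref{i:p:Properties:00}--\iref{i:p:Properties:5}. The key observation is the compatibility of quasi-continuous representatives under~$j$: if~$\reptwo h\in\mcL^0(\A)$ is $\mcE$-quasi-continuous with refining nest~$\set{F_n\cap G_k}_{n,k}$ (on whose union~$\reptwo h$ is continuous), then~$\reptwo h\circ j^{-1}$, extended arbitrarily on a polar set, is an $\mcE^\sharp$-quasi-continuous version of~$h^\sharp\eqdef (j^*)^{-1}h$. Combined with the identities~$\mcE(fh,g)=\mcE^\sharp(f^\sharp h^\sharp,g^\sharp)$ (since~$j^*$ is a Banach-algebra isomorphism), one obtains~$\sqf{f,g}(h)=\sqf{f^\sharp,g^\sharp}^\sharp(h^\sharp)$, hence~$2\int \reptwo h\diff\sq{f,g}=2\int \reptwo h^\sharp \diff\sq{f^\sharp,g^\sharp}^\sharp=\sqf{f^\sharp,g^\sharp}^\sharp(h^\sharp)=\sqf{f,g}(h)$, proving~\eqref{eq:EnergyMeas}. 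Property~\iref{i:p:Properties:00} then follows by taking~$\reptwo h\equiv 1$, and~\iref{i:p:Properties:0}--\iref{i:p:Properties:3} transfer verbatim from their regular counterparts by changing variables under~$j$, using that~$j$ carries the level sets~$\ttset{\reptwo f\leq \reptwo g}$ and the composition~$\phi\circ f$ faithfully to~$X^\sharp$ modulo polar sets. The weak Banach-algebra bound~\iref{i:p:Properties:4} follows from~\iref{i:p:Properties:3} and~\iref{i:p:Properties:0} with~$h\eqdef fg$, together with~\eqref{eq:Representation}. For strong locality~\iref{i:p:Properties:5}, note that an $\mcE$-quasi-open~$G\subset X$ on which~$f$ is $\mssm$-a.e.\ constant corresponds, via~$j$, to an $\mcE^\sharp$-quasi-open subset of~$X^\sharp$ on which~$f^\sharp$ is $\mssm^\sharp$-a.e.\ constant, and one pushes forward the regular version of~\eqref{eq:SLoc:1}.

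The main obstacle is the careful bookkeeping of polar sets, quasi-continuous representatives, and quasi-open sets under quasi-homeomorphism, to ensure that pointwise statements (truncation, chain rule, and especially strong locality) survive pull-back and push-forward without accumulating~$\mcE$-polar ambiguities. Once this is handled systematically ---~for instance by always refining to a common nest witnessing both the quasi-homeomorphism and the quasi-continuity of all functions in question~--- everything reduces cleanly to the regular case.
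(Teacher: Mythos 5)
Your proposal follows essentially the same route as the paper: construct the energy-measure bilinear form on the quasi-regular space by pushing forward the classical energy measure of a quasi-homeomorphic regular Dirichlet space, verify~\eqref{eq:EnergyMeas} via the compatibility of quasi-continuous representatives under the quasi-homeomorphism~$j$ (\cite[Cor.~3.6(iii)]{CheMaRoe94}), and transfer each of the algebraic and locality properties from the regular counterpart. The only cosmetic difference is that the paper cites Kuwae's Lemmas~5.1 and~5.2 (which already carry out this transfer for quasi-regular forms, and prove~\eqref{eq:SLoc:1} for~$f=g$ followed by the bilinear extension via Cauchy--Schwarz), whereas you redo the transfer by hand; you should nonetheless note, as the paper does, that Radonness of~$\sq{f,g}$ comes from refining~$F_\bullet$ to a compact nest so that the (finite) measure is tight.
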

\begin{proof} We recall the construction of~$\sq{\emparg,\emparg}$ for reference in the proof of Proposition~\ref{p:PropertiesLoc} below.

Assume first that~$(\mbbX,\mcE)$ is a regular strongly local Dirichlet space.
Since~$(X,\T)$ is Polish, then $\Mbp(\Bo{\T})=\MbpR(\Bo{\T})$, e.g.~\cite[Thm.~II.7.1.7]{Bog07}.
The existence of~$\sq{\emparg,\emparg}\colon \dom^\tym{2}\rar \Mbp(\Bo{\T}, \Ne{\mcE})$ is the standard representation of the strongly local part in the Beurling--Deny decomposition of~$(\mcE,\dom)$, e.g.~\cite[Lem.s~3.2.3, 3.2.4, pp.~126-127]{FukOshTak11}.

Assume now that~$(\mbbX,\mcE)$ is quasi-regular strongly local.
By quasi-regularity, there exists a regular (strongly local) Dirichlet space~$(\mbbX^\sharp,\mcE^\sharp)$ quasi-homeomorphic to~$(\mbbX,\mcE)$. Let~$F_\bullet\eqdef \seq{F_n}_n$, $F^\sharp_\bullet\eqdef\tseq{F_n^\sharp}_n$, and~$j\colon X_0^\sharp\eqdef \cup_n F_n^\sharp\rar X$ be witnessing the definition of quasi-homeomorphism, and set
\begin{align*}
\sq{f,g} \eqdef j_\pfwd \ttonde{ \sq{j^*f,j^*g}^\sharp\mrestr{X_0^\sharp} } \comm \qquad f,g\in \dom\fstop
\end{align*}

Since~$(\mbbX^\sharp,\mcE^\sharp)$ is regular, it is in particular quasi-regular, thus it admits an $\mcE$-nest consisting of compact sets. Therefore, up to the refinement provided by~\cite[Lem.~3.2]{Kuw98}, we may and shall assume with no loss of generality that~$F^\sharp_\bullet$ is additionally increasing and consisting of compact sets. Up to redefining~$F_\bullet\eqdef j(F^\sharp_\bullet)$, we may assume that~$F_\bullet$ too consists of compact sets, by~\iref{i:QH:1}, and increasing. Since~$\sq{f,g}$ is finite and concentrated on~$X_0\eqdef \cup_n F_n$, one has~$\nlim \sq{f,g} F_n= \sq{f,g}X$. Thus,~$\sq{f,g}$ is tight, and therefore Radon, e.g.~\cite[Prop.~II.7.2.2(iv) and (iii)]{Bog07}.
Since~$\rep f\colon X_0\rar [-\infty,\infty]$ is $\mcE$-quasi-continuous if and only if~$j^*f\colon X_0^\sharp\rar [-\infty,\infty]$ is $\mcE^\sharp$-quasi-continuous,~\cite[Cor.~3.6(iii)]{CheMaRoe94}, Equation~\eqref{eq:EnergyMeas} for~$\sq{f,g}$ follows from~\ref{i:QH:3} and~\eqref{eq:EnergyMeas} for~$\sq{j^*f,j^*g}^\sharp$.

\smallskip

Assertion~\iref{i:p:Properties:00} is~\cite[Lem.~5.1]{Kuw98}.
\iref{i:p:Properties:0} is~\cite[Lem.~5.2($\Gamma1$)]{Kuw98}.
\iref{i:p:Properties:2} is~\cite[Lem.~5.2($\Gamma3$)]{Kuw98}.
\iref{i:p:Properties:3} is~\cite[Lem.~5.2($\Gamma4$)]{Kuw98}.
\iref{i:p:Properties:1} is noted (with~$\vee$ in place of~$\wedge$) in the proof of~\cite[Lem.~5.2($\Gamma5$)]{Kuw98} for~$f,g\in\dom$ and~$h=f\wedge g$. The extension to the non-symmetric case follows by polarization.
\iref{i:p:Properties:4} follows from a consecutive application of~\eqref{eq:Leibniz},~\eqref{eq:CS}, and~\eqref{eq:Representation}.
\iref{i:p:Properties:5} Equation~\eqref{eq:SLoc:1} for~$f=g$ is~\cite[Lem.~5.2($\Gamma6$)]{Kuw98}. Let~$f$ be constant~$\mssm$-a.e.\ on~$G$. By~\eqref{eq:CS} and~\eqref{eq:SLoc:1} for~$f=g$,
\begin{align*}
\abs{\int \car_G \rep v \diff\sq{f,g}}\leq \sqrt{\int \car_G \diff\sq{f}}\,\sqrt{\int \rep v^2\diff\sq{g}}=0 \comm \qquad \rep v\in \mcL^\infty(\Bo{\T})\fstop
\end{align*}
Since~$\sq{f,g}$ does not charge $\mcE$-polar sets, we can conclude~\eqref{eq:SLoc:1} by arbitrariness of~$\rep v$.
\end{proof}

Everywhere in the following, we denote nets by the subscripts~$\alpha,\beta$, etc..

\begin{lemma}[Continuity]\label{l:Continuity}
Let~$(\mbbX,\mcE)$ be a quasi-regular Dirichlet space,~$\seq{f_\alpha}_\alpha\subset \dom$ be a net %a bounded net
so that~$\dom$-$\lim_\alpha f_\alpha= f$. Then,
\begin{align*}
\lim_\alpha \sq{f_\alpha} \reptwo h= \sq{f} \reptwo h\comm \qquad h\in\domb\fstop
\end{align*}
\end{lemma}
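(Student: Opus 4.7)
The plan is to reduce continuity of $\alpha \mapsto \sq{f_\alpha}\reptwo h$ to the Cauchy--Schwarz inequality \eqref{eq:CS} via a polarization identity. First, I would observe that bilinearity and symmetry of $\sq{\emparg,\emparg}$ yield the identity
\begin{align*}
\sq{f_\alpha} - \sq{f} = \sq{f_\alpha + f,\, f_\alpha - f}
\end{align*}
as signed measures, so
\begin{align*}
\sq{f_\alpha}\reptwo h - \sq{f}\reptwo h = \int \reptwo h \diff \sq{f_\alpha+f,\, f_\alpha-f}\fstop
\end{align*}

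Next, I would apply \eqref{eq:CS} with $\rep u \equiv 1$ and $\rep v = \reptwo h$ to obtain
\begin{align*}
\tabs{\sq{f_\alpha}\reptwo h - \sq{f}\reptwo h} \leq \sqrt{\sq{f_\alpha + f}X}\; \sqrt{\int \reptwo h^2 \diff \sq{f_\alpha - f}}\fstop
\end{align*}
The first factor equals $\sqrt{2\mcE(f_\alpha + f)}$ by the representation property \eqref{eq:Representation}, and stays bounded in $\alpha$ because $\dom$-convergence of $f_\alpha$ to $f$ bounds $\mcE(f_\alpha)$ uniformly. For the second factor, since $\reptwo h$ is the $\mcE$-q.e.-unique quasi-continuous version of $h \in \domb$, one has $\ttabs{\reptwo h} \leq \norm{h}_\infty$ $\mcE$-q.e., and hence $\sq{\emparg}$-a.e.\ because the energy measure does not charge $\mcE$-polar sets (Theorem~\ref{t:Kuwae}). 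Therefore
\begin{align*}
\int \reptwo h^2 \diff \sq{f_\alpha-f} \leq \norm{h}_\infty^2\, \sq{f_\alpha-f}X = 2\norm{h}_\infty^2 \mcE(f_\alpha-f) \xrightarrow[\alpha]{} 0\comm
\end{align*}
again by \eqref{eq:Representation} and $\dom$-convergence.

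The main subtlety is really a well-posedness issue rather than a hard estimate: the energy measure $\sq{\emparg,\emparg}$ is defined on $\domb^\tym{2}$ in \eqref{eq:EnergyMeas}, whereas the lemma involves $f_\alpha, f \in \dom$ (which may not lie in $\domb$). This is handled by the standard truncation procedure, where precisely the same Cauchy--Schwarz argument proves that $f \mapsto \sq{f}$ extends continuously from $\domb$ to $\dom$ as a map into measures tested against bounded $\mcE$-quasi-continuous functions. Beyond this, the argument requires no further ingredients, as the net indexing is absorbed by the uniform bound above.
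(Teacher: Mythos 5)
Your argument is correct, and it is self-contained where the paper is not: the paper reduces the claim via \eqref{eq:EnergyMeas} to continuity of $\alpha\mapsto\sqf{f_\alpha}(h)$ and then defers that continuity to the citations \cite[Lem.~2.4(ii)]{HinRam03} and \cite[Lem.~3.3(ii)]{AriHin05}, whereas you supply the underlying estimate directly. Your route --- polarize $\sq{f_\alpha}-\sq{f}=\sq{f_\alpha+f,\,f_\alpha-f}$ (legitimate on $\dom^{\tym2}$ since all energy measures there are \emph{finite}, so the \emph{caveat} of Remark~\ref{r:Caveat} about $\dotloc{\dom}$ does not apply), apply Cauchy--Schwarz \eqref{eq:CS} with $\rep u\equiv 1$, $\rep v=\reptwo h$, and control the resulting factors via $\abs{\reptwo h}\leq\norm{h}_\infty$ $\mcE$-q.e.\ together with the representation property \eqref{eq:Representation} --- is precisely the standard argument underlying the cited reference, so the mathematical content coincides but your exposition is explicit. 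One simplification of your final paragraph: the passage from $\domb$ to $\dom$ need not be re-done by truncation, since the proof of Theorem~\ref{t:Kuwae} already constructs $\sq{\emparg,\emparg}$ on $\dom^{\tym2}$ as a $\Mbp(\Bo{\T},\Ne{\mcE})$-valued bilinear form, and both \eqref{eq:CS} and \eqref{eq:Representation} persist on that domain; only the defining identity \eqref{eq:EnergyMeas} involving $\sqf{f,g}(h)$ genuinely requires $\domb$, and your argument never uses it. Finally, note that both your proof and the paper's implicitly invoke strong locality through Theorem~\ref{t:Kuwae}, even though the statement of Lemma~\ref{l:Continuity} nominally requires only quasi-regularity.
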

\begin{proof}
By definition of~$\sq{\emparg,\emparg}$,
\begin{align*}
\lim_\alpha 2\int \reptwo h \diff \sq{f_\alpha} = \lim_\alpha \sqf{f_\alpha}(h) \qquad \text{~and~} \qquad \sqf{f}(h)= 2\int  \reptwo h \diff\sq{f} \comm \qquad h\in\domb \fstop
\end{align*}
Therefore, it suffice to show that~$\lim_\alpha \sqf{f_\alpha}(h)=\sqf{f}(h)$, which is a consequence of~\cite[Lem.~2.4(ii)]{HinRam03}. Although the statement there is given in the case when~$\mssm$ is a probability measure, the proof applies \emph{verbatim} to the case when~$\mssm$ is merely $\sigma$-finite. Also cf.~\cite[Lem.~3.3(ii)]{AriHin05}.
\end{proof}

\subsection{Broad local spaces}\label{ss:BroadLoc}
Let~$(\mbbX,\mcE)$ be a quasi-regular Dirichlet space. We recall the main properties of `\emph{local spaces}' as introduced by K.~Kuwae in~\cite[\S4]{Kuw98}. Following~\cite[p.~272]{FukOshTak11}, we call these spaces \emph{broad local spaces}.
For $\mcE$-quasi-open~$E\subset X$ set
\begin{align}\label{eq:Xi0}
\msG(E)\eqdef\set{ G_\bullet\eqdef\seq{G_n}_n :  G_n \text{~$\mcE$-quasi-open,~} G_n\subset G_{n+1} \text{~$\mcE$-q.e.,~} \cup_n G_n=E \qe{\mcE}} \fstop
\end{align}
When~$E=X$ we simply write~$\msG$ in place of~$\msG(X)$.
For~$G_\bullet\in\msG(E)$ and~$\msA\subset L^0(\mssm)$, we say that~$f\in L^0(\mssm\mrestr{E})$ is in the \emph{broad local space}~$\dotloc{\msA}(E,G_\bullet)$ if for every~$n$ there exists~$f_n\in \msA$ so that~$f_n=f$ $\mssm$-a.e.\ on~$G_n$.
The \emph{broad local space}~$\dotloc{\msA}(E)$ of~$(\mbbX,\mcE)$ relative to~$E$ is the space~\cite[\S4, p.~696]{Kuw98},
\begin{align}\label{eq:Xi}
\dotloc{\msA}(E)\eqdef \bigcup_{G_\bullet\in \msG(E)} \dotloc{\msA}(E,G_\bullet) \fstop
\end{align}
The set~$\dotloc{\msA}(E,G_\bullet)$ depends on~$G_\bullet$. We shall comment extensively on this fact in Remark~\ref{r:AriyoshiHino} below. We omit the specification of~$E$ whenever~$E=X$.

\begin{lemma}\label{l:LocLoc} For each~$\msA\subset L^0(\mssm)$, we have~$\dotloc{\ttonde{\dotloc{\msA}}}=\dotloc{\msA}$.
\end{lemma}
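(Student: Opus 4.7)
The inclusion $\dotloc{\msA}\subset\dotloc{(\dotloc{\msA})}$ is immediate, since $\msA\subset\dotloc{\msA}$ via the trivial cover $G_n\equiv X$ with $f_n=f$ for all $n$, and the operation $\dotloc{(\emparg)}$ is monotone in its argument. My plan is therefore to focus on the reverse inclusion.

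Fix $f\in\dotloc{(\dotloc{\msA})}$. Unpacking both layers of locality, there exists an outer cover $G_\bullet=(G_n)_n\in\msG$ and functions $f_n\in\dotloc{\msA}$ with $f_n=f$ $\mssm$-a.e.\ on $G_n$; for each $n$, there exists an inner cover $H^{(n)}_\bullet=(H^{(n)}_k)_k\in\msG$ and functions $g^{(n)}_k\in\msA$ with $g^{(n)}_k=f_n$ $\mssm$-a.e.\ on $H^{(n)}_k$. Chaining these equalities yields $g^{(n)}_k=f$ $\mssm$-a.e.\ on the $\mcE$-quasi-open set $G_n\cap H^{(n)}_k$, and the countable family $\{G_n\cap H^{(n)}_k\}_{n,k}$ covers $X$ $\mcE$-quasi-everywhere (since for each fixed $n$ one has $\cup_k H^{(n)}_k=X$ $\mcE$-q.e., whence $\cup_k(G_n\cap H^{(n)}_k)=G_n$ $\mcE$-q.e., and $\cup_n G_n=X$ $\mcE$-q.e.).

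To produce a single-level witness for $f\in\dotloc{\msA}$, I would extract a diagonal sub-family of the above double-indexed one. The natural attempt is $\tilde G_m:=G_m\cap H^{(m)}_{k_m}$, $\tilde f_m:=g^{(m)}_{k_m}\in\msA$, for a sequence $k_m\uparrow\infty$ to be chosen. The set $\tilde G_m$ is $\mcE$-quasi-open as a finite intersection, and by construction $\tilde f_m=f$ $\mssm$-a.e.\ on $\tilde G_m$.

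The main obstacle is to verify $(\tilde G_m)_m\in\msG$, i.e., both the increasing property $\tilde G_m\subset\tilde G_{m+1}$ $\mcE$-q.e.\ and the covering property $\cup_m\tilde G_m=X$ $\mcE$-q.e. Neither holds for the naive choice $k_m\equiv m$: the inner sequences $H^{(n)}_k$ for different $n$ need not interlace, and for a general $\mcE$-quasi-open set $A$ there need not exist a finite $k$ with $H^{(m+1)}_k\supset A$ (even modulo polar sets). The correct resolution, which I expect to be the technical heart of the proof, is an inductive selection of $k_m$ exploiting that each $H^{(n)}_\bullet$ exhausts $X$ $\mcE$-q.e.\ and that countable unions of $\mcE$-polar sets are $\mcE$-polar, so that increasingness and coverage can be secured simultaneously by choosing $k_m$ to grow sufficiently fast. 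Once $(\tilde G_m)_m\in\msG$ is in hand, the single-piece witnesses $\tilde f_m\in\msA$ established above place $f$ in $\dotloc{\msA}(X,\tilde G_\bullet)\subset\dotloc{\msA}$.
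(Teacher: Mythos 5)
Your reduction to the reverse inclusion, the unpacking of the two layers, and the identification of the main obstacle are all correct: a fixed $\mcE$-quasi-open set $A$ need not be $\mcE$-q.e.\ contained in any finite member of a $\msG$-sequence, so the increasingness requirement $\tilde G_m\subset H^{(m+1)}_{k_{m+1}}$ $\mcE$-q.e.\ is not automatically solvable. But the claimed resolution---that increasingness and coverage ``can be secured simultaneously by choosing $k_m$ to grow sufficiently fast''---is a genuine gap and in fact false as stated. No growth rate for $(k_m)_m$ helps when the inner covers $H^{(n)}_\bullet$ for distinct $n$ are misaligned. Concretely, take $X=B(0,1)\subset\R^2$, $G_m=B(0,1-\tfrac1{m+1})$, a constant inner cover $H^{(1)}_\bullet\equiv X$ (which forces $\tilde G_1=G_1=B(0,\tfrac12)$ regardless of $k_1$), and $H^{(2)}_k=B(0,1)\setminus\overline B(0,\tfrac1k)$, a $\msG$-sequence exhausting $X$ up to the polar singleton $\set{0}$: then $\tilde G_1\setminus H^{(2)}_{k_2}=B(0,\tfrac12)\cap\overline B(0,\tfrac1{k_2})$ has positive capacity for every $k_2$, and the induction is already stuck between $m=1$ and $m=2$. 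A single inner index per outer stage simply carries too little structure to reconcile the inner covers.

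The paper's proof avoids this by not picking one inner set per stage. After synchronizing each inner cover $G_{n,\bullet}$ with the outer one, it observes that the families $F_{n,\bullet}\eqdef(\cl_\T G_{n,m})_m$ are $\mcE$-nests, and then applies the nest-refinement lemma of Kuwae~\cite[Lem.~3.2]{Kuw98} to select, for each $\ell$, indices $n(\ell,m)$ \emph{simultaneously across all the inner covers} so that the resulting sets $G'_\ell\eqdef\cap_m G_{n(\ell,m),m}$ form a $\msG$-sequence, with each $G'_\ell$ lying in a single piece $G_{n(\ell,\ell),n(\ell,\ell)}$ on which $f$ agrees $\mssm$-a.e.\ with one element of $\msA$. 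That simultaneous refinement across the whole countable collection of inner nests---rather than a diagonal pick of one member per stage---is the device your proposal lacks and needs.
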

\begin{proof}
Let~$G_\bullet\in\msG$ and~$f_\bullet\in\dotloc{\msA}$ be witnessing that~$f\in\dotloc{\ttonde{\dotloc{\msA}}}$.
By definition of~$\dotloc{\msA}$, for each~$n\in \N$ there exist~$G_{n,\bullet}\in\msG$ and~$f_{n,\bullet}\subset\msA$ witnessing that~$f_n\in\dotloc{\msA}$. 
Without loss of generality, up to substituting~$G_{n,m}$ with~$G_{n,m}\cap G_m$ for every~$m,n\in\N$, we may and will assume that~$G_{n,m}\subset G_m$ $\mcE$-q.e.\ for every~$m,n$.
Thus,~$f\equiv f_n\equiv f_{n,m}$ $\mssm$-a.e. on~$G_{n,m}$ for every~$m,n$.

Since~$F_{n,\bullet}\eqdef \seq{\cl_\T G_{n,m}}_m$ is an $\mcE$-nest for each~$n$, arguing similarly to~\cite[Lem.~3.2]{Kuw98} we may find indices~$\seq{n(\ell,m)}_{\ell\in\N}$, depending on~$m\in\N$, with~$n(\ell,m)\geq \ell$ for each~$m$, and so that the set $G'_\ell\eqdef \cap_m G_{n(\ell,m),m}$ satisfies~$G'_\bullet\in\msG$.
Thus, finally, since~$G'_\ell\subset G_{n(\ell,\ell),n(\ell,\ell)}$ $\mcE$-q.e., if we set $f'_\ell\eqdef f_{n(\ell,\ell),n(\ell,\ell)}\in\msA$, the pair~$(G'_\bullet,f'_\bullet)$ witnesses that~$f\in\dotloc{\msA}$.
\end{proof}

By (the proof of)~\cite[Thm.~4.1(i)]{Kuw98},
\begin{align}\label{eq:Fbloc}
\domext\subset \dotloc{\dom}=\dotloc{(\domb)}=\dotloc{(\domext)}=\dotloc{(\domextb)} \fstop
\end{align}
On a regular Dirichlet space~$(\mbbX,\mcE)$ one usually defines the \emph{local domain}~$\domloc$ of~$(\mcE,\dom)$ as the space of all functions~$f\in L^0(\mssm)$ so that for each relatively compact open~$G\subset X$ there exists~$f_G\in\dom$ with~$f_G=f$ $\mssm$-a.e.\ on~$G$, e.g.~\cite[p.~130]{FukOshTak11}.
In this case,~$\domloc\subset \dotloc{\dom}$, however the inclusion may be a strict one.

\begin{proposition}\label{p:PropertiesLoc}
Let~$(\mbbX,\mcE)$ be a quasi-regular strongly local Dirichlet space. Then, 
%\begin{itemize}
the quadratic form $\sq{\emparg}\colon \dom\rar \MbpR(\Bo{\T},\Ne{\mcE})$ associated to the bilinear form~$\sq{\emparg,\emparg}$ in~\eqref{eq:EnergyMeas} uniquely extends to a non-relabeled form on~$\dotloc{\dom}$ with values in~$\Msp(\Bo{\T},\Ne{\mcE})$, satisfying:
%\end{itemize}
%Furthermore, these extensions satisfy
\begin{enumerate}[$(i)$]
\item\label{i:p:PropertiesLoc:00} the representation property
\begin{align}\label{eq:RepresentationLoc}
\mcE(f,g)=\tfrac{1}{2}\sq{f,g} X \comm\qquad f,g\in\domext \semicolon
\end{align}

\item\label{i:p:PropertiesLoc:1} the truncation property
\begin{equation}\label{eq:TruncationLoc}
\begin{aligned}
f\wedge g \in \dotloc{\dom} \quad\text{and}\quad \sq{f\wedge g}=\car_{\ttset{\reptwo f\leq \reptwo g}}\sq{f}+\car_{\ttset{\reptwo f> \reptwo g}} \sq{g}\comm \qquad f,g\in\dotloc{\dom} \semicolon
\\
f\vee g \in \dotloc{\dom} \quad\text{and}\quad \sq{f\vee g}=\car_{\ttset{\reptwo f\leq \reptwo g}}\sq{g}+\car_{\ttset{\reptwo f> \reptwo g}} \sq{f}\comm \qquad f,g\in\dotloc{\dom} \semicolon
\end{aligned}
\end{equation}

\item\label{i:p:PropertiesLoc:2} the chain rule
\begin{equation}\label{eq:ChainRuleLoc}
\phi\circ f \in \dotloc{\dom} \quad\text{and}\quad \sq{\phi\circ f}=(\phi'\circ \reptwo f)^2 \cdot \sq{f}\comm \qquad f\in\dotloc{\dom} \comm \quad \begin{aligned}&\phi\in \mcC^1(\R)\comm\\ &\phi(0)=0\end{aligned} \semicolon
\end{equation}

\item\label{i:p:PropertiesLoc:5} the strong locality property
\begin{align}\label{eq:SLoc:2}
\car_G \sq{f}=\car_G \sq{g}\comm  \qquad G \text{~$\mcE$-quasi-open}\comm f,g\in\dotloc{\dom}\comm f\equiv g \text{~$\mssm$-a.e.\ on $G$}\fstop
\end{align}
\end{enumerate}

Furthermore,~$\dotloc{\dom}$ is an algebra for the pointwise multiplication, and
\begin{align}\label{eq:E(1)=0}
\car\in\dotloc{\dom}\comm \qquad \sq{\car}\equiv 0 \fstop
\end{align}
\end{proposition}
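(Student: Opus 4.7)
The plan is to construct $\sq{f}$ for $f\in\dotloc{\dom}$ by patching together energy measures of local $\domb$-representatives, and then to derive each property by localizing the corresponding one from Theorem~\ref{t:Kuwae}.

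\textbf{Construction and well-definedness.} Given $f\in\dotloc{\dom}$, by~\eqref{eq:Fbloc} I can choose a witness $(G_\bullet,f_\bullet)$ with $f_n\in\domb$ and (wlog, up to intersecting) $G_n\subset G_{n+1}$ $\mcE$-q.e. For $n\leq m$, the function $f_m-f_n$ vanishes $\mssm$-a.e.\ on $G_n$, so strong locality~\eqref{eq:SLoc:1} gives $\car_{G_n}\sq{f_m-f_n}=0$, and Cauchy--Schwarz~\eqref{eq:CS} combined with bilinearity yields $\car_{G_n}\sq{f_n}=\car_{G_n}\sq{f_m}$. The sequence $\mu_n\eqdef\car_{G_n}\sq{f_n}$ is therefore non-decreasing in the strong sense on $\Bo{\T}$, and I set
\begin{align*}
\sq{f}\eqdef\sup_n \mu_n\in\Msp(\Bo{\T},\Ne{\mcE})\comm
\end{align*}
which is $\sigma$-finite (each $\mu_n$ is finite) and does not charge $\mcE$-polar sets. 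Independence from the choice of witness is obtained by taking, as in Lemma~\ref{l:LocLoc}, a common refinement of two witnessing pairs and applying the same strong-locality/Cauchy--Schwarz argument on the intersected nest. On $\dom$ the trivial witness $(X,f)$ yields agreement with the measure of Theorem~\ref{t:Kuwae}, so this is an extension; uniqueness of the extension follows from strong locality, which forces any extension to coincide with $\car_{G_n}\sq{f_n}$ on $G_n$.

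\textbf{Representation, truncation, chain rule, strong locality.} For $f\in\domext$, pick an $\mcE_1^{1/2}$-fundamental sequence $f_n\in\dom$ with $f_n\to f$ $\mssm$-a.e.; Lemma~\ref{l:Continuity} gives $\sq{f_n}\reptwo h\to \sq{f}\reptwo h$ for every $h\in\domb$, and passing to $h\nearrow\car$ together with~\eqref{eq:Representation} yields~\eqref{eq:RepresentationLoc}. For~\iref{i:p:PropertiesLoc:1}--\iref{i:p:PropertiesLoc:2}, given $f,g\in\dotloc{\dom}$, select a common witnessing nest $(G_\bullet, f_\bullet, g_\bullet)$ with $f_n,g_n\in\domb$ (possible by~\eqref{eq:Fbloc} and intersecting the two individual nests). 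Then $f_n\wedge g_n, f_n\vee g_n, \phi\circ f_n\in\domb$ serve as local representatives of $f\wedge g$, $f\vee g$, $\phi\circ f$ on $G_n$, proving membership in $\dotloc{\dom}$; the pointwise identities~\eqref{eq:TruncationLoc} and~\eqref{eq:ChainRuleLoc} then follow by restricting~\eqref{eq:Truncation} and~\eqref{eq:ChainRule} to $G_n$ and taking the supremum in $n$. For strong locality~\eqref{eq:SLoc:2}: on $G\cap G_n$, $f_n-g_n\equiv 0$ $\mssm$-a.e., so~\eqref{eq:SLoc:1} applied to $f_n-g_n$ together with~\eqref{eq:CS} gives $\car_{G\cap G_n}\sq{f_n}=\car_{G\cap G_n}\sq{g_n}$, and the conclusion follows from the definition of $\sq{f}$ and $\sq{g}$.

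\textbf{Algebra property and $\sq{\car}\equiv 0$.} If $f,g\in\dotloc{\dom}$ admit common bounded witnesses on $G_\bullet$, then $f_n g_n\in\domb$ (since $\domb$ is a pointwise algebra by~\cite[Prop.~I.2.3.2]{BouHir91}) and represents $fg$ on $G_n$, giving $fg\in\dotloc{\dom}$. For $\car\in\dotloc{\dom}$, let $\seq{F_n}_n$ be an $\mcE$-nest of $\T$-compact sets from quasi-regularity~\iref{i:QR:1}, and set $G_n\eqdef\intE F_n$, so $G_\bullet\in\msG$ by Lemma~\ref{l:Kuwae}. Passing through a quasi-homeomorphism~\cite[Thm.~3.7]{CheMaRoe94} to a regular Dirichlet space $(\mbbX^\sharp,\mcE^\sharp)$, Urysohn-type cutoffs in $\dom^\sharp\cap\Cz(\T^\sharp)$ identically equal to $1$ on a neighborhood of $F_n^\sharp$ exist by regularity; their pull-backs under $j$ give $h_n\in\dom$ with $h_n\equiv 1$ $\mssm$-a.e.\ on $G_n$, exhibiting $(G_\bullet,h_\bullet)$ as a witness for $\car$. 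Since $h_n$ is constant on the $\mcE$-quasi-open set $G_n$, strong locality~\eqref{eq:SLoc:1} gives $\car_{G_n}\sq{h_n}=0$, hence $\sq{\car}\equiv 0$.

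The main obstacle is the well-definedness step: verifying independence of $\sq{f}$ from the choice of witness and establishing that the patching does produce a $\sigma$-finite Borel measure not charging $\mcE$-polar sets. A secondary delicate point is the cutoff construction for $\car$, since under quasi-regularity alone one has no direct access to continuous compactly supported Sobolev functions and must pass through the quasi-homeomorphism to the regular setting.
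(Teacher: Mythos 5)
Your construction of~$\sq{f}$ for $f\in\dotloc{\dom}$ by monotone patching of $\car_{G_n}\sq{f_n}$, including the consistency argument via strong locality~\eqref{eq:SLoc:1} and Cauchy--Schwarz~\eqref{eq:CS}, matches the paper's proof exactly; the same is true of your proofs of~\iref{i:p:PropertiesLoc:1}--\iref{i:p:PropertiesLoc:5}, of the algebra property, and of~$\sq{\car}\equiv 0$, where the paper simply cites Kuwae for the latter whereas you build the cutoffs~$h_n$ explicitly via quasi-homeomorphism --- a valid alternative.

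The one genuine gap is in your treatment of the representation property~\iref{i:p:PropertiesLoc:00}. You propose to \emph{pick an $\mcE_1^{1/2}$-fundamental sequence $f_n\in\dom$ with $f_n\to f$ $\mssm$-a.e.}\ for~$f\in\domext$ and invoke Lemma~\ref{l:Continuity}. This is inconsistent with the definition of~$\domext$: that space is defined via $\mcE^{1/2}$-fundamental sequences, \emph{not} $\mcE_1^{1/2}$-fundamental ones. Since~$\norm{\emparg}_{L^2(\mssm)}\leq \mcE_1(\emparg)^{1/2}$, any $\mcE_1^{1/2}$-fundamental sequence converging $\mssm$-a.e.\ to~$f$ would also converge to~$f$ in~$L^2(\mssm)$ and hence in~$\dom$ by closedness, forcing~$f\in\dom$; so no such sequence exists when~$f\in\domext\setminus\dom$. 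If one instead only has $\mcE^{1/2}$-convergence, then Lemma~\ref{l:Continuity} --- which requires convergence in~$\norm{\emparg}_\dom=\mcE_1(\emparg)^{1/2}$ --- does not apply. The final step \emph{passing to $h\nearrow\car$} also silently interchanges the limit in~$n$ with a monotone limit over test functions $h\in\domb$, and would need additional justification (e.g.\ a uniform-Cauchy estimate using that~$\sq{f_n}X=2\mcE(f_n)$ is Cauchy in~$n$). The paper avoids all of this by citing~\cite[Lem.~5.1]{Kuw98} directly for~\iref{i:p:PropertiesLoc:00}.
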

\begin{proof}
Let~$f\in\dotloc{\dom}$, witnessed by~$\seq{G_n}_n$ and~$\seq{f_n}_n\subset\dom$. By strong locality~\eqref{eq:SLoc:1}, $\car_{G_m}\sq{f_n}=\car_{G_m}\sq{f_m}$ for every~$m\leq n$, since~$G_m\subset G_n$ $\mcE$-q.e.\ and~$\sq{f_n}$ does not charge $\mcE$-polar sets.
Thus, the set function~$\sq{f}\colon \Bo{\T}\rar [0,+\infty]$ given by~$\sq{f}A\eqdef \nlim \sq{f_n} (A\cap G_n)$, $A\in\Bo{\T}$, is well-defined, since the limit is monotone, and a $\sigma$-finite Borel measure, e.g.~\cite[\S{I.I.6(d)}, p.~44]{Sch73}.
The measure~$\sq{f}$ is independent of the approximating sequence~$\seq{G_n}_n$ in the obvious way, again by~\eqref{eq:SLoc:1}, cf.\ the proof of~\cite[Lem.~5.3(i)]{Kuw98}.
This proves the extension of the quadratic form~$\sq{\emparg}\colon \dom\rar \MbpR(\Bo{\T}, \Ne{\mcE})$ to~$\sq{\emparg}\colon \dotloc{\dom}\rar \Msp(\Bo{\T}, \Ne{\mcE})$.

\smallskip

Assertion~\iref{i:p:PropertiesLoc:00} is~\cite[Lem.~5.1]{Kuw98}.
\iref{i:p:PropertiesLoc:1} is an extension of the symmetric case in Theorem~\ref{t:Kuwae}\iref{i:p:Properties:1} to~$\dotloc{\dom}$, consequence of~\cite[Thm.~4.1(i)]{Kuw98}.
\iref{i:p:PropertiesLoc:2} is a special case of~\cite[Lem.~5.2($\Gamma3$), Lem.~5.3(iii)]{Kuw98}.
\iref{i:p:PropertiesLoc:5} Equation~\eqref{eq:SLoc:2} for~$g\equiv 0$ is~\cite[Lem.~5.3(iii)]{Kuw98}.
Let~$f,g\in\dotloc{\dom}$, witnessed by~$\tseq{G_n^f}_n$, $\seq{G_n^g}_n$ and~$\seq{f_n}_n$, $\seq{g_n}_n\subset \dom$. Up to the refinement provided by~\cite[Lem.~3.2]{Kuw98}, we may and shall assume with no loss of generality that~$G_n\eqdef G_n^f=G_n^g$ for every~$n\in \N$. Furthermore, by~\eqref{eq:Fbloc}, we may and shall assume without loss of generality that~$\seq{f_n}_n$,~$\seq{g_n}_n\subset \domb$.
Since~$G\cap G_n$ is $\mcE$-quasi-open, by~\eqref{eq:SLoc:1} with~$f_n-g_n$ in place of both~$f$ and~$g$, and with~$G\cap G_n$ in place of~$G$, and by~\eqref{eq:CS},
\begin{equation}
\begin{aligned}
0=\car_{G\cap G_n}\sq{f_n-g_n} A =&\ \car_{G\cap G_n}\sq{f_n} A+\car_{G\cap G_n}\sq{g_n} A - 2\cdot \car_{G\cap G_n}\sq{f_n,g_n} A
\\
\geq&\ \ttonde{(\car_{G\cap G_n} \sq{f_n}A)^{1/2}-(\car_{G\cap G_n} \sq{g_n} A)^{1/2}}^2\comm
\end{aligned} \qquad A\in\Bo{\T}\comm
\end{equation}
and therefore,~$\car_{G\cap G_n}\sq{f_n}=\car_{G\cap G_n}\sq{g_n}$ for every~$n\in \N$.
By the definition of~$\sq{f},\sq{g}$,
\begin{align*}
\car_G \car_{G_n} \sq{f} = \car_G \car_{G_n} \sq{f_n} = \car_G \car_{G_n} \sq{g_n} = \car_G \car_{G_n} \sq{g}\comm
\end{align*}
and the conclusion follows by arbitrariness of~$n$ since~$X\setminus \cup_n G_n$ is $\mcE$-polar and both~$\sq{f}$ and~$\sq{g}$ do no charge $\mcE$-polar sets.
In order to show that~$\dotloc{\dom}$ is an algebra, let~$f,g\in\dotloc{\dom}$, and~$\seq{G_n}_b$ be as above.
Then,~$f_ng_n=fg$ $\mssm$-a.e.\ on~$G_n$. Since~$f_n$,~$g_n\in\domb$, then~$f_ng_n\in\domb$ as well by~\eqref{eq:Continuity}, thus~$\seq{f_ng_n}_n$ and~$\seq{G_n}_n$ witness that~$fg\in\domloc$.
Equation~\eqref{eq:E(1)=0} is~\cite[Lem.~5.3(i)]{Kuw98}.
\end{proof}

\begin{remark}[\emph{Caveat}]\label{r:Caveat} Despite some claims to this fact in~\cite{Stu94,BirMos95,Kuw98} and others, \emph{no} bilinear form on~$\dotloc{\dom}^\tym{2}$ is induced by~$(\sq{\emparg},\dotloc{\dom})$ with values in~$\Ms(\Bo{\T})$, by polarization or otherwise.
In order to see this, let~$(\mcE,\dom)$ be the Dirichlet form associated with the standard Brownian motion on the real line.
It is readily seen that~$\mcC^\infty_b(\R)\subset \dotloc{\dom}$, and that, for every~$f,g\in\mcC^\infty_b(\R)$ and every open relatively compact set~$U\subset \R$, one has~$\car_U\sq{f,g}=\tfrac{1}{2}\car_U f'g' \Leb^1$, yet~$f'g' \Leb^1$ is generally not a well-defined extended signed measure on~$\R$; e.g.\ choosing~$f(x)\eqdef\sin(x)$,~$g(x)\eqdef\cos(x)$ one can verify that~$\sq{f,g}\R$ is not well-defined, since~$\sq{f,g}$ is not $\sigma$-additive.

However, if~$(\mbbX,\mcE)$ is regular, a bilinear form on~$\sq{\emparg,\emparg}$ is induced on~$\dotloc{\dom}^\tym{2}$ by polarization with values in the space of real-valued Radon measures on~$\mbbX$ \emph{\`a la Bourbaki}, i.e.\ in the sense of~\cite[\S{III}.1.3, D{\'e}finition~2, p.~47]{Bou07} or~\cite[p.~58]{Sch73}. In particular, the set function~$\sq{\emparg,\emparg}$ constructed in this way is \emph{not} defined on the whole of~$\Bo{\T}$, and is merely a $\sigma$-additive functional on the algebra of relatively compact Borel sets, cf.~\cite[p.~57]{Sch73}.
\end{remark}

The next lemma is an adaptation to our setting of~\cite[Lem.~3.7]{FraLenWin14}.
\begin{lemma}\label{l:FrankLenz} 
Let~$(\mbbX,\mcE)$ be a quasi-regular strongly local Dirichlet space. Further let~$f\in \dotloc{L^\infty(\mssm)}$ be so that~$f_r\eqdef (-r)\vee f \wedge r\in \dotloc{\dom}$ for every~$r>0$, and assume there exists a measure~$\mu$ on~$(X,\Bo{\T})$ so that~$\sq{f_r}\leq \mu$ for every~$r>0$. Then,~$f\in\dotloc{\dom}$ and~$\sq{f}=\displaystyle{\lim_{r\rar\infty}} \sq{f_r}\leq \mu$.
\end{lemma}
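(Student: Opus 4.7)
The plan is to prove the two assertions in sequence, using as the only non-trivial ingredients the iterated broad local identity of Lemma~\ref{l:LocLoc}, the truncation rule \eqref{eq:TruncationLoc}, and the strong locality \eqref{eq:SLoc:2} from Proposition~\ref{p:PropertiesLoc}.

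For the membership $f \in \dotloc{\dom}$, I would argue that $f$ lies in the iterated broad local space $\dotloc{(\dotloc{\dom})}$. Choose $(G_n)_n \in \msG$ and $h_n \in L^\infty(\mssm)$ witnessing $f \in \dotloc{L^\infty(\mssm)}$, and set $M_n \eqdef \norm{h_n}_{L^\infty(\mssm)}$. On $G_n$ one has $\abs{f} \leq M_n$ $\mssm$-a.e., so $f_{M_n} = f$ $\mssm$-a.e.\ on $G_n$, and by hypothesis $f_{M_n} \in \dotloc{\dom}$. Thus $\ttonde{(G_n)_n, (f_{M_n})_n}$ witnesses $f \in \dotloc{(\dotloc{\dom})} = \dotloc{\dom}$ by Lemma~\ref{l:LocLoc}. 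In particular $\sq{f}$ is a well-defined element of $\Msp(\Bo{\T}, \Ne{\mcE})$ by Proposition~\ref{p:PropertiesLoc}, and $f$ has an $\mcE$-q.e.\ finite quasi-continuous representative $\reptwo f$.

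For the identification of $\sq{f}$ as the limit, I would express $\sq{f_r}$ in terms of $\sq{f}$ as follows. For $0 < r < s$, applying the truncation rule \eqref{eq:TruncationLoc} twice to $f_r = (-r) \vee (f_s \wedge r)$, and using $\sq{\pm r} \equiv 0$ (a consequence of \eqref{eq:E(1)=0} and the chain rule \eqref{eq:ChainRuleLoc}), yields
\begin{align*}
\sq{f_r} = \car_{\{-r < \reptwo{f_s} \leq r\}}\, \sq{f_s} = \car_{\{-r < \reptwo{f} \leq r\}}\, \sq{f_s},
\end{align*}
the second equality using that $\reptwo{f_s} = (-s) \vee \reptwo{f} \wedge s$ $\mcE$-q.e.. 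The set $\{\abs{\reptwo f} < s\}$ is $\mcE$-quasi-open and contains $\{-r < \reptwo f \leq r\}$ when $r < s$; on it, $f = f_s$ $\mssm$-a.e., so strong locality \eqref{eq:SLoc:2} gives $\sq{f_r} = \car_{\{-r < \reptwo{f} \leq r\}}\, \sq{f}$.

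To conclude, the sets $A_r \eqdef \{-r < \reptwo{f} \leq r\}$ increase in $r$ to $\{\abs{\reptwo f} < \infty\}$, whose complement is $\mcE$-polar and hence $\sq{f}$-negligible. Monotone convergence yields $\sq{f_r}(A) = \sq{f}(A \cap A_r) \nearrow \sq{f}(A)$ for every Borel $A$, which is the claimed setwise convergence $\sq{f_r} \to \sq{f}$; since each $\sq{f_r} \leq \mu$, the limit satisfies $\sq{f} \leq \mu$. I expect the only delicate point to be the careful bookkeeping between $\mssm$-a.e.\ and $\mcE$-q.e.\ equalities when composing truncations with quasi-continuous representatives; once the two-step truncation identity is in place, the passage to the limit is routine monotone convergence.
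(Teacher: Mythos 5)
Your proof is correct and follows essentially the same route as the paper: establish $f\in\dotloc{\dom}$ via Lemma~\ref{l:LocLoc} by exhibiting $\ttonde{(G_n)_n,(f_{M_n})_n}$ as a witness, then derive $\sq{f_r}=\car_{\{-r<\reptwo f\leq r\}}\sq{f}$ from the truncation rule and pass to the monotone limit using that $\reptwo f$ is $\mcE$-q.e.\ finite. The only difference is that your detour through $f_s$ and strong locality~\eqref{eq:SLoc:2} is unnecessary once $f\in\dotloc{\dom}$ is known, since \eqref{eq:TruncationLoc} may then be applied directly to $f_r=(-r)\vee(f\wedge r)$, which is what the paper does.
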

\begin{proof}
Let~$G_\bullet\in\msG$ and~$f_\bullet\subset L^\infty(\mssm)$ be witnessing that~$f\in\dotloc{L^\infty(\mssm)}$.
Then~$f_n=f_{r_n}$ $\mssm$-a.e.\ on~$G_n$ for some~$r_n>0$. Since~$f_{r_n}\in\dotloc{\dom}$ by assumption,~$G_\bullet$ and~$\seq{f_{r_n}}_n$ witness that~$f\in\dotloc{\ttonde{\dotloc{\dom}}}$, and thus~$f\in\dotloc{\dom}$ by Lemma~\ref{l:LocLoc}.
By~\eqref{eq:TruncationLoc}, $\sq{f_r}=\car_{\ttset{-r<\reptwo f\leq r}}\sq{f}$, whence the conclusion letting~$r$ to~$\infty$, again since~$\reptwo f$ is $\mcE$-q.e.\ finite.
\end{proof}

\begin{lemma}\label{l:LinftyLoc}
Let~$(\mbbX,\mcE)$ be a Dirichlet space satisfying~\iref{i:QR:1}. Then,~$\Cont(\T)\subset \dotloc{L^\infty(\mssm)}$.
\end{lemma}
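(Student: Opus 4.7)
The plan is to exhibit $G_\bullet\in\msG$ and a sequence $\seq{f_n}_n\subset L^\infty(\mssm)$ witnessing that $f\in\dotloc{L^\infty(\mssm)}$ for each $f\in\Cont(\T)$. The key observation is that a $\T$-continuous function is bounded on every $\T$-compact set, so the compact nest supplied by~\iref{i:QR:1} provides precisely the local boundedness needed, once it is translated into a nest of $\mcE$-quasi-open sets via Lemma~\ref{l:Kuwae}.

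First I would invoke~\iref{i:QR:1} to get an $\mcE$-nest $\seq{F_n}_n$ of $\T$-compact sets. Replacing $F_n$ by $\bigcup_{k\leq n} F_k$ (still $\T$-compact as a finite union, and still a nest since $\dom_{F_n}\subset\dom_{\cup_{k\leq n} F_k}$, so density of $\cup_n\dom_{F_n}$ in $\dom$ is preserved), I may assume the sequence to be increasing. Next, I would set $G_n\eqdef \intE F_n$. Each $G_n$ is $\mcE$-quasi-open and contained in $F_n$ by definition of $\intE$. Monotonicity of $\intE$ (immediate from its characterization as the $\mcE$-q.e.-maximal $\mcE$-quasi-open subset) yields $G_n\subset G_{n+1}$ $\mcE$-q.e., and Lemma~\ref{l:Kuwae} applied to $\seq{F_n}_n$ gives $\bigcup_n G_n=X$ $\mcE$-q.e. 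Hence $G_\bullet\in\msG$.

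For the local representatives, set $M_n\eqdef \sup_{F_n}\abs{f}$, which is finite by $\T$-compactness of $F_n$ and continuity of $f$, and define $f_n\eqdef (-M_n)\vee f\wedge M_n\in L^\infty(\mssm)$. By construction $f_n=f$ pointwise on $F_n$, and since $G_n\subset F_n$ and $\mcE$-polar sets are $\mssm$-negligible, this yields $f_n=f$ $\mssm$-a.e.\ on $G_n$. Therefore $f\in\dotloc{L^\infty(\mssm)}(X,G_\bullet)\subset\dotloc{L^\infty(\mssm)}$, as required.

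I do not foresee a genuine obstacle: the argument is essentially a repackaging of the compact nest into an $\mcE$-quasi-open nest, combined with the trivial boundedness of continuous functions on compacta. The only mild technicality is arranging the monotonicity of $G_\bullet$ together with the quasi-covering property, which is handled by the upfront passage to finite unions and by Lemma~\ref{l:Kuwae}.
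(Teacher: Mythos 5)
Your proposal is correct and follows essentially the same route as the paper's proof: extract the compact $\mcE$-nest from~\iref{i:QR:1}, pass to quasi-interiors $G_n\eqdef\intE F_n$, invoke Lemma~\ref{l:Kuwae} to get $G_\bullet\in\msG$, and use boundedness of a continuous function on compacta. The extra care you take — making the nest increasing by passing to $\bigcup_{k\leq n}F_k$ and explicitly exhibiting the local representatives $f_n$ via truncation — is implicit in the paper's one-line argument and is a welcome clarification rather than a deviation.
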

\begin{proof}
Let~$\seq{F_n}_n$ be an $\mcE$-nest consisting of compact sets, and note that~$G_n\eqdef\intE F_n$ satisfies~$\seq{G_n}_n\in \msG$ by Lemma~\ref{l:Kuwae}. Further let~$f\in\Cont(\T)$ and note that~$f$ is bounded on~$F_n$, hence $\mssm$-a.e.\ on~$G_n$, for each~$n$ by $\T$-continuity and by $\T$-compactness of~$F_n$.
\end{proof}

\subsection{Dominance, moderance, and smoothness}
As shown in Example~\ref{ese:LQG} below, it can happen that $\mssd_\mssm=0$.
For this reason, one extends the usual definition of the intrinsic distance~$\mssd_\mssm$ to that~\eqref{eq:Intro:Intrinsic2} of~$\mssd_\mu$.
We introduce some classes of measures on~$(X,\Bo{\T})$, in particular that of $\mcE$-dominant $\mcE$-moderate measures, for which the definition of~$\mssd_\mu$ is meaningful and interesting.

\subsubsection{Dominance}\label{sss:Dominance}
We say that a Dirichlet space~$(\mbbX,\mcE)$ \emph{admits carr\'e du champ} if~$\sq{f}\ll \mssm$ for every~$f\in\dom$, in which case~$\mcE(f,g)=\tfrac{1}{2}\int \cdc(f,g)\diff\mssm$ where~$\cdc\colon \dom^\tym{2}\rar L^1(\mssm)$ is the carr\'e du champ operator~$\cdc\colon (f,g)\mapsto \frac{\diff\sq{f,g}}{\diff\mssm}$.
The potential lack of carr\'e du champ operator partly motivates the next definition, introduced by M.~Hino,~\cite[Dfn.~2.1]{Hin09} on regular Dirichlet spaces.

\begin{definition}[$\mcE$-dominance]\label{d:Dominance}
Let~$(\mbbX,\mcE)$ be a quasi-regular strongly local Dirichlet space. A $\sigma$-finite measure~$\mu$ on~$(X,\Bo{\T})$ is:
\begin{itemize}
\item \emph{$\mcE$-dominant}, if~$\sq{f}\ll \mu$ for every $f\in\dom$;
\item \emph{minimal $\mcE$-dominant}, if $\mu$ is $\mcE$-dominant and~$\mu\ll \nu$ for every $\mcE$-dominant~$\nu$.
\end{itemize}
We denote by~$\domdom$ the set of functions~$f\in\dom$ so that~$\sq{f}$ is minimal $\mcE$-dominant. 
\end{definition}

It is readily verified that, if~$(\mcE,\dom)$ admits carr\'e du champ operator, then~$\mssm$ is minimal $\mcE$-dominant. In general however,~$\mssm$ may be singular with respect to any $\mcE$-dominant measure~$\mu$, as shown by the next example.

\begin{example}[Sierpi\'nski gaskets]
The Dirichlet form~$(\mcE,\dom)$ associated with the Brownian motion on the standard Sierpi\'nski gasket was constructed by S.~Kusuoka, also cf.\ Goldstein~\cite{Gol87} for a different construction of the process. In the notation of~\cite{Kus89}, the measure~$\tilde\mu$ constructed there is (minimal) $\mcE$-dominant, and singular with respect to\ the reference measure~$\tilde\nu$.
\end{example}

Let us start by showing that~$\domdom$ is non-empty.

\begin{proposition}\label{p:EDominant}
Let~$(\mbbX,\mcE)$ be a quasi-regular strongly local Dirichlet space. Then,
\begin{enumerate}[$(i)$]
\item\label{i:p:EDominant:1} $\domdom$ is dense in~$\dom$;
\item\label{i:p:EDominant:2} every minimal $\mcE$-dominant~$\mu$ does not charge~$\mcE$-polar sets.
\end{enumerate}
\end{proposition}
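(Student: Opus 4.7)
The plan is to produce one explicit minimal $\mcE$-dominant measure $\mu_0$ with controlled structure, deduce (ii) from it directly, and then bootstrap to (i) by a generic-perturbation argument of Fubini type. Throughout, I would exploit that by quasi-regularity $\dom$ is separable and $\domb$ is dense in $\dom$.

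Fix a countable set $\set{u_n}_n \subset \domb$ dense in $\dom$ and set
\begin{align*}
\mu_0 \eqdef \sum_{n\in\N} 2^{-n}\frac{\sq{u_n}}{1+\sq{u_n}(X)} \fstop
\end{align*}
As a convergent series of measures $\sq{u_n}\in\MbR(\Bo{\T},\Ne{\mcE})$ (Theorem~\ref{t:Kuwae}), $\mu_0$ is a finite Borel measure not charging $\mcE$-polar sets. For $\mcE$-dominance, I would use the subadditivity $\sq{f}(A)^{1/2}\leq\sq{u_n}(A)^{1/2}+\sq{f-u_n}(X)^{1/2}$ derived from~\eqref{eq:CS} with $\rep u=\rep v=\car_A$: if $\mu_0(A)=0$ then $\sq{u_n}(A)=0$ for all $n$, and choosing $u_{n_k}\to f$ in $\dom$ (so $\sq{f-u_{n_k}}(X)=2\mcE(f-u_{n_k})\to 0$) yields $\sq{f}(A)=0$. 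Minimality is immediate: if $\nu$ is $\mcE$-dominant and $\nu(A)=0$, then $\sq{u_n}(A)=0$ for all $n$, hence $\mu_0(A)=0$. This already proves (ii), since any minimal $\mcE$-dominant $\mu$ satisfies $\mu\ll\mu_0$ by minimality of $\mu$ applied to the dominant $\mu_0$, and $\mu_0$ does not charge $\mcE$-polar sets.

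For (i), the decisive step is to exhibit a single $g\in\domdom$, equivalently one with $\sq{g}\sim\mu_0$. Write $\phi_h\eqdef d\sq{h}/d\mu_0$ and $\phi_{h,k}\eqdef d\sq{h,k}/d\mu_0$ (well-defined since~\eqref{eq:CS} forces $\sq{h,k}\ll\mu_0$), with the pointwise Cauchy--Schwarz bound $\phi_{h,k}^2\leq\phi_h\phi_k$ $\mu_0$-a.e. Build $g_k\in\dom$ inductively with $\phi_{g_k}>0$ $\mu_0$-a.e.\ on $B_k\eqdef\bigcup_{j\leq k}\set{\phi_{u_j}>0}$, starting from $g_0=0$. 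At step $k+1$, on $\set{\phi_{u_{k+1}}=0}$ Cauchy--Schwarz forces $\phi_{g_k,u_{k+1}}=0$, so the quadratic
\begin{align*}
t\longmapsto \phi_{g_k+tu_{k+1}}=\phi_{g_k}+2t\phi_{g_k,u_{k+1}}+t^2\phi_{u_{k+1}}
\end{align*}
reduces to the constant $\phi_{g_k}$; on $\set{\phi_{u_{k+1}}>0}$ it has at most two real roots per point. Fubini in $(t,x)$ then yields $\Leb^1$-a.e.\ small $t$ for which $\phi_{g_k+tu_{k+1}}>0$ $\mu_0$-a.e.\ on $B_{k+1}$; choose such $t_{k+1}$ with $\abs{t_{k+1}}\norm{u_{k+1}}_\dom<2^{-k}$ and set $g_{k+1}\eqdef g_k+t_{k+1}u_{k+1}$. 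The series converges to some $g\in\dom$, and the pointwise inequality $(\phi_f^{1/2}-\phi_h^{1/2})^2\leq\phi_{f-h}$ (consequence of $\phi_{f-h}=\phi_f-2\phi_{f,h}+\phi_h$ and Cauchy--Schwarz) gives $L^2(\mu_0)$-convergence $\phi_{g_k}^{1/2}\to\phi_g^{1/2}$. Along a subsequence this is $\mu_0$-a.e., so $\set{\phi_g>0}\supseteq\bigcup_n\set{\phi_{u_n}>0}$ $\mu_0$-a.e., and the right-hand set is $\mu_0$-full by the very definition of $\mu_0$.

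Density of $\domdom$ in $\dom$ finally follows by the \emph{same} Fubini trick applied once more: for any $h\in\dom$ the quadratic $t\mapsto\phi_{h+tg}=\phi_h+2t\phi_{h,g}+t^2\phi_g$ has leading coefficient $\phi_g>0$ $\mu_0$-a.e., so $h+tg\in\domdom$ for $\Leb^1$-a.e.\ $t$, in particular for some $t$ making $\abs{t}\norm{g}_\dom$ arbitrarily small. The main obstacle is the inductive construction of $g$: one must interweave the Cauchy--Schwarz structure of Theorem~\ref{t:Kuwae} at the level of Radon--Nikodym densities with the $\dom$-limit, the delicate point being that $\dom$-convergence controls only the $L^2(\mu_0)$-distance of the square-roots $\phi^{1/2}$, not of the densities themselves, which is why passage to a subsequence is needed in the final positivity step.
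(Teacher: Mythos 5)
Your approach is genuinely different from the paper's. The paper proves part~(i) by citing Hino~\cite[Prop.~2.7]{Hin09} for regular forms and transferring to the quasi-regular setting via quasi-homeomorphism, and then deduces part~(ii) from the non-emptiness of $\domdom$ established in~(i). You instead build an explicit minimal $\mcE$-dominant measure $\mu_0$, which yields~(ii) directly, and then attempt a self-contained proof of the density of~$\domdom$. The construction of $\mu_0$, the verification of its minimal $\mcE$-dominance via the square-root subadditivity derived from~\eqref{eq:CS}, and the resulting proof of~(ii) are all correct; so is the final Fubini step that upgrades a single $g\in\domdom$ to density of $\domdom$.

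The production of that first $g\in\domdom$, however, has a genuine gap at the passage to the limit. You maintain the invariant $\phi_{g_k}>0$ $\mu_0$-a.e.\ on $B_k$, obtain $\dom$-convergence $g_k\to g$, deduce $L^2(\mu_0)$-convergence $\phi_{g_k}^{1/2}\to\phi_g^{1/2}$, hence $\mu_0$-a.e.\ convergence along a subsequence. But strict pointwise positivity does not survive a limit: at a fixed $x\in B_{k_0}$ one knows $\phi_{g_{k_j}}(x)>0$ for all large~$j$ and $\phi_{g_{k_j}}(x)\to\phi_g(x)$, yet this sequence may decrease to~$0$. The increment $\phi_{g_{k+1}}-\phi_{g_k}=2t_{k+1}\phi_{g_k,u_{k+1}}+t_{k+1}^2\phi_{u_{k+1}}$ is small in $L^1(\mu_0)$ because $\abs{t_{k+1}}\norm{u_{k+1}}_\dom<2^{-k}$, but nothing in the construction controls its size \emph{relative} to the current value $\phi_{g_k}(x)$ at a given point, so $\phi_g$ may vanish on a set of positive $\mu_0$-measure. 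The inference $\set{\phi_g>0}\supseteq\cup_n\set{\phi_{u_n}>0}$ $\mu_0$-a.e.\ therefore does not follow, and the obstacle you flag at the end of your note (that $\dom$-convergence controls only the $L^2(\mu_0)$-norm of $\phi^{1/2}$) is exactly the place where the argument fails; a subsequence does not rescue it.

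The Fubini idea is nevertheless the right one, applied once to the limiting object rather than step by step. Consider the family $g_t\eqdef\sum_j t_j u_j$ parametrised by $t$ in the product $T\eqdef\prod_j[-\eps_j,\eps_j]$ with $\eps_j\eqdef 2^{-j}\norm{u_j}_\dom^{-1}$, endowed with its normalized product probability measure. Since $\sum_j\eps_j\,\sq{u_j}(X)<\infty$, one checks that $\sum_j\eps_j\,\phi_{u_j}^{1/2}(x)<\infty$ for $\mu_0$-a.e.\ $x$, and hence the representation
\begin{align*}
\phi_{g_t}(x)=\sum_{i,j}t_i t_j\,\phi_{u_i,u_j}(x)
\end{align*}
holds for every $t\in T$ outside a \emph{single} $\mu_0$-null set. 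For such $x$ in $B\eqdef\cup_n\set{\phi_{u_n}>0}$, picking $n_0$ with $\phi_{u_{n_0}}(x)>0$, the slice $t_{n_0}\mapsto\phi_{g_t}(x)$ is a quadratic in $t_{n_0}$ with positive leading coefficient, hence vanishes for at most two values of $t_{n_0}$; Fubini on $T$ then shows $\set{t\in T:\phi_{g_t}(x)=0}$ is null. Applying Fubini once more in the other direction yields that for a.e.\ $t\in T$ one has $\phi_{g_t}>0$ $\mu_0$-a.e.\ on $B$, hence on $X$, i.e.\ $g_t\in\domdom$. With this repair your argument gives a self-contained alternative to the paper's citation-plus-transfer proof.
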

\begin{proof}
\iref{i:p:EDominant:1} If~$(\mbbX,\mcE)$ is regular, the statement is~\cite[Prop.~2.7]{Hin09}.
For an arbitrary Dirichlet space~$(\mbbX,\mcE)$ let~$j\colon (\mbbX,\mcE)\rar (\mbbX^\sharp,\mcE^\sharp)$ be a quasi-homeomorphism to a regular Dirichlet space~$(\mbbX^\sharp,\mcE^\sharp)$, and~$\seq{F_n}_n$, $\tseq{F^\sharp_n}_n$ be nests witnessing the quasi-homeomorphism property~\iref{i:QH:1} of~$j$. Set~$X_0\eqdef \cup_n F_n$ and recall that
\begin{align*}
\sq{j^*f^\sharp,j^*g^\sharp} A\eqdef \sq{f^\sharp,g^\sharp}^\sharp j(A\cap X_0) \comm \qquad A\in\Bo{\T} \comm \qquad f^\sharp,g^\sharp\in\dom^\sharp\fstop
\end{align*}

Let~$X^\sharp\eqdef j(X_0)=\cup_n F_n^\sharp$ and note that~$X^\sharp\setminus X_0^\sharp$ is $\mcE^\sharp$-polar. Since~$\sq{f^\sharp,g^\sharp}^\sharp\in\Mbp(\Bo{\T^\sharp},\Ne{\mcE^\sharp})$ does not charge $\mcE^\sharp$-polar sets, we have that~$\sq{f,g}=(j^{-1})_\pfwd \sq{j^{-1*}f, j^{-1*}g}^\sharp\mrestr{X_0^\sharp}$ as measures on~$X_0$, where~$j^{-1}$ denotes the inverse of~$j$ defined on~$X_0^\sharp$.
Since the push-forward of measures preserves absolute continuity,~$\sq{j^*f^\sharp}$ is minimal $\mcE$-dominant if and only if~$\sq{f^\sharp}^\sharp$ is minimal $\mcE^\sharp$-dominant. In particular,~$\domdom=j^*\domdom^\sharp$ as a consequence of the isomorphism~\iref{i:QH:3}.

\iref{i:p:EDominant:2} Let~$\mu$ be minimal $\mcE$-dominant. By definition,~$\mu\ll\sq{f}$ for every~$f\in \domdom$. The conclusion follows since~$\sq{f}$ does not charge $\mcE$-polar sets.
\end{proof}

\subsubsection{Moderance}\label{sss:Moderance}
Let~$(X,\T)$ be a paracompact completely regular Hausdorff space, and~$\kappa\subset \Bo{\T}$ be any family of Borel subsets of $X$. We say that a Borel measure~$\mu$ on~$(X,\T)$ is $\kappa$-\emph{moderate} if there exists a $\mu$-negligible set~$N\subset X$ and a countable cover~$\seq{A_n}_n\subset \kappa$ of~$X\setminus N$ so that~$\mu A_n<\infty$. As usual, if~$\kappa$ is closed under finite unions, we may arrange so that~$\seq{A_n}_n$ be increasing.
When~$\kappa=\Bo{\T}$, the definition of $\kappa$-moderance reduces to that of $\sigma$-finiteness.
When~$\kappa=\T$ and~$N=\emp$, it is standard to say that~$\mu$ is \emph{moderate}, cf.~e.g.~\cite[\S{IX}.1.9, D{\'e}finition~12, p.~21]{Bou69}.

A sequence of functions~$\seq{\phi_n}_n$ is an \emph{algebraic approximation to the identity} if
\begin{align*}
0\leq \phi_n\leq \phi_{n+1}\nearrow_n 1\comm \qquad \seq{\phi_n}_n\subset \Cb(\T)\comm
\end{align*}
in which case~$\nlim f\phi_n=f$ for every~$f\in \Cb(\T)$. Up to relabeling~$\phi_n$ by~$n\phi_n\wedge 1$ and possibly passing to a subsequence, we may and shall assume with no loss of generality that~$U_n\eqdef \inter_\T\set{\phi_n =1}\neq \emp$ defines an open covering of~$(X,\T)$.

Analogously, a sequence of functions~$\seq{\psi_n}_n$ is a \emph{latticial approximation to the identity} if
\begin{align*}
0\leq \psi_n \leq \psi_{n+1} \nearrow_n \infty\comm \qquad \seq{\psi_n}_n\subset \Cb(\T)\comm
\end{align*}
in which case~$\nlim (-\psi_n)\vee (f\wedge \psi_n)=f$ for every~$f\in \Cb(\T)$. Up to relabeling~$\psi_n$ by~$\psi_n\wedge n$ and possibly passing to a subsequence, we may and shall assume with no loss of generality that~$V_n\eqdef \inter_\T\set{\psi_n =n}\neq \emp$ defines an open covering of~$(X,\T)$ and~$\psi_n\leq n$ for every~$n\in \N$.

In this way, a bijective correspondence between algebraic and latticial approximations to the identity is induced by letting~$\psi_n\eqdef n\phi_n$.
As a consequence of paracompactness, the following are equivalent:
\begin{enumerate}[$(a)$]
\item $\mu$ is moderate; 
\item\label{i:Moderation:2} there exists an algebraic approximation to the identity~$\seq{\phi_n}_n$ so that~$\mu\phi_n<\infty$ for every~$n$;
\item\label{i:Moderation:3} there exists a latticial approximation to the identity~$\seq{\psi_n}_n$ so that~$\mu\psi_n<\infty$ for every~$n$.
\end{enumerate}
This suggests that moderance may be expressed in terms of a family~$\msK$ of real-valued functions on~$X$, rather than of a family~$\kappa$ of subsets of~$X$.
Informally,~$\mu$ is `moderate' if the elements of~$\msK$ may be `localized' to sets of finite $\mu$-measure by approximations to the identity.
Different notions of moderance may arise depending on the chosen localization procedure; for instance, if~$\msK$ is endowed with additional structure, e.g., if it is an algebra, as $\Cb(\T)$ in~\iref{i:Moderation:2}, or a lattice, as~$\Cb(\T)$ in~\iref{i:Moderation:3}.

\paragraph{Energy moderance} Further abstracting away from families of sets, an analogous concept of moderance may be given for functionals on~$\msK$ more general than $\mu$-integration. This is usually the case in describing local Dirichlet spaces, where several notions of moderance naturally appear. 
Before discussing the literature, let us give some precise definitions.
 
\begin{definition}\label{d:Nest}
 Let~$(\mbbX,\mcE)$ be a quasi-regular Dirichlet space. A countable family of Borel sets $A_\bullet\eqdef \seq{A_n}_n$ is \emph{$\mcE$-moderate} if for every~$n$ there exists~$e_n\in\dom$ so that~$\reptwo e_n=1$ $\mssm$-a.e.\ on~$A_n$.
\end{definition}
 
\begin{remark} \label{r:AriHino}
Definition~\ref{d:Nest} is modeled after~\cite{AriHin05}:
An $\mcE$-moderate Borel $\mcE$-nest is but a `\emph{nest}' in the sense of~\cite[Dfn.~2.1]{AriHin05}.
If a family of Borel sets~$A_\bullet$ is $\mcE$-moderate, we may take~$e_n\in \domb$ so that~$0\leq \reptwo e_n\leq 1$ without loss of generality, by the Markov property of~$\mcE$,~\cite[Rmk.~2.2]{AriHin05}, and additionally so that~$\reptwo e_n=1$ $\mcE$-q.e.\ on~$A_n$.
Indeed, since~$\reptwo e_n$ is $\mcE$-quasi-continuous, then~$\reptwo e_n^{-1}(\set{1})$ is $\mcE$-quasi-closed, and we have~$A_n\subset \cl_\mcE A_n\subset \reptwo e_n^{-1}(\set{1})$, where all inclusions hold up to $\mcE$-polar sets.

As a consequence of the quasi-regularity of~$(\mbbX,\mcE)$, in the language of Choquet capacities,~$A_\bullet$ is $\mcE$-moderate if and only if each~$A_n$ has finite $\mcE_1$-capacity, cf.~\cite[Thm.~2.1.5]{FukOshTak11}.
\end{remark}

For an $\mcE$-quasi-open~$E\subset X$, write
\begin{equation}\label{eq:G0}
\begin{aligned}
\msG_0(E)\eqdef&\ \set{G_\bullet \in\msG(E): G_\bullet \text{~is $\mcE$-moderate}} \comm
\\
\msG_c(E)\eqdef&\ \set{G_\bullet \in\msG_0(E) : \cl_\T G_n \text{~is $\T$-compact for all~$n$}}\fstop
\end{aligned}
\end{equation}
For~$G_\bullet\in\msG_0(E)$, we write~$e_\bullet\eqdef\seq{e_n}_n$ for any sequence of functions witnessing the $\mcE$-moderance of~$G_\bullet$. When the sequence~$e_\bullet$ is relevant, we write as well~$(G_\bullet,e_\bullet)\in\msG_0(E)$. As usual, we omit the specification of~$E=X$.
Since~$\car\in\dotloc{\dom}$ by~\eqref{eq:E(1)=0}, then~$\msG_0\neq \emp$. Clearly,~$\msG_c\subset \msG_0\subsetneq \msG$ as in~\eqref{eq:Xi0}, cf.~\cite[Lem.~3.5(iii)]{Kuw98}. On the other hand, since $\mcE$-moderance (resp.\ compactness) is hereditary (resp.\ hereditary with respect to\ closed sets), both~$\msG_0$ and~$\msG_c$ are $\cap$-ideals of~$\msG$, i.e.\ if~$G_{*,\bullet}\in\msG_*$ and~$G_\bullet\in\msG$, then~$G'_\bullet\eqdef \seq{G_{*,n}\cap G_n}_n$ satisfies~$G_\bullet'\in\msG_*$ for $*=0$ or~$c$. In particular, we have the following.

\begin{lemma}\label{l:nests}
Let~$(\mbbX,\mcE)$ be a quasi-regular Dirichlet space. Then, for every~$G_\bullet\in\msG$ there exists~$G_{c,\bullet}\in\msG_c$ so that~$\dotloc{\dom}(G_\bullet)=\dotloc{\dom}(G_{c,\bullet})$.
\end{lemma}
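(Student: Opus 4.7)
The plan is to exploit the intersection-ideal property of $\msG_c$ inside $\msG$ recorded in the paragraph preceding the lemma: once I produce any $H_\bullet \in \msG_c$, the candidate $G_{c,\bullet} \eqdef (H_n \cap G_n)_n$ automatically lies in $\msG_c$, so the construction reduces to exhibiting a single concrete $H_\bullet \in \msG_c$.

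By quasi-regularity~\iref{i:QR:1}, I obtain an $\mcE$-nest $(F_n)_n$ of $\T$-compact sets, which I may take increasing via the standard refinement of~\cite[Lem.~3.2]{Kuw98}. Transferring through a quasi-homeomorphism $(\mbbX,\mcE) \sim (\mbbX^\sharp,\mcE^\sharp)$ to a regular Dirichlet space, Urysohn's lemma in the locally compact regular setting produces $e_n^\sharp \in \Cc(\T^\sharp) \cap \dom^\sharp$ with $0 \leq e_n^\sharp \leq 1$ and $e_n^\sharp \equiv 1$ on $F_n^\sharp$; pulling back via~\iref{i:QH:3} yields $e_n \in \dom$ with $\reptwo{e_n} = 1$ $\mcE$-q.e.\ on $F_n$. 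I then set $H_n \eqdef \intE F_n$: Lemma~\ref{l:Kuwae} supplies $\bigcup_n H_n = X$ $\mcE$-q.e., monotonicity of $\intE$ makes $H_\bullet$ increasing, $\cl_\T H_n \subset F_n$ is $\T$-compact, and $\reptwo{e_n} = 1$ on $H_n$ certifies $\mcE$-moderance. Hence $H_\bullet \in \msG_c$, and so $G_{c,\bullet} = (H_n \cap G_n)_n \in \msG_c$ by the intersection-ideal property.

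For the equality $\dotloc{\dom}(G_\bullet) = \dotloc{\dom}(G_{c,\bullet})$, the inclusion $\subset$ is immediate from $G_{c,n} \subset G_n$: any witness $f_n \in \dom$ with $f_n = f$ $\mssm$-a.e.\ on $G_n$ also satisfies $f_n = f$ $\mssm$-a.e.\ on the smaller $G_{c,n}$. The reverse inclusion is the main obstacle. Starting from $f \in \dotloc{\dom}(G_{c,\bullet})$ with witnesses $(f_n) \subset \dom$ satisfying $f_n = f$ $\mssm$-a.e.\ on $H_n \cap G_n$, I would exploit that $G_N = \bigcup_{m \geq N}(H_m \cap G_N)$ $\mcE$-q.e.\ and that the witnesses are mutually consistent on overlaps (both equal $f$ there), then perform a diagonal extraction in the spirit of the proof of Lemma~\ref{l:LocLoc} to assemble, for each $N$, a single $g_N \in \dom$ with $g_N = f$ $\mssm$-a.e.\ on $G_N$; the resulting refined nest is kept inside $\msG_c$ by another application of the intersection-ideal property. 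Because $\dom$ is not closed under pointwise pasting, combining the partial data $(f_m\restriction_{H_m \cap G_N})_{m \geq N}$ into a single element of $\dom$ (rather than of $\dotloc{\dom}$) is precisely the technical heart where I expect the real work to go, and will likely lean on the weak Banach algebra property~\eqref{eq:Continuity} applied to the moderance cut-offs $\reptwo{e_m}$.
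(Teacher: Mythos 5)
Your construction of a member of $\msG_c$ is valid, but the detour through a quasi-homeomorphism to a regular form and Urysohn's lemma is unnecessary: since $\car\in\dotloc{\dom}$, a nest $G'_\bullet\in\msG_0$ is already available, and the paper simply takes a $\T$-compact increasing $\mcE$-nest $F_\bullet$ from~\iref{i:QR:1}, sets $G''_n\eqdef\intE F_n$, and puts $G_{c,n}\eqdef G_n\cap G'_n\cap G''_n$, which lies in $\msG_c$ by the $\cap$-ideal property recorded just before the lemma. Your observation that $G_{c,n}\subset G_n$ forces $\dotloc{\dom}(G_\bullet)\subset\dotloc{\dom}(G_{c,\bullet})$ is likewise correct, and it is exactly the content of the paper's own proof.

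The place where you get stuck, however, is not a gap to be filled: the reverse inclusion is \emph{false} in general, and the paper's proof does not claim to establish it --- it only proves the forward inclusion. For a counterexample take the Dirichlet form of standard Brownian motion on $\R$ and the constant nest $G_n\eqdef\R$ for all $n$, so that $\dotloc{\dom}(G_\bullet)=\dom=H^1(\R)$; any $G_{c,\bullet}\in\msG_c$ has $\cl_\T G_{c,n}$ compact, and a cut-off argument yields $\dotloc{\dom}(G_{c,\bullet})=H^1_{\loc}(\R)\supsetneq H^1(\R)$. This is consistent with Remark~\ref{r:AriyoshiHino}, which explicitly notes that the broad local space depends on the choice of nest. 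The only downstream use of the lemma is~\eqref{eq:DotLocGc}, and for that equality only the forward inclusion (together with the trivial $\msG_c\subset\msG$) is required; so read the lemma as asserting an inclusion and abandon the diagonal-extraction programme. The ``pointwise pasting'' of partial witnesses into a single element of $\dom$ that you correctly identified as the obstruction simply does not exist in general.
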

\begin{proof}
Let~$G_\bullet\in\msG$ and~$f\in \dotloc{\dom}(G_\bullet)$ be witnessed by~$f_\bullet$. Further let~$G_\bullet'\in\msG_0$ be witnessing that~$\car\in\dotloc{\dom}$, and~$F_\bullet$ be a $\T$-compact increasing $\mcE$-nest given by~\iref{i:QR:1}. We have that~$G_n'' \eqdef \intE F_n$ is $\mcE$-q.e.\ increasing in~$n$ and $\mcE$-quasi-open. Furthermore,~$X=\cup_n G_n''$ $\mcE$-q.e.\ by Lemma~\ref{l:Kuwae}, thus $G_\bullet''\eqdef\seq{G_n''}_n\in\msG$.
Set~$G_{c,n}\eqdef G_n\cap G_n'\cap G_n''$. It is readily verified that~$G_{c,\bullet}\in\msG_c$. Furthermore,~$f_n= f$ $\mssm$-a.e.\ on~$G_{c,n}$ and so~$f_\bullet$ witnesses that~$f\in\dotloc{\dom}(G_{c,\bullet})$ as well.
\end{proof}

As a consequence of the previous lemma,~$\msG_c\neq\emp$, and
\begin{align}\label{eq:DotLocGc}
\dotloc{\dom}=\bigcup_{G_\bullet\in \msG_c} \dotloc{\dom}(G_\bullet) \fstop
\end{align}

A minimal $\mcE$-dominant measure~$\mu$ may not be moderate in any reasonable sense, despite its minimality. In order to ensure the existence of sufficiently many $\mu$-integrable functions of interest, we shall need the following definition.

\begin{definition}[$\mcE$-moderance]\label{d:Moderance}
Let~$(\mbbX,\mcE)$ be a quasi-regular Dirichlet space. For a measure~$\mu\in\Msp(\Bo{\T},\Ne{\mcE})$ we say that $(G_\bullet,e_\bullet)\in\msG_0$ is \emph{$\mu$-moderated} if~$e_\bullet$ is additionally so that~$\mu \reptwo e_n <\infty$ for every~$n$.
We say that~$\mu$ is:
\begin{itemize}
\item \emph{$\mcE$-moderate} if there exists a $\mu$-moderated $G_\bullet\in\msG_0$;
\item \emph{absolutely $\mcE$-moderate} if for every $G_\bullet\in\msG_0$ there exists~$e_\bullet$ so that~$(G_\bullet, e_\bullet)$ is $\mu$-moderated.
\end{itemize}

We denote by~$\mcM$ (resp.~$\mcM_0$) the space of all (absolutely) $\mcE$-moderate measures.
\end{definition}

Since $\mcE$-moderate measures do not charge $\mcE$-polar sets by definition, then~$\mu(X\setminus \cup_n G_n)=0$ for every $G_\bullet\eqdef\seq{G_n}_n\in\msG$.
Therefore, $\mcE$-moderate measures are $\kappa$-moderate for the family~$\kappa$ of all $\mcE$-quasi-open subsets of~$X$, which motivates the terminology.
If~$(G_\bullet,e_\bullet)$ is $\mu$-moderated, then $e_\bullet$ is an algebraic approximation to the identity in the $\mcE$-q.e.\ sense ---~analogously to the purely topological case discussed in the previous paragraph.
The absolute $\mcE$-moderance of~$\mssm$ is implicit in the definition of~$\msG_0$:
if~$(G_\bullet,e_\bullet)\in\msG_0$, then~$(G_\bullet, e_\bullet^2)$ is $\mssm$-moderated.
Finally, since~$2\,\sq{f}\reptwo h=\sqf{f}(h)$ is finite for every~$f,h\in\domb$, by Proposition~\ref{p:EDominant} there exist (plenty of) $\mcE$-moderate minimal $\mcE$-dominant (Radon) measures, which makes the definition non-void.

\subsubsection{Smoothness}\label{sss:Smoothness}
Let us compare $\mcE$-moderance with the following well-known definition.

\begin{definition}\label{d:Smooth}
Let~$(\mbbX,\mcE)$ be a quasi-regular Dirichlet space.
A measure~$\mu\in\Msp(\Bo{\T}, \Ne{\mcE})$ is:
\begin{itemize}
\item \emph{of finite $\mcE$-energy integral} if there exists a constant~$c>0$ so that, for any $\mcE$-quasi-continuous $\mssm$-representative $\reptwo u$ of~$u$,
\begin{align}\label{eq:DongMa}
\int \abs{\reptwo u} \diff \mu \leq c\, \sqrt{\mcE_1(u)} \comm \qquad u\in\dom\semicolon
\end{align}
\item $\mcE$-\emph{smooth} if there exists a compact $\mcE$-nest~$F_\bullet$ so that~$\mu F_n<\infty$ for every~$n$.
\end{itemize}
We denote by~$\mcS_0$, resp.~$\mcS$, the space of all measures of finite $\mcE$-energy integral, resp.\ $\mcE$-smooth measures.
\end{definition}

The above definition of `measure of finite $\mcE$-integral' is taken from~\cite[Dfn.~1]{DonMa93}. The fact that~$\mu$ does not charge $\mcE$-polar sets is in fact a consequence of~\eqref{eq:DongMa} rather than part of the definition; cf.~\cite[Lem.~2.2.3, p.~79]{FukOshTak11} in the regular case.
Contrary to the definition for regular Dirichlet spaces~\cite[p.~77]{FukOshTak11}, one does not require a measure of finite $\mcE$-energy integral to be Radon.
The above definition of `$\mcE$-smooth measure' is taken from~\cite[Dfn.~2.2]{RoeSch95} or~\cite[Dfn.~2]{DonMa93}. It coincides with the more standard definition~\cite[p.~123]{MaRoe92} on any Dirichlet space satisfying~\iref{i:QR:1}.
It is a standard result in the theory that $\mcE$-smooth measures are in one-to-one correspondence with \emph{positive continuous additive functionals} on the Dirichlet space~$(\mcE,\dom)$, e.g.~\cite[\S{VI.2} and Thm.~VI.2.4]{MaRoe92}.

For every $\mcE$-smooth~$\mu$, a quadratic form~$(\mcE^\mu,\dom^\mu)$ is induced on~$X$ by setting
\begin{align}\label{eq:Perturbed}
\dom^\mu\eqdef \tset{f\in\dom : \reptwo f\in \mcL^2(\mu)}\comm \qquad \mcE^\mu(f,g)\eqdef \mcE(f,g)+\int \reptwo f \,\reptwo g\, \diff\mu \fstop
\end{align}
The form~$(\mcE^\mu,\dom^\mu)$ is in fact a Dirichlet form, called the $\mu$-perturbation of~$(\mcE,\dom)$, and, if~$(\mbbX,\mcE)$ is a quasi-regular strongly local Dirichlet space, then~$(\mbbX,\mcE^\mu)$ is so as well, for every $\mcE$-smooth~$\mu$, e.g.~\cite[Prop.~2.3]{RoeSch95}.
By definition of~$\dom^\mu$, there is a natural inclusion~$\dom^\mu\hookrightarrow\dom$ and therefore~$\dotloc{(\dom^\mu)}\hookrightarrow \dotloc{\dom}$, by which we shall always mean that~$\dom^\mu\subset\dom$ up to the choice of suitable representatives.

\begin{proposition}\label{p:Smooth}
Let~$(\mbbX,\mcE)$ be a quasi-regular Dirichlet space. Then,
\begin{equation*}
\begin{aligned}
\xymatrix@=10pt{\mcM_0 \ar@{}[r]|-*{\subset} & \mcM_{\phantom{0}} \\ \mcS_0 \ar@{}[u]|-*{\cup} \ar@{}[r]|-*{\subset} & \mcS_{\phantom{0}} \ar@{}[u]|-*{\cup\,\,}
}
\end{aligned} \fstop
\end{equation*}
\end{proposition}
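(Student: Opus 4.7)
The plan is to verify the four inclusions of Proposition~\ref{p:Smooth} separately, in increasing order of difficulty.

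First, the inclusion~$\mcM_0\subset\mcM$ is formal. By~\eqref{eq:E(1)=0} the constant function~$\car$ lies in~$\dotloc{\dom}$, hence $\msG_0\neq\emp$ (indeed $\msG_c\neq\emp$ by Lemma~\ref{l:nests}); the absolute-moderance condition defining~$\mcM_0$, specialized to any one element of~$\msG_0$, already delivers a $\mu$-moderated pair, so~$\mu\in\mcM$.

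Next I would treat $\mcS_0\subset\mcS$ and $\mcS_0\subset\mcM_0$ by a single argument combining~\eqref{eq:DongMa} with Remark~\ref{r:AriHino}. For any Borel set~$B$ of finite $\mcE_1$-capacity, Remark~\ref{r:AriHino} together with the Markov property supplies a representative~$e_B\in\dom$ with $0\leq \reptwo e_B\leq 1$ and $\reptwo e_B=1$ $\mcE$-q.e.\ on~$B$; when $\mu\in\mcS_0$, inequality~\eqref{eq:DongMa} immediately gives
\begin{align*}
\mu\reptwo e_B\leq c\sqrt{\mcE_1(e_B)}<\infty\fstop
\end{align*}
Applying this to a compact $\mcE$-nest supplied by~\iref{i:QR:1} (which is automatically $\mcE$-moderate, hence of finite $\mcE_1$-capacity, by Remark~\ref{r:AriHino}) yields a nest with $\mu F_n\leq \mu\reptwo e_{F_n}<\infty$, so $\mu\in\mcS$. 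Applying the same inequality with $B=G_n$ to an arbitrary $G_\bullet\in\msG_0$, after truncating the witnesses of $\mcE$-moderance of $G_\bullet$ into $[0,1]$ by the Markov property, yields a $\mu$-moderating $e_\bullet$, so $\mu\in\mcM_0$.

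The remaining inclusion~$\mcS\subset\mcM$ is the only one requiring real work, because for a generic~$\mu\in\mcS$ the capacitary bound~\eqref{eq:DongMa} is unavailable and the witness $e_n$ must instead be forced to have $\mu$-integrable support. Starting from a compact $\mcE$-nest~$F_\bullet$ with $\mu F_n<\infty$, I would first refine~$F_\bullet$ (by a subsequence argument based on Lemma~\ref{l:Kuwae}, which lets me thicken each~$F_n$ into the $\mcE$-quasi-interior of a later element) in order to arrange $F_n\subset\intE F_{n+1}$ $\mcE$-q.e. Setting $G_n\eqdef\intE F_n$ then gives $G_\bullet\in\msG$ by Lemma~\ref{l:Kuwae}, and the task reduces to constructing, for each~$n$, a function~$e_n\in\domb$ with $\reptwo e_n=1$ $\mssm$-a.e.\ on~$G_n$ and $\reptwo e_n=0$ $\mssm$-a.e.\ outside~$F_{n+1}$; this immediately yields $\mu\reptwo e_n\leq \mu F_{n+1}<\infty$ and hence $\mu\in\mcM$. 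To produce such~$e_n$ I would pass via quasi-homeomorphism to a regular Dirichlet space, in which~$F_n$ and~$X\setminus\intE F_{n+1}$ are disjoint closed sets with the first one compact, and then combine Urysohn's lemma, density of $\Cc(\T)\cap\dom$ in~$\Cc(\T)$, an equilibrium-type representative equal to~$1$ $\mcE$-q.e.\ on~$F_n$, the weak Banach algebra property~\eqref{eq:Continuity}, and a Markov truncation; finally transport the result back through the quasi-homeomorphism. I expect the main obstacle to be precisely this last construction: producing an honest element of~$\domb$ (rather than of $\Cc(\T)$ alone) that still satisfies the required pointwise equalities on both~$F_n$ and~$X\setminus\intE F_{n+1}$, which is why the preliminary refinement making the two sets quasi-topologically well-separated is essential.
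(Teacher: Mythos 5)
Your treatment of $\mcM_0\subset\mcM$, $\mcS_0\subset\mcS$ and $\mcS_0\subset\mcM_0$ is correct and essentially matches what the paper leaves to the cited references.

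Your plan for $\mcS\subset\mcM$ is genuinely different from the paper's and has two concrete gaps. First, $X\setminus\intE F_{n+1}$ is only \emph{quasi}-closed: $\intE F_{n+1}$ is $\mcE$-quasi-open and may strictly contain the topological interior of $F_{n+1}$ even after passing to a quasi-homeomorphic regular Dirichlet space, so the pair you want to separate is not a pair of disjoint closed sets and classical Urysohn does not apply --- the right tool is the quasi-Urysohn of \cite[Lem.~3.5(i)]{Kuw98}, already invoked in the proof of Lemma~\ref{l:ConvMeasure}. Second, the telescoping $F_n\subset\intE F_{n+1}$ $\mcE$-q.e.\ does not follow from Lemma~\ref{l:Kuwae}: that lemma gives $\cup_m\intE F_m=X$ $\mcE$-q.e., but the $\intE F_m$ are not open, so compactness of $F_n$ produces no finite subcover, which is the standard mechanism for such a refinement; you would have to use the nest-refinement \cite[Lem.~3.2]{Kuw98} instead. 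Even then, producing a \emph{single} $e_n\in\domb$ with $\reptwo e_n=1$ on $G_n$ and with support inside $F_{n+1}$ is an equilibrium-potential construction, requiring finite $\mcE_1$-capacity of $F_{n+1}$ and careful transport through the quasi-homeomorphism. The paper sidesteps all of this with the $\mu$-perturbed form $(\mcE^\mu,\dom^\mu)$ of~\eqref{eq:Perturbed}: since $\dom^\mu\subset L^2(\mu)$ by construction, the cut-offs supplied by \cite[Lem.~3.5(iii)]{Kuw98} for $\mcE^\mu$ are automatically $\mu$-integrable after squaring, with no support control at all, and the identity of $\mcE$- and $\mcE^\mu$-nests recorded at~\eqref{eq:p:Smooth:1} carries them back to $\msG_0$ for $\mcE$. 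That structural shortcut is exactly what your direct construction forgoes.
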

\begin{proof}
The inclusion~$\mcS_0\subset \mcM_0$ is straightforward, while~$\mcM_0\subset \mcM$ holds by definition. The inclusion~$\mcS_0\subset \mcS$ is standard in the regular case, e.g.~\cite[p.~84]{FukOshTak11}; see~\cite{DonMa93} for the quasi-regular case.
In order to show that~$\mcS\subset\mcM$, let~$\mu\in\mcS$.
Since~$(\mcE,\dom)$ is quasi-regular, the $\mu$-perturbed form $(\mcE^\mu,\dom^\mu)$ in~\eqref{eq:Perturbed} is quasi-regular as well, and for any sequence~$F_\bullet$ of $\T$-closed sets,
\begin{align}\label{eq:p:Smooth:1}
\text{$F_\bullet$ is an $\mcE$-nest} \qquad \text{if and only if} \qquad \text{$F_\bullet$ is an $\mcE^\mu$-nest} \comm
\end{align}
see the proof of~\cite[Prop.~2.3]{MaRoe92}, also cf.~\cite[Lem.~IV.4.5]{MaRoe92} for the regular case.
It follows that the family~$\msG^\mu$ as in~\eqref{eq:Xi0} relative to~$(\mcE^\mu,\dom^\mu)$ is in fact independent of~$\mu\in\mcS$.
We may therefore write~$\msG$ in place of~$\msG^\mu$.

By~\cite[Lem.~3.5(iii)]{Kuw98}, there exists~$G'_\bullet\in\msG$ and a sequence of functions~$\reptwo e_n^\mu\in \dom^\mu$ satisfying~$0\leq \reptwo e_n^\mu\leq 1$ and~$\reptwo e_n^\mu=1$ $\mcE^\mu$-q.e.\ on~$G'_n$. Setting~$F'_n\eqdef \cl_\T G'_n$, we have that~$F'_\bullet$ is an $\mcE^\mu$-nest, since~$G'_\bullet\in\msG$. 
Let~$\msG_0^\mu$ be defined as in~\eqref{eq:G0} with~$\mcE^\mu$ in place of~$\mcE$. Since~\iref{i:QR:1} holds for~$(\mcE^\mu,\dom^\mu)$, by~\cite[Lem.~3.2]{Kuw98} there exists a compact $\mcE^\mu$-nest~$F_\bullet$ refining~$F'_\bullet$.
Thus, by Lemma~\ref{l:Kuwae},~$G_n\eqdef \inter_\mcE F_n$ satisfies~$(G_\bullet, \reptwo e^\mu_\bullet)\in\msG_0^\mu$, and~$G_\bullet\in\msG$. In fact, since~$\dom^\mu \hookrightarrow \dom$, we additionally  have that~$G_\bullet\in \msG_0$.
The proof is concluded if we show that~$(G_\bullet, \reptwo e^\mu_\bullet)$ is $\mu$-moderated. Up to possibly replacing~$\reptwo e^\mu_\bullet$ by its square, we have~$\mu\reptwo e^\mu_n<\infty$, since~$\dom^\mu\subset L^2(\mu)$ by definition.
\end{proof}

Let us note that many inclusions in Proposition~\ref{p:Smooth} become trivial if one further assumes the involved measures to be Radon. In particular, it is clear from the proof that every $\mcE$-moderate Radon measure is~$\mcE$-smooth. For examples of non-Radon $\mcE$-smooth measures, see Example~\ref{ese:SingularPerturb} below.

\subsection{Intrinsic distances} 
In this section we introduce the generalized intrinsic distance induced %on a topological Dirichlet space
by a Borel measure~$\mu$. We start with the definition of local domains of functions with $\mu$-bounded energy measure.

\subsubsection{Local domains of bounded-energy}\label{sss:LocDom}
Let~$(\mbbX,\mcE)$ be a quasi-regular strongly local Dirichlet space, and~$\mu\in\Msp(\Bo{\T},\Ne{\mcE})$. For $\mcE$-quasi-open~$E\subset X$ and any $G_\bullet\in\msG_0(E)$, set
\begin{align*}
\DzLoc{\mu}(E,G_\bullet)\eqdef& \set{f\in \dotloc{\dom}(E,G_\bullet): \sq{f}\leq \mu}\comm \qquad \DzLoc{\mu}(E)\eqdef \tset{f\in \dotloc{\dom}(E): \sq{f}\leq \mu}\comm
\\
\Dz{\mu}(E)\eqdef& \DzLoc{\mu}(E)\cap \dom\comm \quad \ \DzLoc{\mu,\T}(E)\eqdef \DzLoc{\mu}(E)\cap \Cont(E,\T) \comm \quad \ \Dz{\mu,\T}(E)\eqdef \DzLoc{\mu,\T}(E)\cap \dom \fstop
\end{align*}
As usual, we omit~$E$ from the notation whenever~$E=X$. For~$G_\bullet\in\msG_0(E)$ we additionally denote by
\begin{align*}
\DzLocB{\mu}(E,G_\bullet)\eqdef \DzLoc{\mu}(E,G_\bullet)\cap L^\infty(\mssm)
\end{align*}
the space of $\mssm$-essentially uniformly bounded functions in~$\DzLoc{\mu}(E,G_\bullet)$.
Let the analogous definitions for~$\DzLocB{\mu}(E)$,~$\DzB{\mu}(E)$,~$\DzLocB{\mu,\T}(E)$, and~$\DzB{\mu,\T}(E)$ be given.

The next results are an adaptation to our setting of~\cite[Lem.~3.8, Prop.~3.9]{AriHin05}.
\begin{lemma}\label{l:AriyoshiHino}
Let~$(\mbbX,\mcE)$ be a quasi-regular strongly local Dirichlet space, and~$\mu\in\Msp(\Bo{\T},\Ne{\mcE})$. Further let~$G_\bullet\in\msG_0$, and fix~$f\in\DzLocB{\mu}(G_\bullet)$ and~$g\in\domb$ with~$\reptwo g\in L^2(\mu)$. Then~$fg\in\dom$ and $\norm{fg}_\dom\leq \sqrt{2}\norm{f}_{L^\infty(\mssm)}\norm{g}_\dom+\norm{\reptwo g}_{L^2(\mu)}$.
\end{lemma}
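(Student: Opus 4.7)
The plan is to realize $fg$ as an element of $\dotloc{\dom}(G_\bullet)$ via the weak Banach-algebra property, bound $\sq{fg}(X)$ through a local Leibniz--Cauchy--Schwarz computation, and then upgrade the membership to $\dom$ via an $\mcE_1$-Cauchy approximation.

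Concretely, I would choose witnesses $f_n\in\dom$ for $f\in\dotloc{\dom}(G_\bullet)$, so that $f_n=f$ $\mssm$-a.e.\ on $G_n$. By the Markov property of $\mcE$ and Remark~\ref{r:AriHino}, we may arrange $\norm{f_n}_{L^\infty(\mssm)}\leq C\eqdef\norm{f}_{L^\infty(\mssm)}$, so $f_n\in\domb$. The weak Banach-algebra property~\eqref{eq:Continuity} yields $f_n g\in\domb\subset\dom$; since $f_n g=fg$ $\mssm$-a.e.\ on $G_n$, this identifies $fg$ as an element of $\dotloc{\dom}(G_\bullet)\cap L^2(\mssm)$ with $\norm{fg}_{L^2(\mssm)}\leq C\norm{g}_{L^2(\mssm)}$. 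By strong locality~\eqref{eq:SLoc:2}, $\sq{fg}\mrestr G_n=\sq{f_n g}\mrestr G_n$ and $\sq{f_n}\mrestr G_n=\sq{f}\mrestr G_n\leq\mu\mrestr G_n$, while the Leibniz rule~\eqref{eq:Leibniz} and Cauchy--Schwarz~\eqref{eq:CS} give
\begin{align*}
\sq{fg}(G_n)\leq\ttonde{\sqrt{\textstyle\int_{G_n} f_n^2\diff\sq{g}}+\sqrt{\textstyle\int_{G_n} g^2\diff\sq{f_n}}}^2\leq\ttonde{C\sqrt{2\mcE(g)}+\norm{\reptwo g}_{L^2(\mu)}}^2\fstop
\end{align*}
Passing $n\to\infty$, since $\bigcup_n G_n$ is co-$\mcE$-polar and $\sq{fg}$ does not charge $\mcE$-polar sets, the same bound propagates to $\sq{fg}(X)$ in the limit.

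To upgrade $fg$ from $\dotloc{\dom}\cap L^2(\mssm)$ with finite total energy to $\dom$, I would consider the approximation $h_n\eqdef f_n g e_n\in\domb\subset\dom$, where $e_n\in\dom$ witness the $\mcE$-moderance of $G_\bullet$ (with $\reptwo e_n=1$ $\mcE$-q.e.\ on $G_n$ and $0\leq\reptwo e_n\leq 1$, cf.\ Remark~\ref{r:AriHino}). Then $h_n\to fg$ in $L^2(\mssm)$ by dominated convergence (since $\abs{h_n}\leq C\abs{g}$, $g\in L^2(\mssm)$), and for $m\leq n$ the difference $h_n-h_m$ vanishes $\mssm$-a.e.\ on $G_m$; by strong locality, $\sq{h_n-h_m}$ concentrates on $X\setminus G_m$, and a localized Leibniz/Cauchy--Schwarz estimate exploiting the vanishing of $\sq{g}(X\setminus G_m)$ and $\int_{X\setminus G_m} g^2\diff\mu$ (both finite, the latter by $\reptwo g\in L^2(\mu)$) yields that $(h_n)_n$ is $\mcE_1$-Cauchy. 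Consequently $fg\in\dom$ with $2\mcE(fg)=\sq{fg}(X)$, and the claimed norm estimate follows from
\begin{align*}
\norm{fg}_\dom\leq\sqrt{\mcE(fg)}+\norm{fg}_{L^2(\mssm)}\leq C\sqrt{\mcE(g)}+\tfrac{1}{\sqrt{2}}\norm{\reptwo g}_{L^2(\mu)}+C\norm{g}_{L^2(\mssm)}
\end{align*}
together with the elementary bound $\sqrt{\mcE(g)}+\norm{g}_{L^2(\mssm)}\leq\sqrt{2}\norm{g}_\dom$.

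The main technical obstacle is controlling $\sq{h_n-h_m}$ on $X\setminus G_m$ uniformly in $n\geq m$: the raw choice $h_n=f_n g$ fails to be $\mcE_1$-Cauchy since $\sq{f_n}$ is a priori uncontrolled off $G_n$, and this is overcome by inserting the cut-off $e_n$ from the $\mcE$-moderance of $G_\bullet$, which forces the $\mcE$-Cauchy differences into regions where the reference measures $\sq{g}$ and $g^2\cdot\mu$ have vanishing mass, sidestepping the global uncontrollability of the witnesses~$f_n$.
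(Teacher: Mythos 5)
Your route diverges from the paper's precisely at the decisive step of promoting $fg$ from $\dotloc{\dom}(G_\bullet)\cap L^2(\mssm)$ with $\sq{fg}(X)<\infty$ to the domain $\dom$. The paper does \emph{not} first bound $\sq{fg}(X)$ and then upgrade via a Cauchy argument: it shows that the sequence $(f_ng)_n\subset\domb$ is \emph{uniformly $\mcE_1$-bounded} (via the Leibniz rule, Cauchy--Schwarz, and the bound $\int\reptwo g^2\diff\sq{f_n}\leq\norm{\reptwo g}_{L^2(\mu)}^2$), converges to $fg$ in $L^2(\mssm)$, and invokes the weak-compactness lemma \cite[Lem.~I.2.12]{MaRoe92}. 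No $\mcE_1$-Cauchy property for $(f_ng)_n$, nor for any other sequence, is claimed or used. Your observation that $\sq{f_n}\leq\mu$ is granted only on $G_n$ (since $\sq{f_n}\mrestr{G_n}=\sq{f}\mrestr{G_n}\leq\mu$ by strong locality) and not globally is pertinent, and your local estimate of $\sq{fg}(G_n)$ followed by monotone passage to $\sq{fg}(X)$ is correct and more careful than the paper's display.

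However, your proposed $\mcE_1$-Cauchy upgrade does not close. Expanding $\sq{h_n}$ for $h_n=f_nge_n$ by the Leibniz rule and localizing to $X\setminus G_m$ yields, in addition to terms governed by the tails $\sq{g}(X\setminus G_m)$ and $\int_{X\setminus G_m}\reptwo g^2\diff\mu$, also (a) $\int_{X\setminus G_m}\reptwo f_n^2\reptwo g^2\diff\sq{e_n}\leq 2\norm{f}_{L^\infty(\mssm)}^2\norm{g}_{L^\infty(\mssm)}^2\,\mcE(e_n)$, which is finite but does \emph{not} vanish as $m\to\infty$; and (b) the contribution of $\reptwo g^2\reptwo e_n^2\,\sq{f_n}$ on $(X\setminus G_m)\setminus G_n$, where neither $\sq{f_n}\leq\mu$ nor $\reptwo e_n=0$ is available: the functions $e_n$ witnessing $\mcE$-moderance satisfy $\reptwo e_n=1$ on $G_n$ and $0\leq\reptwo e_n\leq 1$, but are \emph{not} required to vanish off $G_n$ or off any larger set. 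The very obstruction you correctly identify for $f_n g$ — that $\sq{f_n}$ is a priori uncontrolled off $G_n$ — therefore persists verbatim for $h_n=f_nge_n$, and the claim that $(h_n)_n$ is $\mcE_1$-Cauchy is not substantiated. You would need cut-offs with controlled energy measure and compact-in-$G_\bullet$ support, neither of which is supplied by the definition of $\msG_0$.
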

\begin{proof}
Let~$\seq{f_n}_n$ be witnessing that~$f\in\DzLocB{\mu}(G_\bullet)$, and note that we may take~$\abs{f_n}\leq \norm{f}_{L^\infty(\mssm)}$ without loss of generality. By~\eqref{eq:Leibniz},
\begin{equation}\label{eq:l:AriyoshiHino:1}
\begin{aligned}
2\, \mcE(f_n g)\leq& \int \reptwo g^2\diff\sq{f_n}+\int \reptwo f_n^2 \diff\sq{g}\leq \norm{\reptwo g}_{L^2(\mu)}^2+2\norm{f}_{L^\infty(\mssm)}^2\, \mcE(g) \comm
\\
\norm{f_n g}_{L^2(\mssm)}\leq& \norm{f}_{L^\infty(\mssm)}\norm{g}_{L^2(\mssm)} \fstop
\end{aligned}
\end{equation}
In particular,~$\sup_n \norm{f_n g}_\dom<\infty$. Since additionally $\mssm$-a.e.-$\nlim f_n g=fg$, then~$L^2(\mssm)$-$\nlim f_n g= fg$ by Dominated Convergence, with dominating function~$\norm{f}_{L^\infty(\mssm)} \abs{g}$. Thus, by \cite[Lem.~I.2.12]{MaRoe92}, $f g\in\dom$ and
\begin{align*}
\norm{fg}_\dom^2 \leq \nliminf \norm{f_n g}_\dom^2 \leq \tfrac{1}{2}\norm{\reptwo g}_{L^2(\mu)}^2+\norm{f}_{L^\infty(\mssm)}^2\, \mcE(g)+\norm{f}_{L^\infty(\mssm)}^2\norm{g}_{L^2(\mssm)}^2
\end{align*}
by~\eqref{eq:l:AriyoshiHino:1}, whence the conclusion follows.
\end{proof}

We are now ready to show that~$\DzLocB{\mu}(G_\bullet)$ does not, in fact, depend on~$G_\bullet$. 

\begin{proposition}\label{p:AriyoshiHino}
Let~$(\mbbX,\mcE)$ be a quasi-regular strongly local Dirichlet space,~$\mu\in\Msp(\Bo{\T},\Ne{\mcE})$. Then, for any pair of $\mu$-moderated $G_\bullet$,~$G_\bullet'\in\msG_0$,
\begin{align*}
\DzLocB{\mu}(G_\bullet)=\DzLocB{\mu}(G_\bullet') \fstop
\end{align*} 
\end{proposition}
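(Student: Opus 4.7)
The plan is to show that any $f\in\DzLocB{\mu}(G_\bullet)$ belongs to $\DzLocB{\mu}(G_\bullet')$; the reverse inclusion follows by symmetry. Since $\sq{f}$ is defined on all of $\dotloc{\dom}$ and is independent of the nest representing $f$ (cf.\ Proposition~\ref{p:PropertiesLoc} and its proof), the condition $\sq{f}\leq \mu$ is automatically preserved; so the real content is to produce a representing sequence $f'_n\in\dom$ for~$f$ with respect to~$G_\bullet'$.

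First I would invoke Remark~\ref{r:AriHino} to pick, for the $\mu$-moderated pair $(G_\bullet',e_\bullet')\in\msG_0$, representatives $e'_n\in\domb$ with $0\leq \reptwo{e'_n}\leq 1$, $\reptwo{e'_n}=1$ $\mcE$-q.e.\ (hence $\mssm$-a.e.) on $G'_n$, and $\mu\reptwo{e'_n}<\infty$. The last estimate together with $0\leq \reptwo{e'_n}\leq 1$ gives $\reptwo{e'_n}^{2}\leq \reptwo{e'_n}$ $\mu$-a.e., so $\reptwo{e'_n}\in L^2(\mu)$.

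Next I would apply Lemma~\ref{l:AriyoshiHino} to the pair $(f,e'_n)$, with $f\in\DzLocB{\mu}(G_\bullet)$ and $g\defeq e'_n\in\domb$. This immediately produces $f'_n\defeq f\cdot e'_n \in \dom$. Because $\reptwo{e'_n}=1$ $\mssm$-a.e.\ on $G'_n$, we have $f'_n = f$ $\mssm$-a.e.\ on $G'_n$, so the sequence $(f'_n)_n\subset \dom$ and the nest $G'_\bullet$ witness $f\in\dotloc{\dom}(G'_\bullet)$. Since $f\in L^\infty(\mssm)$ and $\sq{f}\leq \mu$ (unchanged by the choice of representation, thanks to strong locality~\eqref{eq:SLoc:2}), we conclude $f\in\DzLocB{\mu}(G'_\bullet)$.

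I do not expect any substantial obstacle: the work has already been absorbed into Lemma~\ref{l:AriyoshiHino} and the construction of $\sq{\emparg}$ on the broad local space. The only mild point to be careful about is the passage from the $\mcE$-q.e.\ identity $\reptwo{e'_n}=1$ on $G'_n$ to the $\mssm$-a.e.\ identity needed to recognize $f'_n=f$ on $G'_n$ as elements of $L^0(\mssm)$; this is immediate since $\mcE$-polar sets are $\mssm$-negligible.
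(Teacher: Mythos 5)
Your proof is correct and follows essentially the same route as the paper's: reduce the problem to showing that $f$, viewed in $\DzLocB{\mu}(G_\bullet)$, also lies in $\dotloc{\dom}(G'_\bullet)$, and obtain the representing sequence by multiplying $f$ with cut-off functions taken from the $\mu$-moderance of $G'_\bullet$, invoking Lemma~\ref{l:AriyoshiHino}. The only cosmetic difference is that the paper multiplies by $e_n^2$ rather than $e_n$, and proves the inclusion in the opposite direction; both variants work for the same reasons you give, including the well-definedness of $\sq{f}$ on $\dotloc{\dom}$ independently of the nest.
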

\begin{proof}
Let~$f\in\DzLocB{\mu}(G_\bullet')$. It suffices to show that~$f\in\DzLocB{\mu}(G_\bullet)$. To this end, let~$e_\bullet\eqdef\seq{e_n}_n\subset \domb$ be witnessing that $G_\bullet$ is $\mu$-moderated, and set~$f_n\eqdef f\cdot e_n^2$. Note that~$e_n^2\in L^1(\mssm)\cap L^\infty(\mssm)$, and therefore~$e_n^2\in L^2(\mssm)$ for all~$n$. Since~$f_n\in\dom$ by Lemma~\ref{l:AriyoshiHino}, and since~$f=f_n$ $\mssm$-a.e.\ on~$G_n$ by definition, then~$f\in\dotloc{\mcF}(G_\bullet)$, and the conclusion follows.
\end{proof}

\begin{remark}\label{r:AriyoshiHino}
The independence of~$\DzLocB{\mu}(G_\bullet)$ on~$G_\bullet$ is a feature of the subspace of \emph{bounded} functions in the local domain of bounded energy. In general,~$\DzLoc{\mu}(G_\bullet)$ \emph{does depend} on~$G_\bullet$, as shown by~\cite[Ex.~2.9(iii)]{AriHin05}.
\end{remark}

\begin{definition}
Let~$\mu\in\Msp(\Bo{\T},\Ne{\mcE})$. The \emph{intrinsic distance generated by~$\mu$} is the extended pseudo-distance~$\mssd_\mu\colon X^{\times 2}\rar [0,\infty]$ defined as
\begin{align}\label{eq:IntrinsicD}
\mssd_\mu(x,y)\eqdef \sup\tset{f(x)-f(y) : f\in \DzLocB{\mu,\T}} \fstop
\end{align}
\end{definition}

Note that~$\mssd_\mu$ is always $\T^\tym{2}$-l.s.c., hence $\Bo{\T^\tym{2}}$- and $\A^{\otym 2}$-measurable, for it is the supremum of a family of $\T^\tym{2}$-continuous functions. Furthermore,~$\mssd_\mu$ is $\T$-admissible by definition, as witnessed by the bounded uniformity
\begin{align*}
\UP_\mu\eqdef \set{\mssd_f(x,y)\eqdef \abs{f(x)-f(y)}: f\in\DzLocB{\mu,\T}}\fstop
\end{align*}

\begin{remark}
Intrinsic distances are occasionally defined by means of a family~$\mcC$ of continuous functions, satisfying the symmetry condition~$-\mcC\subset\mcC$, see e.g.~\cite{Ebe96,Kuw96}. Whereas we do not explicitly allow for such generality, all of the results in the following remain valid by further assuming, when appropriate, any of the following:
\begin{enumerate*}[$(a)$]
\item $\mcC$ separates points in~$X$;
\item $\mcC$ is a $(\vee,\wedge)$-lattice;
\item $\mcC$ is an algebra.
\end{enumerate*}
\end{remark}

By lower semi-continuity,~$\cl_{\mssd_\mu} A\subset \cl_{\T}A$ for every~$A\in X$, yet the inclusion may be a strict one, even when~$\mssd_\mu$ is an extended distance (as opposed to: extended pseudo-distance).
If~$\mssd_\mu$ is an extended distance, then~$(X,\mssd_\mu\wedge r)$ is a metric space for every~$r>0$, yet in general it is \emph{not} separable.
Thus, possibly,~$\mssd_\mu(\emparg, A)< \inf_n \mssd(\emparg, y_n)$ for every countable family~$\seq{y_n}_n\subset A$.
Finally,~$\mssd_\mu(\emparg, A)$ needs \emph{not} be $\Bo{\T}$-measurable, even if~$A\in \Bo{\T}$.
In the following, this fact motivates the use of nets in place of sequences.

\smallskip \emph{Caveat:} The Monotone and Dominated Convergence Theorems do \emph{not} hold for nets of functions in~$L^2(\mssm)$, even if the net is bounded and consisting of continuous compactly supported functions. A substitute is provided by the next lemma.

\begin{lemma}[{\cite[\S{I.6(b)}, Prop.~5, p.~42]{Sch73}}]\label{l:MonotoneNets1}
Let~$\mbbX$ be satisfying~\iref{ass:Hausdorff}, and additionally be so that~$\mssm$ is Radon. 
Then,
\begin{enumerate*}[$(a)$]
\item if~$\seq{f_\alpha}_\alpha$ is an upwards-directed family of non-nega\-tive $\T$-l.s.c.\ functions, then, there exists~$\mssm(\sup_\alpha f_\alpha)= \sup_\alpha \mssm f_\alpha$;
and
\item if~$\seq{f_\alpha}_\alpha$ is a downwards-directed family of non-negative $\T$-u.s.c.\ functions so that~$\mssm f_\alpha<\infty$ for some~$\alpha$, then there exists $\mssm (\inf_\alpha f_\alpha)=\inf_\alpha \mssm f_\alpha $.
\end{enumerate*}
\end{lemma}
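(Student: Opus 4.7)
\textit{Part (a).} The function $f \eqdef \sup_\alpha f_\alpha$ is again non-negative and $\T$-lower semicontinuous, so $\mssm f \geq \sup_\alpha \mssm f_\alpha$ is immediate by monotonicity of the integral. For the reverse inequality, I would invoke the Radon characterization of the integral on non-negative l.s.c.\ functions,
\[
\mssm f = \sup\set{\mssm g : g \in \Cc(\T)\comm 0 \leq g \leq f}\fstop
\]
Given such $g$ and $\varepsilon > 0$, set $K \eqdef \supp g$ (compact, hence of finite $\mssm$-measure by inner regularity). By lower semicontinuity of $f_\alpha$ and continuity of $g$, each set $U_\alpha \eqdef \set{f_\alpha > g - \varepsilon}$ is $\T$-open, and $\cup_\alpha U_\alpha \supset K$ since $\sup_\alpha f_\alpha \geq g$. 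Compactness yields a finite subcover $U_{\alpha_1},\dots,U_{\alpha_n}$, and upward directedness supplies $\beta$ with $f_\beta \geq f_{\alpha_i}$ for every $i$, whence $f_\beta > g - \varepsilon$ on $K$. Since $f_\beta \geq 0$ off $K$, we have $f_\beta \geq g - \varepsilon\, \car_K$ pointwise, and therefore $\sup_\alpha \mssm f_\alpha \geq \mssm g - \varepsilon\, \mssm K$. Letting $\varepsilon \to 0$ and taking supremum over $g$ gives $\sup_\alpha \mssm f_\alpha \geq \mssm f$.

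\textit{Part (b).} Replacing the directed set by its final segment $\set{\alpha : \alpha \geq \alpha_0}$, I may assume $\mssm f_\alpha < \infty$ for every~$\alpha$; set $f \eqdef \inf_\alpha f_\alpha$, which is non-negative $\T$-upper semicontinuous as an infimum of u.s.c.\ functions, with $\mssm f \leq \mssm f_{\alpha_0} < \infty$. The bound $\mssm f \leq \inf_\alpha \mssm f_\alpha$ is clear. For the converse, the strategy is a Dini-type argument dual to part~(a), combined with a truncation forced by the absence of compact support. Given $\varepsilon > 0$, inner regularity of the finite measure $f_{\alpha_0}\cdot\mssm$ provides a compact $K \subset X$ with $\mssm(f_{\alpha_0}\,\car_{X\setminus K}) < \varepsilon$, hence $\mssm(f_\alpha\,\car_{X\setminus K}) < \varepsilon$ for all $\alpha$ in the final segment. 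On the compact set $K$, the dual Radon characterization
\[
\mssm(f\, \car_K) = \inf\set{\mssm \phi : \phi \in \Cc(\T)\comm \phi \geq f\,\car_K}
\]
supplies $\phi \in \Cc(\T)$ with $\mssm \phi \leq \mssm(f \,\car_K) + \varepsilon$. Upper semicontinuity makes the sets $V_\alpha \eqdef \set{f_\alpha < \phi + \varepsilon}$ open, and they cover $K$ because $\inf_\alpha f_\alpha \leq \phi$ there. Compactness and downward directedness then yield $\beta$ with $f_\beta < \phi + \varepsilon$ on $K$, whence
\[
\mssm f_\beta \leq \mssm \phi + \varepsilon\, \mssm K + \mssm(f_\beta\,\car_{X\setminus K}) \leq \mssm f + \varepsilon\, (1 + \mssm K) + \varepsilon \fstop
\]
Letting $\varepsilon\to 0$ concludes the proof.

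\textit{Main obstacle.} The truly delicate step is the dual description of the Radon integral for upper semicontinuous functions used in part~(b): unlike the clean approximation from below by elements of $\Cc(\T)$ available for non-negative l.s.c.\ functions, the approximation from above by continuous compactly supported functions is only available \emph{after} restricting to a compact set of finite total measure. This is precisely why the Radon hypothesis and the finiteness assumption $\mssm f_{\alpha_0} < \infty$ are both essential in part~(b), whereas part~(a) relies on the Radon hypothesis alone.
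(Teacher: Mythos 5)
The paper does not actually prove this lemma: it is quoted verbatim from Schwartz~\cite[\S{I.6(b)}, Prop.~5, p.~42]{Sch73}, so there is no in-text argument to compare against. Your Dini-type compactness argument is indeed the right mechanism behind the statement, and part~(a) is exactly how one would argue on a \emph{locally compact} space. The gap is that the lemma is asserted under~\iref{ass:Hausdorff} alone, i.e.\ for an arbitrary Hausdorff space carrying a Radon measure, and both of your key identities
\begin{align*}
\mssm f = \sup\set{\mssm g : g\in\Cc(\T),\ 0\leq g\leq f}\comm\qquad
\mssm(f\car_K)=\inf\set{\mssm\phi : \phi\in\Cc(\T),\ \phi\geq f\car_K}
\end{align*}
are false in that generality, simply because~$\Cc(\T)$ may reduce to~$\set{0}$. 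Concretely, if~$X$ is an infinite-dimensional separable Banach (or Hilbert) space with its norm topology ---~precisely the class of examples the paper insists on accommodating~--- then compact sets have empty interior, so any~$g\in\Cc(\T)$ vanishes identically; taking~$f\equiv 1$ and~$\mssm$ a Gaussian measure, the left-hand side of the first identity is~$1$ while the right-hand side is~$0$. The ``main obstacle'' paragraph you wrote identifies the duality issue in part~(b), but the same obstruction already defeats part~(a) outside the locally compact case.

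The repair is to drop continuous test functions altogether and work with semicontinuous ones, whose supply is guaranteed by the Radon hypothesis (inner regularity by compacts) rather than by local compactness. For~(a), replace~$\Cc(\T)$ by the cone of bounded non-negative u.s.c.\ functions with compact support lying below~$f$; since~$\ttset{f>t}$ is open, inner regularity of~$\mssm$ on these open sets shows this cone is integral-dense in~$f$ from below, and your Dini step goes through verbatim because~$f_\alpha - g$ is l.s.c.\ when~$g$ is u.s.c. For~(b), after restricting to a compact~$K$ with small exterior mass as you do, one can argue at the level of the superlevel sets~$K\cap\ttset{f_\alpha\geq t}$: these form a downward-directed net of compacta, and the identity~$\mssm\bigl(\cap_\alpha K_\alpha\bigr)=\inf_\alpha\mssm K_\alpha$ for such nets follows by outer-approximating~$\cap_\alpha K_\alpha$ by an open set and extracting a finite subcover of its compact complement in~$K$. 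The layer-cake formula then transfers the set-level statement to the integral. This is, in substance, the route taken in Schwartz's book, and it is also the reason the statement is genuinely delicate enough to warrant an external citation rather than a one-line remark.

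Two smaller points, both minor once the main gap is fixed: in~(a), you should note explicitly that~$\mssm K<\infty$ because a Radon measure is locally finite, not merely inner regular; and in~(b), after passing to the final segment, you silently use that the infimum over the truncated directed set agrees with the infimum over the original one, which is true but worth a word.
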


As a consequence of Proposition~\ref{p:AriyoshiHino}, our definition~\eqref{eq:IntrinsicD} of intrinsic distance generalizes the standard notions, in particular~\cite[Eqn.~(1.3)]{Stu94} for strongly local regular Dirichlet spaces, as we now show.

\begin{proposition}[Cf.{~\cite[Rmk.~2.8]{AriHin05}}]\label{p:BoundedDist}
Let~$(\mbbX,\mcE)$ be a regular strongly local Dirichlet space. Then
\begin{equation}\label{eq:p:ConsistencyD:0}
\begin{aligned}
\mssd_\mssm(x,y)=&\sup\set{f(x)-f(y) : f\in \domloc \cap \Cb(\T) \comm \sq{f}\leq \mssm}
\\
=&\sup\set{f(x)-f(y) : f\in \domloc \cap \Cont(\T) \comm \sq{f}\leq \mssm} \fstop
\end{aligned}
\end{equation}
\end{proposition}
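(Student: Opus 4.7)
The plan is to denote by $S_1,S_2,S_3$ the three suprema in~\eqref{eq:p:ConsistencyD:0} in the order written, with $S_1=\mssd_\mssm(x,y)$, and to close the circle $S_2\leq S_1$, $S_2\leq S_3$, $S_1\leq S_2$, $S_3\leq S_2$. The first two are immediate from set inclusions: in the regular case $\domloc\subset\dotloc{\dom}$, and since $\supp[\mssm]=X$ under~\iref{ass:Polish} every $f\in\Cb(\T)$ is $\mssm$-essentially bounded, so $\domloc\cap\Cb(\T)\cap\set{\sq{f}\leq\mssm}\subseteq \DzLocB{\mssm,\T}$, giving $S_2\leq S_1$; similarly $\Cb(\T)\subset\Cont(\T)$ gives $S_2\leq S_3$ trivially.

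The nontrivial step is $S_1\leq S_2$: I plan to show that every $f\in\DzLocB{\mssm,\T}$ in fact belongs to $\domloc$. Given such $f$, by Lemma~\ref{l:nests} I may pick $G_\bullet\in\msG_c\subset\msG_0$ witnessing $f\in\DzLocB{\mssm}(G_\bullet)$. For each relatively compact open $G\subset X$, the regularity of $(\mcE,\dom)$ furnishes, via the standard Urysohn-type cut-off construction in regular Dirichlet spaces, a function $\chi\in\dom\cap\Cc(\T)$ with $0\leq\chi\leq 1$ and $\chi\equiv 1$ on $\overline G$. Since $\chi\in\domb$ and $\reptwo\chi=\chi\in\Cc(\T)\subset L^2(\mssm)$ (as $\mssm$ is Radon and $\supp\chi$ is compact), Lemma~\ref{l:AriyoshiHino} applied with $\mu=\mssm$ yields $f\chi\in\dom$. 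Because $\chi\equiv 1$ on $\overline G$, we have $f\chi= f$ everywhere on $\overline G$, hence $\mssm$-a.e.\ on $G$. By arbitrariness of $G$ this gives $f\in\domloc$; the properties $f\in\Cb(\T)$ and $\sq{f}\leq\mssm$ carry over unchanged, so $f(x)-f(y)\leq S_2$, and taking the supremum over $f$ yields $S_1\leq S_2$.

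The remaining step $S_3\leq S_2$ is handled by truncation. Fix $x,y\in X$ and $f\in\domloc\cap\Cont(\T)$ with $\sq{f}\leq\mssm$; choose $r>\abs{f(x)}\vee\abs{f(y)}$ and set $f_r\eqdef (-r)\vee f\wedge r\in\Cb(\T)$. Then $f_r\in\domloc$, for if $f_G\in\dom$ agrees with $f$ $\mssm$-a.e.\ on a relatively compact open $G$, the Markov property produces $(-r)\vee f_G\wedge r\in\dom$, which equals $f_r$ $\mssm$-a.e.\ on $G$. Two applications of the truncation identity~\eqref{eq:TruncationLoc} give $\sq{f_r}\leq\sq{f}\leq\mssm$, while continuity and the choice of $r$ force $f_r(x)-f_r(y)=f(x)-f(y)$. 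Hence $f(x)-f(y)\leq S_2$, and $S_3\leq S_2$.

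The main obstacle is precisely the upgrade from the broad-local membership $f\in\dotloc{\dom}$ to the compact-local membership $f\in\domloc$ needed for $S_1\leq S_2$; this is exactly where regularity (rather than only quasi-regularity) of $(\mcE,\dom)$ enters, through the availability of $\Cc(\T)\cap\dom$ cut-offs. In the merely quasi-regular setting the inclusion $\domloc\subsetneq\dotloc{\dom}$ can be strict and the three suprema need not agree, which justifies adopting~\eqref{eq:IntrinsicD} as the working definition of $\mssd_\mu$ throughout the paper.
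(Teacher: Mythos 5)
Your proof is correct and follows essentially the same route as the paper's: both hinge on Lemma~\ref{l:AriyoshiHino} to identify $\DzLocB{\mssm,\T}$ with the $\domloc$-based set, you simply apply that lemma directly to compactly supported cut-offs $\chi$ where the paper instead invokes the consequence packaged as Proposition~\ref{p:AriyoshiHino} together with the observation that $\dotloc{\dom}(G'_\bullet)=\domloc$ for a relatively compact open exhaustion $G'_\bullet\in\msG_c$. Your truncation argument for the second equality spells out what the paper dismisses as a ``straightforward consequence of the Markov property.''
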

\begin{proof}
Since~$\mssm$ is absolutely $\mcE$-moderate, then
\begin{align}\label{eq:p:ConsistencyD:1}
\DzLocB{\mssm,\T}=\set{f\in \dotloc{\dom}(G_\bullet) \cap \Cb(\T) : \sq{f}\leq \mssm}
\end{align}
for any fixed~$G_\bullet\in\msG_0$ by Proposition~\ref{p:AriyoshiHino}. Since~$(X,\T)$ is locally compact Polish and~$\mssm$ is Radon, there exists~$G_\bullet'\in\msG_c$ consisting of relatively compact open sets, and thus~$\dotloc{\dom}(G_\bullet')=\domloc$. Together with~\eqref{eq:p:ConsistencyD:1}, this shows the first equality in~\eqref{eq:p:ConsistencyD:0}. The second is a straightforward consequence of the Markov property of~$(\mcE,\dom)$.
\end{proof}

The above proposition clarifies that our definition of intrinsic distance coincides with the usual one in all classical settings, including on Riemannian manifolds, as detailed in the next example.

\begin{example}[Riemannian manifolds] Let~$(M,g)$ be any smooth connected Riemannian manifold, and define the ---~regular strongly local~--- canonical Dirichlet form~$(\mcE,\dom)$ of~$(M,g)$ as the closure of the pre-Dirichlet form
\begin{align*}
\mcE(\phi,\psi)\eqdef \int_M g(\diff\phi,\diff\psi) \ \dvol_g\comm \qquad \phi,\psi\in \Czinfty(M)\comm
\end{align*}
see e.g.~\cite[Thm.~4.2]{AlbBraRoe89}. Then,
\begin{equation}\label{ese:Riemannian:3}
\begin{aligned}
\mssd_{\vol_g}(x,y)=& \ \sup\set{f(x)-f(y): f\in \domloc\cap \Cb(M)\comm g(\diff f,\diff f)\leq 1 \as{\vol_g}}
\\
=&\ \sup\set{f(x)-f(y): f\in \mcC^1_b(M)\comm g(\diff f,\diff f)\leq 1}
\\
=&\ \sup\set{f(x)-f(y): f\in \mcC^\infty_b(M)\comm g(\diff f,\diff f)\leq 1}
\end{aligned}
\end{equation}
and, if~$(M,\mssd_g)$ is additionally complete, then it holds as well that
\begin{align}
\label{ese:Riemannian:4}
\mssd_{\vol_g}(x,y)=&\ \mssd_g(x,y)\eqdef \inf \set{\ell(\gamma) : \gamma\in \mcC^1([0,1],M)\comm \gamma_0=x, \gamma_1=y} \fstop
\end{align}
Indeed: the first equality in~\eqref{ese:Riemannian:3} holds by Proposition~\ref{p:BoundedDist}, the third equality in~\eqref{ese:Riemannian:3} is straightforward, the second equality in~\eqref{ese:Riemannian:3} is claimed in~\cite[Rmk.~3, p.~1859]{Stu97}. A proof can be adapted from that of the same statement for Alexandrov spaces, in~\cite[Thm.~7.1]{KuwMacShi01}.
If~$\mssd_g$ is complete, then the equality of~\eqref{ese:Riemannian:3} and~\eqref{ese:Riemannian:4} is standard, e.g.~\cite[p.~151, Ex.~17]{Pet06}.
Despite a claim to this fact in~\cite[\S5.1.1 Prop.~3]{Stu97}, the latter equality may not hold if~$\mssd_g$ is not complete. Indeed,~\cite[\S5.1.1 Prop.~3]{Stu97} relies on~\cite[\S2.1.2 Prop.~2]{Stu97} ---~i.e.\ the equality~$\rho=\tilde\rho$ right before~\cite[Lem.~3.4]{DeCPal91}~--- which holds under the completeness assumption in~\cite[Ass.~(1.13) ii)]{DeCPal91}.
\end{example}

If~$(\mbbX,\mcE)$ is a quasi-regular strongly local Dirichlet space, the case~$\mu=\mssm$ is singled out as giving important information about the associated diffusion process. However, it is possible that~$\mssd_\mssm$ is identically vanishing. Let~$\mbbX$ be satisfying~\ref{ass:Hausdorff}. In the next example, we say that a measure~$\mu$ on~$(X,\Bo{\T})$ is \emph{nowhere-Radon} if~$\mu U=\infty$ for every non-empty open~$U\subset X$.

\begin{example}[Singular perturbations]\label{ese:SingularPerturb}
In~\cite{AlbMa91}, S.~Albeverio and Z.-M.~Ma construct a very large class of quasi-regular perturbations of Dirichlet forms by nowhere-Radon measure.
The domain of any such form contains no continuous function but the zero function, and thus~$\mssd_\mssm$ vanishes identically.

In particular, let~$(\mbbX,\mcE)$ be a regular strongly local Dirichlet space and assume that each singleton in~$X$ is $\mcE$-polar. In this case, for each $\mcE$-smooth~$\mu$, we may find an $\mcE$-smooth (infinite) measure~$\nu$ on~$(X,\Bo{\T})$, equivalent to~$\mu$ and additionally nowhere Radon,~\cite[Thm.~IV.4.7]{MaRoe92}.
\end{example}

\begin{example}[Liouville Brownian motion]\label{ese:LQG}
In~\cite{GarRhoVar14}, C.~Garban, R.~Rhodes, and V.~Vargas construct the regular strongly local Dirichlet form~$(\mcE,\dom)$ associated to the Liouville Brownian motion on a simply connected domain~$D\subset \R^2$,~\cite[Thm.~1.7]{GarRhoVar14}. The form, defined on the~$L^2$-space of the Liouville quantum gravity measure~$M$ on~$D$, has identically vanishing intrinsic distance~$\mssd_\mssm\eqdef \mssd_M$,~\cite[Prop.~3.1]{GarRhoVar14}.
On the other hand, letting~$\mu\eqdef \Leb^2\mrestr D$, the intrinsic distance~$\mssd_\mu$ satisfies
\begin{align*}
\mssd_\mu(x,y)=&\ \sup\set{f(x)-f(y): f\in \dom\comm \abs{\nabla f}^2\leq 1 \as{\mu}}
\\
\geq & \ \sup\set{f(x)-f(y): f\in\Cbinfty(D) \comm \abs{\nabla f}^2\leq 1}=\abs{x-y} \comm
\end{align*}
and is therefore non-trivial.
\end{example}

\begin{lemma}\label{l:ConvMeasure}
Let~$(\mbbX,\mcE)$ be a quasi-regular strongly local Dirichlet space, and~$\mu\in\Msp(\Bo{\T},\Ne{\mcE})$ be $\mcE$-moderate. Further let~$\ttseq{f_\alpha}_\alpha\subset{(\dotloc{\dom})}_b$ be so that
\begin{enumerate*}[$(i)$]
\item $\sup_\alpha\norm{f_\alpha}_{L^\infty(\mssm)} \leq r$ for some~$r>0$;
\item $\seq{f_\alpha}_\alpha\subset \DzLocB{\mu}$;
\item there exists~$f\in L^\infty(\mssm)$ so that~$f_\alpha \rar f$ weakly* in~$L^\infty(\mssm)$.
\end{enumerate*}
Then,~$f\in \DzLocB{\mu}$.
\end{lemma}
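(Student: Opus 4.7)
The plan is to fix a $\mu$-moderated pair $(G_\bullet, e_\bullet) \in \msG_0$ (which exists by $\mcE$-moderance of $\mu$), and, appealing to Proposition~\ref{p:AriyoshiHino}, assume with no loss of generality that $\seq{f_\alpha}_\alpha \subset \DzLocB{\mu}(G_\bullet)$. Since $\domb$ is an algebra, $e_n^2 \in \domb$, and since $0 \leq \reptwo{e_n^2} \leq \reptwo{e_n}$ with $\mu \reptwo{e_n} < \infty$, we have $\reptwo{e_n^2} \in L^2(\mu)$. Lemma~\ref{l:AriyoshiHino} then yields $f_\alpha e_n^2 \in \dom$ together with the uniform-in-$\alpha$ bound
\begin{align*}
\ttnorm{f_\alpha e_n^2}_\dom \leq \sqrt{2}\, r\, \ttnorm{e_n^2}_\dom + \ttnorm{\reptwo{e_n^2}}_{L^2(\mu)} \eqcolon C_n .
\end{align*}

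Next I would upgrade the $L^\infty$-weak* convergence to weak $L^2$-convergence of the localized functions. For any $h \in L^2(\mssm)$, the function $e_n^2 h$ belongs to $L^1(\mssm)$ (since $e_n^2 \in L^1(\mssm) \cap L^\infty(\mssm) \subset L^2(\mssm)$ and by Cauchy--Schwarz), hence
$\int f_\alpha e_n^2 \cdot h \diff\mssm = \int f_\alpha \cdot (e_n^2 h) \diff\mssm \to \int f \cdot (e_n^2 h) \diff\mssm$, showing $f_\alpha e_n^2 \rightharpoonup f e_n^2$ weakly in $L^2(\mssm)$. Combined with the uniform $\dom$-bound and the $L^2$-closedness of the sublevel sets of $\mcE$ (a consequence of the closedness of the Dirichlet form), we get $f e_n^2 \in \dom$; by reflexivity of $\dom$ and uniqueness of weak limits, in fact $f_\alpha e_n^2 \rightharpoonup f e_n^2$ weakly in $\dom$. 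Since $\reptwo{e_n} = 1$ $\mssm$-a.e.\ on $G_n$, the pair $\big(G_\bullet, (f e_n^2)_n\big)$ witnesses $f \in \dotloc{\dom}(G_\bullet)$, and weak* lower-semicontinuity of the $L^\infty$-norm gives $\ttnorm{f}_{L^\infty(\mssm)} \leq r$.

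To bound $\sq{f}$ by $\mu$, I would use that for every $\phi \in \domb$ with $\reptwo\phi \geq 0$ the quadratic form $u \mapsto \int \reptwo\phi \diff\sq{u}$ on $\dom$ is continuous (by~\eqref{eq:CS} and~\eqref{eq:Representation}) and non-negative definite, hence weakly lower-semicontinuous. Thus
\begin{align*}
\int \reptwo\phi \diff\sq{f e_n^2} \leq \liminf_\alpha \int \reptwo\phi \diff\sq{f_\alpha e_n^2} .
\end{align*}
If in addition $\reptwo\phi = 0$ $\mcE$-q.e.\ on $X \setminus G_n$, then by the strong locality~\eqref{eq:SLoc:2} applied to $f_\alpha e_n^2$ and $f_\alpha$ (which agree $\mssm$-a.e.\ on $G_n$) the right-hand side is bounded by $\int_{G_n}\reptwo\phi \diff\sq{f_\alpha} \leq \int\reptwo\phi \diff\mu$. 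By quasi-regularity of $(\mbbX,\mcE)$, there are enough such q.c.\ test functions vanishing $\mcE$-q.e.\ off $G_n$ to characterize $\sigma$-finite measures charging no $\mcE$-polar set on Borel subsets of $G_n$, whence $\sq{f e_n^2}\mrestr G_n \leq \mu \mrestr G_n$. Applying~\eqref{eq:SLoc:2} once more to $f$ and $f e_n^2$ on $G_n$ gives $\car_{G_n}\sq{f} \leq \car_{G_n}\mu$, and letting $n \to \infty$ together with $\cup_n G_n = X$ $\mcE$-q.e.\ and the fact that neither $\sq{f}$ nor $\mu$ charges $\mcE$-polar sets yields $\sq{f} \leq \mu$. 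Combined with the $L^\infty$-bound, $f \in \DzLocB{\mu}$.

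The main obstacle is the last step of the second paragraph: the transfer from the functional inequality against q.c.\ non-negative test functions vanishing $\mcE$-q.e.\ off $G_n$ to the measure inequality $\sq{f e_n^2}\mrestr G_n \leq \mu\mrestr G_n$. This is cleanest via quasi-homeomorphism (\iref{i:QH:1}--\iref{i:QH:3}) to a regular Dirichlet space, where $G_n$ is essentially open and one can invoke an outer-regularity / monotone-class argument approximating indicators of Borel $B \subset G_n$ by bounded q.c.\ functions in $\domb \cap \dom_{G_n}$; I would expect this to be the one step that requires care to state rigorously without the full force of local compactness.
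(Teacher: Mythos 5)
Your outline matches the paper's strategy up to and including the testing inequality $\int\reptwo\phi\,\diff\sq{e_n^2 f}\leq\int\reptwo\phi\,\diff\mu$ for quasi-continuous $\phi\in\domb$ vanishing $\mcE$-q.e.\ off $G_n$: you localize by $e_n^2$, upgrade weak* $L^\infty$-convergence to weak $\dom$-convergence of the localizations, and pass the bound to the limit. Your route through the weak lower semicontinuity of the continuous non-negative quadratic form $u\mapsto\int\reptwo\phi\,\diff\sq{u}$ is a legitimate variant of the paper's route through Mazur convex combinations, convexity of $\sq{\emparg}$, and Lemma~\ref{l:Continuity}.

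The step you flag as the main obstacle is, however, exactly where the actual proof does its work, and the paper does not resolve it by quasi-homeomorphism. It argues directly, using two facts. First, because $G_\bullet$ is $\mu$-moderated, $\mu G_n<\infty$, so both $\car_{G_n}\mu$ and $\car_{G_n}\sq{e_n^2 f}$ are \emph{totally finite} Borel measures. Second, because $(X,\T)$ is strongly Lindel\"of under the Luzin assumption, $G_n$ is perfectly normal, hence hereditarily Lindel\"of and regular, so by~\cite[Prop.~II.7.2.2(iv) and~(i)]{Bog07} these finite measures are inner regular with respect to closed --- hence also $\mcE$-quasi-closed --- subsets of~$G_n$. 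Testing against the monotone cutoffs of~\cite[Lem.~3.5(i)]{Kuw98} adapted to $F\subset V\subset G_n$, with $F$ quasi-closed and $V$ quasi-open, yields $\sq{e_n^2 f}F\leq\mu F+\mu(V\setminus F)\leq\mu V$; the supremum over $F$ and inner regularity give $\sq{e_n^2 f}V\leq\mu V$ for all quasi-open $V\subset G_n$, and a monotone class argument over the $\pi$-system of such $V$ finishes. Both the inner-regularity and the monotone-class steps use the finiteness of the restricted measures --- that is, $\mcE$-moderance of~$\mu$ --- in an essential way, a point your sketch does not surface. A detour via quasi-homeomorphism is not obviously wrong, but carrying it out would require showing that $G_\bullet$, $e_\bullet$, the moderated measure~$\mu$, and the localized energy measures all transport compatibly along~$j$, which is not visibly less work than the direct argument.
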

\begin{proof} The statement is well-posed by Proposition~\ref{p:EDominant}\iref{i:p:EDominant:2} since~$\mu\in\Msp(\Bo{\T}, \Ne{\mcE})$.
Since~$\car\in\dotloc{\dom}$ and~$\sq{\car}\equiv 0$ by~\eqref{eq:E(1)=0}, we may and shall assume with no loss of generality that~$f_\alpha\geq 0$ for every~$\alpha$.
Further note that we may arrange so that an $\mcE$-quasi-continuous representative~$\reptwo f_\alpha$ of~$f_\alpha$ satisfies~$0\leq \reptwo f_\alpha \leq r$ for every~$\alpha$.

\paragraph{Step 1} Let~$(G_\bullet, e_\bullet)\in\msG_0$ be $\mu$-moderated, and fix~$n\in \N$. By assumption~$\seq{f_\alpha}_\alpha$ converges to~$f$ weakly* in~$L^\infty(\mssm)$, thus~$\seq{e_n f_\alpha}_\alpha$ converges weakly* in~$L^2(\mssm)$ to~$e_n f$.
Reasoning as in~\eqref{eq:l:AriyoshiHino:1} with~$f_\alpha$ in place of~$f_n$ and~$e_n$ in place of~$g$, one has~$\sup_\alpha\mcE_1^{1/2}(e_n f_\alpha)<\infty$ and so~$\seq{e_n f_\alpha}_\alpha$ has a (non-relabeled) subnet weakly* convergent in~$\dom$. Since $\norm{\emparg}_{L^2(\mssm)}\leq \norm{\emparg}_{\dom}$, the subnet~$\seq{e_n f_\alpha}_\alpha$ converges weakly* in~$L^2(\mssm)$, and so it converges to the weak* limit~$e_n f$ of the net~$\seq{e_n f_\alpha}_\alpha$. Since the subnet was arbitrary, in fact the net~$\seq{e_n f_\alpha}_\alpha$ converges weakly* in~$\dom$ to~$e_n f$.
Since~$n\in \N$ was arbitrary,~$f\in\dotloc{\dom}$.

\paragraph{Step 2} Fix again~$n\in\N$ and note that~$e_n^2\in\domb$, since the latter is an algebra, and that~$\reptwo e_n^2 \in L^2(\mu)$ by interpolation, since~$\reptwo e_n\in L^2(\mu)\cap L^\infty(\mu)$.
Repeat \emph{verbatim} the reasoning in \emph{Step 1} with~$e_n^2$ in place of~$e_n$ to conclude that~$\seq{e_n^2 f_\alpha}_\alpha$ converges weakly* in~$\dom$ to~$e_n^2 f$ for every~$n\in \N$.
Since every norm-closed convex subset of a Hilbert space is weakly closed,~$e_n^2 f\in \cl_\dom \conv \set{e_n^2 f_\alpha}_\alpha$. Therefore, there exists~$\seq{g_m}_m$ so that each~$g_m$ is a convex combination of finitely many elements of~$\set{e_n^2 f_\alpha}_\alpha$ and~$\dom$-$\mlim g_m=e_n^2 f$.
Again by the Leibniz rule~\eqref{eq:Leibniz}, for arbitrary~$\alpha$,
\begin{align*}
\sq{e_n \cdot e_n f_\alpha}=&\reptwo e_n^2\, \sq{e_n f_\alpha}+\reptwo e_n^2 \reptwo f_\alpha^2 \, \sq{e_n} + 2\cdot \reptwo e_n^2\, \reptwo f_\alpha \,\sq{e_n, e_n f_\alpha} \comm
\\
\car_{G_n}\sq{e_n \cdot e_n f_\alpha}=&\car_{G_n} \reptwo e_n^2 \, \sq{e_n f_\alpha}+\car_{G_n}\reptwo e_n^2\, \reptwo f_\alpha^2\, \sq{e_n} + 2\cdot \car_{G_n}\, \reptwo e_n^2\, \reptwo f_\alpha\, \sq{e_n, e_n f_\alpha} = \car_{G_n} \sq{f_\alpha} \comm
\end{align*}
where the last equality holds by strong locality~\eqref{eq:SLoc:2}, since~$\reptwo e_n$ is constant $\mcE$-q.e.\ on the $\mcE$-quasi-open set~$G_n$. Thus,~$\car_{G_n} \sq{e_n^2 f_\alpha}\leq \car_{G_n}\mu$. By convexity of~$\emparg \mapsto \sq{\emparg}$ on~$\dom$, the same holds for~$g_m$ in place of~$e_n^2 f_\alpha$, that is
\begin{align}\label{eq:l:ConvMeasure:1}
\car_{G_n}\sq{g_m}\leq \car_{G_n} \mu\comm \qquad m\in \N\fstop
\end{align}

\paragraph{Step 3} Let~$V$ be $\mcE$-quasi-open with~$V\subset G_n$ $\mcE$-q.e., and~$F$ be $\mcE$-quasi-closed with~$F\subset V$ $\mcE$-q.e. By~\cite[Lem~3.5(i)]{Kuw98} there exists a sequence~$\seq{v_k}_k\subset \domb$ so that~$0\leq \reptwo v_k \leq \reptwo v_{k+1}\leq 1$ $\mcE$-q.e.,~$\klim \reptwo v_k=1$ $\mcE$-q.e.\ on~$F$, and~$\reptwo v_k=0$ $\mcE$-q.e.\ on~$V^\complement$ for every~$k\in \N$. Set~$\rep v\eqdef \sup_k \reptwo v_k$, satisfying~$0\leq \rep v\leq 1$ $\mcE$-q.e., and~$\rep v=0$ $\mcE$-q.e.\ on~$V^\complement$.
Since~$\reptwo v_k\leq \car_{G_n}$ $\mcE$-q.e., it follows from~\eqref{eq:l:ConvMeasure:1} that
\begin{align*}
\int \reptwo v_k \diff\sq{g_m}\leq \int \reptwo v_k \diff\mu \comm \qquad k,m\in \N\fstop
\end{align*}
By Lemma~\ref{l:Continuity}, we may take the limit as~$m\rar\infty$, to obtain
\begin{align*}
\int \reptwo v_k \diff\sq{e_n^2 f} \leq \int \reptwo v_k \diff\mu\comm \qquad k\in \N\comm
\end{align*}
and, by Monotone Convergence, the limit as~$k\rar\infty$, to obtain
\begin{align*}
\int_V \rep v\diff\sq{e_n^2 f}\leq \int_V \rep v \diff\mu\comm \qquad n\in \N\comm
\end{align*}
hence conclude from the properties of~$\rep v$ that
\begin{align}\label{eq:ConvMeasure:3}
\sq{e_n^2 f} F \leq \int_V \rep v \diff \sq{e_n^2 f} \leq& \ \mu F +\int_{V\setminus F} \rep v\diff\mu
\leq \mu F + \mu(V\setminus F)\comm \qquad n\in \N\fstop
\end{align}

On the other hand, since~$G_\bullet$ is $\mu$-moderated,~$\mu G_n$ is finite for every~$n\in \N$. In particular, the measures~$\car_{G_n}\mu$ and~$\car_{G_n}\sq{e_n^2 f}$ are totally finite measures.
Since~$(X,\T)$ is strongly Lindel\"of,~$G_n$ is strongly Lindel\"of as well, and therefore perfectly normal,~\cite[Ex.~3.8A(c), p.~194]{Eng89}. In particular, $G_n$~is hereditarily Lindel\"of and regular Hausdorff, thus both~$\car_{G_n}\mu$ and~$\car_{G_n}\sq{e_n^2 f}$ are inner regular with respect to\ closed sets by~\cite[Prop.~II.7.2.2(iv) and~(i)]{Bog07}, and \emph{a fortiori} inner regular with respect to\ $\mcE$-quasi-closed sets.
By arbitrariness of~$F$, and since~$V\subset G_n$ $\mcE$-q.e., we conclude from~\eqref{eq:ConvMeasure:3} that
\begin{align*}
\car_{G_n}\sq{e_n^2 f} V \leq \car_{G_n} \mu V \comm \qquad V\subset G_n \qe{\mcE}\comm V \text{~$\mcE$-quasi-open}\comm \qquad n\in \N\fstop
\end{align*}
By strong locality~\eqref{eq:SLoc:2},
\begin{align}\label{eq:ConvMeasure:4}
\car_{G_n}\sq{f} V= \car_{G_n}\sq{e_n^2 f} V \leq \car_{G_n} \mu V \comm \qquad V\subset G_n \qe{\mcE}\comm V \text{~$\mcE$-quasi-open}\comm \qquad n\in \N\fstop
\end{align}
Since both~$\car_{G_n}\sq{f}$ and~$\car_{G_n}\mu$ are \emph{finite} Borel measures, and since the family of $\mcE$-quasi-open subsets~$V$ of~$G_n$ is closed with respect to finite intersections and generates the Borel $\sigma$-algebra on~$G_n$, the inequality~\eqref{eq:ConvMeasure:4} suffices to establish that~$\car_{G_n}\sq{f}\leq \car_{G_n}\mu$ for every~$n\in \N$ by a standard monotone class argument, e.g.~\cite[Lem.~I.1.9.4]{Bog07}.
Finally, let~$A\in \Bo{\T}$.
Since both~$\sq{f}$ and~$\mu$ do not charge $\mcE$-polar sets, it follows from the monotonicity of both measures along the nested countable family~$A\cap G_n$ that
\begin{align*}
\sq{f} A=\nlim \car_{G_n} \sq{f} A \leq \nlim \car_{G_n}\mu A =\mu A \comm \qquad A\in\Bo{\T}\comm
\end{align*}
which concludes the proof.
\end{proof}

\begin{remark}\label{r:weak*}
Suppose~$\seq{f_n}_n\subset L^\infty(\mssm)$ with~$\sup_n \norm{f_n}_{L^\infty(\mssm)}\leq r$ for some~$r>0$. Further let~$f\in L^\infty(\mssm)$, and assume that~$\mssm$-a.e.-$\nlim f_n=f$.
Then,~$\nlim f_n=f$ weakly* in~$L^\infty(\mssm)$. Indeed, for every~$g\in L^1(\mssm)$ we have~$\nlim \mssm(f_n g)=\mssm(f g)$ by Dominated Convergence with dominating function~$r\abs{g}\in L^1(\mssm)$.
\end{remark}

\subsubsection{Invariance vs. accessibility}
\blue{Let us briefly recall the notion of $\mcE$-invariance of set~$A\subset X$, and its relation with $\mssd$-accessibility.}

\begin{definition}[$\mcE$-invariance]
Let~$(\mbbX,\mcE)$ be a Dirichlet space, and $\A$ be a $\sigma$-algebra satisfying~$\Bo{\T}\subset \A\subset \Bo{\T}^\mssm$. A set~$A\in\A$ is \emph{$\mcE$-invariant} if
\begin{align*}
T_t (\car_A f)= \car_A T_t f\comm \qquad f\in L^2(\mssm)\fstop
\end{align*}
\end{definition}

The following characterization of $\mcE$-invariance is standard. See~\cite[Thm.~1.6.1 and Cor.~4.6.3]{FukOshTak11} for the regular case. The quasi-regular case follows by the transfer method.

\begin{proposition}\label{p:InvarChar}
Let~$(\mbbX,\mcE)$ be a quasi-regular strongly local Dirichlet space, and~$A\in\Bo{\T}$. Then, the following are equivalent:
\begin{enumerate}[$(a)$]
\item\label{i:p:InvarChar:1} $A$ is $\mcE$-invariant;
\item\label{i:p:InvarChar:2} $A^\complement$ is $\mcE$-invariant;
\item\label{i:p:InvarChar:3} $\car_A f\in\dom$ and~$\mcE(f)=\mcE(\car_A f)+\mcE(\car_{A^\complement} f)$ for every~$f\in \dom$;
\item\label{i:p:InvarChar:4} $\car_A f\in\domext$ and~$\mcE(f)=\mcE(\car_A f)+\mcE(\car_{A^\complement} f)$ for every~$f\in \domext$;
\item\label{i:p:InvarChar:5} there exists~$\tilde A\subset X$ so that~$\mssm (A\triangle \tilde A)=0$ and~$\tilde A$ is both $\mcE$-quasi-open and $\mcE$-quasi-closed.
\end{enumerate}
\end{proposition}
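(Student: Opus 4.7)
The plan is a two-step reduction. First, I would establish the equivalences in the regular case by appealing to the references cited in the statement, and then transfer them to the quasi-regular setting via a quasi-homeomorphism.

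For the regular case, assume temporarily that $(\mbbX, \mcE)$ is regular strongly local. The equivalence~\iref{i:p:InvarChar:1}$\iff$\iref{i:p:InvarChar:2} is immediate: since $\car_A + \car_{A^\complement} \equiv 1$ and $T_t$ is linear, commutation of $T_t$ with multiplication by $\car_A$ is equivalent to commutation with multiplication by $\car_{A^\complement}$. The equivalences~\iref{i:p:InvarChar:1}$\iff$\iref{i:p:InvarChar:3} and~\iref{i:p:InvarChar:1}$\iff$\iref{i:p:InvarChar:5} are, respectively,~\cite[Thm.~1.6.1]{FukOshTak11} and~\cite[Cor.~4.6.3]{FukOshTak11}. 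For~\iref{i:p:InvarChar:3}$\iff$\iref{i:p:InvarChar:4}, one direction follows by restriction to $\dom\subset\domext$; for the converse, given $f\in\domext$ I would choose an $\mcE^{1/2}$-Cauchy approximating sequence $\seq{f_n}_n \subset \dom$ with $f_n \to f$ $\mssm$-a.e., so that~\iref{i:p:InvarChar:3} forces $\seq{\car_A f_n}_n \subset \dom$ to be $\mcE^{1/2}$-Cauchy with $\mssm$-a.e.\ limit $\car_A f$, placing $\car_A f$ in $\domext$, and passing to the limit in the energy identity yields~\iref{i:p:InvarChar:4}.

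To transfer to the quasi-regular case, by~\cite[Thm.~3.7]{CheMaRoe94} there exists a quasi-homeomorphism $j\colon X_0 \to X_0^\sharp$ (with $X_0\eqdef \cup_n F_n$ and $X_0^\sharp\eqdef \cup_n F_n^\sharp$ both of $\mcE$-, respectively $\mcE^\sharp$-, polar complements) from $(\mbbX, \mcE)$ to some regular strongly local Dirichlet space $(\mbbX^\sharp, \mcE^\sharp)$. By~\iref{i:QH:3}, the isometry $j^*\colon L^2(\mssm^\sharp) \to L^2(\mssm)$ intertwines the $L^2$-semigroups and the Dirichlet forms. For $A\in\Bo{\T}$, set $A^\sharp\eqdef j(A\cap X_0)\subset X^\sharp$, which is Borel up to an $\mcE^\sharp$-polar set, and one has $(j^*)^{-1}(\car_A f) = \car_{A^\sharp}\,(j^*)^{-1} f$ $\mssm^\sharp$-a.e. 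Each of the five conditions for $A$ is then equivalent to the corresponding condition for $A^\sharp$: \iref{i:p:InvarChar:1} and \iref{i:p:InvarChar:2} transfer through the semigroup intertwining; \iref{i:p:InvarChar:3} and \iref{i:p:InvarChar:4} through the isometry of $j^*$ on $\dom$ and its extension to $\domext$ together with $\mcE(f)=\mcE^\sharp\ttonde{(j^*)^{-1}f}$; and by~\cite[Cor.~3.6]{CheMaRoe94}, $\mcE$-quasi-open and $\mcE$-quasi-closed sets correspond (up to polar sets) to $\mcE^\sharp$-quasi-open and $\mcE^\sharp$-quasi-closed sets, yielding the transfer of~\iref{i:p:InvarChar:5}. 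Applying the already-established regular case to $A^\sharp$ then yields the full proposition for~$A$.

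The main obstacle I anticipate is the careful bookkeeping of polar exceptional sets in transferring~\iref{i:p:InvarChar:5}: one must verify that an $\mssm$-a.e.\ modification $\tilde A$ of $A$ in $X$ which is simultaneously $\mcE$-quasi-open and $\mcE$-quasi-closed corresponds under $j$ to an $\mssm^\sharp$-a.e.\ modification of $A^\sharp$ in $X^\sharp$ with the analogous property, and conversely. This follows from the correspondence of $\mcE$-nests under quasi-homeomorphism and the $\mssm$-negligibility of polar sets, but the interplay $\Bo{\T} \subset \A \subset \Bo{\T}^\mssm$ stipulated in the paper's main assumptions has to be respected in order to choose a Borel representative of $\tilde A$ without loss of generality; this is where most of the genuine work of the proof lies.
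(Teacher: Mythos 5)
Your proof is correct and follows essentially the same route as the paper: establish the regular case via Fukushima--Oshima--Takeda~\cite[Thm.~1.6.1, Cor.~4.6.3]{FukOshTak11} and transfer to the quasi-regular case via a quasi-homeomorphism. The paper's own proof consists of exactly these two citations plus the phrase ``the quasi-regular case follows by the transfer method,'' so your write-up merely fills in the details the paper leaves implicit, including the Cauchy-sequence argument for $(c)\Leftrightarrow(d)$ and the bookkeeping of polar sets in the transfer.
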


\blue{
\begin{lemma}\label{l:InvarSq}
Let~$(\mbbX,\mcE)$ be a quasi-regular strongly local Dirichlet space, and~$A\in\Bo{\T}$ be $\mcE$-invariant. Then,~$\car_A\in\dotloc{\dom}$ and~$\sq{\car_A}\equiv 0$.
\begin{proof}
%Let~$\tilde A$ be the $\mcE$-quasi-open and $\mcE$-quasi-closed $\mssm$-version of~$A$ provided by Proposition~\ref{p:InvarChar}\iref{i:p:InvarChar:5}.
Let~$(G_\bullet, e_\bullet)\in\msG_0$. %and set~$G^{\tilde A}_n\eqdef G_n\cap \tilde A$ for every~$n\in\N$, and note that~$G^{\tilde A}_\bullet\in\msG(\tilde A)$.
Since~$e_n\in\domb$ for each~$n\in\N$, and since~$A$ is $\mcE$-invariant, we have that~$e^A_n\eqdef \car_A e_n\in \domb$ by Proposition~\ref{p:InvarChar}\iref{i:p:InvarChar:3}, and~$e^A_n= 1$ $\mssm$-a.e.\ on~$G_n\cap  A$ and~$e^A_n=0$ $\mssm$-a.e.\ on~$G_n\cap A^\complement$, hence~$e^A_n= \car_A$ $\mssm$-a.e.\ on~$G_n$.
Thus,~$(G_\bullet,e^A_\bullet)\in\msG_0$ witnesses that~$\car_A\in\dotloc{\dom}$.
Let~$\tilde A$ be the $\mcE$-quasi-open and $\mcE$-quasi-closed $\mssm$-version of~$A$ provided by Proposition~\ref{p:InvarChar}\iref{i:p:InvarChar:5}.
Since~$\tilde A\cup \tilde A^\complement=X$, we have that~$\car_{\tilde A}+\car_{\tilde A^\complement}\equiv\car$, and combining~\eqref{eq:SLoc:2} and~\eqref{eq:E(1)=0} readily yields that~$\sq{\car_A}=\sq{\car_{\tilde A}}\equiv 0$.
\end{proof}
\end{lemma}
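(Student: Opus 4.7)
The plan is to combine the local characterization of $\mcE$-invariance in Proposition~\ref{p:InvarChar} with the fact, coming from \eqref{eq:E(1)=0}, that the constant function $\car$ already lives in $\dotloc{\dom}$ with vanishing energy measure. More precisely, since $\msG_0$ is non-empty, I would first pick a $\mu$-moderated pair $(G_\bullet,e_\bullet)\in\msG_0$ witnessing $\car\in\dotloc{\dom}$, so that $\reptwo e_n\equiv 1$ $\mssm$-a.e.\ on $G_n$ for every $n$.

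To show $\car_A\in\dotloc{\dom}$, I would set $e^A_n\eqdef \car_A\cdot e_n$. Each $e_n$ belongs to $\dom$ (in fact to $\domb$), so Proposition~\ref{p:InvarChar}\iref{i:p:InvarChar:3} gives $e^A_n\in\dom$; indeed $e^A_n\in\domb$ since multiplication by $\car_A$ cannot increase the $L^\infty$-norm. On $G_n$ we have $e_n=1$ $\mssm$-a.e., whence $e^A_n=\car_A$ $\mssm$-a.e.\ on $G_n$, so $(G_\bullet,e^A_\bullet)$ witnesses that $\car_A$ lies in the broad local space $\dotloc{\dom}$.

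For the vanishing of the energy measure, I would invoke Proposition~\ref{p:InvarChar}\iref{i:p:InvarChar:5} to produce a set $\tilde A\subset X$ with $\mssm(A\triangle\tilde A)=0$ that is simultaneously $\mcE$-quasi-open and $\mcE$-quasi-closed (so that $\tilde A^\complement$ is also $\mcE$-quasi-open). By the $\mssm$-a.e.\ equality $\car_A=\car_{\tilde A}$, the strong locality property \eqref{eq:SLoc:2} gives $\sq{\car_A}=\sq{\car_{\tilde A}}$. Now on the $\mcE$-quasi-open set $\tilde A$ we have $\car_{\tilde A}\equiv 1\equiv \car$ $\mssm$-a.e., so a further application of \eqref{eq:SLoc:2} together with \eqref{eq:E(1)=0} yields $\car_{\tilde A}\cdot\sq{\car_A}=\car_{\tilde A}\cdot\sq{\car}=0$. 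Similarly, on the $\mcE$-quasi-open set $\tilde A^\complement$ the function $\car_{\tilde A}$ coincides $\mssm$-a.e.\ with the constant $0$, so $\car_{\tilde A^\complement}\cdot\sq{\car_A}=0$. Since $\tilde A\cup\tilde A^\complement=X$, the two identities together give $\sq{\car_A}\equiv 0$.

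The only mild obstacle is the bookkeeping between the measurable set $A$ and its $\mcE$-quasi-open/quasi-closed version $\tilde A$; but this is handled automatically because both $\sq{\car_A}$ and the indicator identities we use are insensitive to $\mssm$-null (equivalently, $\mcE$-polar) modifications in view of Proposition~\ref{p:EDominant}\iref{i:p:EDominant:2} and the fact that strong locality \eqref{eq:SLoc:2} is stated $\mssm$-a.e. No further estimates are needed.
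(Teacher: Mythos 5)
Your proof is correct and follows essentially the same route as the paper's: you build the witness $e^A_n\eqdef\car_A e_n$ from any $(G_\bullet,e_\bullet)\in\msG_0$ via Proposition~\ref{p:InvarChar}\iref{i:p:InvarChar:3}, and then apply strong locality~\eqref{eq:SLoc:2} together with~\eqref{eq:E(1)=0} on the quasi-open/quasi-closed version~$\tilde A$ furnished by Proposition~\ref{p:InvarChar}\iref{i:p:InvarChar:5}, splitting along $\tilde A$ and $\tilde A^\complement$ exactly as the paper does (only written out more explicitly). One cosmetic remark: the reference to a ``$\mu$-moderated pair'' is superfluous, since there is no measure~$\mu$ in this lemma and any $(G_\bullet,e_\bullet)\in\msG_0\neq\emp$ already satisfies~$\reptwo e_n=1$ $\mssm$-a.e.\ on~$G_n$ by the very definition of~$\msG_0$.
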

}

\begin{proposition}\label{p:Invariant}
Let~$(\mbbX,\mcE)$ be a quasi-regular strongly local Dirichlet space, $\mu\in\Msp(\Bo{\T})$, and~$A=B^{\mssd_\mu}_\infty(x_0)$ be the $\mssd_\mu$-accessible component of~$x_0$ for some~$x_0\in X$. Then, the following are equivalent:
\begin{enumerate}[$(a)$]
\item\label{i:p:Invariant:1} $A$ is $\mcE$-invariant;
\item\label{i:p:Invariant:2} $\rep\rho_A\colon y\mapsto \mssd_\mu(y, A)\wedge 1$ satisfies~$\rho_A\in \DzLocB{\mu}$.
\end{enumerate}
\end{proposition}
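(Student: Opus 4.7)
The key observation is that $\rho_A$ is in fact an indicator. Since $A=B^{\mssd_\mu}_\infty(x_0)$ is the $\mssd_\mu$-accessibility component of $x_0$, the triangle inequality forces $\mssd_\mu(y,z)=\infty$ for every $y\in A^\complement$ and $z\in A$; hence $\rho_A(y)=\mssd_\mu(y,A)\wedge 1$ equals $0$ on $A$ and $1$ on $A^\complement$, i.e.\ $\rho_A=\car_{A^\complement}$ pointwise. The direction $(a)\Rightarrow(b)$ is then immediate: if $A$ is $\mcE$-invariant, so is $A^\complement$ by Prop.~\ref{p:InvarChar}\iref{i:p:InvarChar:1}$\Leftrightarrow$\iref{i:p:InvarChar:2}, and Lemma~\ref{l:InvarSq} applied to $A^\complement$ gives $\rho_A\in\dotloc{\dom}$ with $\sq{\rho_A}\equiv 0$; boundedness of $\rho_A$ then yields $\rho_A\in\DzLocB{\mu}$.

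For $(b)\Rightarrow(a)$ the first step upgrades $\sq{\rho_A}\leq\mu$ to $\sq{\rho_A}\equiv 0$. Since $\rho_A\in\{0,1\}$ $\mssm$-a.e., one has $\rho_A^k=\rho_A$ $\mssm$-a.e.\ for every $k\in\N$; combining strong locality~\eqref{eq:SLoc:2} with the chain rule~\eqref{eq:ChainRuleLoc} applied with $\phi(t)=t^k$ yields the measure identity
\begin{align*}
\sq{\rho_A}=\sq{\rho_A^k}=k^2\,\reptwo{\rho_A}^{2(k-1)}\,\sq{\rho_A}\comm\qquad k\in\N\fstop
\end{align*}
Hence $k^2\reptwo{\rho_A}^{2(k-1)}=1$ $\sq{\rho_A}$-a.e.\ for every $k\geq 2$; the cases $k=2$ and $k=3$ simultaneously force $\reptwo{\rho_A}^2=1/4$ and $\reptwo{\rho_A}^2=1/3$ $\sq{\rho_A}$-a.e., which is possible only if $\sq{\rho_A}\equiv 0$.

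With $\sq{\rho_A}\equiv 0$ in hand, we verify characterization~\iref{i:p:InvarChar:3} of Prop.~\ref{p:InvarChar}. For $f\in\domb$, the absolute $\mcE$-moderance of $\mssm$ (Definition~\ref{d:Moderance}) gives $\rho_A\in\DzLocB{\mssm}$, and Lemma~\ref{l:AriyoshiHino} applied with $\mu'=\mssm$ yields $\rho_A f\in\dom$; the $\domb$-Leibniz rule from Thm.~\ref{t:Kuwae}\iref{i:p:Properties:3}, applied on each piece $G_n$ to the bounded local representatives $\rho_A^n$, together with $\sq{\rho_A}|_{G_n}=0$ and Cauchy--Schwarz~\eqref{eq:CS}, delivers $\sq{\rho_A f}=\reptwo{\rho_A}^2\sq{f}$. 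Extending to arbitrary $f\in\dom$ via truncations $f_r=(-r)\vee f\wedge r$ and lower semi-continuity of $\mcE$ (with the uniform bound $\mcE(\rho_A f_r)\leq\mcE(f)$) gives $\rho_A f, (1-\rho_A)f\in\dom$. Finally, the identity $\rho_A f=\rho_A^2 f$ in $\dom$ (they coincide $\mssm$-a.e.) combined with a second Leibniz expansion forces $\reptwo{\rho_A}^2\sq{f}=\sq{\rho_A f}=\sq{\rho_A^2 f}=\reptwo{\rho_A}^4\sq{f}$, whence $\reptwo{\rho_A}\in\{0,1\}$ $\sq{f}$-a.e. Therefore $\mcE(\rho_A f,(1-\rho_A)f)=\tfrac{1}{2}\int\reptwo{\rho_A}(1-\reptwo{\rho_A})\,d\sq{f}=0$, which gives $\mcE(f)=\mcE(\rho_A f)+\mcE((1-\rho_A)f)$ and thereby the $\mcE$-invariance of $A$.

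The main technical obstacle is the careful justification of the Leibniz-type identity $\sq{\rho_A f}=\reptwo{\rho_A}^2\sq{f}$, since $\rho_A$ only lives in the broad local space $\dotloc{\dom}$, not in $\dom$. One must work on each $G_n$ with bounded local representatives $\rho_A^n\in\domb$ and bounded truncations $f_r$ of $f$, invoke the $\domb$-Leibniz rule there, patch the local equalities via strong locality~\eqref{eq:SLoc:2}, and pass to the limit $r\to\infty$ using the continuity of energy measures provided by Lemma~\ref{l:Continuity}.
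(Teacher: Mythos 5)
Your direction $(a)\Rightarrow(b)$ is identical to the paper's: identify $\rho_A=\car_{A^\complement}$ and apply Lemma~\ref{l:InvarSq} to $A^\complement$. The interesting divergence is in $(b)\Rightarrow(a)$, where you take a genuinely different and substantially longer route. The paper exploits quasi-continuity directly: the quasi-continuous representative $\reptwo\rho_A$ is $\{0,1\}$-valued $\mcE$-q.e.\ (because $\rho_A$ is $\{0,1\}$-valued $\mssm$-a.e.\ and quasi-continuous $\mssm$-versions of an a.e.\ $[0,1]$-valued function lie in $[0,1]$ $\mcE$-q.e.), so $\tilde A\eqdef\reptwo\rho_A^{-1}(\{0\})=\reptwo\rho_A^{-1}\ttonde{(-1,1)}$, up to polar sets, is simultaneously quasi-closed and quasi-open, and Proposition~\ref{p:InvarChar}\iref{i:p:InvarChar:5} gives $\mcE$-invariance immediately — no chain rule, no Leibniz rule, no product computations. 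You instead verify the energy decomposition \iref{i:p:InvarChar:3} from scratch: upgrade $\sq{\rho_A}\leq\mu$ to $\sq{\rho_A}\equiv 0$ via the $k$-th power chain-rule trick, then run Lemma~\ref{l:AriyoshiHino}, the local Leibniz rule, Cauchy--Schwarz, and a truncation-plus-lower-semicontinuity argument to reach $\mcE(\rho_A f,(1-\rho_A)f)=0$. This is workable, but it is a direct re-derivation of the characterization rather than an application of it, and it incurs all the nest-localization bookkeeping you yourself flag as the main obstacle.

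Two small repairs are needed. First, the step $\reptwo\rho_A^2=\reptwo\rho_A^4$ $\sq{f}$-a.e.\ only yields $\reptwo\rho_A\in\{-1,0,1\}$; to conclude $\reptwo\rho_A\in\{0,1\}$ you must add that the quasi-continuous representative of a $[0,1]$-valued class is $[0,1]$-valued $\mcE$-q.e.\ and that $\sq{f}$ does not charge $\mcE$-polar sets. Note that this is precisely the observation the paper's proof is built on, so you end up invoking it anyway — at which point the paper's route becomes available for free. Second, the attribution ``the absolute $\mcE$-moderance of $\mssm$ gives $\rho_A\in\DzLocB{\mssm}$'' is not quite the right reason: what is actually used is that you have just established $\sq{\rho_A}\equiv 0\leq\mssm$ and that $\rho_A$ is a bounded element of $\dotloc{\dom}$; the role of Proposition~\ref{p:AriyoshiHino} is only to make the notation $\DzLocB{\mssm}$ unambiguous.
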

\begin{proof}
Since~$\mssd_\mu$ is $\T^\tym{2}$-l.s.c., $\mssd_\mu$-accessible components are $\Bo{\T}$-measurable, and the statement is well-posed. Further note that~$\rep\rho_A=\car_{A^\complement}$.
Assume~\iref{i:p:Invariant:1}. 
Since~$A$ is $\mcE$-invariant, so is~$A^\complement$.
By Lemma~\ref{l:InvarSq} applied to~$A^\complement$, we have that~$\rho_A=\car_{A^\complement}\in \dotloc{\dom}$ and~$\sq{\rho_A}=0$, and therefore~$\rho_A\in\DzLocB{\mu}$.
Assume~\iref{i:p:Invariant:2}. Since~$\rho_A\in\DzLocB{\mu}$, it admits an $\mcE$-quasi-continuous representative~$\reptwo\rho_A$. By $\mcE$-quasi-continuity,~$\tilde A\eqdef \reptwo\rho_A^{-1}\ttonde{(-1,1)}=\reptwo\rho_A^{-1}(\set{0})$ is both $\mcE$-quasi-open and $\mcE$-quasi-closed. Since~$\reptwo\rho_A=\rep\rho_A$ $\mssm$-a.e.,~$\mssm(\tilde A\triangle A)=0$, and therefore~$A$ is $\mcE$-invariant by Proposition~\ref{p:InvarChar}.
\end{proof}

For an extended pseudo-distance~$\mssd$ on~$X$, $\mssd$-accessible components may range within the two extrema, there being spaces all of which $\mssd$-accessible components are $\mssm$-negligible. A meaningful example is as follows.

\begin{example}[Configuration spaces I]\label{ese:Config1}
Let~$\Upsilon(\R^d)$ denote the configuration space over~$\R^d$, endowed with the vague topology, the induced Borel $\sigma$-algebra, and the Poisson measure~$\pi_d$ with intensity the Lebesgue measure on~$\R^d$. Since~$\Upsilon(\R^d)$ is Polish, the probability measure~$\pi_d$ is Radon.
The canonical Dirichlet form~$(\mcE,\dom)$ on~$L^2(\Upsilon(\R^d))$ is the quasi-regular strongly local Dirichlet form constructed in~\cite{AlbKonRoe98}. It was shown in~\cite[Thm.~1.5(ii)]{RoeSch99} that the intrinsic distance~$\mssd_{\pi_d}$ coincides with the $L^2$-transportation extended distance~$W_2$.
It is not difficult to show, by translation-invariance of the Lebesgue measure and standard properties of Poisson measures, that all $W_2$-accessible components are $\pi_d$-negligible.
In particular, for any such component~$A$, we have that~$W_2(\emparg, A)\equiv +\infty$ $\pi_d$-a.e., hence~$W_2(\emparg, A)\wedge 1\equiv \car$ $\pi_d$-a.e.
\end{example}

\section{The Rademacher property}
Let us introduce the first property of our interest. Write
\begin{align*}
\Lipu(\mssd)\eqdef& \tset{\rep f\in\Lip(\mssd) : \Li[\mssd]{\rep f}\leq 1} \comm & \bLipu(\mssd)\eqdef& \tset{\rep f\in\bLip(\mssd) : \Li[\mssd]{\rep f}\leq 1} \comm
\\
\Lipu(\mssd,\A)\eqdef& \tset{\rep f\in\Lip(\mssd,\A) : \Li[\mssd]{\rep f}\leq 1}\comm & \bLipu(\mssd,\A)\eqdef& \tset{\rep f\in\bLip(\mssd,\A) : \Li[\mssd]{\rep f}\leq 1} \fstop
\end{align*}

\blue{
In the next definition, and in some of the results of this section, we will be concerned with Dirichlet spaces that are not necessarily quasi-regular.
In fact, it will be shown in Proposition~\ref{p:Consistency} below, that the Rademacher property for a Dirichlet space~$(\mbbX,\mcE)$ ---~together with some additional assumptions~--- \emph{implies} the quasi-regularity of~$(\mbbX,\mcE)$.
}

\begin{definition}[Rademacher]\label{d:Rad}
Let~$\mbbX$ be satisfying~\ref{ass:Luzin},~$(\mbbX,\mcE)$ be a strongly local Dirichlet space, $\mu\in\Msp(\Bo{\T})$, and~$\mssd\colon X^\tym{2}\rar [0,\infty]$ be an extended pseudo-distance on~$X$. 
We say that~$(\mbbX,\mcE,\mssd,\mu)$ has:
\begin{itemize}
\item the \emph{Rademacher property} if
\begin{align}\label{eq:Rad}
\tag{$\Rad{\mssd}{\mu}$} \rep f\in \Lipu(\mssd,\A) \comm f\in\dotloc{L^\infty(\mssm)} \qquad \implies \qquad f\in \DzLoc{\mu}\semicolon
\end{align}
\item the \emph{bounded-support Rademacher property} if
\begin{align}\label{eq:bsRad}
\tag{$\Rad{\mssd}{\mu}^{bs}$} \rep f\in \Lipu(\mssd,\A) \comm f\in\dotloc{L^\infty(\mssm)} \comm  \textrm{$\supp[f]$ $\mssd$-bounded} \qquad \implies \qquad f\in \DzLoc{\mu}\semicolon
\end{align}
\item the \emph{distance-Rademacher property} if
\begin{align}\label{eq:dRad}
\tag{$\dRad{\mssd}{\mu}$} \mssd\leq \mssd_\mu\fstop
\end{align}
\end{itemize}
\end{definition}

It will be apparent from the proof of Theorem~\ref{t:Lenz} that we might equivalently define
\begin{align}\tag{$\Rad{\mssd}{\mu}^b$}
\rep f\in \bLipu(\mssd,\A) \qquad \implies \qquad f\in \DzLocB{\mu} \fstop
\end{align}

\begin{remark}\label{r:RadBS}
The bounded-support Rademacher property, together with Proposition~\ref{p:BoundedSupp} below, is introduced for comparison with~\cite{AmbGigSav15}. It is clear that~\eqref{eq:Rad} implies~\eqref{eq:bsRad}. The converse implication does not hold in general. Furthermore~\eqref{eq:bsRad} might be trivial, even if~\eqref{eq:Rad} is not.
Example~\ref{ese:Config2} below provides a quadruple~$(\mbbX,\mcE,\mssd_\mssm,\mssm)$ on which every $\mssd_\mssm$-Lipschitz function with bounded support is $\Bo{\T}^\mssm$-measurable and coincides $\mssm$-a.e.\ with the $\zero$-function, so that~\eqref{eq:bsRad} trivially holds.
\end{remark}

\begin{remark}
In the discussion of any Rademacher property for extended metric spaces, the $\A$-measurability of~$\rep f\in \Lipu(\mssd)$ is essential, as shown by the next example. Furthermore, the property \emph{depends} on the chosen $\sigma$-algebra.
\end{remark}

\begin{example}[Non-measurable Lipschitz functions]\label{ese:LipNonMeas}
Let~$\mbbX$ be satisfying~\ref{ass:Luzin}, and~$\mssd\colon X^\tym{2}\rar [0,\infty]$ be an extended distance on~$X$ with uncountably many $\mssd$-accessible components.
Assume that every $\mssd$-accessible component is $\mssm$-negligible. Then there exists a bounded $\mssd$-Lipschitz function~$\rep f$ that is not $\Bo{\T}^\mssm$-measurable. In particular, there exists no $\Bo{\T}$- (or even~$\Bo{\T}^\mssm$-) measurable function~$\rep g$ with~$\rep f=\rep g$ $\mssm$-a.e.
\end{example}
\begin{proof}
Let~$\msN$ be the $\sigma$-ideal of $\mssm$-negligible sets and~$\msA$ the family of $\mssd$-accessible components.
By assumption,~$\msA\subset \msN$ and~$\msA$ is a partition of~$X$ (i.e.\ sets in~$\msA$ are pairwise disjoint and~$\cup \msA=X$). In particular,~$\cup \msA\notin \msN$.
By the Four Poles Theorem~\cite{CicMorRalRyl07} applied to~$\msA$ and~$\msN$, there exists a subfamily~$\msA'\subset \msA$ so that~$A\eqdef \cup \msA'$ is not $\Bo{\T}^\mssm$-measurable.
By definition of~$\msA$ we have~$\mssd(x,y)=+\infty$ for every~$x\in A$ and~$y\in A^\complement$.
As a consequence, the characteristic function~$\car_A$ is $\mssd$-Lipschitz.
\end{proof}
Note that the configuration spaces discussed in Example~\ref{ese:Config1} satisfy the previous assumptions.

We postpone a proof of the next proposition until later in this section.

\begin{proposition}\label{p:BoundedSupp}
Let~$\mbbX$ be satisfying~\ref{ass:Luzin},~$(\mbbX,\mcE)$ be a strongly local Dirichlet space, $\mu\in\Msp(\Bo{\T})$, and~$\mssd\colon X^\tym{2}\rar [0,\infty]$ be a $\T^\tym{2}$-continuous extended pseudo-distance on~$X$.
Then,~$(\mbbX,\mcE,\mssd,\mu)$ possesses~\eqref{eq:bsRad} if and only if it possesses~\eqref{eq:Rad}.
\end{proposition}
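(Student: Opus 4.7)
The direction $(\Rad{\mssd}{\mu}) \Rightarrow (\Rad{\mssd}{\mu}^{bs})$ is immediate, so I focus on the converse. Given $\rep f \in \Lipu(\mssd, \A)$ with $f \in \dotloc{L^\infty(\mssm)}$, I would first pass to the truncations $f_r \eqdef (-r) \vee f \wedge r$, which lie in $\Lipu(\mssd, \A) \cap L^\infty(\mssm)$ with~$\norm{f_r}_\infty \leq r$. By Lemma~\ref{l:FrankLenz} it is enough to prove, for every $r > 0$, that $f_r \in \dotloc{\dom}$ with $\sq{f_r} \leq \mu$; this reduces everything to the bounded $\mssd$-Lipschitz case.

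The core idea is to localize $f_r$ by a family of radial $\mssd$-cutoffs whose Lipschitz constants tend to~$0$, so that the distortion produced in invoking $(\Rad{\mssd}{\mu}^{bs})$ vanishes in the limit. Because $\mssd$ is $\T^\tym{2}$-continuous, the $\mssd$-accessible components $A_k \eqdef B^\mssd_\infty(x_k)$ of~$X$ are $\T$-open; as they are pairwise disjoint and $(X, \T)$ is Lindel\"of (being a Luzin space), there are only countably many of them, say $\set{A_k}_{k \in K}$ with $K \subset \N$. Choosing base points $x_k \in A_k$, for each $k \in K$ and $m \in \N$ I would set
\begin{align*}
\rho_{k,m} \eqdef 1 \wedge \ttonde{2 - \mssd(x_k, \emparg)/m}_+ \comm
\end{align*}
so that $\rho_{k,m}$ is $\T$-continuous with $0 \leq \rho_{k,m} \leq 1$ and $\Li[\mssd]{\rho_{k,m}} \leq 1/m$, equals~$1$ on $G_{k,m} \eqdef B^\mssd_m(x_k)$, and has $\supp \rho_{k,m} \subset \set{\mssd(x_k, \emparg) \leq 2m} \subset A_k$, in particular $\mssd$-bounded. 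Then $\rho_{k,m} f_r$ is $\A$-measurable, bounded, of $\mssd$-bounded support, and Lipschitz with constant at most $\norm{\rho_{k,m}}_\infty \Li[\mssd]{f_r} + \norm{f_r}_\infty \Li[\mssd]{\rho_{k,m}} \leq 1 + r/m$. Applying $(\Rad{\mssd}{\mu}^{bs})$ to the rescaled function $(1 + r/m)^{-1} \rho_{k,m} f_r$ therefore yields $\rho_{k,m} f_r \in \dotloc{\dom}$ with $\sq{\rho_{k,m} f_r} \leq (1 + r/m)^2 \mu$.

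Next, fix an enumeration $K = \ttset{k_j}_j$ and glue by $g_m \eqdef \sum_{j \leq m} \rho_{k_j, m} f_r$. This is a well-defined finite sum in $\dotloc{\dom}$, the summands having pairwise disjoint supports in the distinct $A_{k_j}$. Since each $A_{k_j}$ is $\T$-open, hence $\mcE$-quasi-open, and $g_m \equiv \rho_{k_j, m} f_r$ on $A_{k_j}$ for $j \leq m$ while $g_m \equiv 0$ on $A_{k_j}$ for $j > m$, strong locality~\eqref{eq:SLoc:2} and summation over~$j$ yield $\sq{g_m} \leq (1 + r/m)^2 \mu$. The sets $G_m \eqdef \cup_{j \leq m} G_{k_j, m}$ form an increasing $\T$-open exhaustion of~$X$, and $g_m \equiv f_r$ on~$G_m$ since $\rho_{k_j, m} \equiv 1$ there; by Lemma~\ref{l:LocLoc} the pair $(G_m, g_m)_m$ witnesses $f_r \in \dotloc{\dom}$. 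A second invocation of strong locality on each $G_m$ gives $\car_{G_m} \sq{f_r} \leq (1 + r/m)^2 \car_{G_m} \mu$, and monotone convergence as $m \to \infty$ along $G_m \nearrow X$ (modulo $\mcE$-polar sets, which neither measure charges) yields $\sq{f_r} \leq \mu$, hence $f_r \in \DzLoc{\mu}$. Lemma~\ref{l:FrankLenz} then concludes that $f \in \DzLoc{\mu}$.

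The main obstacle is the sharpness of the Lipschitz constant: a naive bounded-support cutoff with a fixed-width transition region has $\mssd$-Lipschitz constant~$1$, so its product with $f_r$ has Lipschitz constant $1 + r$, and $(\Rad{\mssd}{\mu}^{bs})$ would only give $\sq{\rho f_r} \leq (1 + r)^2 \mu$, a loss that cannot be absorbed upon removing the cutoff. Spreading the transition region of $\rho_{k,m}$ over $\mssd$-distance~$m$, so that $\Li[\mssd]{\rho_{k,m}} \leq 1/m$, is what recovers the sharp bound. A secondary subtlety is that $\mssd$ need not be $\mssd$-connected, but the Lindel\"of property of $(X, \T)$ controls this by forcing the accessible-component partition to be countable, which is what makes the disjoint-support gluing across components feasible.
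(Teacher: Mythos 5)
Your proof is correct, and it follows the same broad plan as the paper's — reduce to bounded $\mssd$-Lipschitz functions, dispose of the countably many $\mssd$-accessible components (which are $\T$-open and pairwise disjoint, hence countably many by Lindel\"of), and cut off by $\mssd$-balls — but it diverges at the cutoff design, and the divergence matters. The paper's proof uses the unit-width cutoff $\theta_n:=0\vee\ttonde{n-\mssd(x_0,\emparg)}\wedge 1$, for which $\Li[\mssd]{\theta_n}=1$, and asserts that $\rep f\cdot\theta_n\in\bLipu(\mssd)$; however, the product estimate only gives $\Li[\mssd]{\rep f\,\theta_n}\leq\norm{\rep f}_\infty\,\Li[\mssd]{\theta_n}+\Li[\mssd]{\rep f}\leq\norm{\rep f}_\infty+1$, a bound independent of~$n$, so that normalizing and applying $(\Rad{\mssd}{\mu}^{bs})$ leaves a factor $(\norm{\rep f}_\infty+1)^2$ in front of~$\mu$ which cannot be removed by letting~$n\to\infty$. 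Your spread-out cutoffs~$\rho_{k,m}$, with $\Li[\mssd]{\rho_{k,m}}\leq 1/m$, make the Lipschitz overshoot $1+r/m\to 1$, which is precisely what is needed to recover $\sq{f_r}\leq\mu$ in the limit — this is the key point you identify explicitly in your closing paragraph, and a literal reading of the paper's cutoff does not achieve it. An alternative repair that keeps a unit-width transition is a lattice cutoff, e.g.\ $\rep f_n:=\ttonde{\rep f\vee(-\psi_n)}\wedge\psi_n$ with $\psi_n:=\ttonde{n-\mssd(x_0,\emparg)}_+\wedge\norm{\rep f}_\infty$, whose $\mssd$-Lipschitz constant is exactly~$1$. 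The other differences are secondary: you treat the accessible components by disjoint-support gluing via strong locality~\eqref{eq:SLoc:2}, whereas the paper appeals to their $\mcE$-invariance via Proposition~\ref{p:InvarChar}; and your final passage to the limit runs through a $(G_m,g_m)_m$-witness with Lemma~\ref{l:LocLoc} plus monotone convergence of the energy measures, whereas the paper invokes Remark~\ref{r:weak*} and Lemma~\ref{l:ConvMeasure}. These are interchangeable; the decisive ingredient your argument supplies is the vanishing Lipschitz overshoot of the cutoff.
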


The interplay between~\eqref{eq:Rad} and~\eqref{eq:dRad} is discussed in the next lemma.

\begin{lemma}\label{l:RadDRad}
Let~$(\mbbX,\mcE)$ be a strongly local Dirichlet space,~$\mu\in\Msp(\Bo{\T},\Ne{\mcE})$, and~$\mssd\colon X^\tym{2}\rar [0,\infty]$ be a $\T$-admissible extended pseudo-distance on~$X$ in the sense of Definition~\ref{d:AES}.
If~$(\mbbX,\mcE,\mssd,\mu)$ possesses~$(\Rad{\mssd}{\mu})$, then it possesses~$(\dRad{\mssd}{\mu})$.
\end{lemma}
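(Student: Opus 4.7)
The plan is to show $\mssd(x,y)\le \mssd_\mu(x,y)$ for arbitrary $x,y\in X$ by testing $\mssd_\mu$ against distance-like functions $z\mapsto \mssd'(z,y)$, where $\mssd'$ ranges over a uniformity witnessing the $\T$-admissibility of $\mssd$. The Rademacher hypothesis will be the single input needed to put these functions into $\DzLocB{\mu,\T}$.

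Concretely, fix $x,y\in X$, and let $\UP$ be a uniformity of bounded $\T^\tym{2}$-continuous pseudo-distances with $\mssd=\sup\{\mssd':\mssd'\in\UP\}$, as provided by Definition~\ref{d:AES}. For each $\mssd'\in\UP$ define
\[
\rep f_{\mssd',y}\colon z \longmapsto \mssd'(z,y)\fstop
\]
Then $\rep f_{\mssd',y}$ is $\T$-continuous (by $\T^\tym{2}$-continuity of $\mssd'$), bounded (by boundedness of $\mssd'\in\UP$), and hence $\Bo{\T}$- and \emph{a fortiori} $\A$-measurable; moreover the reverse triangle inequality and $\mssd'\le \mssd$ give
\[
\tabs{\rep f_{\mssd',y}(z_1)-\rep f_{\mssd',y}(z_2)}\le \mssd'(z_1,z_2)\le \mssd(z_1,z_2)\comm \qquad z_1,z_2\in X \comm
\]
so that $\rep f_{\mssd',y}\in \bLipu(\mssd,\A)\subset \Lipu(\mssd,\A)$. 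Since $\rep f_{\mssd',y}$ is bounded, its $\mssm$-class $f_{\mssd',y}$ lies in $L^\infty(\mssm)$, hence in $\dotloc{L^\infty(\mssm)}$ (e.g.\ via $G_\bullet\equiv X$). Applying $(\Rad{\mssd}{\mu})$ yields $f_{\mssd',y}\in\DzLoc{\mu}$.

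Under Assumption~\iref{ass:Luzin} the measure $\mssm$ has full $\T$-support, so by Lemma~\ref{l:AeEquality} the $\T$-continuous $\mssm$-representative of $f_{\mssd',y}$ is unique and coincides everywhere with $\rep f_{\mssd',y}$; with the convention adopted in the paragraph on measurability and continuity of Lipschitz functions we identify the two, obtaining $f_{\mssd',y}\in \DzLocB{\mu,\T}$. Then, directly from the definition~\eqref{eq:IntrinsicD} of $\mssd_\mu$,
\[
\mssd_\mu(x,y)\ge f_{\mssd',y}(x)-f_{\mssd',y}(y)=\mssd'(x,y)-\mssd'(y,y)=\mssd'(x,y)\fstop
\]
Taking the supremum over $\mssd'\in\UP$ gives $\mssd_\mu(x,y)\ge \mssd(x,y)$, which is $(\dRad{\mssd}{\mu})$. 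No step is really delicate: the only point that requires care is the identification of the $\mssm$-class $f_{\mssd',y}$ with its pointwise continuous representative $\rep f_{\mssd',y}$, so that the pointwise inequality defining $\mssd_\mu$ can be read off at the specific points $x$ and $y$; this is guaranteed by the full support of $\mssm$ under~\iref{ass:Luzin}.
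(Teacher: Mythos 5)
Your proof is correct and takes essentially the same route as the paper: both extract from the witnessing uniformity a family of bounded $\T^\tym{2}$-continuous pseudo-distances, observe that each slice $\mssd'(\emparg,y)$ is a bounded measurable $1$-Lipschitz function to which $(\Rad{\mssd}{\mu})$ applies, and then read off $\mssd\leq\mssd_\mu$ from the definition of the intrinsic distance. You simply make explicit the step where the $\mssm$-class is identified with its continuous pointwise representative via full $\T$-support, which the paper elides.
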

\begin{proof}
By Definition~\ref{d:AES} and Remark~\ref{r:AES}, there exists a family~$\set{\mssd_\alpha}_{\alpha\in \mcA}$ of $\T^\tym{2}$-\emph{continuous} bounded pseudo-distances $\mssd_\alpha\leq \mssd$ so that~$\mssd=\lim_\alpha \mssd_\alpha$. Furthermore,~$\abs{\mssd_\alpha(x,y)-\mssd_\alpha(x,z)}\leq \mssd_\alpha(y,z)\leq \mssd(y,z)$, thus~$\mssd_\alpha(x,\emparg)$ is $\mssd$-Lipschitz and so~$\mssd_\alpha\in \DzLocB{\mu,\T}$ by assumption. Therefore,
\begin{align*}
\mssd(x,y)=&\ \mssd(x,y)-\mssd(x,x)=\lim_\alpha \mssd_\alpha(x,y)-\mssd_\alpha(x,x)\leq \sup\tset{f(y)-f(x): f\in \DzLocB{\mu,\T}}\\=&\ \mssd_\mu(x,y) \qedhere \fstop
\end{align*}
\end{proof}

\blue{
As a first application of the Rademacher property, let us show how it implies to the completeness of intrinsic distances.
\begin{proposition}\label{p:RadCompleteness}
Let~$(\mbbX,\mcE)$ be a quasi-regular strongly local Dirichlet space,~$\mssd\colon X^\tym{2}\rar [0,\infty]$ be an extended pseudo-distance on~$X$ and $\mu\in\Msp(\Bo{\T},\Ne{\mcE})$ be $\mcE$-moderate.
Further assume that~$(\mbbX,\mcE)$ possesses~$(\dRad{\mssd}{\mu})$.
Then, the following assertions hold:
\begin{enumerate}[$(i)$]
\item if~$(X,\T,\mssd)$ is an extended metric-topological space (Dfn.~\ref{d:AES}), then so is~$(X,\T,\mssd_\mu)$;
\item if~$(X,\T,\mssd)$ is additionally complete, then so is~$(X,\T,\mssd_\mu)$.
\end{enumerate}

\begin{proof}
The $\T$-admissibility of~$\mssd_\mu$ was already noted after~\eqref{eq:IntrinsicD}.
Since~$\mssd\leq \mssd_\mu$ and~$\mssd$ separates points by assumption, we conclude that~$\mssd_\mu$ too separates points, hence~$(X,\T,\mssd_\mu)$ is an extended metric-topological space.
In order to show completeness, let~$\seq{x_n}_n$ be a $\mssd_\mu$-fundamental sequence.
Since~$\mssd\leq \mssd_\mu$, the sequence~$\seq{x_n}_n$ is as well $\mssd$-fundamental.
By completeness of~$(X,\mssd)$, it $\mssd$-converges to a limit point~$x\in X$.
Since~$(X,\T,\mssd)$ is an extended metric-topological space, $\nlim \mssd(x_n,x)=0$ implies that $\T$-$\nlim x_n=x$.
Fix~$\eps>0$, and let~$n_\eps\in\N$ be so that~$\mssd_\mu(x_n,x_m)<\eps$ for $m,n\geq n_\eps$.
Then, by $\T$-lower semi-continuity of~$\mssd_\mu$,
\begin{align*}
\mssd_\mu(x_n,x)\leq \mliminf \mssd_\mu(x_n,x_m) <\eps\comm \qquad n\geq n_\eps\fstop
\end{align*}
This shows that $\seq{x_n}_n$ $\mssd_\mu$-converges to~$x$ as well, hence~$(X,\mssd_\mssm)$ is a complete extended metric space.
\end{proof}
\end{proposition}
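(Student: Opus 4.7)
For part~(i), the $\T$-admissibility of~$\mssd_\mu$ is essentially free: as already observed right after~\eqref{eq:IntrinsicD}, the family $\UP_\mu\eqdef\set{\mssd_f: f\in\DzLocB{\mu,\T}}$ of bounded $\T^\tym{2}$-continuous pseudo-distances has pointwise supremum~$\mssd_\mu$. What remains is to produce a single uniformity that simultaneously witnesses $\T$-admissibility, separates points, and generates~$\T$. The natural candidate is the directed order-closed hull of~$\UP_\mu\cup\UP_\mssd$, where $\UP_\mssd$ is a uniformity witnessing that $(X,\T,\mssd)$ is an extended metric-topological space. Since $(\dRad{\mssd}{\mu})$ gives $\mssd\leq\mssd_\mu$, every member of this union is still bounded by~$\mssd_\mu$, and the supremum already recovers~$\mssd_\mu$ because $\UP_\mu$ alone does. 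Hausdorffness is inherited from~$\UP_\mssd$, and the generated topology coincides with~$\T$: containment of~$\UP_\mssd$ forces it to be at least~$\T$, while $\T^\tym{2}$-continuity of all members forces it to be at most~$\T$.

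For part~(ii), I would proceed by a direct Cauchy-sequence argument. Let~$\seq{x_n}_n$ be $\mssd_\mu$-Cauchy. Since $\mssd\leq\mssd_\mu$, it is also $\mssd$-Cauchy and has bounded $\mssd$-diameter, hence lies in a single $\mssd$-accessible component. By $\mssd$-completeness inside that component it converges in~$\mssd$ to some~$x\in X$; and since $(X,\T,\mssd)$ is an extended metric-topological space, $\mssd$-convergence within an accessible component entails $\T$-convergence, so $\T$-$\nlim x_n=x$. Finally, the $\T$-lower semi-continuity of~$\mssd_\mu$ (a byproduct of its definition as a supremum of $\T^\tym{2}$-continuous functions) upgrades convergence to~$\mssd_\mu$: for any~$\eps>0$, pick~$n_\eps$ with $\mssd_\mu(x_n,x_m)<\eps$ whenever $m,n\geq n_\eps$, and take the $\T$-liminf in~$m$ to obtain
\begin{align*}
\mssd_\mu(x_n,x)\leq \mliminf \mssd_\mu(x_n,x_m)\leq\eps\comm\qquad n\geq n_\eps\fstop
\end{align*}

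I expect the main subtlety to lie in part~(i), specifically in making sure that the chosen uniformity for~$\mssd_\mu$ really generates the original topology~$\T$ (rather than some coarser topology); $\UP_\mu$ on its own need not do so, which is why $\UP_\mssd$ must be adjoined. The role of~$(\dRad{\mssd}{\mu})$ is the same in both parts: it transfers information (Hausdorffness, completeness) from~$\mssd$ to~$\mssd_\mu$ via the pointwise bound $\mssd\leq\mssd_\mu$. Part~(ii) is then essentially formal once the lower semi-continuity of~$\mssd_\mu$ has been invoked.
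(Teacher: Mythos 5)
Your proof is correct and follows essentially the same route as the paper: part (ii) is the same Cauchy-transfer plus lower-semicontinuity argument, down to the final $\liminf$. For part (i), you are in fact more careful than the paper: the paper only records that $\mssd_\mu$ is $\T$-admissible and separates points, and leaves implicit the existence of a uniformity that both witnesses the admissibility and \emph{generates} $\T$; your construction — the directed, order-closed hull of $\UP_\mu\cup\UP_\mssd$ — supplies precisely that missing witness, using $\mssd\leq\mssd_\mu$ to check that the enlarged family still has supremum $\mssd_\mu$, inherits Hausdorffness and $\T$-generation from $\UP_\mssd$, and remains a family of $\T^\tym{2}$-continuous pseudo-distances so that the generated topology does not exceed $\T$. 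You have correctly identified the one genuine subtlety in part (i), and your resolution is the right one.
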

}

\medskip

Let us now provide some examples.

\begin{example}[Intrinsic distances of perturbed forms]\label{ese:Perturbed}
Let~$(\mbbX,\mcE)$ be a quasi-regular strongly local Di\-richlet space, and~$\mu\in\mcS$ be $\mcE$-smooth in the sense of Definition~\ref{d:Smooth}. 
Further let~$\msG^\mu$, resp.~$\msG_0^\mu$,~$\msG_c^\mu$, be defined as in~\eqref{eq:Xi0}, resp.~\eqref{eq:G0}, with~$\mcE^\mu$ in place of~$\mcE$.
By~\eqref{eq:p:Smooth:1}, we have that~$\msG^\mu=\msG$,~$\msG_0^\mu\subset \msG_0$, hence~$\msG_c^\mu\subset \msG_c$ as well.
Because of the inclusion of sets~$\dom^\mu\hookrightarrow\dom$, we have as well the inclusions of sets~$\dotloc{(\dom^\mu)}(G_\bullet)\hookrightarrow \dotloc{\dom}(G_\bullet)$, for every~$G_\bullet\in\msG^\mu=\msG$, and therefore~$\dotloc{(\dom^\mu)}\hookrightarrow \dotloc{\dom}$ by~\eqref{eq:DotLocGc}.
In particular,
\begin{align}\label{eq:DotLocMu}
\tset{f\in\dotlocb{(\dom^\mu)}: \sq{f}\leq \mu}\subset \DzLocB{\mu} \fstop
\end{align}

Now, let~$(\mcE^\mu,\dom^\mu)$ be the $\mu$-perturbed form defined in~\eqref{eq:Perturbed}, and denote by~$\sq{f}^{\mcE^\mu}$ the $\mcE^\mu$-energy measure of~$f\in\dom^\mu$.
By~\eqref{eq:Representation} and definition of~$(\mcE^\mu,\dom^\mu)$, we have that
\begin{align}\label{eq:EnergyMu}
\sq{f}^{\mcE^\mu}=\sq{f}+2\reptwo f^2\cdot \mu\comm
\end{align}
which we may extend to~$f\in\dotloc{(\dom^\mu)}$ by~\eqref{eq:SLoc:2} with~$\sq{f}$ being replaced by~$\sq{f}^{\mcE^\mu}$. Combining~\eqref{eq:DotLocMu} and~\eqref{eq:EnergyMu}, we thus have
\begin{align*}
\DzLocB{\mcE^\mu,\mu}\eqdef \tset{f\in\dotlocb{(\dom^\mu)}: \sq{f}^{\mcE^\mu} \leq \mu}\subset \DzLocB{\mu}\comm
\end{align*}
and therefore, letting~$\mssd^{\mcE^\mu}_\mu$ be the intrinsic metric of~$(\mcE^\mu,\dom^\mu)$, one has~$\mssd^{\mcE^\mu}_\mu\leq \mssd_\mu$, that is, the distance-Rademacher property holds for~$(\mbbX,\mcE,\mssd^{\mcE^\mu}_\mu,\mu)$.
\end{example}

\begin{example}[Rademacher property for extended distances]
In the case when the intrinsic distance~$\mssd_\mssm$ is extended, meaningful examples of Dirichlet spaces satisfying~$(\Rad{\mssd_\mssm}{\mssm})$ typically have infinite-dimensional underlying space~$\mbbX$. Examples include: configuration spaces, see Examples~\ref{ese:Config1}, \ref{ese:Config2}, \ref{ese:Config3}, \ref{ese:Config4}; Wiener spaces \cite{EncStr93}; and locally convex riggings of normed spaces~\cite{BogMay96}.
\end{example}

\begin{example}[Metric Measure Spaces]\label{ese:MMS}
A wide class of examples of Dirichlet spaces satisfying the Rademacher property is given by metric measure spaces~$(X,\mssd,\mssm)$.

A triple~$(X,\mssd,\mssm)$ is a \emph{metric measure space} if~$(X,\mssd)$ is a complete and separable metric space, and~$\mssm$ is a Borel measure on~$(X,\mssd)$ with full support and finite on $\mssd$-balls.
For a function~$f\in \Lip(\mssd)$, define the slope of~$f$ at~$x$ by
\begin{align*}
\slo{f}(x)\eqdef \limsup_{y\rar x} \frac{\abs{f(y)-f(x)}}{\mssd(x,y)}\comm \qquad x\in X\comm
\end{align*}
where, conventionally,~$\slo{f}(x)=0$ if $x$ is isolated.
The \emph{Cheeger energy}~\cite[Eqn.~(4.11)]{AmbGigSav14} on a metric measure space~$(X,\mssd,\mssm)$ is the functional
\begin{align*}
\Ch[\mssd,\mssm](f)\eqdef \inf\set{\nliminf \int \slo{f_n}^2\diff\mssm : f_n\in \Lip(\mssd)\comm L^2(\mssm)\text{-}\nlim f_n=f} \comm \qquad \inf\emp\eqdef+\infty\fstop
\end{align*}
A metric measure space~$(X,\mssd,\mssm)$ is called \emph{infinitesimally Hibertian} if~$\Ch[\mssd,\mssm]$ is quadratic, in which case, it is a strongly local Dirichlet form, satisfying~$(\Rad{\mssd}{\mssm})$ by construction.
\end{example}

\subsection{Sufficient conditions} Under the assumption of strong locality, the next statement is an extension to the quasi-regular Dirichlet spaces of~\cite[Thm.~4.9]{FraLenWin14}, the proof of which we adapt to our setting. Concerning the assumptions, see~\cite[Dfn.~4.1, Rmk.~4.2]{FraLenWin14}.

\begin{theorem}\label{t:Lenz}
Let~$\mbbX$ be satisfying~\ref{ass:Luzin},~$(\mbbX,\mcE)$ be a quasi-regular strongly local Dirichlet space, $\mssd\colon X^\tym{2}\rar [0,\infty]$ be an extended pseudo-distance on~$X$, and $\mu\in\Msp(\Bo{\T},\Ne{\mcE})$ be $\mcE$-moderate.
Further assume that:
\begin{enumerate}[$(a)$]
\item\label{i:t:Lenz:1} $\mssd(\emparg, A)$ is $\A$-measurable for every~$A\in\A$;
\item\label{i:t:Lenz:2} $\mssd(\emparg, A)\wedge r\in\DzLocB{\mu}$ for every~$A\in\A$ and every~$r>0$.
\end{enumerate}

Then,~$(\mbbX,\mcE,\mssd,\mu)$ possesses~$(\Rad{\mssd}{\mu})$.
\end{theorem}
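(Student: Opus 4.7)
The plan is to reduce to a bounded non-negative~$\rep f$ and then approximate it by finite sums of ``capped distance-to-sublevel-set'' functions, which belong to~$\DzLocB{\mu}$ by assumption~\iref{i:t:Lenz:2}. Strong locality of~$\mcE$ will guarantee that the energy measures of the summands live on pairwise disjoint regions, yielding the uniform bound~$\sq{g_n}\leq \mu$ needed to pass to the limit via Lemma~\ref{l:ConvMeasure}.

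To set up the reduction, let~$\rep f\in\Lipu(\mssd,\A)$ with~$f\in\dotloc{L^\infty(\mssm)}$. By Lemma~\ref{l:FrankLenz} it suffices to show that each truncation~$f_r\eqdef (-r)\vee f\wedge r$ lies in~$\dotloc{\dom}$ with~$\sq{f_r}\leq \mu$, uniformly in~$r>0$. Since~$\rep f_r\in\bLipu(\mssd,\A)$ and constants have zero energy by~\eqref{eq:E(1)=0}, we may further assume~$0\leq \rep f\leq M$ for some~$M>0$. For~$n\in\N$ and~$k=0,1,\ldots,K_n\eqdef \lceil M\, 2^n\rceil$, define the $\A$-measurable sublevel set~$A_{k,n}\eqdef \set{\rep f\leq k\, 2^{-n}}$ and set
\begin{align*}
\rep h_{k,n}\eqdef \mssd(\emparg, A_{k,n})\wedge 2^{-n}\comm \qquad \rep g_n\eqdef \sum_{k=0}^{K_n}\rep h_{k,n}\fstop
\end{align*}
By~\iref{i:t:Lenz:2},~$h_{k,n}\in\DzLocB{\mu}$ with~$\sq{h_{k,n}}\leq \mu$, so~$g_n\in\dotloc{\dom}\cap L^\infty(\mssm)$. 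The $1$-Lipschitz property of~$\rep f$ yields~$\rep h_{k,n}(x)=0$ whenever~$\rep f(x)\leq k\, 2^{-n}$ and~$\rep h_{k,n}(x)=2^{-n}$ whenever~$\rep f(x)\geq (k+1)\, 2^{-n}$; a short telescoping computation then gives~$\rep g_n(x)\in [\rep f(x),\rep f(x)+2^{-n}]$ for every~$x\in X$. In particular~$\rep g_n\rar \rep f$ uniformly, hence weakly* in~$L^\infty(\mssm)$ by Remark~\ref{r:weak*}, with uniform bound~$\norm{\rep g_n}_\infty\leq M+1$.

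The heart of the argument is the energy estimate~$\sq{g_n}\leq \mu$. Again by the $1$-Lipschitz property of~$\rep f$, the ``active regions''~$E_{k,n}\eqdef \set{0<\rep h_{k,n}<2^{-n}}$ are contained in~$\set{k\,2^{-n}<\rep f<(k+1)\,2^{-n}}$, hence pairwise disjoint. Passing to $\mcE$-quasi-continuous representatives~$\reptwo h_{k,n}$ and invoking strong locality~\eqref{eq:SLoc:2} on the quasi-open level sets~$\intE\set{\reptwo h_{k,n}=0}$ and~$\intE\set{\reptwo h_{k,n}=2^{-n}}$, on which~$\rep h_{k,n}$ is $\mssm$-a.e.\ constant, one obtains that~$\sq{h_{k,n}}$ is carried by the quasi-closure of~$E_{k,n}$; the Cauchy--Schwarz inequality~\eqref{eq:CS} then forces all cross terms~$\sq{h_{k,n},h_{j,n}}$ with~$k\neq j$ to vanish. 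By bilinearity and disjointness,
\begin{align*}
\sq{g_n}=\sum_{k=0}^{K_n}\sq{h_{k,n}}\leq \sum_{k=0}^{K_n}\car_{E_{k,n}}\mu\leq \mu\fstop
\end{align*}
Applying Lemma~\ref{l:ConvMeasure} to the sequence~$\seq{g_n}_n\subset \DzLocB{\mu}$ then yields~$f_r\in\DzLocB{\mu}$ with~$\sq{f_r}\leq \mu$, completing the proof via the initial reduction.

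The main obstacle is the energy estimate itself: assumption~\iref{i:t:Lenz:1} only guarantees $\A$-measurability (not $\T$-continuity) of the sublevel sets~$A_{k,n}$, so the heuristic ``$\rep h_{k,n}$ is locally constant off~$E_{k,n}$'' must be carefully promoted to a strong-locality statement via the quasi-continuous representatives~$\reptwo h_{k,n}$, with extra care required at the boundaries of the level sets, where disjointness of the supports could otherwise fail.
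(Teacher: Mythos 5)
Your construction takes a genuinely different route from the paper's, and while the skeleton (approximate a $\mssd$-Lipschitz $\rep f$ by capped distance-to-level-set functions, then pass to the limit with Remark~\ref{r:weak*} and Lemma~\ref{l:ConvMeasure}) is the same, the key step --- the uniform energy bound $\sq{g_n}\leq \mu$ --- has a genuine gap. The paper instead sets $A_s\eqdef\ttset{\rep f\geq s}$ and forms the \emph{maximum} $\rep f_n\eqdef \max_{1\leq m\leq nr}\ttonde{\tfrac{m}{n}-\mssd(\emparg,A_{m/n})}_+$. Because each summand lies in $\DzLocB{\mu}$ by assumption~\iref{i:t:Lenz:2} together with~\eqref{eq:E(1)=0}, the truncation property~\eqref{eq:TruncationLoc} alone yields $\sq{f_n}\leq \mu$: for $\sq{f},\sq{g}\leq\mu$ one has $\sq{f\vee g}=\car_{\ttset{\reptwo f\leq \reptwo g}}\sq{g}+\car_{\ttset{\reptwo f> \reptwo g}}\sq{f}\leq\mu$, with no disjointness argument required. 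Your \emph{sum} $\rep g_n=\sum_k \rep h_{k,n}$ cannot exploit the truncation property, and instead you must argue that the $\sq{h_{k,n}}$ are carried by pairwise disjoint sets and that all cross terms $\sq{h_{j,n},h_{k,n}}$ vanish.

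This is where the proof breaks. Strong locality~\eqref{eq:SLoc:2} only kills $\sq{h_{k,n}}$ on the quasi-interiors $G_0^k\eqdef\intE\set{\reptwo h_{k,n}=0}$ and $G_1^k\eqdef\intE\set{\reptwo h_{k,n}=2^{-n}}$; consequently $\sq{h_{k,n}}$ is carried by $\ttonde{G_0^k\cup G_1^k}^\complement$, which contains the \emph{quasi-closure} $\cl_\mcE E_{k,n}^\mcE$ of $E_{k,n}^\mcE\eqdef\set{0<\reptwo h_{k,n}<2^{-n}}$. Two problems follow. First, $E_{k,n}^\mcE$ is defined via the quasi-continuous representative $\reptwo h_{k,n}$, which agrees with your pointwise $\rep h_{k,n}$ only $\mssm$-a.e.; since no assumption forces $\sq{h_{k,n}}\ll\mssm$, the strip containment $E_{k,n}\subset\set{k2^{-n}<\rep f<(k+1)2^{-n}}$ (proved from the pointwise Lipschitz estimate) need not transfer to $E_{k,n}^\mcE$, and these sets may differ on a set of positive $\sq{h_{k,n}}$-measure. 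Second, and more fundamentally, $\sq{h_{k,n}}$ may charge the boundary $\cl_\mcE E_{k,n}^\mcE\setminus E_{k,n}^\mcE$, i.e.\ the level sets $\set{\reptwo h_{k,n}=0}$ and $\set{\reptwo h_{k,n}=2^{-n}}$; neighbouring $k$ share such level sets, so the quasi-closures overlap there, and neither the claimed bound $\sq{h_{k,n}}\leq\car_{E_{k,n}}\mu$ nor the cross-term vanishing follows from Cauchy--Schwarz~\eqref{eq:CS}. Your argument would become sound if one could invoke the Bouleau--Hirsch energy-image-density property $\sq{h_{k,n}}\ttonde{\set{\reptwo h_{k,n}=c}}=0$ for all $c$, but that is an extra input that you neither state nor prove, and it is not among the tools (Theorem~\ref{t:Kuwae}, Proposition~\ref{p:PropertiesLoc}) developed in the paper. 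You flag this boundary issue yourself as the main obstacle; it is indeed a real gap, and the paper's max-plus-truncation construction is exactly the device that sidesteps it.
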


Before proving the Theorem, we collect some remarks on the assumptions.
\begin{remark}\label{r:Lenz} Note that:
\begin{enumerate}[$(i)$]
\item\label{i:r:Lenz:1} If~$\T_\mssd$ is separable, then~\iref{i:t:Lenz:1}, \iref{i:t:Lenz:2} in Theorem~\ref{t:Lenz} may be respectively substituted by:
\begin{enumerate}[$(a')$]
\item\label{i:r:Lenz:1.1} $\mssd(\emparg, x_0)$ is $\A$-measurable for every~$x_0\in X$;
\item\label{i:r:Lenz:1.2} $\mssd(\emparg, x_0)\wedge r\in\DzLocB{\mu}$ for every~$x_0\in X$ and every~$r>0$.
\end{enumerate}
\begin{proof}
Let~$A\subset X$ be any subset. Since~$\T_\mssd$ is separable, there exists a countable set~$\set{x_i}_i$ $\T_\mssd$-dense in~$A$. Therefore,
\begin{align}\label{eq:r:Lenz:1:1}
\mssd(\emparg, A)\wedge r=\mssd(\emparg, \cl_\mssd A)\wedge r=\nlim \min_{i\leq n} \mssd(\emparg, x_i) \wedge r\comm \qquad r>0 \fstop
\end{align}

Thus,~$\mssd(\emparg, A)$ is the limit of a sequence of $\A$-measurable functions, by~\iref{i:r:Lenz:1.1}, and therefore it is $\A$-measurable. Furthermore,~$\min_{i\leq n} \mssd(\emparg, x_i) \wedge r\in \DzLocB{\mu}$ by~\iref{i:r:Lenz:1.2} and~\eqref{eq:TruncationLoc}, and thus we conclude~\iref{i:t:Lenz:2} by~\eqref{eq:r:Lenz:1:1}, Remark~\ref{r:weak*}, and Lemma~\ref{l:ConvMeasure}.
\end{proof}

\item\label{i:r:Lenz:2} If~$\mssd$ is $\T^\tym{2}$-continuous, then~$\T_\mssd$ is separable (since~$\T$ is so by~\ref{ass:Luzin}), and it suffices to assume~\iref{i:r:Lenz:1.2};

\item\label{i:r:Lenz:3.0} If we substitute~\iref{i:t:Lenz:1},~\iref{i:t:Lenz:2} with the stronger assumptions:
\begin{enumerate}[$(a'')$]
\item\label{i:r:Lenz:3} $\mssd(\emparg, A)$ is $\A$-measurable for every~$A\subset X$ (possibly: $A\not\in\A$);
\item\label{i:r:Lenz:4} $\mssd(\emparg, A)\wedge r\in\DzLocB{\mu}$ for every~$A\subset X$;\end{enumerate}
then we may relax the assumptions on~$\rep f$ in~$(\Rad{\mssd}{\mu})$ in that we do not need to assume \emph{a priori} that~$\rep f$ be $\A$-measurable. Indeed, it is shown in the proof that, under~\iref{i:r:Lenz:3},~\iref{i:r:Lenz:4}, there exists~$\reptwo f\in \DzLoc{\mu}$ with~$\reptwo f=\rep f$ $\mcE$-quasi-everywhere.
\end{enumerate}
\end{remark}

\begin{proof}[Proof of Theorem~\ref{t:Lenz}]
Let~$\rep f\in \Lip(\mssd,\A)$ with~$\Li[\mssd]{\rep f}\leq 1$.
By Lemma~\ref{l:FrankLenz} we may and shall assume with no loss of generality that~$0\leq f\leq r$ $\mssm$-a.e.\ for some~$r>0$.
For~$s\geq 0$ set~$A_s(\rep f)\eqdef \ttset{\rep f\geq s}$.
Let~$\rep d_{m,n}\eqdef \mssd\ttonde{\emparg, A_{m/n}(\rep f)}$ and note that~$\rep d_{m,n} \wedge r\in\DzLocB{\mu}$ for every~$r>0$ by~\iref{i:t:Lenz:2}. 
Further set
\begin{align*}
\rep f_n&\colon x\longmapsto \max_{1\leq m\leq n r} \tonde{\tfrac{m}{n}- \rep d_{m,n}(x)}_+\comm 
\qquad n\in \N \fstop
\end{align*}
By the truncation property~\eqref{eq:TruncationLoc} we have~$f_n \in \DzLocB{\mu}$ for every~$n\in \N$.

\blue{
Since~$\rep f$ is $\mssd$-Lipschitz, we have that, for every~$n\in N$, every~$m\leq n$, every~$x\in A_{m/n}(\rep f)^\complement$, and every~$m'$ with~$1\leq m'\leq nr$,
\begin{align*}
\rep d_{m',n}(x)\eqdef \inf_{y\in A_{m'/n}(\rep f)} \mssd(x,y) \geq \inf_{y\in A_{m'/n}(\rep f)} \rep f(y) - \rep f(x) > \inf_{y\in A_{m'/n}(\rep f)} \rep f(y) - \tfrac{m}{n} \fstop
\end{align*}
As a consequence,
\begin{align*}
\tfrac{m'}{n} - \rep d_{m',n}(x) < \tfrac{m'}{n}+\tfrac{m}{n}-\inf_{y\in A_{m'/n}(\rep f)} \rep f(y) \leq \tfrac{m}{n}\comm
\end{align*}
and therefore, maximizing over~$m'$,
\begin{align}\label{eq:t:Lenz:0.1}
\rep f_n(x)\eqdef \max_{1\leq m\leq nr} \tonde{\tfrac{m'}{n}- \rep d_{m',n}(x)}_+ \leq \tfrac{m}{n} \comm \qquad x\in  A_{m/n}(\rep f)^\complement \comm \qquad n\in \N\fstop
\end{align}

In a similar way, one can show that
\begin{align}\label{eq:t:Lenz:0.2}
\rep f_n(x) \geq \tfrac{m-1}{n}\comm \qquad x\in A_{(m-1)/n}(\rep f)\comm \qquad n\in \N\fstop
\end{align}
}

Combining~\eqref{eq:t:Lenz:0.1} and~\eqref{eq:t:Lenz:0.2},
\begin{align}\label{eq:t:Lenz:1}
\tfrac{m-1}{n} \leq \rep f_n(x) \leq \tfrac{m}{n}\comm \qquad x\in A_{(m-1)/n}(\rep f)\setminus A_{m/n}(\rep f)\comm \qquad n\in \N\fstop
\end{align}
It thus follows that~$\rep f_n$ converges to~$\rep f$ pointwise on~$X$.
%Since~$\cup_{k\in\N} (B\cap F_k)$ is $\mssm$-conegligible, we may conclude 
In particular,~$\rep f_n$ converges to~$\rep f$ $\mssm$-a.e.\ on~$X$, hence~$\nlim f_n=f$ weakly* in~$L^\infty(\mssm)$ by Remark~\ref{r:weak*}.
By Lemma~\ref{l:ConvMeasure} this concludes the proof.
\end{proof}

The assumptions in Theorem~\ref{t:Lenz} are usually difficult to check.
It is however worth to spell out one result in the case when~$\mssd=\mssd_\mu$. This was shown by K.~Kuwae in~\cite{Kuw96} for not necessarily local forms assuming the $\T$-continuity of~$\mssd_\mu$.
We adapt the proof of~\cite{Kuw96} to our more general definition of intrinsic distance, postponing a thorough comparison with~\cite{Kuw96} to Remark~\ref{r:Kuw96} below.

\begin{theorem}\label{t:KuwaeProposition}
Let~$(\mbbX,\mcE)$ be a quasi-regular strongly local Dirichlet space,~$\mu\in\Msp(\Bo{\T}, \Ne{\mcE})$ be $\mcE$-moderate. Further assume that~$\T_{\mssd_\mu}$ is separable. Then, the conditions~\iref{i:r:Lenz:3} and~\iref{i:r:Lenz:4} in Remark~\ref{r:Lenz}\ref{i:r:Lenz:3.0} hold for~$\mssd_\mu$.
\end{theorem}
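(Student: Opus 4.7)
The plan is to invoke Remark~\ref{r:Lenz}\iref{i:r:Lenz:1} to reduce to the pointwise version of both conditions, dispatch measurability by lower semicontinuity, and identify $\mssd_\mu(\emparg,x_0)\wedge r$ as the pointwise monotone limit of a countable sequence in~$\DzLocB{\mu}$, closing with Lemma~\ref{l:ConvMeasure}.

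Since~$\T_{\mssd_\mu}$ is separable, every $A\subset X$ contains a countable $\mssd_\mu$-dense subset~$\set{x_i}_i$, so $\mssd_\mu(\emparg,A)=\mssd_\mu(\emparg,\cl_{\mssd_\mu}A)=\inf_i \mssd_\mu(\emparg,x_i)$ regardless of whether~$A\in\A$. Arguing as in the proof of Remark~\ref{r:Lenz}\iref{i:r:Lenz:1}, this identity combined with the truncation property~\eqref{eq:TruncationLoc}, Remark~\ref{r:weak*}, and Lemma~\ref{l:ConvMeasure} reduces assertions~\iref{i:r:Lenz:3} and~\iref{i:r:Lenz:4} to the pointwise statements $(a')$ $\mssd_\mu(\emparg,x_0)$ is $\Bo{\T}$-measurable and $(b')$ $\mssd_\mu(\emparg,x_0)\wedge r\in\DzLocB{\mu}$, to be established for every $x_0\in X$ and~$r>0$. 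Condition~$(a')$ is immediate: as a slice of the $\T^\tym{2}$-l.s.c.\ function $\mssd_\mu$, the map $\mssd_\mu(\emparg,x_0)\colon X\to [0,\infty]$ is $\T$-l.s.c., hence Borel.

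For~$(b')$ write $\rho\eqdef \mssd_\mu(\emparg,x_0)\wedge r$. By~\eqref{eq:IntrinsicD} every $f\in\DzLocB{\mu,\T}$ is $\mssd_\mu$-Lipschitz of constant at most~$1$, and substituting $f$ by $f-f(x_0)\in\DzLocB{\mu,\T}$ I may assume $f(x_0)=0$, so that $f\leq \mssd_\mu(\emparg,x_0)$ pointwise on~$X$. Separability of~$\T_{\mssd_\mu}$ further forces the $\mssd_\mu$-accessible components $\set{C_j}_{j\in J}$ of~$X$ to be countable in number (since any disjoint family of nonempty $\T_{\mssd_\mu}$-open sets in a separable space is countable) and each $(C_j,\mssd_\mu)$ to be separable pseudo-metric, carrying a countable $\mssd_\mu$-dense subset~$D_j\subset C_j$. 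Set $D\eqdef \cup_j D_j$. For each $(y,k)\in D\times\N$ I select $f_{y,k}\in\DzLocB{\mu,\T}$ with $f_{y,k}(x_0)=0$ and $f_{y,k}(y)\geq (\mssd_\mu(y,x_0)\wedge k)-1/k$, which exists by the sup definition of~$\mssd_\mu$. Enumerating $D\times\N$ as $\set{(y_i,k_i)}_i$ and including a trivial~$f\equiv 0$ to keep things non-negative, set $G_N\eqdef \tonde{\max_{i\leq N} f_{y_i,k_i}}\wedge r$; iterating~\eqref{eq:TruncationLoc} with each $\sq{f_{y_i,k_i}}\leq \mu$ shows $G_N\in\DzLocB{\mu}$.

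The sequence $\seq{G_N}_N$ is monotone, uniformly bounded by~$r$, and each $G_N$ is $\mssd_\mu$-Lipschitz of constant at most~$1$, so it converges pointwise on~$X$ to a $\mssd_\mu$-Lipschitz function $G\colon X\to [0,r]$ of constant at most~$1$. The bound $G\leq \rho$ is pointwise since $f_{y,k}\leq \mssd_\mu(\emparg,x_0)$, while the reverse $G(y)\geq \rho(y)$ at every $y\in D$ follows by sending $k\to\infty$ along the family $\seq{f_{y,k}}_k$. Both $G$ and $\rho$ being $1$-Lipschitz for $\mssd_\mu$ and agreeing on the $\mssd_\mu$-dense subset $D_j$ of the accessible component $C_j$ (on which $\mssd_\mu$ is a genuine pseudo-metric), they coincide on each~$C_j$ and hence on all of~$X$. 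Remark~\ref{r:weak*} upgrades $G_N\nearrow\rho$ to weak* convergence in $L^\infty(\mssm)$, and Lemma~\ref{l:ConvMeasure} yields $\rho\in\DzLocB{\mu}$. The main obstacle I anticipate is precisely the passage from the uncountable sup in~\eqref{eq:IntrinsicD} to a countable approximation compatible with Lemma~\ref{l:ConvMeasure}; its resolution hinges on the joint $1$-Lipschitz character of~$G$ and~$\rho$, which upgrades pointwise equality on a $\mssd_\mu$-dense subset of each component to equality throughout, together with the countability of the family of accessible components granted by separability of~$\T_{\mssd_\mu}$.
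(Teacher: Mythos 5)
Your proof is correct. It shares the essential skeleton with the paper's argument: reduce to the point-to-point statement via Remark~\ref{r:Lenz}\iref{i:r:Lenz:1}, exploit separability of~$\T_{\mssd_\mu}$ to pass from an uncountable supremum to a countable one, use the truncation property~\eqref{eq:TruncationLoc} to keep the approximants inside~$\DzLocB{\mu}$, and close with Remark~\ref{r:weak*} and Lemma~\ref{l:ConvMeasure}. What is genuinely different is the construction and, especially, how you pass to the limit. The paper splits into two steps: Step~1 handles the case of an everywhere-finite~$\mssd_\mu$ via an explicit countable ball covering $\set{B_{n,i}}$, a three-line inequality chain tracking an error of order~$3/n$, and a double limit $m\rar\infty$, $n\rar\infty$; Step~2 handles the extended case by running Step~1 inside the accessible component of~$x_0$ and grafting on auxiliary diverging functions~$g^{i,k}_n$ to cap~$\rho_{x_0}$ at~$r$ on the other components. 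You avoid this case distinction entirely: you enumerate a dense set~$D$ across all (countably many) accessible components at once, form a single monotone sequence~$G_N$ of truncated maxima, and then observe that the pointwise limit~$G$ and the target~$\rho=\mssd_\mu(\emparg,x_0)\wedge r$ are \emph{both} $1$-Lipschitz with respect to~$\mssd_\mu$ and agree on~$D$. Since~$D$ is $\mssd_\mu$-dense in~$X$, the Lipschitz extension argument immediately gives $G=\rho$ everywhere, replacing the paper's quantitative error estimates and the entire second step with a soft density argument. The subtraction of~$f(x_0)$ used to normalize each~$f_{y,k}$ is legitimate, for exactly the same reason as in the paper's Step~2, namely~\eqref{eq:E(1)=0}. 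What the paper's route buys is more explicit uniform control over the approximants via the $3/n$ bounds; what your route buys is a shorter proof, a unified treatment of the finite and extended cases, and a clean separation between the construction (supply enough Lipschitz approximants) and the identification (Lipschitzness plus density).
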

\begin{proof}
We show the statement for~$A=\set{x_0}$. The assertion for~$\mssd_\mu(\emparg, A)$ with arbitrary~$A\subset X$ follows by Remark~\ref{r:Lenz}\iref{i:r:Lenz:1}. For fixed~$x_0\in X$ and~$r>0$, set~$\rep\rho_{x_0}\eqdef\mssd_\mu(\emparg, x_0)\wedge r$.

\paragraph{Step 1} Suppose first that~$\mssd_\mu$ is everywhere finite. Then~$(X,\mssd_\mu)$ is a separable pseudo-metric space, and therefore it is second countable.
In particular, there exists a countable set~$\set{y_i}_i$ so that, setting~$B_{n,i}\eqdef B^{\mssd_\mu}_{1/n}(y_i)$, then~$\set{B_{n,i}}_i$ is a $\T_{\mssd_\mu}$-open covering of~$X$ for every~$n\in \N$. 
By definition of~$\mssd_\mu$, for every~$n$, $i\in \N$ and every fixed~$x\in X$ there exists~$f_{n,i,x}\in \DzLoc{\mu,\T}$ so that
\begin{align}
\label{eq:p:Kuw:1}
f_{n,i,x}(x)-f_{n,i,x}(y_i)\geq&\ \mssd_\mu(x,y_i)-\tfrac{1}{n}\comm && n,i\in \N\comm \qquad x\in X\comm
\\
\label{eq:p:Kuw:1.5}
f_{n,i,x}(y)\geq&\ f_{n,i,x}(x)-\mssd_\mu(x,y) \comm && n,i\in \N\comm \qquad x,y\in X\comm
\\
\label{eq:p:Kuw:2}
f_{n,i,x}(y)\leq&\ f_{n,i,x}(y_i)+\tfrac{1}{n} \comm && n,i\in \N\comm \qquad y\in B_{n,i}\fstop
\intertext{
Combining~\eqref{eq:p:Kuw:1} and~\eqref{eq:p:Kuw:2} with the triangle inequality
}
\label{eq:p:Kuw:3}
\mssd_\mu(x,y_i)\geq&\ \mssd_\mu(x,y)-\tfrac{1}{n}\comm && n,i\in \N \comm \qquad x\in X\comm y\in B_{n,i}\comm
\intertext{
yields
}
\nonumber
f_{n,i,x}(y)\leq&\ f_{n,i,x}(x)-\mssd_\mu(x,y)+\tfrac{3}{n} \comm && n,i\in \N\comm \qquad x\in X\comm y\in B_{n,i}\fstop
\end{align}

Now, let~$g_{n,i,x}\colon y\mapsto 0\vee \ttonde{f_{n,i,x}(x)-f_{n,i,x}(y)} \wedge r$, and note that $g_{n,i,x}\in \DzLoc{\mu,\T}$ by \eqref{eq:TruncationLoc} for every~$n$, $i\in \N$, and every~$x\in X$, and that
\begin{align}
\label{eq:p:Kuw:4}
0\leq g_{n,i,x}(y) \leq&\ \mssd_\mu(x,y)\wedge r\comm && n, i\in \N\comm \qquad x,y\in X\comm
\\
\label{eq:p:Kuw:5}
g_{n,i,x}(y)\geq&\ \ttonde{\mssd_\mu(x,y)-\tfrac{3}{n}}\wedge r\comm && n, i\in \N\comm \qquad x\in X\comm y\in B_{n,i}\comm
\\
\nonumber
\abs{g_{n,i,x}(y)-g_{n,i,x}(z)}\leq&\ \mssd_\mu(y,z)\comm && n, i\in \N\comm \qquad y,z\in X\fstop
\end{align}

Let~$\rho_{n,m,x_0}\colon y\mapsto \max_{i\leq m} g_{n,i,x_0}(y)$, and note that~$\rho_{n,m,x_0}\in \DzLoc{\mu,\T}$ by~\eqref{eq:TruncationLoc} for~$n,m\in \N$ and every~$x_0\in X$. Set~$\rep\rho_{n,x_0}\eqdef \mlim \rho_{n,m,x_0}$. 
By Remark~\ref{r:weak*} and Lemma~\ref{l:ConvMeasure} we have~$\rep\rho_{n,x_0}\in \DzLocB{\mu}$ for every~$x_0\in X$. By~\eqref{eq:p:Kuw:4} and~\eqref{eq:p:Kuw:5}, we have~$\rep\rho_{x_0}(y)=\nlim \rep\rho_{n,x_0}(y)$ for all~$y\in X$. As a consequence,~$\rho_{x_0} \in \DzLocB{\mu}$ for every~$x_0\in X$ and every~$r>0$ again by an application of Remark~\ref{r:weak*} and Lemma~\ref{l:ConvMeasure}.

\paragraph{Step 2} Suppose now that~$\mssd_\mu$ is an extended pseudo-distance. Since~$(X,\T_{\mssd_\mu})$ is separable, $(X,\mssd_\mu)$ has up to countably many accessible components~$X_i\eqdef B^{\mssd_\mu}_\infty(x_i)$, $x_i\in X$, each an element of~$\Bo{\T}$ and a separable pseudo-metric space. Fix~$x_0\in X_i$ for some~$i$. Without loss of generality, up to relabeling,~$i=0$. Arguing as in \emph{Step~1} with~$X_0$ in place of~$X$, there exists a sequence~$\tseq{\rep f_{n,x_0}}_n\subset \mcL^\infty(\A)$ so that~$\seq{f_{n,x_0}}_n \subset \DzLocB{\mu}$ and
\begin{align}\label{eq:p:Kuw:8}
0\leq \rep f_{n,x_0}(y)\leq \rep\rho_{x_0}(y)\comm \qquad \nlim \rep f_{n,x_0}(z)=\rep\rho_{x_0}(z)\comm \qquad y\in X\comm z\in X_0\fstop
\end{align}
Again arguing as in \emph{Step~1}, for each~$i\in\N$ let~$\set{y_{i,k}}_k\subset X_i$ be a countable set, $\T_{\mssd_\mu}$-dense in~$X_i$. For each~$i,k\in \N$, there exists a sequence~$\tseq{g^{i,k}_n}_n\subset \DzLocB{\mu,\T}$ of functions, defined on the whole of~$X$, so that
\begin{align*}
\nlim g^{i,k}_n(y_{i,k}) - g^{i,k}_n(x_0)=\mssd_\mu(y_{i,k},x_0)= \infty\comm \qquad i,k,n\in \N\fstop
\end{align*}
Note that the construction of~$g^{i,k}_n$ on~$X$ as in \emph{Step~1} for~$f_{n,i,x}$ can be done in this generality, i.e.\ on the whole of~$X$.

Without loss of generality, up to subtracting the constant~$g^{i,k}_n(x_0)$ by~\eqref{eq:E(1)=0}, and possibly taking a (non-relabeled) subsequence in~$n$, we may and shall assume that 
\begin{align}\label{eq:p:Kuw:6}
g^{i,k}_n(x_0)=0\comm \quad g^{i,k}_n(y_{i,k})\geq n\comm \qquad i,k,n \in \N\comm
\end{align}
and thus, by~\eqref{eq:p:Kuw:1.5},
\begin{align*}
g^{i,k}_n(y)\wedge r =\ttonde{g^{i,k}_n(y)-g^{i,k}_n(x_0)}\wedge r\leq \rep\rho_{x_0}(y)\comm \qquad i,k,n\in \N\comm \qquad y\in X \fstop
\end{align*}
Furthermore
\begin{align}\label{eq:p:Kuw:9}
g^{i,k}_n(y)-g^{i,k}_n(x)\leq \mssd_\mu(x,y) \comm \qquad i,k,n\in \N\comm \qquad x,y\in X\fstop
\end{align}
Letting~$\rep g_{n,x}\colon y\mapsto 0\vee \sup_{i,k} \ttonde{g^{i,k}_n(y)-g^{i,k}_n(x)}\wedge r$ for fixed~$x\in X$, the function~$\rep g_{n,x}$ is~$\Bo{\T}$-measurable, and $\T_{\mssd_\mu}$-continuous by~\eqref{eq:p:Kuw:9}.
By $\T_{\mssd_\mu}$-density of~$\set{y_{i,k}}_k$ in~$X_i$ for every~$i$, $\T_{\mssd_\mu}$-continuity of~$\rep g_{n,x}$, and~\eqref{eq:p:Kuw:6},
\begin{align}\label{eq:p:Kuw:7}
\rep g_{n,x_0}(y)= r \comm \qquad n\geq r \comm \qquad y\in X\setminus X_0 \fstop
\end{align}
Since~$\rep g_{n,x}=\mlim \max_{i,k\leq m}\ttonde{g^{i,k}_n(\emparg)-g^{i,k}_n(x)}\wedge r$ pointwise on~$X$ for every fixed~$x\in X$, one has that~$g_{n,x}\in \DzLocB{\mu}$ for every~$x\in X$ by Remark~\ref{r:weak*} and Lemma~\ref{l:ConvMeasure}.

Finally, set~$\rep\rho_{n,x_0}\colon y\mapsto \rep f_{n,x_0}(y) \vee \rep g_{n,x_0}(y)$, and note that~$\rho_{n,x_0}\in \DzLocB{\mu}$ by~\eqref{eq:TruncationLoc} for every~$n\in \N$ and every~$x_0\in X$. Then, by~\eqref{eq:p:Kuw:8} and~\eqref{eq:p:Kuw:7},
\begin{align*}
\rep\rho_{x_0}(y)\geq& \ \rep\rho_{n,x_0}(y)\geq \rep f_{n,x_0}(y) \comm && n\in \N\comm \qquad y\in X_0\comm
\\
r=\rep\rho_{x_0}(y)\geq& \ \rep\rho_{n,x_0}(y)\geq \rep g_{n,x_0} \comm && n\in \N\comm \qquad y\in X\setminus X_0\fstop
\end{align*}
Thus,~$\rep\rho_{n,x_0}$ converges pointwise to~$\rep\rho_{x_0}$ everywhere on~$X$ by~\eqref{eq:p:Kuw:8} and~\eqref{eq:p:Kuw:7}, and the conclusion is implied by Remark~\ref{r:weak*} and Lemma~\ref{l:ConvMeasure}.
\end{proof}

\begin{remark}[Comparison with~\cite{Kuw96}]\label{r:Kuw96}
In~\cite[Thm.~3.1]{Kuw96} the conclusion of Theorem~\ref{t:KuwaeProposition} is shown for \emph{non-local} spaces~$(\mbbX,\mcE)$ admitting carr\'e du champ operator with a point-separating form core of continuous bounded functions. In particular~$(\mbbX,\mcE)$ is quasi-regular by~\cite[Lem.~2.2]{Kuw96}.
Additionally, it is assumed there that
\begin{enumerate*}[$(a)$]
\item[$(A'')$] $\mssd_\mu$ is $\T^\tym{2}$-continuous;
for some~$g\in\dom$ with~$0<g\leq 1$ $\mssm$-a.e.\ and~$\psi\in L^1(g^2\mssm)$, it holds that
\item[$(C)_g$] $\mu=\psi\mssm$ is absolutely continuous;
\item[$(D)_g$] $0<g\leq 1$ $\mcE$-q.e.
\end{enumerate*}
Since~$\T$ is separable,~$(A'')$ implies that~$\T_{\mssd_\mu}$ is separable as well. Letting~$G_n\eqdef  (\reptwo g^2)^{-1} \ttonde{(1/n,2)}$, then~$G_\bullet\in\msG$ by~$(D)_g$, and therefore~$\mu\ll\mssm$ is $\mcE$-moderate by~$(C)_g$.
\end{remark}

The main consequence of the results in this section is collected in the next corollary.
\begin{corollary}\label{c:Separable}
Let~$(\mbbX,\mcE)$ be a quasi-regular strongly local Dirichlet space,~$\mu\in\Msp(\Bo{\T},\Ne{\mcE})$ be $\mcE$-moderate, and assume that~$\T_{\mssd_\mu}$ is separable.
Then,~$(\mbbX,\mcE,\mssd_\mu,\mu)$ possesses~$(\Rad{\mssd_\mu}{\mu})$.
\end{corollary}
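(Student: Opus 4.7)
The plan is to deduce the corollary directly by combining the two main results just proved, namely Theorem~\ref{t:KuwaeProposition} and Theorem~\ref{t:Lenz}, with the distance~$\mssd$ in Theorem~\ref{t:Lenz} specialized to~$\mssd_\mu$. Under the separability hypothesis on~$\T_{\mssd_\mu}$, Theorem~\ref{t:KuwaeProposition} verifies the strengthened conditions~\iref{i:r:Lenz:3} and~\iref{i:r:Lenz:4} of Remark~\ref{r:Lenz}\iref{i:r:Lenz:3.0} for~$\mssd_\mu$: namely, for every~$A\subset X$ (not merely~$A\in\A$), the function~$\mssd_\mu(\emparg, A)$ is $\A$-measurable, and $\mssd_\mu(\emparg,A)\wedge r\in \DzLocB{\mu}$ for every~$r>0$. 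Since these strengthened conditions trivially imply conditions~\iref{i:t:Lenz:1} and~\iref{i:t:Lenz:2} of Theorem~\ref{t:Lenz}, and since~$\mu$ is by assumption $\mcE$-moderate, the hypotheses of Theorem~\ref{t:Lenz} are fully satisfied with~$\mssd\eqdef \mssd_\mu$.

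Consequently, applying Theorem~\ref{t:Lenz} yields~$(\Rad{\mssd_\mu}{\mu})$, which is precisely the conclusion of the corollary. In fact, as observed in Remark~\ref{r:Lenz}\iref{i:r:Lenz:3.0}, because the strengthened hypotheses are available, the $\A$-measurability assumption on~$\rep f$ in the definition~\eqref{eq:Rad} of the Rademacher property is not needed \emph{a priori}: any $\rep f \in \Lipu(\mssd_\mu)$ with~$f\in\dotloc{L^\infty(\mssm)}$ automatically admits an $\mcE$-quasi-continuous representative belonging to~$\DzLoc{\mu}$. Hence the implication in~\eqref{eq:Rad} holds, concluding the proof.

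There is essentially no obstacle here beyond correctly invoking the two prior results; all the substantive work has already been carried out in the proofs of Theorems~\ref{t:KuwaeProposition} and~\ref{t:Lenz}, and of Lemma~\ref{l:ConvMeasure}. The only point that requires mild care is that Theorem~\ref{t:KuwaeProposition} is stated for arbitrary~$A\subset X$ (rather than just~$A\in\A$), which is exactly what makes the strengthened form of Remark~\ref{r:Lenz}\iref{i:r:Lenz:3.0} available; this, in turn, is what allows us to drop the \emph{a priori} measurability hypothesis on~$\rep f$ in the Rademacher implication.
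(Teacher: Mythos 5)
Your proposal is correct and follows exactly the route taken in the paper: the paper's proof consists precisely of invoking Theorem~\ref{t:Lenz}, Remark~\ref{r:Lenz}\iref{i:r:Lenz:3.0}, and Theorem~\ref{t:KuwaeProposition}. Your additional observation that, under the strengthened conditions~\iref{i:r:Lenz:3} and~\iref{i:r:Lenz:4}, the \emph{a priori} measurability of~$\rep f$ may be dropped is also exactly as noted in Remark~\ref{r:Lenz}\iref{i:r:Lenz:3.0}.
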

\begin{proof}
Consequence of Theorem~\ref{t:Lenz}, Remark~\ref{r:Lenz}\iref{i:r:Lenz:3.0} and Theorem~\ref{t:KuwaeProposition}.
\end{proof}

\begin{remark}[Comparison with~\cite{KosZho12,Stu94}]
Because of Remark~\ref{r:Lenz}\iref{i:r:Lenz:2}, Corollary~\ref{c:Separable} is a sensible generalization to quasi-regular strongly local Dirichlet spaces of several results in the literature, including e.g.,~\cite[Lem.~1, Lem.~$1'$]{Stu94},~\cite[Thm.~2.1]{KosZho12}, obtained for~$\mssd_\mssm$ on strongly regular Dirichlet spaces. For the definition of \emph{strong regularity}, see Remark~\ref{r:StrongRegularity} below.
\end{remark}

\begin{corollary} Let~$(\mbbX,\mcE)$ be a quasi-regular strongly local Dirichlet space,~$\mu\in\Msp(\Bo{\T},\Ne{\mcE})$ be $\mcE$-moderate, and assume that~$\mssd_\mu\colon X^\tym{2}\rar[0,\infty)$ is $\T^\tym{2}$-continuous and everywhere finite.
Further let~$\mssd\colon X^\tym{2}\rar [0,\infty]$ be an extended pseudo-distance. Then, the following are equivalent:
\begin{enumerate}[$(a)$]
\item $(\mbbX,\mcE,\mssd,\mu)$ possesses~\eqref{eq:Rad} and~$\mssd$ is $\T$-admissible;
\item $(\mbbX,\mcE,\mssd,\mu)$ possesses~\eqref{eq:dRad}.
\end{enumerate}
\end{corollary}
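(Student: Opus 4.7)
The plan is to prove the two implications separately.

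First I would handle $(a)\Rightarrow(b)$, which is immediate from Lemma~\ref{l:RadDRad}: that lemma states precisely that the Rademacher property $(\Rad{\mssd}{\mu})$ together with the $\T$-admissibility of~$\mssd$ implies $\mssd\leq\mssd_\mu$, i.e.\ $(\dRad{\mssd}{\mu})$.

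For the converse $(b)\Rightarrow(a)$, assume $\mssd\leq\mssd_\mu$. The core idea is to transfer the Rademacher property from~$\mssd_\mu$ to~$\mssd$ via this domination. Since $\mssd_\mu$ is $\T^\tym{2}$-continuous and everywhere finite, Remark~\ref{r:Lenz}\iref{i:r:Lenz:2} yields the separability of~$\T_{\mssd_\mu}$, and then Corollary~\ref{c:Separable} produces $(\Rad{\mssd_\mu}{\mu})$. Since $\mssd\leq\mssd_\mu$ gives the inclusion $\Lipu(\mssd,\A)\subseteq\Lipu(\mssd_\mu,\A)$, any $\rep f\in\Lipu(\mssd,\A)$ with $f\in\dotloc{L^\infty(\mssm)}$ lies in $\DzLoc{\mu}$, which establishes $(\Rad{\mssd}{\mu})$.

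It remains to argue the $\T$-admissibility of~$\mssd$. I would work with the family $\UP\eqdef\set{\mssd':\mssd'\text{ a bounded }\T^\tym{2}\text{-continuous pseudo-distance with }\mssd'\leq\mssd}$, which is directed and order-closed by construction, and then attempt to show $\mssd=\sup\UP$, i.e.\ for every $x,y\in X$ and $\eps>0$ exhibit some $\mssd'\in\UP$ with $\mssd'(x,y)>\mssd(x,y)-\eps$. The natural candidates come from the admissibility of~$\mssd_\mu$: starting from pseudo-distances $\mssd_f(x,y)=\tabs{\rep f(x)-\rep f(y)}$ for $\rep f\in\DzLocB{\mu,\T}$, which generate~$\mssd_\mu$, one would truncate or clamp them so as to keep them dominated by~$\mssd$, using the Rademacher property $(\Rad{\mssd}{\mu})$ just established to ensure a sufficient supply of genuinely $\mssd$-Lipschitz continuous functions. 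I expect this clamping step, and in particular the preservation of $\T^\tym{2}$-continuity throughout, to be the main technical obstacle.
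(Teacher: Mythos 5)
Your implication $(a)\Rightarrow(b)$ via Lemma~\ref{l:RadDRad} is exactly what the paper does. Your transfer of the Rademacher property from~$\mssd_\mu$ to~$\mssd$ is also correct, and is in fact \emph{cleaner} than the paper's route: you simply observe that $\mssd\leq\mssd_\mu$ yields $\Lipu(\mssd,\A)\subset\Lipu(\mssd_\mu,\A)$, so that~$(\Rad{\mssd_\mu}{\mu})$, obtained from Corollary~\ref{c:Separable}, directly gives~$(\Rad{\mssd}{\mu})$. The paper instead re-verifies the hypotheses of Theorem~\ref{t:Lenz} (via Remark~\ref{r:Lenz}$(i)$) for~$\mssd$ and reapplies that theorem — a more roundabout path. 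Your monotonicity observation is the shorter and more transparent argument.

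However, your treatment of $\T$-admissibility contains a genuine gap, and the ``main technical obstacle'' you flag is not really there. You never exploit the crucial consequence of~$\mssd\leq\mssd_\mu$ combined with the standing hypothesis on~$\mssd_\mu$: from the triangle inequality,
\begin{equation*}
\tabs{\mssd(x,y)-\mssd(x',y')}\ \leq\ \mssd(x,x')+\mssd(y,y')\ \leq\ \mssd_\mu(x,x')+\mssd_\mu(y,y')\comm
\end{equation*}
so $\T^\tym{2}$-continuity of~$\mssd_\mu$ forces $\T^\tym{2}$-continuity of~$\mssd$; and $\mssd\leq\mssd_\mu<\infty$ forces~$\mssd$ to be everywhere finite. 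Thus~$\mssd$ \emph{itself} is a finite $\T^\tym{2}$-continuous pseudo-distance, and its $\T$-admissibility is witnessed trivially (the paper writes~$\UP=\set{\mssd}$). Your proposed construction — taking the $\mssd_f$'s that generate~$\mssd_\mu$ and ``clamping'' them below~$\mssd$ — is not only unnecessary but unclear: the pointwise minimum of two pseudo-distances is not in general a pseudo-distance, so the clamping operation you envisage does not obviously produce elements of a uniformity, and there is no indication how the supremum would recover~$\mssd$. You should replace that paragraph with the two-line observation above.
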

\begin{proof}
Assume~\eqref{eq:dRad}. Then:
\begin{enumerate*}[$(a)$]
\item\label{i:c:Loc:1} $\mssd$ is $\T^\tym{2}$-continuous, since~$\mssd_\mu$ is;
and
\item\label{i:c:Loc:2} $\mssd$ is everywhere finite, since~$\mssd_\mu$ is, and therefore

\item\label{i:c:Loc:3} $\mssd(x_0,\emparg)\in \dotloc{L^\infty(\mssm)}$ by Lemma~\ref{l:LinftyLoc} for every~$x_0\in X$;
\item $\T_{\mssd_\mu}$ and~$\T_\mssd$ are separable, since~$\T$ is separable;
\item $\mssd(x_0,\emparg)$ is $\mssd_\mu$-Lipschitz for every~$x_0\in X$, since~$\mssd\leq \mssd_\mu$.
\end{enumerate*}
By Corollary~\ref{c:Separable},~$(\Rad{\mssd_\mu}{\mu})$ holds, therefore the assumptions~\iref{i:r:Lenz:1.1},~\iref{i:r:Lenz:1.2} in Remark~\ref{r:Lenz}\iref{i:r:Lenz:1} hold for~$\mssd$. Thus, Remark~\ref{r:Lenz}\iref{i:r:Lenz:1} applies, and~$(\Rad{\mssd}{\mu})$ follows from Theorem~\ref{t:Lenz}.
By~\iref{i:c:Loc:1} and~\iref{i:c:Loc:2} above,~$\mssd$ is $\T$-admissible with~$\UP=\set{\mssd}$.

The reverse implication holds by Lemma~\ref{l:RadDRad}.
\end{proof}

\begin{proof}[Proof of Proposition~\ref{p:BoundedSupp}]
We show the equivalent statement that~$(\Rad{\mssd}{\mu}^{bs,b})$ implies~$(\Rad{\mssd}{\mu}^b)$.

Firstly, note that $\mssd$-accessible components are open and closed, therefore $\mcE$-quasi-open and $\mcE$-quasi-closed, and thus $\mcE$-invariant by Proposition~\ref{p:InvarChar}. Analogously to the proof of Proposition~\ref{p:Invariant}, for every $\mssd$-accessible component~$A\subset X$, we have therefore that $\mssd(\emparg, A)\wedge 1 \in \DzLocB{\mu,\T}$.
Hence, $\mssd(\emparg, A^\complement)\wedge 1= \car - \ttonde{\mssd(\emparg, A)\wedge 1}$ is an element of~$\DzLocB{\mu,\T}$ as well, by~\eqref{eq:E(1)=0}. Set~$\rep\rho_A\eqdef \mssd(\emparg, A^\complement)\wedge 1$.

Fix now~$\rep f\in \bLipu(\mssd)$, and note that~$\rep\rho_A \cdot \rep f\in \bLipu(\mssd)$ as well, since~$\rep\rho_A\equiv\car_A$ and~$\mssd(x,y)=+\infty$ for every~$x\in A$ and~$y\in A^\complement$.
Arguing as in \emph{Step 2} in the proof of Theorem~\ref{t:KuwaeProposition},~$X$ has up to countably many $\mssd$-accessible components, thus it suffices to show the statement in the case when $\mssd$ has exactly one accessible components, that is, when~$\mssd$ is a (everywhere finite) pseudo-distance.
In this case, fix~$x_0\in X$, and set~$\rep f_n\eqdef \rep f\cdot \ttonde{0\vee \ttonde{n-\mssd(x_0,\emparg)}\wedge 1}$. Then,~$\rep f_n\equiv \rep f$ on the ball~$B^\mssd_{n-1}(x_0)$, and~$\rep f_n\in \bLipu(\mssd)$ with bounded support in~$B^\mssd_n(x_0)$. By assumption,~$f_n\in \DzLocB{\mu,\T}$, and the conclusion is implied by Remark~\ref{r:weak*} and Lemma~\ref{l:ConvMeasure} letting~$n\rar\infty$.
\end{proof}

The next example shows that the separability of~$\T_\mssd$ is not necessary for the Rademacher property to hold.
\begin{example}[Configuration Spaces II]\label{ese:Config2}
Recall the setting of Example~\ref{ese:Config1}, and in particular that the intrinsic distance~$\mssd_{\pi_d}$ of the canonical Dirichlet form~$(\mcE,\dom)$ on~$L^2(\Upsilon(\R^d))$ coincides with the $L^2$-transportation extended distance~$W_2$. Since all $W_2$-accessible components are $\pi_d$-negligible, there exist more than countably many such components, thus the topology~$\T_2$ on~$\Upsilon(\R^d)$ induced by~$W_2$ is not separable.
The Rademacher property~$(\Rad{W_2}{\pi_d})$ is shown in~\cite[Thm.~1.3]{RoeSch99}.

Since every $W_2$-accessible components is $\pi_d$-negligible, every $W_2$-Lipschitz function with bound\-ed support is measurable with respect to\ the $\pi_d$-completion of the $\sigma$-algebra on~$\Upsilon(\R^d)$, and coincides $\pi_d$-a.e.\ with the $\zero$-function.
\end{example}

\subsection{The Rademacher property and quasi-regularity}\label{ss:RadQuasiReg}
Note that the definition of intrinsic distance is always well-posed for Dirichlet spaces that are not necessarily quasi-regular.
In particular, we may always discuss properties like~$(\Rad{\mssd}{\mssm})$ and~$(\dRad{\mssd}{\mssm})$ on any Dirichlet space satisfying~\iref{ass:Hausdorff}. Note however that the definition of strong locality is well-posed only if~\iref{ass:Luzin} holds.
A discussion of the interplay between~$\mssd$ and~$\T$ motivates the following definitions, mimicking that of \emph{strict locality}~\cite[p.~224]{Sto10}.

\begin{definition}[Strict locality]\label{d:StrictLoc}
Let~$\mbbX$ be satisfying~\ref{ass:Luzin}, and~$(\mbbX,\mcE)$ be a strongly local Dirichlet space. We say that~$(\mbbX,\mcE)$ is \emph{strictly local} if~$\T_{\mssd_\mssm}=\T$.
\end{definition}

\begin{remark}\label{r:StrongRegularity}
If~$(\mbbX,\mcE)$ is a regular strongly local Dirichlet space, then the definition of `strict locality' coincides with that of \emph{strong regularity}, e.g.~\cite[p.~74]{Stu95}.
\end{remark}

If~$(\mbbX,\mcE)$ is strictly local, then~$(X,\T)$ is metrizable, and~$\mssd_\mssm$ is an extended distance.
The importance of strict locality is evident from the following more general fact. Let~$(\mbbX,\mcE)$ be a quasi-regular strictly local Dirichlet space, and~$\mu\in\Msp(\Bo{\T},\Ne{\mcE})$ be $\mcE$-moderate. Further assume that~$\mssd_\mu$ is $\T$-continuous. By Corollary~\ref{c:Separable},
\begin{align*}
\bLipu(\mssd_\mu)=\bLipu(\mssd_\mu,\A)=\bLipu(\mssd_\mu,\T)\subset \DzLocB{\mu,\T}\fstop
\end{align*}
Furthermore,~$\DzLocB{\mu,\T}\subset \bLipu(\mssd_\mu)$ by definition of~$\mssd_\mu$. Therefore,
\begin{align}\label{eq:LipDzLoc}
\bLipu(\mssd_\mu)=\DzLocB{\mu,\T} \fstop
\end{align}

The next result was essentially shown by L.~Ambrosio, N.~Gigli, and G.~Savar\'e in~\cite[Lem.~6.7]{AmbGigSav14} in the case when~$\mssm X<\infty$, and by Savar\'e in the case of $\sigma$-finite~$\mssm$.

\begin{proposition}\label{p:Consistency}
Let~$\mbbX$ be satisfying~\ref{ass:Luzin},~$(\mbbX,\mcE)$ be a strongly local Dirichlet space, and $\mssd\colon X^\tym{2}\rar [0,\infty]$ be a distance. Further assume that:
\begin{enumerate}[$(a)$]
\item\label{i:p:Consistency2} $\T_\mssd=\T$;
\item\label{i:p:Consistency1} $\mssm B^\mssd_r(x)<\infty$ for every~$x\in X$ and~$r\in(0,\infty)$;

\item\label{i:p:Consistency3} $(X,\mssd)$ is a $\T$-locally complete metric space;

\item\label{i:p:Consistency4} $(\mbbX,\mcE)$ possesses~$(\Rad{\mssd}{\mssm})$.
\end{enumerate}

Then,~$(\mbbX,\mcE)$ satisfies~\iref{i:QR:1} and~\iref{i:QR:3}.
In particular, if~$(\mbbX,\mcE)$ additionally satisfies~\iref{i:QR:2}, then it is quasi-regular.
\end{proposition}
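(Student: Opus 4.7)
The plan is to verify properties~\iref{i:QR:1} and~\iref{i:QR:3} by first constructing a countable family of bounded, $\T$-continuous, $\mssd$-Lipschitz functions of $\mssd$-bounded support inside~$\dom$, and then producing a $\T$-compact $\mcE$-nest via tightness. Combining~(a) with~\iref{ass:Luzin}, the space~$(X,\T)=(X,\T_\mssd)$ is second countable, so~$(X,\mssd)$ is a separable metric space. Fix a countable $\mssd$-dense set $\set{x_k}_{k\in\N}\subset X$ and a basepoint~$x_0\in X$. For $R\in\N$ consider the Lipschitz cutoffs
\begin{align*}
\chi_R(x)\eqdef \ttonde{R-\mssd(x,x_0)}_+\wedge 1\comm \qquad u_{k,R}(x)\eqdef \ttonde{\mssd(x,x_k)\wedge 1}\cdot\chi_R(x)\fstop
\end{align*}
Each such function is $\T$-continuous by~(a), bounded by~$1$, $\mssd$-Lipschitz with constant at most~$2$, and supported in~$\bar B^\mssd_R(x_0)$, a set of finite $\mssm$-measure by~(b); hence each lies in~$L^2(\mssm)\cap L^\infty(\mssm)\subset\dotloc{L^\infty(\mssm)}$. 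The Rademacher hypothesis~(d) (applied after a harmless rescaling) then yields $\chi_R,u_{k,R}\in\DzLoc{\mssm}$ with energy measures bounded by a multiple of $\mssm\mrestr{\bar B^\mssd_R(x_0)}$, hence of finite total mass.

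The main obstacle is to upgrade these inclusions from~$\DzLoc{\mssm}\cap L^2(\mssm)$ to~$\dom$ itself. Fix an $\mssm$-moderated witness $(G_\bullet,e_\bullet)\in\msG_0$ of $\car\in\dotloc{\dom}$, available by~\eqref{eq:E(1)=0} and arranged so that $0\leq e_m\leq 1$ and $e_m\in L^2(\mssm)$. Lemma~\ref{l:AriyoshiHino} then gives $\chi_R e_m\in\dom$ for every~$m$, while dominated convergence ---~using that $e_m\rar 1$ $\mssm$-a.e.\ since $\mcE$-polar sets are $\mssm$-negligible~--- yields the $L^2(\mssm)$-convergence $\chi_R e_m\rar\chi_R$ as~$m\rar\infty$. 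Strong locality~\eqref{eq:SLoc:2} forces $\sq{\chi_R e_m}=\sq{\chi_R}$ on~$G_m$; on~$G_m^\complement$, the Leibniz identity~\eqref{eq:Leibniz}, together with the Cauchy--Schwarz inequality~\eqref{eq:CS}, the bound $\sq{\chi_R}\leq\mssm$, and the vanishing $\mssm\ttonde{G_m^\complement\cap\supp\chi_R}\rar 0$, controls the residual contribution and produces a uniform bound $\sup_m\mcE(\chi_R e_m)<\infty$. A Mosco-type weak-compactness argument in~$\dom$ then forces $\chi_R\in\dom$, and the same procedure applies to~$u_{k,R}$. The delicate point is pinning down the energy contribution of~$\sq{e_m}$ on $G_m^\complement\cap\supp\chi_R$, which is not uniformly bounded by~$\mssm$ alone and therefore requires careful exploitation of the fact that~$\sq{e_m}$ is concentrated off~$G_m$ by strong locality.

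Granting $u_{k,R}\in\dom$, property~\iref{i:QR:3} is immediate: each~$u_{k,R}$ is $\T$-continuous, hence its own $\mcE$-quasi-continuous version; for distinct~$x\neq y\in X$, pick~$R$ with $\set{x,y}\subset B^\mssd_{R-1}(x_0)$, so that $\chi_R\equiv 1$ in a $\mssd$-neighborhood of both points, and invoke $\mssd$-density of~$\set{x_k}_k$ to select~$k$ with $\mssd(x,x_k)\wedge 1\neq \mssd(y,x_k)\wedge 1$. For~\iref{i:QR:1}, cover $X$ by countably many open $\mssd$-complete neighborhoods by~(c) and separability; each intersection with a closed $\mssd$-ball around~$x_0$ is $\mssd$-complete, separable, and of finite $\mssm$-measure by~(b), so Ulam's tightness theorem yields compact subsets of arbitrarily small complement in~$\mssm$-measure. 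Diagonalization produces an increasing sequence~$\seq{F_n}_n$ of $\T$-compact sets with~$\mssm\ttonde{X\setminus\cup_n F_n}=0$; to see that~$\seq{F_n}_n$ is an $\mcE$-nest, approximate $u\in\dom$ first by $u\chi_R\in\dom$ (via the upgrading step applied to products) and then by further multiplication with bounded Lipschitz truncations of $\mssd(\emparg,X\setminus F_n^\circ)$, which are again in~$\dom$ by Theorem~\ref{t:Lenz} and the same upgrading argument.
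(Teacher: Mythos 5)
Your plan is a sensible direct construction, but it has a genuine gap at exactly the step you yourself flag as delicate, and a circularity later on.

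The upgrade step does not go through as written. You observe that $\sq{\chi_R e_m}=\sq{\chi_R}$ on~$G_m$ by strong locality, and you hope to control the residual contribution on~$G_m^\complement$ using the Leibniz rule, the bound~$\sq{\chi_R}\leq\mssm$, and the vanishing of~$\mssm\ttonde{G_m^\complement\cap\supp\chi_R}$. But the Leibniz expansion also produces the term $\car_{G_m^\complement}\,\chi_R^2\,\sq{e_m}$, and nothing in your setup bounds $\sq{e_m}\ttonde{G_m^\complement\cap\supp\chi_R}$ uniformly in~$m$: the~$e_m$ are an arbitrary sequence in~$\domb$ witnessing that $\car\in\dotloc{\dom}$, so~$\mcE(e_m)$ may diverge, and~$\sq{e_m}$ need not be absolutely continuous with respect to~$\mssm$ (the paper explicitly works in a framework where energy measures may be singular). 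The fact that~$\sq{e_m}$ is supported off~$G_m$ by strong locality tells you \emph{where} the mass sits, not \emph{how much} there is. Consequently the claimed uniform bound~$\sup_m\mcE(\chi_R e_m)<\infty$ is unsubstantiated, and the Mosco-type weak-compactness conclusion~$\chi_R\in\dom$ does not follow. The constant returned by Lemma~\ref{l:AriyoshiHino} involves~$\norm{e_m}_\dom$ and so is of no help here either.

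A second problem is circularity: at the end you appeal to Theorem~\ref{t:Lenz} to conclude that Lipschitz truncations of~$\mssd(\emparg,X\setminus F_n^\circ)$ lie in~$\dom$, but Theorem~\ref{t:Lenz} (and, more fundamentally, the construction of the energy measure~$\sq{\emparg}$ on~$\dotloc\dom$ in Proposition~\ref{p:PropertiesLoc}, used throughout your argument) is stated for \emph{quasi-regular} Dirichlet spaces. Quasi-regularity is precisely what you are trying to establish. The paper addresses this point in Remark~\ref{r:QuasiRegularity} by passing to the closure of~$\mcE$ restricted to~$\dom\cap\Cont(\T)$, but you cannot simply cite the localization machinery before that reduction is made. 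For the record, the paper's own proof takes an entirely different route: it defers to Savar\'e's~\cite[Thm.~4.1]{Sav14}, noting only that the exponential volume bound there can be relaxed to finiteness of~$\mssm$ on balls and that completeness can be relaxed to local completeness. Savar\'e's argument is built precisely so as not to presuppose the quasi-regular localization theory that your outline implicitly uses.
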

\begin{proof}
The proof follows exactly as in~\cite[Thm.~4.1]{Sav14}. It suffices to note that~\iref{i:p:Consistency1} (as opposed to the exponential bound~\cite[($\mfm$-$\exp$), p.~1655]{Sav14}), and local completeness (as opposed to completeness) are enough to the arguments there.
\end{proof}

\begin{remark}[Comparison with~\cite{AmbGigSav15} --- part I]\label{r:QuasiRegularity}
Let~$(\mbbX,\mcE)$ be satisfying assumptions~\iref{i:p:Consistency2}-\iref{i:p:Consistency4} of Proposition~\ref{p:Consistency}. Since~$(X,\T)$ is second countable, the definition of `strong locality' in the sense of~\cite{MaRoe92} adopted here is implied by the definition of `locality' in the sense of~\cite{BouHir91}, as noted in~\cite[p.~78]{tElRobSikZhu06}.
Further note that~\iref{i:p:Consistency2},~\iref{i:p:Consistency3} and~\iref{ass:Luzin} with~$\A\eqdef \Bo{\T}^\mssm$ together are~\cite[(MD.a), p.~358]{AmbGigSav15}; \iref{i:p:Consistency1} is~\cite[(MD.b), p.~358]{AmbGigSav15} for~$\mssd$; \cite[(ED.b), p.~369]{AmbGigSav15} implies~$(\Rad{\mssd}{\mssm}^{bs})$, which in turn yields~\iref{i:p:Consistency4} by Proposition~\ref{p:BoundedSupp}.
If~$\mssd=\mssd_\mu$ in Proposition~\ref{p:Consistency} for some $\mcE$-moderate~$\mu\in\Msp(\Bo{\T},\Ne{\mcE})$, %then~\iref{i:p:Consistency4'} (hence
then~\iref{i:p:Consistency4} %)
is a consequence of~\iref{i:p:Consistency2} and Corollary~\ref{c:Separable}.

Now, let~$D\eqdef \dom\cap \Cont(\T)$, and note that the form~$(\mcE,D)$ is closable. Its closure~$(\mcE_0,\dom_0)$ is a strongly local Dirichlet form satisfying~\iref{i:QR:2} by definition. 
By definition of intrinsic metric, the intrinsic metric of the form~$(\mcE_0,\dom_0)$ coincides with the intrinsic metric~$\mssd_\mssm$ of the original form.
As a consequence, the Dirichlet space~$(\mbbX,\mcE_0)$ is a strongly local Dirichlet space satisfying assumptions~\iref{i:p:Consistency2}-\iref{i:p:Consistency4} of Proposition~\ref{p:Consistency}, and it is therefore also a quasi-regular Dirichlet space. 
It follows that, under the assumptions of Proposition~\ref{p:Consistency}, we may assume~$(\mbbX,\mcE)$ to be additionally quasi-regular, with no loss of generality.
\end{remark}

\subsection{The Rademacher property and the length property}\label{ss:RadLength}
In this section we establish the length property for intrinsic distances of strictly local spaces, adapting the characterization in terms of sheaves given by P.~Stollman in~\cite{Sto10} for regular Dirichlet spaces.
We start with a preliminary Lemma.

\begin{lemma}\label{l:Sheaf}
Let~$(\mbbX,\mcE)$ be a quasi-regular strongly local Dirichlet space, and~$\mu\in\Msp(\Bo{\T},\Ne{\mcE})$ be $\mcE$-moderate. If~$\T_{\mssd_\mu}=\T$, then~$\DzLoc{\mu,\T}$ is a sheaf, i.e.\ for every~$f\in\Cont(\T)$ it holds that $f\in\DzLoc{\mu,\T}$ if and only if for every~$x\in X$ there exists $U\in\T$ with~$x\in U$ and so that~$f\restr_U\in \DzLoc{\mu,\T}(U)$.
\end{lemma}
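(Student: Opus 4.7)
The $(\Rightarrow)$ direction is trivial: take $U=X$. For $(\Leftarrow)$, my plan is as follows. By Lemma~\ref{l:FrankLenz} together with the truncation property~\eqref{eq:TruncationLoc}, it suffices to show the claim for \emph{bounded} continuous $f$, since the local condition is preserved by the truncations $(-r)\vee f\wedge r$, and these converge back to $f$ $\mssm$-a.e. as $r\to\infty$. Since $(X,\T)$ satisfies~\ref{ass:Luzin}, it is strongly Lindel\"of, so we can extract from the hypothesized cover $\{U_x\}_{x\in X}$ a countable subcover $\{U_i\}_{i\in\N}$ with $f\restr_{U_i}\in\DzLoc{\mu,\T}(U_i)$, witnessed by some $(H^i_\bullet, e^i_\bullet)\in\msG_0(U_i)$ and representatives $g^i_k\in\dom$ with $g^i_k=f$ $\mssm$-a.e.\ on $H^i_k$ and $\sq{f\restr_{U_i}}\leq \mu\restr_{U_i}$ as measures on $U_i$.

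Next, I exploit strict locality. Because $\T=\T_{\mssd_\mu}$, each $U_i$ contains an open $\mssd_\mu$-ball around each of its points; after refining, I obtain a countable cover $\{B_i\}_i$ of $X$ by $\mssd_\mu$-balls together with slightly larger balls $\tilde B_i$ satisfying $\cl_{\mssd_\mu}\tilde B_i\subset U_{j(i)}$ for some index map $j(i)$. Define $\mssd_\mu$-Lipschitz cutoffs $\chi_i\colon X\to[0,1]$ with $\chi_i\equiv 1$ on $B_i$ and $\chi_i\equiv 0$ outside $\tilde B_i$. Since $\T_{\mssd_\mu}=\T$ is separable, Corollary~\ref{c:Separable} gives $\chi_i\in\bLipu(\mssd_\mu,\A)\subset \DzLocB{\mu,\T}$. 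Form the sequential partition of unity
\begin{align*}
\tilde\chi_1\eqdef\chi_1\comm\qquad \tilde\chi_i\eqdef \chi_i\prod_{j<i}(1-\chi_j)\comm
\end{align*}
so that $\sum_{i\leq n}\tilde\chi_i = 1-\prod_{i\leq n}(1-\chi_i)$ equals $1$ on $G_n^{\circ}\eqdef \cup_{i\leq n} B_i$; each $\tilde\chi_i$ is still in $\DzLocB{\mu,\T}$ as a bounded product of $\mssd_\mu$-Lipschitz bounded functions.

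For each $n$, choose indices $k_i=k_i(n)$ by a diagonal argument so that the set $G_n\eqdef \cup_{i\leq n}(B_i\cap H^{j(i)}_{k_i(n)})$ yields $G_\bullet\in\msG$ (possible because $H^{j(i)}_\bullet$ exhausts $U_{j(i)}$ $\mcE$-q.e.\ and $\cup_i B_i=X$), and set
\begin{align*}
h_n\eqdef \sum_{i\leq n}\tilde\chi_i\, g^{j(i)}_{k_i(n)}\fstop
\end{align*}
By Lemma~\ref{l:AriyoshiHino}, each summand is in $\dom$, so $h_n\in\dom$. On $G_n$, the partition-of-unity identity $\sum_{i\leq n}\tilde\chi_i=1$ together with $g^{j(i)}_{k_i(n)}=f$ on $H^{j(i)}_{k_i(n)}$ gives $h_n=f$ $\mssm$-a.e.\ on $G_n$. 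Strong locality~\eqref{eq:SLoc:2} shows that $\sq{h_n}\restr_{G_n}=\sq{f}\restr_{G_n}\leq \mu\restr_{G_n}$ (Leibniz cross-terms vanish on the regions where the corresponding $g^{j(i)}_{k_i}$ coincides with $f$). Taking $n\to\infty$ and invoking Remark~\ref{r:weak*} and Lemma~\ref{l:ConvMeasure} delivers $f\in\DzLocB{\mu,\T}$, and removing the truncation via Lemma~\ref{l:FrankLenz} yields the general case. The main obstacle is the combinatorial diagonal choice of $k_i(n)$: the local representatives $g^i_k$ equal $f$ only on $H^i_k\subsetneq U_i$, so one must simultaneously (i) ensure the formula $h_n=f$ holds on a set $G_n$ which itself exhausts $X$ $\mcE$-q.e., and (ii) control the energy of $h_n$ via strong locality despite $h_n$ being a genuine convex combination of distinct $g^{j(i)}_{k_i}$ outside $G_n$.
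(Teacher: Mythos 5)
Your approach is genuinely different from the paper's, but it has a gap exactly where you flag it, and that gap is fatal to the argument as written.

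The equality $h_n = f$ $\mssm$-a.e.\ on your $G_n \eqdef \cup_{i\le n}\ttonde{B_i\cap H^{j(i)}_{k_i(n)}}$ does \emph{not} follow from $\sum_{i\le n}\tilde\chi_i\equiv 1$ on $\cup_{i\le n}B_i$. Take $x\in B_\ell\cap H^{j(\ell)}_{k_\ell}\subset G_n$ with $x\in\tilde B_i\setminus H^{j(i)}_{k_i}$ for some $i\neq\ell$, $i\le n$. Then $\tilde\chi_i(x)$ may be strictly positive while $g^{j(i)}_{k_i}(x)\ne f(x)$, so $h_n(x)\ne f(x)$. To force $h_n=f$ one would have to further intersect $G_n$ with $\bigcap_{i\le n}\ttonde{\tilde B_i^\complement\cup H^{j(i)}_{k_i}}$, and then verify that the resulting sequence still belongs to~$\msG$: it must be $\mcE$-q.e.\ increasing and exhaust~$X$ $\mcE$-q.e. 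Neither of these is automatic —  the removed sets $\tilde B_i\setminus H^{j(i)}_{k_i(n)}$ newly appear as $i$ increases and a point $x$ can lie in infinitely many $\tilde B_i$ unless the cover is locally finite, which you have not arranged. You acknowledge this as ``the main obstacle'' but do not resolve it, and without a concrete diagonalization the proof is incomplete. A secondary issue: $\tilde\chi_i$ is a product of $i$ bounded $1$-Lipschitz functions, whose Lipschitz constant (hence the bound on $\sq{\tilde\chi_i}$ via Rademacher) grows with $i$; so $\tilde\chi_i\notin\DzLocB{\mu,\T}$ in general. That is repairable (one only needs $\tilde\chi_i\in\dotloc{\dom}\cap L^\infty$), but it would also degrade the constants in the~\eqref{eq:l:AriyoshiHino:1} estimates used to apply Lemma~\ref{l:AriyoshiHino} and Lemma~\ref{l:ConvMeasure}, so it is not entirely cosmetic either.

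The paper avoids all of this by a structurally different observation: because $\T_{\mssd_\mu}=\T$, the analogue of~\eqref{eq:LipDzLoc} on each $G^n_k\subset U_n$ shows that the local piece $f^n_k$ is automatically in $\Lipu(G^n_k,\mssd_\mu,\T)$. Continuity and full support (Lemma~\ref{l:AeEquality}) then force agreement of overlapping local pieces pointwise, so $f\restr_{G_n}$, where $G_n=\cup_{k\le n}G^n_k$, is a $1$-Lipschitz function for~$\mssd_\mu$ on $G_n$. The gluing thus reduces to a purely metric extension problem: one extends by McShane (Lemma~\ref{l:McShane}) to get globally $1$-Lipschitz $f_n$, converts back to the energy side by the Rademacher property (Corollary~\ref{c:Separable}, which needs nothing but separability of $\T_{\mssd_\mu}$), and finally truncates and passes to the limit via Remark~\ref{r:weak*} and Lemma~\ref{l:ConvMeasure}. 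No partition of unity, no cross-term bookkeeping, no diagonal choice of indices. If you want to salvage the partition-of-unity route, you would at a minimum need to arrange for a \emph{locally finite} cover (to control the supports), and to choose the $k_i(n)$ so that the exceptional set $\cup_{i\le n}(\tilde B_i\setminus H^{j(i)}_{k_i(n)})$ has summable capacity uniformly in $n$ — but at that point the McShane route is strictly simpler.
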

\begin{proof}
Since~$\T_{\mssd_\mu}=\T$, $\mssd_\mu$-accessible components are open and closed, therefore (Borel) $\mcE$-quasi-open and $\mcE$-quasi-closed, and thus $\mcE$-invariant by Proposition~\ref{p:InvarChar}. As a consequence,~$f\in\DzLoc{\mu,\T}$ if and only if~$f\restr_A\in\DzLoc{\mu,\T}(A)$ for each $\mssd_\mu$-accessible component~$A\subset X$.
Thus, since~$(X,\mssd_\mu)$ has at most countable $\mssd_\mu$-accessible components by separability of~$\T$, we may assume with no loss of generality, up to restricting to each such component, that~$\mssd_\mu$ be everywhere finite.

Assume~$f\in \DzLoc{\mu,\T}$ and let~$G_\bullet\in\msG_0$ and~$f_\bullet\subset \dom$ be witnessing that~$f\in \DzLoc{\mu,\T}$. Since~$U$ is open,~$G^U_n\eqdef G_n\cap U$ is $\mcE$-quasi-open for every~$n$, and therefore~$G^U_\bullet\eqdef\seq{G^U_n}_n$ satisfies~$G^U_\bullet\in\msG_0(U)$. Thus,~$G^U_\bullet$ and~$f_\bullet$ witness that~$f\restr_U\in \DzLoc{\mu,\T}(U)$.

Vice versa, assume that for every~$x\in X$ there exists~$U_x\in \T$ so that~$x\in U_x$ and~$f\restr_{U_x}\in \DzLoc{\mu,\T}(U_x)$.
Since~$(X,\T)$ is Lindel\"of, there exists a countable set~$\set{x_n}_n\subset X$ so that~$U_\bullet\eqdef\seq{U_n}_n$, with~$U_n\eqdef U_{x_n}$, is an open covering of~$X$.
For every~$n$, there exist~$G^n_\bullet\eqdef\seq{G^n_k}_k\in\msG(U_n)$ and~$f^n_\bullet\eqdef \seq{f^n_k}_k\subset \dom\cap \Cont(U_n,\T)$ witnessing that~$f\restr_{U_n}\in \DzLoc{\mu,\T}(U_n)$.
Analogously to~\eqref{eq:LipDzLoc}, we have that~$f^n_k\in \Lipu(G^n_k,\mssd_\mu,\T)$ for every~$n$ and~$k$.

Set~$G_n\eqdef \cup_{k\leq n} G^n_k$ and note that~$G_n$ is $\mcE$-quasi-open.
Since~$U_\bullet$ is a covering of~$X$ and since $\cup_k G^n_k=U_n$ $\mcE$-q.e.\ for every~$n$, then~$\cup_n G_n=X$ $\mcE$-q.e., and thus~$G_\bullet\in\msG$.
Further note that, if~$G^n_k\cap G^m_h\neq \emp$ for some choice of the indices, then~$f^n_k=f=f^m_h$ $\mssm$-a.e.\ on~$G^n_k\cap G^m_h$, therefore everywhere on~$G^n_k\cap G^m_h$, since all functions involved are $\T$-continuous and~$\supp[\mssm]=X$.
As a consequence,~$f\restr_{G_n}$ is $\mssd_\mu$-Lipschitz on~$G_n$ with~$\Li[\mssd_\mu]{f\restr_{G_n}}\leq 1$.

Now, since~$\mssd_\mu$ is everywhere finite, the (e.g., lower) McShane extension~$\rep f_n$ of~$f\restr_{G_n}$ to~$X$ satisfies~$\rep f_n\in \Lipu(\mssd_\mu)$ for every~$n$, and~$\rep f_n\restr_{G_n}=f$ everywhere (hence $\mssm$-a.e.) on~$G_n$, hence~$f\in\dotloc{\dom}$ by Lemma~\ref{l:LocLoc}.
Since $\mssd_\mu$ metrizes~$\T$, we may write~$f_n$ in place of~$\rep f_n$ (i.e.: $\rep f_n$ is $\T$-continuous), and we have that~$f_n\in \DzLoc{\mu,\T}$ by Corollary~\ref{c:Separable}.

Finally, for every fixed~$r>0$ set~$f_{n,r}\eqdef (-r)\vee f_n \wedge r$ and~$f_r\eqdef (-r)\vee f \wedge r$. Since~$\cup_n G_n=X$ $\mcE$-q.e., then~$\cup_n G_n=X$ $\mssm$-a.e., and therefore~$\mssm$-a.e.-$\nlim f_{n,r}=f_r$, since~$G_n$ is $\mcE$-q.e.\ increasing and~$f_{n,r}=f_r$ on~$G_n$.
By Remark~\ref{r:weak*} and Lemma~\ref{l:ConvMeasure},~$f_r\in \DzLoc{\mu}$, hence~$f_r\in \DzLoc{\mu,\T}$ by continuity, for every~$r>0$. The conclusion follows from~$f\equiv f_r$ on~$\set{\abs{f}<r}$ and~\eqref{eq:SLoc:2}, letting~$r\to\infty$.
\end{proof}

\begin{theorem}\label{t:Stollmann}
Let~$(\mbbX,\mcE)$ be a quasi-regular strictly local Dirichlet space. If~$(X,\mssd_\mssm)$ is locally complete, then it is a length space.
\end{theorem}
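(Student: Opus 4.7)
The plan is to prove that $\mssd_\mssm$ coincides with the length distance $d^L$ it induces on each accessibility component; the inequality $\mssd_\mssm \leq d^L$ is automatic, so only the reverse inequality requires proof. The key device is the family of $\epsilon$-chain approximations
\[
\mssd_\mssm^\epsilon(x,y) \eqdef \inf\set{\sum_{i=0}^{n-1} \mssd_\mssm(z_i,z_{i+1}) : z_0 = x,\ z_n = y,\ \mssd_\mssm(z_i,z_{i+1}) \leq \epsilon }\fstop
\]
These satisfy $\mssd_\mssm \leq \mssd_\mssm^\epsilon \nearrow d^L$ as $\epsilon \searrow 0$ and, crucially, $\mssd_\mssm^\epsilon(y,z) = \mssd_\mssm(y,z)$ whenever $\mssd_\mssm(y,z) \leq \epsilon$. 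Fix $x_0 \in X$, $R > 0$, and set $f_\epsilon \eqdef \mssd_\mssm^\epsilon(x_0,\emparg) \wedge R$. The local equality makes $f_\epsilon$ locally $1$-Lipschitz w.r.t.\ $\mssd_\mssm$, and strict locality ($\T_{\mssd_\mssm}=\T$) upgrades this to $\T$-continuity; across the boundary of the $\epsilon$-chain component of $x_0$ continuity is automatic, since if $y_1$ is not $\epsilon$-chain-connected to $x_0$ then neither is any $z$ with $\mssd_\mssm(z,y_1) \leq \epsilon$ (otherwise a chain through $z$ would reach $y_1$), forcing $f_\epsilon \equiv R$ on a whole $\mssd_\mssm$-$\epsilon$-ball around $y_1$.

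The central claim is $f_\epsilon \in \DzLocB{\mssm,\T}$, which I plan to establish through the sheaf property (Lemma~\ref{l:Sheaf}, whose hypothesis $\T_{\mssd_\mssm}=\T$ is precisely strict locality). For each $y_1 \in X$, local completeness provides a $\mssd_\mssm$-complete neighborhood $V_{y_1}$; I set $U_{y_1} \eqdef B^{\mssd_\mssm}_{\epsilon/3}(y_1) \cap V_{y_1}$, which is $\T$-open and on which any two points are at $\mssd_\mssm$-distance below $\epsilon$, so $f_\epsilon\restr_{U_{y_1}}$ is bounded and $1$-Lipschitz w.r.t.\ $\mssd_\mssm\restr_{U_{y_1}}$. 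The extended-metric McShane extension (Lemma~\ref{l:McShane}) then yields a bounded $1$-Lipschitz $\tilde f_{y_1}$ on $(X,\mssd_\mssm)$ that agrees with $f_\epsilon$ on $U_{y_1}$. By Corollary~\ref{c:Separable} -- whose hypotheses are met since $\mssm$ is absolutely $\mcE$-moderate (remark following Dfn.~\ref{d:Moderance}) and $\T_{\mssd_\mssm}$ is separable (as $(X,\T)$ is Luzin) -- together with Lemma~\ref{l:LinftyLoc}, one obtains $\tilde f_{y_1} \in \DzLoc{\mssm,\T}$ with $\sq{\tilde f_{y_1}} \leq \mssm$. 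Hence $f_\epsilon\restr_{U_{y_1}} \in \DzLoc{\mssm,\T}(U_{y_1})$, and the sheaf criterion gives $f_\epsilon \in \DzLocB{\mssm,\T}$ globally.

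The conclusion is then immediate from the definition~\eqref{eq:IntrinsicD} of the intrinsic distance applied to $f_\epsilon$: since $f_\epsilon(x_0) = 0$, one has $\mssd_\mssm^\epsilon(x_0, y) \wedge R = f_\epsilon(y) - f_\epsilon(x_0) \leq \mssd_\mssm(x_0, y)$ for every $y \in X$. Letting $\epsilon \searrow 0$ (using $\mssd_\mssm^\epsilon \nearrow d^L$) and then $R \nearrow \infty$ gives $d^L(x_0, y) \leq \mssd_\mssm(x_0, y)$, so $\mssd_\mssm = d^L$ and the length property follows.

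The main obstacle I foresee is the sheaf step: certifying that the locally McShane-extended $1$-Lipschitz functions $\tilde f_{y_1}$ produce elements of $\DzLoc{\mssm,\T}$ with the sharp energy bound $\sq{\tilde f_{y_1}} \leq \mssm$ (rather than some multiple of $\mssm$), and gluing them coherently via Lemma~\ref{l:Sheaf}. Local completeness enters precisely here, furnishing the $\mssd_\mssm$-complete neighborhoods needed to carry out the McShane construction in a controlled way; this is the natural substitute for the local compactness used in the classical, regular-form setting of Stollmann, and it interacts well with strict locality to make the gluing work even when $\mssd_\mssm$ is only an extended pseudo-distance.
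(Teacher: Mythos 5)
Your plan is essentially the same chain-based approach as Stollmann's, which the paper adapts step by step, and the hardest ingredient you identify correctly: establishing $f_\epsilon \in \DzLocB{\mssm,\T}$ via McShane extensions, the Rademacher property (Corollary~\ref{c:Separable}), and the sheaf lemma~\ref{l:Sheaf}. That part of the argument is sound. The gap is in the step you treat as free: ``$\mssd_\mssm \leq \mssd_\mssm^\epsilon \nearrow d^L$ as $\epsilon\searrow 0$''. Monotone convergence of $\mssd_\mssm^\epsilon$ is clear, and $\lim_\epsilon \mssd_\mssm^\epsilon \leq d^L$ always holds, but the reverse inequality is \emph{false} for general pseudo-metric spaces: on $\mathbb{Q}\cap[0,1]$ with the Euclidean distance one has $\mssd^\epsilon = \mssd$ for every $\epsilon>0$, while $d^L\equiv+\infty$ off the diagonal. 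What your Dirichlet-form argument actually shows is $\mssd_\mssm^\epsilon\leq\mssd_\mssm$, hence $\mssd_\mssm^\epsilon=\mssd_\mssm$ for every $\epsilon>0$; converting that ``approximate-midpoint'' property into the existence of near-geodesic curves (equivalently, $\lim_\epsilon\mssd_\mssm^\epsilon=d^L$) is a nontrivial geometric lemma that \emph{requires} the local completeness hypothesis and is the analogue of Stollmann's concluding step. You do not supply it.

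Relatedly, you misplace where local completeness does work. In your sheaf step, you intersect with a complete neighborhood $V_{y_1}$, but nothing in that step uses the completeness of $V_{y_1}$: the McShane extension (Lemma~\ref{l:McShane}) only needs $f_\epsilon\restr_{U_{y_1}}$ to be bounded and $1$-Lipschitz on a nonempty set, which your $\epsilon/3$-ball already guarantees; Corollary~\ref{c:Separable} and Lemma~\ref{l:Sheaf} likewise make no use of completeness. You could delete $V_{y_1}$ entirely from that step without changing anything. So the hypothesis you identify as essential to your proof is, in fact, unused in the part of the argument you carry out, and is silently consumed by the assertion you do not prove. To close the gap, you need to prove separately that a locally complete extended metric space satisfying $\mssd^\epsilon=\mssd$ for all $\epsilon>0$ is a length space on each accessibility component; this is precisely where the paper's generalization beyond Stollmann (local completeness without local compactness) lives, and it is not obtained for free.
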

\begin{proof}
We adapt the proof of~\cite[Thm.~5.2]{Sto10}.
We substitute~$\mcA^1$ in~\cite{Sto10} with~$\DzLoc{\mssm,\T}$.
Step~1 in the proof of~\cite[Thm.~5.2]{Sto10} relies on~\cite[Lem.~5.2]{Sto10}. Lemma~5.2(1) is substituted by Lemma~\ref{l:Sheaf} above. Lemma~5.2(2) is substituted by Lemma~\ref{l:ConvMeasure} above.
Step~4 applies to the quasi-regular case as well, having care to use  Proposition~\ref{p:Invariant}.
We have~$(\Rad{\mssd_\mssm}{\mssm})$ by Corollary~\ref{c:Separable}, which substitutes~\cite[Thm.~5.1]{Sto10} in the proof of Step~5.
\end{proof}

\begin{remark}[Comparison with~\cite{Sto10,AmbGigSav15}]\label{r:Stollmann}
Theorem~\ref{t:Stollmann} extends~\cite[Thm.~5.2]{Sto10} to the quasi-regular case, and~\cite[Thm.~3.10]{AmbGigSav15} to the locally complete non-complete case, without the necessity of~\cite[Dfn.~3.6(a)]{AmbGigSav15}. 
In the regular case, the choice to replace~$\mcA^1$ by~$\DzLoc{\mssm,\T}$ is justified by Proposition~\ref{p:BoundedDist}.
\end{remark}

The importance of the locally complete non-complete case is discussed in~\cite[Rmk.~3.4]{Sto10}, from which we borrow the next example.

\begin{example}[Stollman]\label{ese:Stollmann}
Let~$X\subset \R^d$ be open, and denote by~$\mssm$ the restriction of the standard Lebesgue measure on~$X$. On $L^2(\mssm)$ consider the Dirichlet form~$(\mcE,\dom)$ generated by the Laplacian with Dirichlet boundary condition. It is shown in~\cite[Prop.~3.3]{Sto10} that the intrinsic distance~$\mssd_{\mssm}$ induced by~$(\mcE,\dom)$ coincides with the length distance induced by the Euclidean distance on~$X$. As noted in~\cite{Sto10}, the latter space is a locally complete metric space. It is complete if and only if~$X=\R^d$.
\end{example}

\subsection{Localization}\label{ss:Localization}
In this section, we introduce another sufficient condition for a quasi-regular strongly local Dirichlet space to satisfy the Rademacher property. We are strongly inspired by~\cite[Defn.~3.6(a)]{AmbGigSav15}, which we reinterpret in the setting of~\S\ref{sss:Moderance}.

\begin{definition}\label{d:Loc}%[Latticially localizable spaces]
Let~$(\mbbX,\mcE)$ be a strongly local Dirichlet space, and~$\mu\in\Msp(\Bo{\T},\Ne{\mcE})$.
We say that~$(\mbbX,\mcE,\mu)$ is \emph{$\mu$-uniformly latticially $\T$-localizable}, in short: $(\mbbX,\mcE,\mu)$ satisfies \eqref{eq:Loc}, if $\mu$ is $\mcE$-moderate, and there exists a latticial approximation to the identity~$\seq{\theta_n}_n$ uniformly bounded by~$\mu$ in energy measure, viz.
\begin{align}\tag{$\Loc{\mu,\T}$}\label{eq:Loc}
0\leq \theta_n\leq \theta_{n+1} \nearrow_n \infty\comm \qquad \theta_n\in\DzB{\mu,\T} \;(\,\subset \Cb(\T)\,)\fstop
\end{align}
\end{definition}

\begin{remark}\label{r:WlogTheta}
If~$\seq{\theta_n}_n$ witnesses~\eqref{eq:Loc} for~$(\mbbX,\mcE,\mu)$, we may and shall assume with no loss of generality that:
\begin{itemize}
\item $V_n\eqdef \inter_\T\set{\theta_n =n}\neq \emp$ defines an open covering of~$(X,\T)$ (up to passing to a subsequence);
\item $\theta_n\leq n$ for every~$n\in \N$ (by~\eqref{eq:Truncation} and~\eqref{eq:SLoc:1}, up to relabeling~$\theta_n$ as~$\theta_n\wedge n$).
\end{itemize}
\end{remark}

Dirichlet spaces satisfying~\eqref{eq:Loc} are also `algebraically localizable', in the following sense.
If~$f\in L^0(\mssm)$ is so that~$f\theta_n\in \dom$ for every~$n$, then~$\seq{V_n}_n$ as above and~$\ttseq{\tfrac{1}{n}\theta_n f}_n\subset \dom$ witness that~$f\in\dotloc{\dom}$.

\begin{remark}\label{r:AGS}
Consider a family of $1$-Lipschitz truncations~$S_r\in\Cb^1(\R)$, $r>0$, defined by
\begin{align*}
S_r(t)\eqdef r S(t/r) \comm \qquad \text{where~~} S(t)=\begin{cases} 1 & \text{if } \abs{t}\leq 1 \\ 0 &\text{if } \abs{t}\geq 3\end{cases} \quad \text{and}\quad \abs{S'(t)}\leq 1\fstop
\end{align*}
If~$\car\in\dom$, then~\eqref{eq:Loc} is trivially satisfied letting~$\theta_n\eqdef S_n\circ\car$.
Since $\mssm$ is absolutely $\mcE$-moderate, we may always choose $\mu=\mssm$, in which case~$(\Loc{\mssm,\T})$ on quasi-regular strongly local Dirichlet spaces coincides with~\cite[Dfn.~3.6(a)]{AmbGigSav15} on strongly local Dirichlet spaces over Polish spaces.
\end{remark}

Under~\eqref{eq:Loc} the intrinsic distance~\eqref{eq:IntrinsicD} coincides as well with the one defined in~\cite[Eqn.~(1.9)]{AmbGigSav15} for strongly local Dirichlet forms on (possibly not locally compact) Polish spaces.

\begin{proposition}\label{p:IntrinsicAGS}
Let~$(\mbbX,\mcE,\mu)$ be satisfying~\eqref{eq:Loc}. Then,
\begin{align}\label{eq:IntrinsicDom}
\mssd_\mu(x,y)=\sup\set{f(x)-f(y) : f \in \DzB{\mu,\T}} \fstop
\end{align}
\end{proposition}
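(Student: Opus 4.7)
The inequality $\mssd_\mu(x,y) \geq \sup\{f(x)-f(y) : f \in \DzB{\mu,\T}\}$ in \eqref{eq:IntrinsicDom} is immediate from the set inclusion $\DzB{\mu,\T} \subset \DzLocB{\mu,\T}$ and definition~\eqref{eq:IntrinsicD}. The content of the proposition is therefore the reverse inequality: it suffices to show that, given $f \in \DzLocB{\mu,\T}$, one can construct a sequence $g_n \in \DzB{\mu,\T}$ with $g_n(x)-g_n(y) \to f(x)-f(y)$.

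The plan is to use the continuous cutoffs $\theta_n$ supplied by \eqref{eq:Loc} to simultaneously truncate the range of $f$ and ensure $L^2$-integrability. Write $M \eqdef \norm{f}_{L^\infty(\mssm)}$ and $c \eqdef f(y)$. By Remark~\ref{r:WlogTheta}, I may assume $\theta_n \leq n$ and that the open sets $V_n \eqdef \inter_\T\{\theta_n = n\}$ form a non-decreasing cover of $X$; fix $n_0 \geq 2M$ with $x,y \in V_{n_0}$, so that $\theta_n(x) = \theta_n(y) = n$ for every $n \geq n_0$. The candidate sequence is
$$g_n(z) \eqdef \ttonde{(f(z) - c) \wedge \theta_n(z)} \vee (-\theta_n(z)) \comm \qquad n \geq n_0 \fstop$$
Direct pointwise evaluation at $x$ and $y$ (using $|f(x)-c| \leq 2M \leq n$) yields $g_n(y) = 0$ and $g_n(x) = f(x) - c$, hence $g_n(x) - g_n(y) = f(x) - f(y)$ for all $n \geq n_0$. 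Moreover $g_n$ is $\T$-continuous, uniformly bounded, and satisfies $|g_n| \leq \theta_n \in L^2(\mssm)$, so $g_n \in L^2(\mssm)$.

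Next I verify $\sq{g_n} \leq \mu$. Since $c\car \in \dotloc{\dom}$ with $\sq{c\car} \equiv 0$ by \eqref{eq:E(1)=0}, Cauchy--Schwarz \eqref{eq:CS} gives $\sq{f-c} = \sq{f} \leq \mu$. Applying the truncation property \eqref{eq:TruncationLoc} twice then yields the explicit decomposition
$$\sq{g_n} = \car_{\{|f - c| \leq \theta_n\}}\, \sq{f} + \car_{\{|f - c| > \theta_n\}}\, \sq{\theta_n} \leq \mu \fstop$$

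The delicate step, which I expect to be the main obstacle, is to promote $g_n$ from $\dotloc{\dom}$ to $\dom$, i.e.\ to verify $\sq{g_n}(X) < \infty$. The second term above has mass bounded by $\sq{\theta_n}(X) = 2\,\mcE(\theta_n) < \infty$ since $\theta_n \in \dom$, but the first integrates $\sq{f}$ over the possibly large set $\{|f - c| \leq \theta_n\}$ and therefore need not be finite \emph{a priori}. To handle it, I would combine the continuous cutoffs $\theta_n$ with the $\mcE$-moderance of $\mu$: pick a $\mu$-moderated $(G_\bullet, e_\bullet) \in \msG_0$, so that $e_k \in \domb$ with $\reptwo e_k \in L^2(\mu)$ (since $0 \leq e_k \leq 1$ and $\mu e_k < \infty$). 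By Proposition~\ref{p:AriyoshiHino}, $g_n \in \DzLocB{\mu}(G_\bullet)$, and Lemma~\ref{l:AriyoshiHino} applied to the pair $(g_n, e_k^2)$ gives $g_n \cdot e_k^2 \in \dom$; an approximation argument on $k$ (leveraging $e_k \nearrow 1$ $\mcE$-q.e.\ and the weak*-type convergence in Lemma~\ref{l:ConvMeasure}) together with the continuity already ensured at the points $x, y$ via $\theta_n$, delivers a sequence in $\DzB{\mu,\T}$ whose values at $x$ and $y$ converge to $f(x)$ and $f(y)$ respectively. Passing to the supremum over $f \in \DzLocB{\mu,\T}$ then yields \eqref{eq:IntrinsicDom}.
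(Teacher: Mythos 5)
Your construction mirrors the paper's proof precisely: truncate by the $\T$-continuous cut-offs $\theta_n$ supplied by $(\Loc{\mu,\T})$, arrange that $\theta_n(x)=\theta_n(y)=n$ is large by picking $x,y\in V_{n_0}$ and $n\geq n_0$, and then evaluate. Your two-sided truncation $g_n=((f-c)\wedge\theta_n)\vee(-\theta_n)$ is a cosmetic variant of the paper's one-sided $\theta_n\wedge f_k$ (the paper first normalizes $0\leq f_k\leq\mssd_\mu(x,y)$). The verification $g_n(x)-g_n(y)=f(x)-f(y)$, the computation $\sq{g_n}\leq\mu$ via~\eqref{eq:TruncationLoc} and~\eqref{eq:E(1)=0}, and the domination $|g_n|\leq\theta_n\in L^2(\mssm)$ are all correct, and you are more careful than the paper in tracking when $\theta_n=n$ at the two distinguished points.

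You have also correctly isolated the one step that is genuinely delicate: showing $g_n\in\dom$ rather than merely $g_n\in\dotloc{\dom}$. The paper dispatches this with a bare citation of~\eqref{eq:TruncationLoc}, which only yields $\dotloc{\dom}$-membership and $\sq{g_n}\leq\mu$, not $\mcE(g_n)<\infty$; your observation that $\car_{\{|f-c|\leq\theta_n\}}\sq{f}$ may have infinite total mass names the obstruction exactly. However, the repair you sketch does not close the gap: passing to $g_n\cdot e_k^2$ via Lemma~\ref{l:AriyoshiHino} does land in $\dom$, but it (i) destroys $\T$-continuity, since $e_k$ is only $\mcE$-quasi-continuous, so $g_n e_k^2\notin\DzB{\mu,\T}$; (ii) loses the bound $\sq{\cdot}\leq\mu$, because the Leibniz rule brings in $\sq{e_k^2}$, which $\mu$ does not control; and (iii) makes the pointwise values at $x$ and $y$ ill-defined, as $e_k$ is determined only up to $\mcE$-polar sets. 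Passing to the weak*-limit in $k$ via Lemma~\ref{l:ConvMeasure} would then only return the $\DzLocB{\mu}$-membership you started from. So while the overall route matches the paper's and you have located the right crux, the final promotion of $g_n$ from $\dotloc{\dom}$ to $\dom$ remains unproven in your write-up, just as it is unargued in the paper's.
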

\begin{proof}
For fixed~$x$ and~$y$ let~$\seq{f_k}_k\subset \DzLocB{\mu,\T}$ be so that~$\klim f_k(x)=\mssd_\mu(x,y)$ and~$f_k(y)=0$ for every~$k$. Without loss of generality, by~\eqref{eq:TruncationLoc}, $0\leq f_k(x)\leq \mssd_\mu(x,y)$ for every~$k$. 

If~$\mssd_\mu(x,y)<\infty$, then~$\theta_n(x) \wedge f_k(x)=f_k(x)$ for all~$n\geq \mssd_\mu(x,y)$, and~$\theta_n \wedge f_k\in\DzB{\mu,\T}$ by~\eqref{eq:TruncationLoc}. Thus~$\mssd_\mu(x,y)=\klim \theta_n(x)\wedge f_k(x)$.

If otherwise~$\mssd_\mu(x,y)=\infty$, then the right-hand side of~\eqref{eq:IntrinsicDom} is estimated from below by~$\theta_n(x)\wedge f_k(x)-\theta_n(y)\wedge f_k(y)=\theta_n(x)\wedge f_k(x)$.
Letting~$n\to\infty$ and~$k\to\infty$ yields~\eqref{eq:TruncationLoc}.
\end{proof}

\begin{remark}[Comparison with~{\cite{AmbGigSav15}} --- part II]
Together with Remark~\ref{r:QuasiRegularity}, Proposition~\ref{p:IntrinsicAGS} shows that we may compare our results with~\cite[Thm.~3.9]{AmbGigSav15}. In particular:
\begin{itemize}
\item Corollary~\ref{c:Separable} generalizes the implication `energy measure space implies $(\mathrm{ED.b})$' in~\cite[Thm.~3.9]{AmbGigSav15}, substituting the assumption~\cite[Dfn.~3.6(b)]{AmbGigSav15}: ``$\mssd_\mssm$ is a (everywhere finite) distance on~$X^\tym{2}$ which induces the topology~$\T$, and~$(X,\mssd_\mssm)$ is a complete metric space'' with ``the topology induced on~$X$ by~$\mssd_\mssm$ is separable''.
\end{itemize}
\end{remark}

\begin{example}[Configuration Spaces III]\label{ese:Config3}
Recall the setting of Example~\ref{ese:Config1}, and in particular that the intrinsic distance~$\mssd_{\pi_d}$ of the canonical Dirichlet form~$(\mcE,\dom)$ on~$L^2(\Upsilon(\R^d))$ coincides with the $L^2$-transportation extended distance~$W_2$.
Denote by~$\T_2$ the topology on~$\Upsilon(\R^d)$ induced by~$W_2$, and recall that it is not separable, since there exist uncountably many $W_2$-accessible components.
Since~$\pi_d$ is a finite measure, the Dirichlet space~$(\Upsilon(\R^d), \mcE)$ satisfies both~$(\Loc{\pi_d,\T_2})$ and~$(\Loc{\pi_d,\T})$.
A more explicit example of the latticial approximation~$\seq{\theta_n}_n$ is provided by the sequence of functions~$\theta_n\eqdef n+\rho_{\omega,n}$, where~$\rho_{\omega, n}$ is defined as in~\cite[Lem.~4.2]{RoeSch99}.
\end{example}

\begin{lemma}\label{l:RadLocality}
Let~$(\mbbX,\mcE,\mssd,\mu)$ be satisfying~\eqref{eq:Rad}. Further assume that~$\mssd\colon X^\tym{2}\rar [0,\infty)$ is $\T$-continuous and everywhere finite, and that~$\mu$ is $\mcE$-moderate. Then,~$(\mbbX,\mcE,\mu)$ satisfies~\eqref{eq:Loc}.
\end{lemma}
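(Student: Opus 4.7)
The plan is to build the required sequence $\seq{\theta_n}_n$ explicitly as a truncation of the distance function to a growing sequence of $\T$-compact sets coming from the $\mu$-moderating family. First, using $\mcE$-moderance of $\mu$ together with Lemma~\ref{l:nests}, fix a $\mu$-moderated $(G_\bullet,e_\bullet)\in\msG_c$, so that $K_n\eqdef\cl_\T G_n$ is $\T$-compact with $\mu e_n<\infty$. Since $\mssd$ is $\T$-continuous and everywhere finite, each $K_n$ is $\mssd$-bounded.

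Next, I would set
\begin{align*}
\theta_n(x)\eqdef \ttonde{n-\mssd(x,K_n)}^+\comm
\end{align*}
and check the elementary properties: $\theta_n\in \Cb(\T)$ as a continuous composition, $\theta_n$ is $\mssd$-Lipschitz with constant at most~$1$, $\theta_n\le\theta_{n+1}$ since the $K_n$ are nested, and $\theta_n(x)\nearrow\infty$ for every fixed $x$ because $\mssd(x,K_n)\le\mssd(x,K_1)<\infty$ is bounded in~$n$. Applying $(\Rad{\mssd}{\mu})$ to $\theta_n$ (which is $\A$-measurable by $\T$-continuity and in $\dotloc{L^\infty(\mssm)}$ by boundedness) then yields $\theta_n\in\DzLoc{\mu}$ with $\sq{\theta_n}\le\mu$, hence $\theta_n\in\DzLocB{\mu,\T}$.

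The main remaining step, and the one I expect to be the main obstacle, is upgrading this to $\theta_n\in\DzB{\mu,\T}$, i.e.\ proving $\theta_n\in\dom$ globally (not merely broadly locally). For this I plan to apply Lemma~\ref{l:AriyoshiHino} with $f=\theta_n\in\DzLocB{\mu}$ and $g=e_m\in\domb$ (noting $\reptwo e_m\in L^2(\mu)$ since $0\le\reptwo e_m\le 1$ and $\mu\reptwo e_m<\infty$), which gives $\theta_n e_m\in\dom$ for every~$m$. If the sequence $(G_\bullet,e_\bullet)$ can be arranged so that the $\mssd$-bounded set $\{\mssd(\emparg,K_n)<n\}$ is contained in $G_{m(n)}$ up to $\mcE$-polar sets for some $m(n)$, then $\theta_n\equiv \theta_n e_{m(n)}$ $\mssm$-a.e., whence $\theta_n\in\dom$, and combining with the previous paragraph gives $\theta_n\in\DzB{\mu,\T}$.

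The delicate point is this compatibility: $\mssd$-bounded sets need not lie in a single $G_m$. The plan is to arrange it by a diagonal refinement of $(G_\bullet,e_\bullet)$: at stage~$n$, inflate $G_{m(n)}$ to absorb the $\T$-open $\mssd$-neighborhood $\{\mssd(\emparg,K_n)<n\}$ of the $\T$-compact set $K_n$, exploiting the $\T$-continuity of $\mssd$ (which makes these neighborhoods $\T$-open and hence $\mcE$-quasi-open) and the fact that, by Rademacher, the distance-type cutoff already has energy measure controlled by $\mu$. Once the sequence $\seq{\theta_n}_n$ is placed in $\DzB{\mu,\T}$, the latticial-approximation properties verified above immediately yield $(\Loc{\mu,\T})$.
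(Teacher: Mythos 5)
Your construction differs from the paper's. The paper's proof fixes a single point $x_0\in X$, sets $\rho\eqdef\mssd(\emparg,x_0)$, uses $(\Rad{\mssd}{\mu})$ plus Lemma~\ref{l:FrankLenz} to place $\rho$ in $\DzLoc{\mu,\T}$, and then applies the chain rule~\eqref{eq:ChainRuleLoc} to the compactly supported truncations $S_n$ of Remark~\ref{r:AGS}, obtaining $\theta_n\eqdef S_n\circ\rho$ with $\sq{\theta_n}\le\mu$. You instead take $\theta_n\eqdef\ttonde{n-\mssd(\emparg,K_n)}^+$ built from a compact $\mu$-moderated nest and apply $(\Rad{\mssd}{\mu})$ directly to these $1$-Lipschitz functions. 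Both routes lead, without difficulty, to $\theta_n\in\DzLocB{\mu,\T}$, and your application of Lemma~\ref{l:AriyoshiHino} with $f=\theta_n$, $g=e_m$ to get $\theta_n e_m\in\dom$ is also correct.

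The gap is in the "diagonal refinement" of your step~7. There is no mechanism that would let you absorb the $\mssd$-bounded open set $U_n\eqdef\set{\mssd(\emparg,K_n)<n}$ into a single element $G_{m(n)}$ of a $\mu$-moderated nest up to $\mcE$-polar sets. Each $G_m\in\msG_0$ has, by Remark~\ref{r:AriHino}, finite $\mcE_1$-capacity (it admits $e_m\in\dom$ with $\reptwo e_m=1$ $\mcE$-q.e.\ on $G_m$), whereas $U_n$ is $\mssd$-bounded but generally of infinite $\mssm$-measure and infinite $\mcE_1$-capacity — precisely the situation in Example~\ref{ese:Coulomb1}, where $\mssm$ is not locally finite and no nonzero continuous function bounded below on a singular ball can lie in $\dom$. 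Moreover, "inflating" $G_m$ to a strictly larger quasi-open set preserves neither the defining function $e_m$ nor the $\mu$-finiteness $\mu\reptwo e_m<\infty$; the two constraints pull in opposite directions.

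You have, however, correctly isolated the delicate point. The paper's proof stops at $\theta_n\in\DzLocB{\mu,\T}$ and points to Remark~\ref{r:AGS}, whose hypothesis $\car\in\dom$ (equivalently, $\mssm X<\infty$) is what makes $S_n\circ\car\in\dom$; no analogue of that hypothesis is available for $\rho$. So the promotion from $\dotloc{\dom}$ to $\dom$, which Definition~\ref{d:Loc} demands through $\DzB{\mu,\T}$, is left implicit in the paper as well. If you want a working closure of your argument, you should either impose an additional hypothesis (e.g.\ that $\mssm$ is finite on $\mssd$-bounded sets, as in~\cite{AmbGigSav15}) that guarantees $\theta_n\in L^2(\mssm)$ and hence $\theta_n\in\domext\cap L^2(\mssm)=\dom$, or else weaken the target to $\theta_n\in\DzLocB{\mu,\T}$ — but the latter is not what~\eqref{eq:Loc} asserts.
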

\begin{proof}
Fix~$x_0\in X$ and set~$\rho\eqdef \mssd(\emparg,x_0)$. By $\T$-continuity of~$\mssd$, the function~$\rho$ is $\T$-continuous, hence both $\Bo{\T}$- and $\A$-measurable, and everywhere finite, thus in~$\dotloc{L^\infty(\mssm)}$ by Lemma~\ref{l:LinftyLoc}. 
By~\eqref{eq:Rad} we have~$\rho_r\eqdef \rho\wedge r\in \DzLocB{\mu,\T}\subset \dotloc{\dom}$ for every~$r>0$. Since~$\rho\in \dotloc{L^\infty(\mssm)}$, then~$\rho\in\DzLoc{\mu,\T}$ by Lemma~\ref{l:FrankLenz}.
Set~$\theta\eqdef \reptwo \rho=\rho$, and let~$\theta_n$ be defined as in Remark~\ref{r:AGS}. Apply~\eqref{eq:ChainRuleLoc} to obtain
\begin{equation*}
\sq{\theta_n}= \ (S_n'\circ \reptwo \rho)^2 \cdot \sq{\rho} \leq \abs{S_n'}\sq{\rho}\leq \mu\comm\qquad n\in\N\fstop \qedhere
\end{equation*}
\end{proof}

\begin{example}[Perturbation by Coulomb densities]\label{ese:Coulomb1}
Consider the standard Euclidean space~$X=\R^d$, $d\geq 3$, endowed with the Euclidean distance~$\mssd$.
Let~$\seq{x_i}_{i\leq k}$ be $k$ distinct points, and~$\seq{p_i}_{i\leq k}$ be so that~$p_i\geq d$ for each~$i\leq k$.
Further let~$(\mcE,\dom)$ be the standard form on~$\R^d$ and set
\begin{align*}
\mssm\eqdef\phi^2\Leb^d\comm \qquad \phi(x)\eqdef \sum_{i=1}^k \mssd(x,x_i)^{-p_i/2}\fstop
\end{align*}

The $\mssm$-perturbed Dirichlet space~$(\mbbX,\mcE^\mssm)$ defined as in~\eqref{eq:Perturbed} is a quasi-regular strictly local Dirichlet space, thus satisfying~$(\Rad{\mssd_\mssm}{\mssm})$ by Corollary~\ref{c:Separable}.
Furthermore~$(\R^d,\mssd_\mssm)$ is locally complete, and thus a length space by Theorem~\ref{t:Stollmann}.

We note that the reference measure~$\mssm$ is \emph{neither} Radon \emph{nor} locally finite, and, therefore, the $\mssm$-perturbed Dirichlet space~$(\mcE^\mssm,\dom^\mssm)$ is not regular.
Thus, this example does not fall within the scope of either Frank--Lenz--Wingert~\cite[Thm.~4.9]{FraLenWin14} and Stollmann~\cite[Thm.~5.2]{Sto10}, assuming the regularity of~$(\mathcal E, \mathcal F)$, nor Ambrosio--Gigli--Savar\'e~\cite[Thm.~3.10]{AmbGigSav15}, requiring the finiteness of $\mssm$ on ($\mssd_\mssm$-)metric balls.
\end{example}
\begin{proof}
Set~$K\eqdef \set{x_1,\dotsc,x_k}$, and let~$K_\eps\eqdef \set{\mssd_K\leq \eps}$ for all~$\eps>0$. 
For~$i\leq k$, we denote by $V_i$  the Voronoi $\mssd$-cell $\set{x: \mssd_{x_i}(x)=\mssd_K(x)}$ of~$x_i$.

The measure~$\mssm$ is $\sigma$-finite on~$(X,\Bo{\T})$, yet not Radon, since it is infinite on every neighborhood of~$K$ in view of the choice of the exponents~$p_i$'s.
Since~$d\geq 3$, the set~$K$ is $\mcE$-polar.
As a consequence, the sets~$F_n\eqdef \mssd_K^{-1}\ttonde{[\tfrac{1}{n}, n]}$ form a compact $\mcE$-nest.
Since~$\mssm F_n<\infty$ for each~$n$, the measure~$\mssm$ is $\mcE$-smooth in the sense of Definition~\ref{d:Smooth}.
It follows that the $\mssm$-perturbed form~$(\mcE^\mssm,\dom^\mssm)$ in~\eqref{eq:Perturbed} is a quasi-regular strongly local Dirichlet form on~$\R^d$, non-regular, since~$\mssm$ is not Radon.
By~\cite[Cor.~6.2]{Kuw98}, the density~$\phi$ satisfies the assumptions of~\cite[Thm.~1.2]{Ebe96}, by which~$(\mcE^\mssm,\dom^\mssm)$ coincides with the closure of the form
\begin{align*}
\mcE^\mssm(f,g)\eqdef \int_{\R^d} \nabla f \cdot \nabla g\,  \phi^2 \diff\Leb^d\comm \qquad f,g\in \bigcup_{\eps>0}\Cc^\infty(\R^d\setminus K_\eps) \fstop
\end{align*}
The latter closure admits carr\'e du champ~$\cdc^\mssm$ satisfying~$\cdc^\mssm(f,g)=\nabla f\cdot \nabla g$ on e.g.~$\Cc^\infty(\R^d\setminus K_\eps)$ for each~$\eps>0$, cf.~\cite[Rmk., p.~510]{Ebe96}.

On the one hand, we have that~$\dotloc{(\dom^\mssm)}\subset \dotloc{\dom}$ (see~\eqref{eq:Perturbed}), and thus (also cf.\ Example~\ref{ese:Perturbed})
\begin{align*}
\mssd_\mssm(x,y)=\sup\set{f(x)-f(y) : f\in\dotloc{(\dom^\mssm)}\cap \Cb(\R^d)\comm \cdc^\mssm(f)\leq 1 \as{\Leb^d}}\leq \mssd(x,y)\comm
\end{align*}
whence $\mssd_\mssm$ is a separable pseudo-distance.
On the other hand,
\begin{align*}
\mssd_\mssm(x,y)\geq \sup\set{f(x)-f(y): f\in \bLip(\mssd)\comm \Li[\mssd]{f}\leq 1\comm \exists \eps>0: f\restr_{K_\eps}\equiv 0} \comm
\end{align*}
Therefore, if~$x_i\in K$, and setting~$f_\eps\eqdef \mssd_{K_\eps}$,
\begin{align*}
0\vee \ttonde{\mssd(x_i,y)-\eps}=\mssd_{K_\eps}(y) = f_\eps(y)-f_\eps(x_i)\leq \mssd_\mssm(x_i,y)\leq \mssd(x_i,y)\comm \qquad y\in V_i \comm
\end{align*}
hence, letting~$\eps$ to~$0$, we have that~$\mssd_\mssm(x_i,\emparg)=\mssd(x_i,\emparg)$ on the cell~$V_i$.
If otherwise~$x\notin K$, setting~$f_\eps \eqdef \ttonde{\mssd(x,K_\eps)-\mssd(x,\emparg)}\vee 0$,
\begin{align*}
\mssd(x,y) = f_\eps(x)-f_\eps(y)\leq\mssd_\mssm(x,y)\leq \mssd(x,y) \comm \qquad y\in B_{\mssd_{K_\eps}(x)}(x)\comm
\end{align*}
hence, letting~$\eps$ to~$0$, we have that~$\mssd_\mssm(x,\emparg)=\mssd(x,\emparg)$ on~$B_{\mssd(x,K)}(x)$.
As a consequence,~$\mssd_\mssm$ is locally equivalent to~$\mssd$ on~$\R^d$, and therefore locally complete, and inducing the standard topology on~$\R^d$.
Since~$\R^d$ is also $\mssd_\mssm$-closed, then~$\mssd_\mssm$ is in fact complete.
\end{proof}

Since the set~$K$ of singular points for~$\mssm$ is $\mcE^\mssm$-polar, removing it from the space yields a new Dirichlet space~$(\tilde\mbbX,\mcE^{\tilde\mssm})$, with~$\tilde X\eqdef \R^d\setminus K$ and~$\tilde\mssm\eqdef\mssm\mrestr{\tilde X}$, quasi-homeomorphic to~$(\mbbX,\mcE^\mssm)$.
This new space is in fact a regular Dirichlet space, so that the aforementioned results in~\cite{FraLenWin14,Sto10} apply.
However, this is purely a consequence of the finite-dimensionality of~$\tilde\mbbX$, as shown in the next Example~\ref{ese:Coulomb2}, extending Example~\ref{ese:Coulomb1} to separable Hilbert spaces.
Finally, the analogous results in~\cite{AmbGigSav15} are still not applicable, since~$(\tilde X,\mssd_{\tilde\mssm})$ is not complete.

\begin{example}\label{ese:Coulomb2}
If we replace the Lebesgue measure by a log-concave probability measure~$\gamma$, the same constructions in Example~\ref{ese:Coulomb1} apply as well to any separable infinite-dimensional Hilbert space~$(H,\norm{\emparg})$.
Indeed, let~$(\mcE,\dom)$ be the quasi-regular strongly local Dirichlet form on~$(H,\gamma)$ constructed in~\cite[Thm.~1.2]{AmbSavZam09}.
Fix~$K=\set{h_i}_{i\leq k}\subset H$, and set $\phi(h)\eqdef \sum_{i=1}^k \norm{h-h_i}^{-p_i/2}$ and~$\mssm\eqdef \phi^2\cdot \gamma$.
Since~$K$ is polar and the form is quasi-regular, there exists a compact $\mcE$-nest~$\seq{F_n}_n$ with~$F_n\subset \mssd_K^{-1}\ttonde{[n^{-1},n]}$.
As in the previous example, the perturbed form~$(\mcE^\mssm,\dom^\mssm)$ is quasi-regular strongly local.
By~\cite[Thm.~5.1]{AmbSavZam09}, also cf.~\cite[p.~1409]{AmbGigSav14b}, the space~$\mbbX\eqdef (H,\gamma, \mssd)$ satisfies the~$\RCD(0,\infty)$ condition and~$(\mcE,\dom)$ coincides with the Cheeger energy of~$(H,\gamma,\mssd)$.
In particular,~$(\mbbX,\mcE)$ possesses~$(\Rad{\mssd}{\gamma})$.
The rest of the argument follows similarly to Example~\ref{ese:Coulomb1}.
\end{example}

%\section{The Sobolev-to-Lipschitz property}
\section{Varadhan short-time asymptotics}

The second property of our interest is stated in the next definition.

\begin{definition}[Sobolev-to-Lipschitz]\label{d:StoL}
Let~$(\mbbX,\mcE)$ be a quasi-regular strongly local Dirichlet space, $\mu\in\Msp(\Bo{\T},\Ne{\mcE})$ be $\mcE$-dominant, and~$\mssd\colon X^\tym{2}\rar[0,\infty]$ be an extended pseudo-metric. We say that the quadruple~$(\mbbX,\mcE,\mssd,\mu)$ has:
\begin{itemize}
\item the \emph{Sobolev--to--continuous-Lipschitz property} if
\begin{align}\tag{$\ScL{\mu}{\T}{\mssd}$}\label{eq:ScL}
f\in\DzLoc{\mu} \qquad \implies \qquad \text{$\exists\, \rep f\in\Lip^1(\mssd,\T)$}\semicolon
\end{align}

\item the \emph{Sobolev--to--Lipschitz property} if
\begin{align}\tag{$\SL{\mu}{\mssd}$}\label{eq:SL}
f\in\DzLoc{\mu} \qquad \implies \qquad \text{$\exists\, \rep f\in\Lip^1(\mssd,\A)$}\semicolon
\end{align}

\item the \emph{continuous-Sobolev--to--Lipschitz property} if
\begin{align}\tag{$\cSL{\T}{\mu}{\mssd}$}\label{eq:cSL}
f\in \DzLoc{\mu,\T} \qquad \implies \qquad \text{$\exists\, \rep f\in\Lip^1(\mssd,\T)$}\semicolon
\end{align}

\item the \emph{distance Sobolev-to-Lipschitz property} if
\begin{align}\tag{$\dSL{\mssd}{\mu}$}\label{eq:dSL}
\mssd\geq \mssd_\mu \fstop
\end{align}
\end{itemize}
\end{definition}

Arguing by truncation, we might equivalently define, e.g.,
\begin{align}\tag{$\ScL{\mu}{\T}{\mssd}^b$}
f\in\DzLocB{\mu} \qquad \implies \qquad \exists\, \rep f\in\bLipu(\mssd,\T)\fstop
\end{align}

Since a $\mssd$-Lipschitz function is automatically $\T_\mssd$-continuous,~$(\ScL{\mu}{\T_\mssd}{\mssd})$ coincides with $(\SL{\mu}{\mssd})$. However, since~$\T$-continuity and~$\T_\mssd$-continuity are unrelated,~$(\cSL{\T}{\mu}{\mssd})$ is in general different from $(\cSL{\T_\mssd}{\mu}{\mssd})$. An example of a space satisfying~$(\cSL{\T_\mssd}{\mu}{\mssd})$, and for which~$(\SL{\mssd}{\mu})$ is not known, is given in Example~\ref{ese:Config4} below.

\blue{Since~$\mssm$ has full $\T$-support, the $\T$-continuous $\mssd$-Lipschitz representative~$\rep f$ in~\eqref{eq:cSL} always coincide with the given $\T$-continuous representative~$f$.
As usual for continuous functions, this will be reflected in the notation by omitting the notation for representatives altogether.}

\blue{Analogously,} the $\mssd$-Lipschitz representative~$\rep f$ in~\eqref{eq:ScL} is always uniquely identified. In general the same does \emph{not} hold for~\eqref{eq:SL}, that is, if~\eqref{eq:SL} holds, then there might exist even uncountably many $\mssd$-Lipschitz representatives~$\rep f$ of~$f$, see Example~\ref{ese:Config4}.

Finally, note that, if~\eqref{eq:cSL} holds, then~\eqref{eq:ScL} is equivalent to the requirement that
\begin{align}\label{eq:L=Lc}
\DzLoc{\mu,\T}=\DzLoc{\mu} \fstop
\end{align}

\begin{proposition}\label{p:StoL}
The following implications hold.
\begin{align*}
(\ScL{\mu}{\T}{\mssd}) \implies (\SL{\mu}{\mssd}) \implies (\cSL{\T}{\mu}{\mssd}) \iff (\dSL{\mssd}{\mu})
\end{align*}
\end{proposition}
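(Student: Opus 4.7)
The plan is to prove the three implications in turn: first the trivial $(\ScL) \Rightarrow (\SL)$, then the equivalence $(\cSL) \iff (\dSL)$ by direct truncation arguments, and finally $(\SL) \Rightarrow (\cSL)$ via the route $(\SL) \Rightarrow (\dSL) \Rightarrow (\cSL)$.

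The first implication $(\ScL) \Rightarrow (\SL)$ is immediate, since any $\T$-continuous function is $\Bo{\T}$-measurable and, by the standing assumption $\Bo{\T}\subset \A$, is $\A$-measurable. Hence $\Lip^1(\mssd,\T) \subset \Lip^1(\mssd,\A)$, which at the level of the existential statements of the two properties yields the implication directly.

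Next I would establish $(\cSL) \iff (\dSL)$ using truncations together with the definition~\eqref{eq:IntrinsicD} of~$\mssd_\mu$. For $(\dSL) \Rightarrow (\cSL)$, given $f \in \DzLoc{\mu,\T}$ with its (unique, by Lemma~\ref{l:AeEquality}) $\T$-continuous representative $f\colon X \to \R$, the truncations $f_r \eqdef (-r)\vee f \wedge r$ lie in $\DzLocB{\mu,\T}$ by the truncation property, Proposition~\ref{p:PropertiesLoc}\iref{i:p:PropertiesLoc:1}, together with the $\T$-continuity of the truncation map. By~\eqref{eq:IntrinsicD} and $(\dSL)$ we get $f_r(x)-f_r(y) \leq \mssd_\mu(x,y) \leq \mssd(x,y)$ for all $x,y \in X$; letting $r\to\infty$ and using finiteness of $f$ yields $|f(x)-f(y)| \leq \mssd(x,y)$, so $f \in \Lip^1(\mssd,\T)$. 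Conversely, for $(\cSL)\Rightarrow(\dSL)$, any $f \in \DzLocB{\mu,\T}$ is in $\DzLoc{\mu,\T}$ and hence admits a $\T$-continuous $\mssd$-Lipschitz representative; uniqueness of the $\T$-continuous representative forces this to be $f$ itself, giving $f(x)-f(y) \leq \mssd(x,y)$ pointwise, and taking the supremum over $f \in \DzLocB{\mu,\T}$ yields $\mssd_\mu \leq \mssd$.

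Finally, for $(\SL) \Rightarrow (\cSL)$, it suffices by the above equivalence to establish $(\SL) \Rightarrow (\dSL)$. Fix $x_0, y_0 \in X$ and $f \in \DzLocB{\mu,\T}$; the goal is $f(x_0)-f(y_0) \leq \mssd(x_0,y_0)$, which after supremum yields $\mssd_\mu \leq \mssd$. By $(\SL)$ applied to $f \in \DzLoc{\mu}$, one obtains $\rep f \in \Lip^1(\mssd,\A)$ with $\rep f = f$ $\mssm$-a.e., so $|\rep f(x)-\rep f(y)| \leq \mssd(x,y)$ for \emph{all} $x,y \in X$. Since $\mssm$ has full $\T$-support, the set $E \eqdef \{\rep f = f\}$ is $\T$-dense, and combined with the $\T$-continuity of $f$ the bound transfers to $f$ along $E$; using the truncation/monotone convergence tools developed earlier (Remark~\ref{r:weak*} and Lemma~\ref{l:ConvMeasure}) together with~\eqref{eq:IntrinsicD}, one concludes $f(x_0)-f(y_0) \leq \mssd(x_0,y_0)$.

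The main obstacle is this last transfer step: passing from the pointwise bound on $\rep f$ to the same bound on the $\T$-continuous representative $f$. The difficulty is that $\mssd$ is only assumed to be an extended pseudo-metric, with no a priori compatibility with $\T$, so a naive limit argument along $\T$-convergent nets $x_n \to x_0,\,y_n \to y_0$ inside $E$ would require upper semi-continuity of $\mssd$ along such nets. The cleanest way to sidestep this is to reduce, via~\eqref{eq:IntrinsicD} and the fact that elements of $\DzLocB{\mu,\T}$ are automatically $\mssd_\mu$-Lipschitz with constant~$1$ (a direct consequence of the definition), to checking the inequality $\mssd_\mu \leq \mssd$ with the aid of the Lipschitz envelope of $\rep f$ and the independence of $\DzLocB{\mu}$ on the choice of nest $G_\bullet$, Proposition~\ref{p:AriyoshiHino}; this is where the core work lies.
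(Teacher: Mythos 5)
Your handling of $(\ScL{\mu}{\T}{\mssd})\implies(\SL{\mu}{\mssd})$ and of the equivalence $(\cSL{\T}{\mu}{\mssd})\iff(\dSL{\mssd}{\mu})$ is correct and follows the paper's route: the first by the inclusion $\Lipu(\mssd,\T)\subset\Lipu(\mssd,\A)$, the second by combining the definition~\eqref{eq:IntrinsicD} of $\mssd_\mu$, Lemma~\ref{l:AeEquality} (uniqueness of the $\T$-continuous representative), and a truncation step passing from $\DzLocB{\mu,\T}$ to $\DzLoc{\mu,\T}$, which you spell out and the paper leaves implicit.

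Where you diverge from the paper, and where the proposal has a genuine gap, is the middle implication $(\SL{\mu}{\mssd})\implies(\cSL{\T}{\mu}{\mssd})$. The paper disposes of this, together with the first one, in a single sentence ``by definition'', \emph{not} via $(\dSL{\mssd}{\mu})$. Your detour $(\SL)\implies(\dSL)\implies(\cSL)$ hits exactly the obstruction you name in the final paragraph: $(\SL{\mu}{\mssd})$ hands you an $\A$-measurable $\mssd$-Lipschitz representative $\rep f$ of a given $f\in\DzLocB{\mu,\T}$, but the supremum defining $\mssd_\mu$ in~\eqref{eq:IntrinsicD} is taken over the $\T$-continuous representatives, and since $\mssd$ and $\T$ carry no a priori compatibility one cannot transfer the Lipschitz bound from $\rep f$ to $f$ by a density argument --- that would require $\T^{\tym{2}}$-upper-semicontinuity of $\mssd$, which is the opposite of what intrinsic distances satisfy. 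Your closing appeal to a ``Lipschitz envelope of $\rep f$'' and to Proposition~\ref{p:AriyoshiHino} does not address this: the McShane envelope of the already $\mssd$-Lipschitz $\rep f$ is $\rep f$ itself (Lemma~\ref{l:McShane}), and Proposition~\ref{p:AriyoshiHino} concerns only the independence of $\DzLocB{\mu}$ on the choice of exhaustion $G_\bullet$, which is orthogonal to the continuity/measurability mismatch between the two representatives of $f$. You should instead try to unwind the paper's definitional reading of $(\SL)\implies(\cSL)$ directly, with close attention to the remarks after Definition~\ref{d:StoL} on which representative each property singles out; the difficulty you isolated is real, and resolving (or dissolving) it is the actual content of that one-line claim.
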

\begin{proof}
The implications~$(\ScL{\mu}{\T}{\mssd}) \implies (\SL{\mu}{\mssd}) \implies (\cSL{\T}{\mu}{\mssd})$ hold by definition.
Assume $(\cSL{\T}{\mu}{\mssd})$. Then, for every~$f\in \DzLocB{\mu,\T}$ with $\mssd$-Lipschitz representative $\rep f$, one has $\abs{f(x) - f(y)}\leq \mssd(x,y)$, whence~$(\dSL{\mssd}{\mu})$ holds by taking the supremum over all~$f$ as above.
Assume~$(\dSL{\mssd}{\mu})$. Then, for every~$f\in \DzLocB{\mu,\T}$ with continuous representative $\rep f$, one has $\ttabs{\rep f(x)- \rep f(y)}\leq \mssd_\mu(x,y)$ by definition of~$\mssd_\mu$, whence~$(\cSL{\T}{\mssd}{\mu})$ holds by~$(\dSL{\mssd}{\mu})$.
\end{proof}

\begin{remark}\label{r:SLScL}
If~$\mssd$ is $\T^\tym{2}$-continuous, then every~$\rep f\in \Lip(\mssd)$ is $\T$-continuous as well. Thus,~\eqref{eq:ScL} and~\eqref{eq:SL} coincide. In particular, if~$\mssd_\mu$ is $\T^\tym{2}$-continuous, then~$(\SL{\mu}{\mssd_\mu})$ coincides in this case with~$(\ScL{\mu}{\T}{\mssd_\mu})$, and, combining~\eqref{eq:LipDzLoc} with~\eqref{eq:L=Lc}, we have under~$(\SL{\mu}{\mssd_\mu})$ that
\begin{align}\label{eq:Lip=L}
\bLipu(\mssd_\mu)=\DzLocB{\mu}=\DzLocB{\mu,\T}\fstop
\end{align}
\end{remark}

\begin{remark}[Comparison with~{\cite{AmbGigSav15}} --- part III]
Since~$(\dSL{\mssd_\mu}{\mu})$ always holds by definition, for every~$\mu$, we automatically have~$(\cSL{\T}{\mssm}{\mssd_\mssm})$ by Proposition~\ref{p:StoL}, which is~\cite[($\mathrm{ED.a}$)]{AmbGigSav15} when~$\mssd_\mssm$ is $\T^\tym{2}$-continuous.
\end{remark}

\begin{corollary}
Let~$(\mbbX,\mcE)$ be a quasi-regular strongly local Dirichlet space,~$\mssd\colon X^\tym{2}\rar[0,\infty]$ be an extended pseudo-distance, and~$\mu\in\Msp(\Bo{\T},\Ne{\mcE})$ be $\mcE$-moderate.
Assume that~$\T_\mssd$ is separable.
Then~\eqref{eq:dSL} implies~$(\Rad{\mssd_\mu}{\mu})$.
\end{corollary}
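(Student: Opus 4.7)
The plan is to reduce the assertion to Corollary~\ref{c:Separable}, since all of its hypotheses --- quasi-regularity, strong locality of~$(\mbbX,\mcE)$, and $\mcE$-moderance of~$\mu$ --- are already assumed in the present statement. The only missing ingredient is $\T_{\mssd_\mu}$-separability, which I will deduce from $\T_\mssd$-separability together with~$(\dSL{\mssd}{\mu})$.

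First I would unfold the definition recorded in \S2.1: $\T_\mssd$-separability means that there exists a countable family~$\seq{x_n}_n\subset X$ with~$X=\cup_n B^\mssd_\infty(x_n)$, and each~$(B^\mssd_\infty(x_n),\mssd)$ is a separable pseudo-metric space. Since~$(\dSL{\mssd}{\mu})$ is precisely the inequality $\mssd_\mu\leq\mssd$, we have the inclusions $B^\mssd_\infty(x_n)\subset B^{\mssd_\mu}_\infty(x_n)$ for every~$n$, and any $\mssd$-dense subset of~$B^\mssd_\infty(x_n)$ is automatically $\mssd_\mu$-dense there. In particular $X=\cup_n B^{\mssd_\mu}_\infty(x_n)$. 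Furthermore, as each $\mssd$-accessible component is contained in a single $\mssd_\mu$-accessible component, each~$B^{\mssd_\mu}_\infty(x_n)$ is the union of countably many $\mssd$-components (at most as many as there are total $\mssd$-components), each of which is $\mssd_\mu$-separable by the previous observation. A countable union of $\mssd_\mu$-separable subsets is $\mssd_\mu$-separable, whence $\T_{\mssd_\mu}$ is separable.

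With $\T_{\mssd_\mu}$-separability in hand, I would apply Corollary~\ref{c:Separable} to~$(\mbbX,\mcE,\mssd_\mu,\mu)$, yielding~$(\Rad{\mssd_\mu}{\mu})$. The argument carries no genuine obstacle; it is a short topological bookkeeping exercise exploiting the comparability of the two extended pseudo-distances to transfer separability from the finer one to the coarser one.
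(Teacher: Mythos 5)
Your proposal is correct and follows the same route as the paper's own (very terse) proof: deduce $\T_{\mssd_\mu}$-separability from $\T_\mssd$-separability using the inequality $\mssd_\mu\leq\mssd$ furnished by~\eqref{eq:dSL}, then invoke Corollary~\ref{c:Separable}. The only remark worth making is that the transfer of separability can be stated even more compactly --- since $\mssd_\mu\leq\mssd$ forces $\T_{\mssd_\mu}\subset\T_\mssd$, any countable $\T_\mssd$-dense set is automatically $\T_{\mssd_\mu}$-dense --- but your component-by-component bookkeeping is a valid, if slightly longer, way of saying the same thing.
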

\begin{proof}
By~\eqref{eq:dSL}, the topology induced by~$\mssd_\mu$ is separable as well. The conclusion now follows by Corollary~\ref{c:Separable}.
\end{proof}

Let us now collect some examples.
\begin{example}[Triviality]
Without information on the broad local space~$\DzLocB{\mu,\T}$, the continuous-Sobolev--to--Lipschitz property can easily trivialize. For instance, this is the case for the Liouville Brownian motion described in Example~\ref{ese:LQG}, or for any other Dirichlet space for which~$\mssd_\mu$ vanishes identically, in which case the only $\mssd_\mu$-Lipschitz functions are constant ones.
\end{example}

\begin{example}[Connexion vs.\ quasi-connexion]
For~$i\in \set{\pm 1}$ let~$\mbbX_i\eqdef (X_i,\T_i,\A_i,\mssm_i)$ be the standard closed unit disk in~$\R^2$ centered at~$(i,0)\in \R^2$, and~$(\mcE_i,\dom_i)$ be the Dirichlet form generated by the Neumann Laplacian on~$X_i$.
Further set~$\mbbX\eqdef \mbbX_{-1}\cup \mbbX_{+1}$ and~$(\mcE,\dom)\eqdef (\mcE_{-1},\dom_{-1})\oplus (\mcE_{+1},\dom_{+1})$, endowed with the measure~$\mssm\eqdef\mssm_{-1}+\mssm_{+1}$. 
Since~$X_{-1}\cap X_{+1}=\set{\zero_2}$ is both $\mcE_{+1}$- and $\mcE_{-1}$-polar, it is not difficult to show that~$(\mcE,\dom)$ is a regular strongly local Dirichlet space.
Both~$(\mbbX_i,\mcE_i)$ satisfy~$(\ScL{\mssm_i}{\T_i}{\mssd_i})$ for the standard topology and the Euclidean distance~$\mssd_i\eqdef\mssd_{\mssm_i}$ on~$X_i$.
Let~$\mssd_\mssm$ be the intrinsic distance of~$(\mbbX,\mcE)$. By strong locality,~$\mssd_\mssm$ coincides with~$\mssd_i$ on each~$\mbbX_i$.
By triangle inequality,~$\mssd_\mssm(x_i,x_j)\leq \mssd_i(x_i,\zero_2)+\mssd_j(x_j,\zero_2)$ for every~$i\neq j\in \set{\pm 1}$ and every~$x_i\in X_i$,~$x_j\in X_j$. Thus,~$\mssd_\mssm$ is a finite distance.
The function~$\rep f\eqdef \car_{X_{+1}}$ satisfies~$f\in\dom$ with~$\sq{f}=0$, but it is not $\mssd_\mssm$-Lipschitz. Therefore~$(\mbbX,\mcE)$ does not satisfy~$(\SL{\mssm}{\mssd_\mssm})$.

On the other hand, let us consider the space~$\mbbX_\circ\eqdef\mbbX\setminus\set{\zero_2}$, i.e.\ the set~$X\setminus\set{\zero_2}$ endowed with the subspace topology~$\T_\circ$, the trace $\sigma$-algebra~$\A_\circ\eqdef \A\cap (X\setminus\set{\zero_2})$, and the restriction~$\mssm_\circ\eqdef\mssm\mrestr X\setminus\set{\zero_2}$.
Since~$\set{\zero_2}$ is $\mssm$-negligible, $L^2(\mssm)$~is latticially isometrically isomorphic to~$L^2(\mssm_\circ)$, and we may define a form~$\mcE_\circ$ on~$L^2(\mssm_\circ)$ by letting~$\mcE_\circ\eqdef\mcE$.
Now, the function~$\car_{X_{+1}}$ is $\T_\circ$-continuous, since~$\mbbX_\circ$ is $\T_\circ$-disconnected, and therefore we have, cf.~\cite[Prop.~3.3]{Sto10},
\begin{align*}
\mssd_\circ(x,y)\eqdef\mssd_{\mssm_\circ}(x,y)=\begin{cases} \mssd_i(x,y) & \textrm{if~} x,y\in \mbbX_i\setminus\set{\zero_2}
\\
+\infty & \text{otherwise}
\end{cases} \fstop
\end{align*}

As a consequence,~$(\mbbX_\circ,\mcE_\circ)$ possesses~$(\ScL{\mssm_\circ}{\T_\circ}{\mssd_\circ})$.
Finally, note that the abstract completion~$\overline{\mbbX_\circ}$ of~$\mbbX_\circ$ with respect to~$\mssd_\circ$ does not coincide with~$(\mbbX,\mssd_\mssm)$, not even as a set. One has instead that~$\overline{\mbbX_\circ}= \mbbX_{+1}\sqcup \mbbX_{-1}$.
\end{example}

\begin{example}[Wiener spaces I]\label{ese:Wiener1}
Let~$(X,H,\mssm)$ be an abstract Wiener space, endowed with the (extended) Cameron--Martin distance~$\mssd_H(x,y)\eqdef \norm{x-y}_H$ and its canonical Dirichlet form~$(\mcE,\dom)$, see e.g.~\cite[Eqn.~(1.6)]{RenRoe05} in the case when $\mu=\mssm$ is the Wiener measure.
By~\cite{EncStr93}, the Dirichlet space~$(\mbbX,\mcE)$ associated to~$(X,H,\mssm)$ possesses both~$(\Rad{\mssd_H}{\mssm}^b)$ and~$(\SL{\mssm}{\mssd_H})$.
By~\cite[Lem.~1.3]{RenRoe05}, we have
\begin{align*}
\mssd_H(x,y)=\sup\set{f(x)-f(y): f\in \mathcal{F}C^\infty_b, \norm{\cdc(f)}_\infty \leq 1}\leq\mssd_\mssm(x,y)\comm
\end{align*}
i.e.~$(\dRad{\mssd_H}{\mssm})$ holds. By Proposition~\ref{p:StoL}, we have~$(\dSL{\mssd_H}{\mssm})$ as well. Thus,~$\mssd_\mssm=\mssd_H$ and~$\mssd_H$ is $\T$-admissible by definition of~$\mssd_\mssm$.
\end{example}

\begin{example}[Configuration spaces IV]\label{ese:Config4}
Recall the setting of Example~\ref{ese:Config3}.
The form~$(\mcE,\dom)$ possesses~$(\cSL{\T_2}{\pi_d}{W_2})$ by~\cite[Thm.~1.5(i)]{RoeSch99}. It is conjectured in~\cite[Rmk., p.~331]{RoeSch99} that~$(\mcE,\dom)$ possesses in fact the stronger property~$(\SL{\pi_d}{W_2})$.

Recall further, again from Example~\ref{ese:Config1}, that, for every $W_2$-accessible component~$A\subset \Upsilon(\R^d)$, the characteristic function of~$A^\complement$ is a Borel measurable $W_2$-Lipschitz representative of the $\pi_d$-equivalence class of~$\car$ in $L^2(\Upsilon(\R^d))$.
As a consequence, if the $\pi_d$-equivalence class~$f$ of some function in~$\DzLocB{\pi_d}=\DzB{\pi_d}$ has a $W_2$-Lipschitz $\pi_d$-representative, then it has in fact uncountably many different such representatives.
\end{example}

\begin{example}[Wasserstein diffusion]
Even in the case of regular strictly local Dirichlet spaces, \eqref{eq:ScL} ---~i.e.~\eqref{eq:SL}~--- may be beyond reach. In such cases,~\eqref{eq:cSL} turns out to be a useful surrogate of~\eqref{eq:ScL}. Apart for the case of configuration spaces detailed in Example~\ref{ese:Config4}, another example of this fact is provided by the Dirichlet space of the \emph{Wasserstein diffusion}~\cite{vReStu09}.
Indeed, let~$(\mcE,\dom)$ be the form~\cite[Dfn.~7.24]{vReStu09} defined by integration of the squared $L^2$-Wasserstein gradient with respect to\ the entropic measure~$\mbbP^\beta$~\cite[Dfn.~3.3]{vReStu09} on the space of probability measure~$\msP(\mbbS^1)$ over the unit circle, endowed with the narrow topology~$\T_\mrmn$.
Then,~$(\mcE,\dom)$ is a $\T_\mrmn$-regular strictly local Dirichlet form on~$\msP_2(\mbbS^1)$ with intrinsic distance the Wasserstein distance~$W_2$, see~\cite[Thm.~7.25 and Cor.~7.29]{vReStu09}, satisfying the continuous-Sobolev--to--Lipschitz property by~\cite[Prop.~7.26(ii)]{vReStu09}.
\end{example}

\begin{example}[Sobolev-to-Lipschitz on metric measure spaces]\label{ese:RCD}
A main example of Dirichlet spaces satisfying the Sobolev-to-Lipschitz property is provided by metric measure spaces $(X,\mssd,\mssm)$ as in Example~\ref{ese:MMS}.
In this setting, sufficient conditions are, for instance that: 
$(X,\mssd,\mssm)$ has synthetic Riemannian Ricci curvature bounds in the sense of e.g.~\cite{Gig12a}, see~\cite[Thm.~6.2]{AmbGigSav14b};
$(X,\mssd,\mssm)$ is $2$-thick geodesic infinitesimally doubling (and infinitesimally Hilbertian), see~\cite[Dfn.s~1.3,~1.6, Thm.~1.7]{CreSou19}.
In both cases, we have~$(\dSL{\mssd}{\mssm})$ by Proposition~\ref{p:StoL}. Since~$\T=\T_\mssd$ by assumption,~$\mssd$ is trivially $\T$-admissible. Since we consider the Dirichlet space of the Cheeger energy, $(\Rad{\mssd}{\mssm})$~holds by construction, as well as~$(\dRad{\mssd}{\mssm})$, by Lemma~\ref{l:RadDRad}. As a consequence, in both cases we have~$\mssd_\mssm=\mssd$.
\end{example}

\subsection{Sobolev--to--Lipschitz-type properties and completeness}\label{ss:StoLCompleteness}
As shown in Proposition~\ref{p:Consistency} and Theorem~\ref{t:Stollmann}, under the assumption of strict locality of a Dirichlet space~$(\mbbX,\mcE)$, the completeness of the intrinsic distance~$\mssd_\mssm$ plays an important role. In this section, we draw a comparison between a strictly local Dirichlet space and its image on the metric completion of the underlying space endowed with the intrinsic metric.
We mostly expand on~\cite[Rmk 3.7]{AmbGigSav15}, by showing that if~$(\mbbX,\mcE)$ is a quasi-regular strictly local Dirichlet space satisfying~$(\SL{\mssm}{\mssd_\mssm})$, then we may assume that~$(X,\mssd_\mssm)$ is additionally complete, with no loss of generality. We do not, however, assume that~$(X,\T)$ is a priori Polish.
Let us start with some topological considerations.

\smallskip

Let~$(\mbbX,\mcE)$ be a Dirichlet space, and~$\mssd\colon X^\tym{2}\rar [0,\infty]$ be an extended distance. Further let~$(X^\iota,\mssd^\iota)$ be the abstract completion of~$(X,\mssd)$ and denote by~$\iota$ the completion embedding~$\iota\colon X\rar X^\iota$.
If~$\iota(X)$ is a Borel subset of~$X^\iota$, then~$\iota$ is $\Bo{\T_{\mssd}}/\Bo{\T_{\mssd^\iota}}$-measurable, and the image form~$(\mcE^\iota,\dom^\iota)$ in~\eqref{eq:Quasihomeo} is well-defined on the image space~$\mbbX^\iota$.

\begin{proposition}\label{p:Completion}
Let~$(X,\T)$ be satisfying~\ref{ass:Luzin} and~$(\mbbX,\mcE)$ be a Dirichlet space satisfying~\iref{i:QR:1}. 
Further let~$\mssd\colon X^\tym{2}\rar [0,\infty]$ be an extended distance generating~$\T$.
Then, the Dirichlet spaces~$(\mbbX,\mcE)$ and~$(\mbbX^\iota, \mcE^\iota)$ are quasi-homeomorphic.
In particular, $(\mbbX^\iota,\mcE^\iota)$ is quasi-regular if and only if~$(\mbbX,\mcE)$ is so.
\end{proposition}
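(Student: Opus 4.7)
The plan is to verify that the identity-type map provided by the completion embedding~$\iota\colon X\rar X^\iota$ satisfies the three axioms~$(\mathsc{qh}_1)$--$(\mathsc{qh}_3)$ with $j\eqdef \iota$ and $F^\sharp_n\eqdef \iota(F_n)$, where~$\seq{F_n}_n$ is the $\T$-compact $\mcE$-nest furnished by~\iref{i:QR:1}. The main ``in particular'' assertion will then follow from the fact (recalled right after the definition of quasi-homeomorphism in the paper) that quasi-homeomorphism is an equivalence relation which preserves quasi-regularity.

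First I would verify~$(\mathsc{qh}_1)$ and the measurability prerequisite needed to set up the image form. Since~$\iota$ is an $\mssd$-isometry into~$(X^\iota,\mssd^\iota)$ and~$\mssd$ generates~$\T$ by hypothesis, $\iota$ is continuous from~$(X,\T)$ to~$(X^\iota,\T^\iota)$; restricted to each~$\T$-compact~$F_n$, the map~$\iota$ is thus a continuous injection into the Hausdorff space~$X^\iota$, hence a homeomorphism onto~$\iota(F_n)$, which is in turn compact, hence closed in~$X^\iota$. Setting~$X_0\eqdef \cup_n F_n$, the image~$\iota(X_0)=\cup_n F^\sharp_n$ is therefore an $F_\sigma$-Borel subset of~$X^\iota$, and $\iota\restr_{X_0}$ is a Borel isomorphism onto its image (piecing together the homeomorphisms~$\iota\restr_{F_n}$). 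This justifies the definition of~$\mssm^\iota\eqdef \iota_\pfwd \mssm$ on~$(X^\iota,\Bo{\T^\iota})$ and of the $\iota$-image form~$(\mcE^\iota,\dom^\iota)$ as in~\eqref{eq:Quasihomeo}, settling~$(\mathsc{qh}_2)$ and~$(\mathsc{qh}_3)$ by construction.

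Next I would check that~$(\mcE^\iota,\dom^\iota)$ is in fact densely defined, and that~$\seq{F^\sharp_n}_n$ is an $\mcE^\iota$-nest. Since~$\mssm^\iota$ is concentrated on the Borel set~$\iota(X_0)$ on which~$\iota$ is a Borel bijection with Borel inverse, for each~$f\in L^2(\mssm)$ the function~$f^\sharp\eqdef f\circ \iota^{-1}$ on~$\iota(X_0)$ extended by~$0$ outside defines an element of~$L^2(\mssm^\iota)$ satisfying~$\iota^*f^\sharp=f$ $\mssm$-a.e., so~$\iota^*$ is a surjective linear isometry between $L^2(\mssm^\iota)$ and~$L^2(\mssm)$. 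Accordingly, pulling~$\dom_{F_n}$ back through~$(\iota^*)^{-1}$ yields~$\dom^\iota_{F^\sharp_n}$, and since~$\cup_n \dom_{F_n}$ is dense in~$\dom$ with the isometry~$\iota^*$ being an isomorphism of Hilbert spaces~$(\dom^\iota,\norm{\emparg}_{\dom^\iota})\cong (\dom,\norm{\emparg}_\dom)$, we conclude that~$\cup_n \dom^\iota_{F^\sharp_n}$ is dense in~$\dom^\iota$, i.e.\ $\seq{F^\sharp_n}_n$ is an $\mcE^\iota$-nest of closed sets.

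The only delicate point, and so the main obstacle to watch out for, is the piece of bookkeeping that guarantees~$\iota\restr_{X_0}$ is a genuine Borel isomorphism: this relies crucially on~\iref{i:QR:1} providing $\T$-\emph{compact} representatives of the nest, because otherwise the image~$\iota(F_n)$ need not be closed in~$X^\iota$ and the inverse of~$\iota$ may fail to be Borel. Once this is in place, the three axioms are immediate, the quasi-homeomorphism is established, and the equivalence of quasi-regularity between~$(\mbbX,\mcE)$ and~$(\mbbX^\iota,\mcE^\iota)$ follows from its invariance under quasi-homeomorphism.
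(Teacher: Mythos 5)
Your proof is correct and takes essentially the same route as the paper: establish~\iref{i:QH:1} from the $\T$-compactness of the nest supplied by~\iref{i:QR:1}, note that~\iref{i:QH:2} and~\iref{i:QH:3} hold by construction, and deduce the final equivalence from the invariance of quasi-regularity under quasi-homeomorphism. The one small improvement over the paper's argument is that where it invokes the Lusin--Suslin theorem (Bogachev, Thm.~6.8.6) to show~$\iota(X)$ is Borel in~$X^\iota$, you obtain the Borel measurability of~$\iota(X_0)=\bigcup_n\iota(F_n)$ directly from its $\sigma$-compactness, which is more elementary and pinpoints precisely why the compactness of the~$F_n$ guaranteed by~\iref{i:QR:1} is essential.
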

\begin{proof} Since~$(X,\T)\cong (X,\T_\mssd)$ by strict locality, we may denote by~$\T^\iota$ the topology on~$X^\iota$ induced by~$\mssd^\iota$, with no risk of confusion.
Since~$\mbbX$ is metrizable Luzin,~$\mbbX^\iota$ is Polish, and~$\iota(X)\in\Bo{\T^\iota}$ by~\cite[Thm.~6.8.6]{Bog07}. Thus,~$(\mcE^\iota,\dom^\iota)$ is well-defined.
Since~\iref{i:QH:2} and~\iref{i:QH:3} hold by construction, it suffices to show~\iref{i:QH:1} for~$(\mbbX^\iota,\mcE^\iota)$.
By~\iref{i:QR:1} for~$(\mbbX,\mcE)$, there exists a $\T$-compact $\mcE$-nest~$F_\bullet$.
By continuity of~$\iota$, $F^\iota_n\eqdef \iota(F_n)$ is $\T^\iota$-compact. By injectivity of~$\iota$, and since~$F^\iota_n$ is Hausdorff,~$\iota\restr_{F_n}$ is a homeomorphism onto~$F^\iota_n$ for every~$n$, e.g.~\cite[Thm.~3.1.13]{Eng89}. Thus, it suffices to show that~$F^\iota_\bullet$ is an $\mcE^\iota$-nest. This follows by definition of~$\mcE$-nest and~\iref{i:QH:3}.
\end{proof}

The assertion of Proposition~\ref{p:Completion} may be equivalently rephrased by saying that~$X^\iota\setminus \iota(X)$ is $\mcE^\iota$-polar.

\smallskip

Proposition~\ref{p:Completion} is not of great interest when considering Dirichlet spaces up to quasi-homeomor\-phism. Indeed, if~$(\mbbX,\mcE)$ is additionally quasi-regular (as opposed to: only satisfying~\iref{i:QR:1}), then one should rather consider a quasi-homeomorphic regular Dirichlet space.
However, the proposition is insightful in the case when~$\mssd=\mssd_\mssm$ is the intrinsic distance of a strictly local quasi-regular Dirichlet space, as we now show.

\begin{proposition}
Let~$(\mbbX,\mcE)$ be a quasi-regular strictly local Dirichlet space satisfying~$(\SL{\mssm}{\mssd_\mssm})$. Then, $(X^\iota,\mssd_\mssm^\iota)$ and~$(X^\iota,\mssd_{\mssm^\iota})$ are isometric.
\end{proposition}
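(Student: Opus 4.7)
The plan is to exploit the quasi-homeomorphism~$\iota$ provided by Proposition~\ref{p:Completion} to set up a bijection between~$\DzLocB{\mssm,\T}$ and~$\DzLocB{\mssm^\iota,\T^\iota}$, and then to conclude by a density argument in the completion. By strict locality,~$\T=\T_{\mssd_\mssm}$, hence by construction~$\T^\iota=\T_{\mssd_\mssm^\iota}$, the distance~$\mssd_\mssm^\iota$ is $\T^\iota$-continuous on~$X^\iota\tym{2}$, and~$\iota(X)$ is $\mssd_\mssm^\iota$-dense in~$X^\iota$. Moreover, since~$X^\iota\setminus\iota(X)$ is $\mcE^\iota$-polar, the pull-back~$\iota^*$ identifies~$\dom^\iota$ with~$\dom$ and, by the construction of~$\sq{\emparg,\emparg}$ in the proof of Theorem~\ref{t:Kuwae}, identifies the energy measures via $\iota_\sharp\sq{\iota^*f^\iota}=\sq{f^\iota}^\iota$ for every~$f^\iota\in\dotloc{\dom^\iota}$, with both sides not charging~$X^\iota\setminus\iota(X)$.

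I would then show that~$\iota^*$ restricts to a bijection~$\DzLocB{\mssm^\iota,\T^\iota}\leftrightarrow\DzLocB{\mssm,\T}$. One direction is immediate: pull-back preserves $\T^\iota$-continuity (via the homeomorphism on nests), and the inequality~$\sq{f^\iota}^\iota\leq\mssm^\iota$ transfers to~$\sq{\iota^*f^\iota}\leq\mssm$ via push-forward. For the converse, given~$g\in\DzLocB{\mssm,\T}$, I would invoke~$(\SL{\mssm}{\mssd_\mssm})$ combined with strict locality and Lemma~\ref{l:AeEquality} (exploiting the full $\T$-support of~$\mssm$) to conclude that the $\T$-continuous representative of~$g$ coincides with its $\mssd_\mssm$-Lipschitz representative of Lipschitz constant~$\leq 1$. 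Hence~$g$ extends uniquely to a $\mssd_\mssm^\iota$-Lipschitz, in particular $\T^\iota$-continuous, function~$g^\iota$ on~$X^\iota$ with~$\iota^*g^\iota=g$, and the quasi-homeomorphism then places~$g^\iota$ in~$\DzLocB{\mssm^\iota,\T^\iota}$.

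This bijection immediately yields the agreement on~$\iota(X)\tym{2}$:
\begin{align*}
\mssd_{\mssm^\iota}(\iota(x),\iota(y))=&\sup\tset{\iota^*f(x)-\iota^*f(y) : f\in\DzLocB{\mssm^\iota,\T^\iota}}
\\
=&\sup\tset{g(x)-g(y):g\in\DzLocB{\mssm,\T}}=\mssd_\mssm(x,y)=\mssd_\mssm^\iota(\iota(x),\iota(y))\comm
\end{align*}
for every~$x,y\in X$. To propagate this to all of~$X^\iota\tym{2}$, I would first observe that every~$f\in\DzLocB{\mssm^\iota,\T^\iota}$ is $\mssd_\mssm^\iota$-Lipschitz with constant~$\leq 1$: indeed, by the bijection, $\abs{f(\iota(x))-f(\iota(y))}\leq \mssd_\mssm(x,y)=\mssd_\mssm^\iota(\iota(x),\iota(y))$ on~$\iota(X)\tym{2}$, and the inequality extends to~$X^\iota\tym{2}$ by density and joint $\T^\iota$-continuity of both sides. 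Taking the supremum yields~$\mssd_{\mssm^\iota}\leq\mssd_\mssm^\iota$ on~$X^\iota\tym{2}$. The reverse inequality~$\mssd_\mssm^\iota\leq\mssd_{\mssm^\iota}$ follows by a triangle-inequality argument: for~$a,b\in X^\iota$ with approximating sequences $\iota(x_n)\to a$ and~$\iota(y_n)\to b$, combine the triangle inequality for~$\mssd_{\mssm^\iota}$ with the already established inequality~$\mssd_{\mssm^\iota}\leq\mssd_\mssm^\iota$ applied to the remainder terms, and pass to the limit using the continuity of~$\mssd_\mssm^\iota$ at~$(a,a)$ and~$(b,b)$.

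The main obstacle is the careful verification that the quasi-homeomorphism transfers the energy-bound condition between the broad local domains~$\dotloc{\dom}$ and~$\dotloc{\dom^\iota}$ in terms of the reference measures, i.e., that~$\sq{g}\leq\mssm$ is equivalent to~$\sq{g^\iota}^\iota\leq\mssm^\iota$. This relies on the push-forward construction of the energy measure recalled in the proof of Theorem~\ref{t:Kuwae} and on the fact that~$\sq{g^\iota}^\iota$ does not charge the $\mcE^\iota$-polar complement~$X^\iota\setminus\iota(X)$, so it is entirely recovered from $\iota_\sharp\sq{g}$; the remaining technicality is to extend this identification from~$\dom^\iota$ to~$\dotloc{\dom^\iota}$ compatibly with the choice of approximating $\mcE$-nests on both sides.
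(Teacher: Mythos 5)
Your proposal is correct and follows essentially the same route as the paper's proof: restrict functions in $\DzLocB{\mssm^\iota,\T^\iota}$ to $\iota(X)$ for one inequality, and for the converse use $(\SL{\mssm}{\mssd_\mssm})$ plus boundedness to obtain a $\ttonde{\mssd_\mssm\wedge r}$-Lipschitz (hence uniformly continuous) representative that extends uniquely to $X^\iota$, then invoke Proposition~\ref{p:Completion} (via~\iref{i:QH:3}) to place the extension in~$\DzLocB{\mssm^\iota,\T^\iota}$. Your more explicit density/triangle-inequality argument to propagate from $\iota(X)^\tym{2}$ to $X^\iota\tym{2}$ correctly fills in what the paper summarizes as ``the reverse inequality follows,'' and the technicality you flag about transferring the energy-bound across $\iota$ on the broad local domains is exactly what Proposition~\ref{p:Completion} and the quasi-homeomorphism machinery are responsible for.
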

\begin{proof}
Let~$f^\iota\in \DzLocB{\mssm^\iota,\T^\iota}$. Then,~$f\eqdef f^\iota\restr_{\iota(X)}$ satisfies~$f\in \DzLocB{\mssm,\T}$ and therefore~$\mssd_{\mssm^\iota}\leq \mssd_\mssm^\iota$.
Vice versa, let~$f\in\DzLocB{\mssm,\T}$. By~$(\SL{\mssm}{\mssd_\mssm})$, and since~$f$ is bounded, there exists~$r>0$ so that~$f$ is $\mssd_\mssm\wedge r$-Lipschitz. Since~$\mssd_\mssm\wedge r$ is a distance,~$f$ is uniformly continuous, and therefore extends uniquely to a continuous function~$f^\iota$ on~$X^\iota$.
By Proposition~\ref{p:Completion}, and in particular by~\iref{i:QH:3}, one has that~$f^\iota\in \DzLocB{\mssm^\iota,\T^\iota}$, and the reverse inequality follows.
\end{proof}

\subsection{Sobolev--to--Lipschitz-type properties and Varadhan asymptotics}\label{ss:StoLMaxFunc}
Under the assumption of the Sobolev-to-Lipschitz property, we may compare point-to-set distance functions with their `maximal representatives' in~$\dotloc{\dom}$.

\paragraph{Maximal functions}
We start by recalling the following result of T.~Ariyoshi and M.~Hino~\cite{AriHin05}, extending M.~Hino and J.~Ram\'irez~\cite{HinRam03} to the case of $\sigma$-finite measure, adapted to our setting. Set
\begin{align*}
\Dz{\mu,A}_{\loc,r}\eqdef \set{f\in\DzLoc{\mu}: f=0 \as{\mssm} \text{~on~} A\comm \abs{f}\leq r \as{\mssm}}\subset \DzLocB{\mu}\comm \qquad r>0\fstop
\end{align*}

\begin{proposition}[{\cite[Prop.~3.11]{AriHin05}, cf.~\cite[Thm.~1.2]{HinRam03}}]\label{p:Hino}
Let~$(\mbbX,\mcE)$ be a quasi-regular strongly local Di\-richlet space,~$\mu\in\Msp(\Bo{\T},\Ne{\mcE})$ be $\mcE$-moderate. For each~$A\in\A$ there exists an $\mssm$-a.e.\ unique $\A$-measurable function~$\hr{\mu,A}\colon X\rar [0,\infty]$ so that~$\hr{\mu,A}\wedge r$ is the $\mssm$-a.e.\ maximal element of~$\Dz{\mu,A}_{\loc, r}$.
\end{proposition}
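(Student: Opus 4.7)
The plan is to construct $\hr{\mu,A}$ as a monotone pointwise limit in~$r$ of the essential suprema~$\hr{\mu,A}^r$ of the families~$\Dz{\mu,A}_{\loc,r}$, and then to verify compatibility of these suprema across the range of~$r$. The central technical input will be Lemma~\ref{l:ConvMeasure}, which preserves the energy bound under weak* limits; the $\mcE$-moderance of~$\mu$ is exactly what makes that lemma applicable.

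First, I would fix~$r>0$ and note that~$\Dz{\mu,A}_{\loc,r}$ is closed under pairwise maxima: if $f,g\in\Dz{\mu,A}_{\loc,r}$, then~$f\vee g\in\dotloc{\dom}$ by~\eqref{eq:TruncationLoc}, and the indicator-convex combination formula~$\sq{f\vee g}=\car_{\{\reptwo f\leq \reptwo g\}}\sq{g}+\car_{\{\reptwo f>\reptwo g\}}\sq{f}$ immediately yields~$\sq{f\vee g}\leq \mu$. Moreover, $f\vee g=0$ $\mssm$-a.e.\ on~$A$ and $\abs{f\vee g}\leq r$ $\mssm$-a.e., so $f\vee g\in\Dz{\mu,A}_{\loc,r}$.

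Next, since~$\mssm$ is $\sigma$-finite, pick a strictly positive~$w\in L^1(\mssm)$ and set~$\nu\eqdef w\cdot\mssm$, a finite measure equivalent to~$\mssm$. All elements of~$\Dz{\mu,A}_{\loc,r}$ are $\mssm$-a.e.\ bounded by~$r$, so~$M\eqdef \sup\{\nu f: f\in \Dz{\mu,A}_{\loc,r}\}\leq r\,\nu X<\infty$. Choose~$\seq{g_n}_n\subset \Dz{\mu,A}_{\loc,r}$ with~$\nlim \nu g_n=M$, and set~$f_n\eqdef g_1\vee\cdots\vee g_n$, which is still in~$\Dz{\mu,A}_{\loc,r}$ by the previous paragraph, is monotone non-decreasing, and still satisfies~$\nlim \nu f_n=M$. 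Let~$\hr{\mu,A}^r$ denote the $\mssm$-a.e.\ monotone limit of~$\seq{f_n}_n$, so that~$0\leq \hr{\mu,A}^r\leq r$ $\mssm$-a.e.\ and~$\hr{\mu,A}^r=0$ $\mssm$-a.e.\ on~$A$. By Remark~\ref{r:weak*}, $\nlim f_n=\hr{\mu,A}^r$ weakly* in~$L^\infty(\mssm)$; by Lemma~\ref{l:ConvMeasure}, $\hr{\mu,A}^r\in\DzLocB{\mu}$ with $\sq{\hr{\mu,A}^r}\leq \mu$. Hence $\hr{\mu,A}^r\in\Dz{\mu,A}_{\loc,r}$. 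Maximality is automatic: for any $g\in\Dz{\mu,A}_{\loc,r}$ the sequence $f_n\vee g\in \Dz{\mu,A}_{\loc,r}$ still satisfies $\nu(f_n\vee g)\leq M$, forcing $\nu(\hr{\mu,A}^r\vee g)=M$; since $\nu\sim\mssm$ and $\hr{\mu,A}^r\vee g\geq \hr{\mu,A}^r$ with $\nu\hr{\mu,A}^r=M$, equality holds, whence $g\leq \hr{\mu,A}^r$ $\mssm$-a.e.
$\mssm$-a.e.\ uniqueness follows from this maximality by symmetry.

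Finally, I would check compatibility across~$r$: for $0<r<r'$, truncation gives~$\hr{\mu,A}^{r'}\wedge r\in \Dz{\mu,A}_{\loc,r}$ so~$\hr{\mu,A}^{r'}\wedge r\leq \hr{\mu,A}^r$ $\mssm$-a.e.; conversely, $\hr{\mu,A}^r=\hr{\mu,A}^r\wedge r'\in\Dz{\mu,A}_{\loc,r'}$, so~$\hr{\mu,A}^r\leq \hr{\mu,A}^{r'}$ $\mssm$-a.e. Combining, $\hr{\mu,A}^r=\hr{\mu,A}^{r'}\wedge r$ $\mssm$-a.e. Therefore, if I choose $\mssm$-representatives so that this identity holds \emph{everywhere} for all integer $r<r'$, the pointwise monotone limit
\begin{align*}
\hr{\mu,A}(x)\eqdef \lim_{n\to\infty} \hr{\mu,A}^n(x)\in [0,\infty]
\end{align*}
is an everywhere-defined $\A$-measurable function satisfying $\hr{\mu,A}\wedge r=\hr{\mu,A}^r$ $\mssm$-a.e.\ for every~$r>0$, which is the required object. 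The main obstacle is the verification that~$\hr{\mu,A}^r\in\DzLocB{\mu}$: without~$\mu$ being $\mcE$-moderate one cannot invoke Lemma~\ref{l:ConvMeasure} to carry the bound~$\sq{\emparg}\leq \mu$ through the weak* limit, and one would be left with a candidate function outside the broad local domain.
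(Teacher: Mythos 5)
Your proposal is correct and follows essentially the same route as the paper's own proof: both pass to an equivalent finite measure~$\nu\sim\mssm$, extract a monotone maximizing sequence~$f_n=g_1\vee\dots\vee g_n$ using the truncation property~\eqref{eq:TruncationLoc}, push the a.e.\ limit through Remark~\ref{r:weak*} and Lemma~\ref{l:ConvMeasure} to place~$\hr{\mu,A}^r$ back in~$\Dz{\mu,A}_{\loc,r}$, and then glue across~$r$ by the consistency identity~$\hr{\mu,A}^{r'}\wedge r=\hr{\mu,A}^r$. You also correctly pinpoint $\mcE$-moderance of~$\mu$ as the hypothesis that makes Lemma~\ref{l:ConvMeasure} applicable, which is the only place the assumption enters.
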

\begin{proof}
The statement is well-posed since~$0\in \Dz{\mu,A}_{\loc, r}$ for every~$A\in \A$ and~$r>0$.
Let~$\nu\sim \mssm$ be a probability measure on~$(X,\A)$ and set~$a\eqdef \sup \tset{\norm{f}_{L^1(\nu)} : f\in \Dz{\mu,A}_{\loc, r}} \leq r$.
By definition of~$a$, there exists a sequence~$\seq{g_k}_k\subset \Dz{\mu,A}_{\loc, r}$ so that~$\klim \nu g_k=a$.
Set~$f_n\eqdef \vee_{k\leq n} g_k$ and note that~$\seq{f_n}_n\subset \Dz{\mu,A}_{\loc, r}$ as well, by~\eqref{eq:TruncationLoc}.
Since~$\nu$ is a probability measure, up to choosing a suitable non-relabeled subsequence,~$\seq{f_n}_n$ converges to some~$f^{\sym r}$, $\nu$-, hence~$\mssm$-, a.e., and~$\nlim \nu f_n=\nu f^{\sym r}=a$ by Dominated Convergence in~$L^1(\nu)$ with dominating function~$r$.
Furthermore,~$f^{\sym r}=0$ $\mssm$-a.e.\ on~$A$, since the same holds true for~$f_n$ for every~$n$. Therefore, it follows by Lemma~\ref{l:ConvMeasure} that~$f^{\sym r}\in \Dz{\mu,A}_{\loc, r}$.
Additionally,~$f^\sym{r_2}\wedge r_1\equiv f^\sym{r_1}$ for every~$0<r_1<r_2$.
Maximality and uniqueness of~$f^{\sym r}$ are straightforward.
The existence of~$\hr{\mu,A}=\mssm\text{-a.e.-}\lim_{r\rar\infty} f^\sym{r}$ follows by consistency, as in the proof of~\cite[Prop.~3.11]{AriHin05}.
\end{proof}

We call the function~$\hr{\mu,A}$ constructed in Proposition~\ref{p:Hino} the \emph{maximal function} of~$A\in\A$. Note that~$\hr{\mu,A}$ is generally not an element of~$\dotloc{L^\infty(\mssm)}$.

\paragraph{Comparison results} Maximal functions should be compared with point-to-set distances induced by intrinsic distances. Before doing so in the next two lemmas, we note why this comparison is non-trivial.

\begin{remark}\label{r:IntrinsicVsHR}
In the case~$\mu=\mssm$, one might be tempted to identify~$\hr{\mssm, A}=\mssd_\mssm(\emparg, A)$.
We will show in Remark~\ref{r:dOpen} below that this identification does not hold for general~$A\in \A$.
Before discussing the details, let us note that ---~heuristically~--- this identification would require some kind of Minimax Theorem to hold. Indeed, by (the proof of) Proposition \ref{p:Hino}, 
\begin{align*}
\hr{\mssm,A}=\sup_{r>0}\mssm\text{-}\esssup_{x \in X} \sup\tset{f(x):\ f\in\Dz{\mu,A}_{\loc,r} }\comm
\end{align*}
whereas
\begin{align*}
\mssd_\mssm(\emparg, A)=\inf_{y\in A} \sup\set{f(y)-f(\emparg): f\in \DzLoc{\mu,\T}} \fstop
\end{align*}
\end{remark}

\begin{lemma}\label{l:StoLdA}
Let~$(\mbbX,\mcE)$ be a quasi-regular strongly local Dirichlet space,~$\mssd\colon X^\tym{2}\rar[0,\infty]$ be an extended pseudo-distance, and~$\mu\in\Msp(\Bo{\T},\Ne{\mcE})$ be $\mcE$-moderate. Further assume that:
\begin{enumerate}[$(a)$]
\item\label{i:l:StoLdA:1} $(\mbbX,\mcE,\mssd,\mu)$ possesses~\eqref{eq:Rad};
\item\label{i:l:StoLdA:2} $\mssd(\emparg, A)$ is $\A$-measurable for every~$A\in\A$.
\end{enumerate}
Then,
\begin{align*}
\mssd(\emparg, A)\leq \hr{\mu,A} \as{\mssm}\comm \qquad A\in \A \fstop
\end{align*}
\end{lemma}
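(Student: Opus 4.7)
The plan is to apply the Rademacher property~\eqref{eq:Rad} to suitable truncations of the point-to-set distance $\rho_A\eqdef \mssd(\emparg,A)$, and then exploit the $\mssm$-a.e.\ maximality of $\hr{\mu,A}\wedge r$ in $\Dz{\mu,A}_{\loc,r}$ provided by Proposition~\ref{p:Hino}.

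First, I observe that $\rho_A\in \Lipu(\mssd,\A)$. Indeed, the triangle inequality gives, for every $x,y\in X$ and every $z\in A$, that $\rho_A(x)\leq \mssd(x,z)\leq \mssd(x,y)+\mssd(y,z)$; taking the infimum over $z\in A$ and then exchanging the roles of $x$ and $y$ shows $\abs{\rho_A(x)-\rho_A(y)}\leq \mssd(x,y)$, so $\rho_A$ is $\mssd$-Lipschitz with $\Li[\mssd]{\rho_A}\leq 1$, and it is $\A$-measurable by assumption~\iref{i:l:StoLdA:2}. Moreover, $\rho_A\equiv 0$ on $A$. Consequently, for each fixed $r>0$, the truncation $\rho_A\wedge r$ is bounded, $\A$-measurable, satisfies $\rho_A\wedge r\in \Lipu(\mssd,\A)$, vanishes on $A$, and takes values in $[0,r]$. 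As a globally bounded function it is trivially in $\dotloc{L^\infty(\mssm)}$ (take any $G_\bullet\in\msG$, which exists by quasi-regularity via Lemma~\ref{l:Kuwae}, and the constant sequence $f_n\eqdef \rho_A\wedge r$). Applying~\eqref{eq:Rad} from assumption~\iref{i:l:StoLdA:1} then yields $\rho_A\wedge r\in \DzLoc{\mu}$, so that in fact $\rho_A\wedge r\in \Dz{\mu,A}_{\loc,r}$.

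By Proposition~\ref{p:Hino}, the maximal function $\hr{\mu,A}\wedge r$ dominates every element of $\Dz{\mu,A}_{\loc,r}$ $\mssm$-a.e., hence $\rho_A\wedge r\leq \hr{\mu,A}\wedge r$ $\mssm$-a.e. Letting $r\nearrow \infty$, the left-hand side increases monotonically to $\rho_A=\mssd(\emparg,A)$ pointwise, while the right-hand side increases to $\hr{\mu,A}$ by construction. This delivers the desired inequality.

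I do not expect any serious obstacle: the argument is essentially a one-shot application of the Rademacher property to $\rho_A\wedge r$, with the measurability hypothesis~\iref{i:l:StoLdA:2} introduced precisely so that $\rho_A\wedge r$ qualifies as an element of $\Lipu(\mssd,\A)$. The only mildly delicate point is verifying that $\rho_A\wedge r\in \dotloc{L^\infty(\mssm)}$ in the sense of~\S\ref{ss:BroadLoc}, which follows from its global boundedness together with the existence of some $G_\bullet\in\msG$.
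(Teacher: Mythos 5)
Your proof is correct and follows essentially the same route as the paper: truncate $\mssd(\emparg,A)$, invoke the Rademacher property to place the truncation in $\Dz{\mu,A}_{\loc,r}$, then use the maximality of $\hr{\mu,A}\wedge r$ from Proposition~\ref{p:Hino} and let $r\to\infty$. You spell out a few steps the paper leaves implicit (the verification that $\rho_A\wedge r\in\dotloc{L^\infty(\mssm)}$ via global boundedness, and the elementary Lipschitz bound for $\rho_A$), but the argument is the same.
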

\begin{proof} Since~$\mssd(\emparg, A)$ is $\mssd$-Lipschitz with~$\Li[\mssd]{\mssd(\emparg, A)}\leq 1$ for all~$A\subset X$, it holds that~$\mssd(\emparg, A)\wedge r\in \DzLocB{\mu}$ for all~$r>0$ by~\iref{i:l:StoLdA:2} and~$(\Rad{\mssd}{\mu})$. Since~$\mssd(\emparg, A)=0$ $\mssm$-a.e.\ on~$A$, then~$\mssd(\emparg, A)\wedge r \leq \hr{\mu,A}\wedge r$ $\mssm$-a.e.\ for every~$r>0$ by Proposition~\ref{p:Hino} and the maximality of~$\hr{\mu, A}\wedge r$.
The conclusion follows letting~$r\to\infty$.
\end{proof}

\blue{
Corollary~\ref{c:StoLdA2} below is a consequence of Lemma~\ref{l:StoLdA}. 
It is non-trivial only when~$(X,\mssd)$ is an extended metric space.
In this case, the statement conveys additional information on the compatibility between the Dirichlet space~$(\mbbX,\mcE)$ and the distance~$\mssd$; cf.\ Example~\ref{ese:Config1}.
Let us recall that~$(\mbbX,\mcE)$ is \emph{irreducible} if and only if every $\mcE$-invariant~$A\subset X$ satisfies either~$\mssm A=0$ or~$\mssm A^\complement=0$.

\begin{corollary}\label{c:StoLdA2}
Under the same assumptions as in Lemma~\ref{l:StoLdA}, let further~$\mu=\mssm$, and assume that the Dirichlet space~$(\mbbX,\mcE)$ be additionally irreducible.
Then,
\begin{align*}
A_1,A_2\in\A\comm \mssm A_1, \mssm A_2 >0 \implies \mssd(A_1,A_2)<\infty\fstop
\end{align*}

\begin{proof}
By irreducibility we have~$P_t(A_1,A_2)>0$ for all~$t>0$, hence~$\hr{\mssm}(A_1,A_2)<\infty$ by~\cite[Thm.~5.1]{AriHin05}, and the conclusion follows from Lemma~\ref{l:StoLdA}.
\end{proof}
\end{corollary}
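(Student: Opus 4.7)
The plan is to reduce the claim to a statement about the maximal function~$\hr{\mssm, A_1}$ via Lemma~\ref{l:StoLdA}, and then to exploit irreducibility together with the Varadhan-type result of Ariyoshi--Hino to control this maximal function on~$A_2$.

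First, I would observe that irreducibility of~$(\mbbX,\mcE)$ together with~$\mssm A_1, \mssm A_2 > 0$ forces~$P_t(A_1,A_2)>0$ for every~$t>0$; this is a standard characterization of irreducibility for $L^2$-sub-Markov semigroups on $\sigma$-finite spaces, which is obtainable in the quasi-regular case by the transfer method from the regular counterpart as in~\cite[Thm.~1.6.1]{FukOshTak11}. Next, I would invoke~\cite[Thm.~5.1]{AriHin05} to deduce the finiteness of $\hr{\mssm}(A_1,A_2)\eqdef \mssm\text{-}\essinf_{y\in A_2}\hr{\mssm,A_1}(y)$ from the strict positivity of~$P_t(A_1,A_2)$.

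Third, by Lemma~\ref{l:StoLdA} applied with~$A=A_1$ and~$\mu=\mssm$ (for which the hypotheses of that lemma are precisely those inherited here), we have $\mssd(\emparg, A_1)\leq \hr{\mssm, A_1}$ $\mssm$-a.e.\ on~$X$. Since $\mssm\text{-}\essinf_{y\in A_2}\hr{\mssm,A_1}(y)<\infty$ and $\mssm A_2>0$, there exists a constant $c<\infty$ such that the set $E\eqdef \tset{y\in A_2: \hr{\mssm,A_1}(y)<c}$ has positive $\mssm$-measure; combining this with the pointwise bound from the lemma (which holds on a set of full $\mssm$-measure) yields at least one $y\in E$ with $\mssd(y,A_1)\leq \hr{\mssm,A_1}(y)<c$. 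Consequently $\mssd(A_1,A_2)\leq \mssd(y,A_1)<\infty$.

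The main obstacle is the correct invocation of~\cite[Thm.~5.1]{AriHin05}, which provides the `finite-speed-of-propagation' direction of the Varadhan asymptotics and relies crucially on strong locality. This is also the reason why the corollary is restricted to~$\mu=\mssm$: the maximal function~$\hr{\mu,\emparg}$ is linked to the semigroup~$P_t$ only in that specific case, whereas the distance-side estimate from Lemma~\ref{l:StoLdA} is available for general $\mcE$-moderate~$\mu$.
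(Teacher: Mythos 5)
Your proposal is correct and follows essentially the same approach as the paper: irreducibility gives $P_t(A_1,A_2)>0$, then Ariyoshi--Hino's Theorem~5.1 yields finiteness of $\hr{\mssm}(A_1,A_2)$, and Lemma~\ref{l:StoLdA} converts this into finiteness of $\mssd(A_1,A_2)$. The only difference is that you spell out the elementary essential-infimum argument which the paper compresses into ``the conclusion follows from Lemma~\ref{l:StoLdA}''.
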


\begin{remark}
We note that %both Corollary~\ref{c:StoLdA1} and
Corollary~\ref{c:StoLdA2} remains valid if assumption~\iref{i:l:StoLdA:2} in Lemma~\ref{l:StoLdA} is dropped, in which case the conclusion holds for all~$A_1,A_2\in\A$ so that~$\mssd(\emparg,A_i)$ is $\A$-measurable for either~$i=1,2$.
\end{remark}
}

\begin{lemma}\label{l:RaddA}
Let~$(\mbbX,\mcE)$ be a quasi-regular strongly local Dirichlet space,~$\mssd\colon X^\tym{2}\rar[0,\infty]$ be an extended pseudo-distance, and~$\mu\in\Msp(\Bo{\T},\Ne{\mcE})$ be $\mcE$-moderate.
Further assume that~$(\mbbX,\mcE,\mssd,\mu)$ possesses~$(\SL{\mu}{\mssd})$.
Then, for each~$A\in\A$ there exists~$\tilde A\in\A$ with~$\tilde A\subset A$ and so that~$A\triangle \tilde A$ is $\mssm$-negligible, and
\begin{align}\label{eq:l:RaddA:0}
\hr{\mu,A}=\hr{\mu,\tilde A}\leq \mssd(\emparg, \tilde A) \as{\mssm}
\end{align}
\end{lemma}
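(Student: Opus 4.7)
The plan is to leverage $(\SL{\mu}{\mssd})$ to pass from the $\mssm$-a.e.\ class $\hr{\mu,A}$ to honest pointwise $\mssd$-Lipschitz representatives of its truncations, and then take $\tilde A$ to be the joint zero set of a countable family of such representatives, intersected with $A$. Concretely, for every $n\in\N$ Proposition~\ref{p:Hino} gives $\hr{\mu,A}\wedge n\in\Dz{\mu,A}_{\loc,n}\subset\DzLocB{\mu}\subset\DzLoc{\mu}$, whence $(\SL{\mu}{\mssd})$ yields an everywhere-defined $\A$-measurable representative $\rep h_n\in\Lipu(\mssd,\A)$ with $\Li[\mssd]{\rep h_n}\leq 1$. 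I would then set
\begin{align*}
\tilde A\eqdef A\cap\bigcap_{n\in\N}\rep h_n^{-1}(\set{0})\in\A\fstop
\end{align*}
Since $\hr{\mu,A}\wedge n=0$ $\mssm$-a.e.\ on $A$, so does $\rep h_n$, i.e.\ $A\setminus\rep h_n^{-1}(\set{0})$ is $\mssm$-negligible for each~$n$; thus $A\triangle\tilde A=A\setminus\tilde A$ is $\mssm$-negligible as a countable union of $\mssm$-negligible sets, and $\tilde A\subset A$ by construction.

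The equality $\hr{\mu,A}=\hr{\mu,\tilde A}$ $\mssm$-a.e.\ will then follow directly: because $\mssm(A\triangle\tilde A)=0$, a function vanishes $\mssm$-a.e.\ on~$A$ if and only if it vanishes $\mssm$-a.e.\ on~$\tilde A$, so the defining families $\Dz{\mu,A}_{\loc,r}$ and $\Dz{\mu,\tilde A}_{\loc,r}$ coincide for every $r>0$, and the uniqueness part of Proposition~\ref{p:Hino} forces their $\mssm$-a.e.\ maximal elements to agree.

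For the distance bound, the key is that $\rep h_n$ is now an everywhere-defined $1$-Lipschitz function which actually vanishes on~$\tilde A$, and not merely $\mssm$-a.e.\ on~$A$. Hence for every $x\in X$, $y\in\tilde A$, and $n\in\N$,
\begin{align*}
\rep h_n(x)=\rep h_n(x)-\rep h_n(y)\leq\mssd(x,y)\fstop
\end{align*}
Taking the infimum over $y\in\tilde A$ gives the pointwise estimate $\rep h_n\leq\mssd(\emparg,\tilde A)$ on~$X$; passing back to $\mssm$-classes, $\hr{\mu,A}\wedge n\leq\mssd(\emparg,\tilde A)$ $\mssm$-a.e., and letting $n\to\infty$ (by monotonicity of the truncations) closes the argument. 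The only delicate point is the reduction from the uncountable family of truncation levels to a countable cofinal subfamily: this is what keeps $\tilde A$ in~$\A$, ensures $A\triangle\tilde A$ is $\mssm$-null, and simultaneously produces a pointwise vanishing property on~$\tilde A$ valid for \emph{all} truncated representatives at once, which is precisely what is needed to pair~$\tilde A$ against the Lipschitz bound.
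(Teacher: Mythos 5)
Your proposal is correct and follows essentially the same strategy as the paper's proof: use $(\SL{\mu}{\mssd})$ to extract pointwise $\mssd$-Lipschitz representatives $\rep h_n$ of the truncations $\hr{\mu,A}\wedge n$, set $\tilde A$ to be $A$ intersected with their joint zero set, and exploit the fact that each $\rep h_n$ vanishes everywhere (not just $\mssm$-a.e.)\ on $\tilde A$ to pair it against the Lipschitz estimate $\rep h_n(x)\leq\mssd(x,y)$ for $y\in\tilde A$. The only difference is in how the limit $n\to\infty$ is handled: the paper introduces a full-measure set $B$ on which the sequence $\rep\rho_{A,n}$ is pointwise monotone and consistent, in order to build a single everywhere-defined representative $\rep\rho_A=\limsup_n\rep\rho_{A,n}$ of $\hr{\mu,A}$, whereas you pass to $\mssm$-classes after the pointwise estimate for each fixed $n$ and then let $n\to\infty$ at the level of $\mssm$-a.e.\ inequalities, which is slightly leaner and avoids the need for the auxiliary set $B$ altogether. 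Both arguments are sound; yours spares the reader the consistency bookkeeping.
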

\begin{proof}
By~$(\SL{\mu}{\mssd})$, for every~$n\in \N$ there exists a $\A$-measurable $\mssd$-Lipschitz $\mssm$-represen\-ta\-tive~$\rep\rho_{A,n}$ of~$\hr{\mu,A}\wedge n$. Set~$\tilde A\eqdef \cap_n\,\rep\rho^{-1}_{A,n}(\set{0})\cap A\in \A$.
We have that~$A\triangle \tilde A$ is $\mssm$-negligible, and~$\rep\rho_{A,n}$ is identically vanishing everywhere on~$\tilde A$ for each~$n\in \N$. Thus,
\begin{align}
\nonumber
\rep\rho_{A,n}(x)\leq& \ \mssd(x,y)+\rep\rho_{A,n}(y) && x,y\in X \comm n\in \N\comm
\\
\label{eq:l:RaddA:1}
\rep\rho_{A,n}(x)\leq& \ \mssd(x,y) && x\in X\comm y\in \tilde A \comm n\in\N\fstop
\end{align}

Since~$n\mapsto \hr{\mu,A}\wedge n$ is monotone, there exists a set~$B\in\A$ of full $\mssm$-measure, and so that~$n\mapsto\rep\rho_{A,n}(x)$ is monotone for every~$x\in B$ and~$\seq{\rep\rho_{A,n}}_n$ is consistent in~$n$ on~$B$, in the sense that~$\rep\rho_{A,n}\equiv \rep\rho_{A,m}$ on the set~$B\cap\set{\rep\rho_{A,n}\leq m}$, for every~$m\leq n$.
Note that~$\tilde A\subset B$, since~$\rep\rho_{A,n}$ vanishes identically on~$\tilde A$ for each~$n\in\N$.
Therefore, taking the infimum over~$y\in \tilde A$ in~\eqref{eq:l:RaddA:1} and the limit superior in~$n$ to infinity,
\begin{align*}
\limsup_{n\rar\infty} \rep\rho_{A,n}(x)\leq& \inf_{y\in \tilde A}\mssd(x, y)\comm \qquad x\in X  \fstop
\end{align*}
Since~$B$ has full $\mssm$-measure,~$\rep\rho_A\eqdef \limsup_{n\rar\infty} \rep\rho_{A,n}$ is an $\mssm$-representative of~$\hr{\mu,A}$, and the conclusion follows.
\end{proof}

\begin{remark}[On the choice of~$\tilde A$]\label{r:dOpen}
\begin{enumerate*}[$(a)$]
\item\label{i:r:dOpen:1} Concerning the assertion of Lemma~\ref{l:RaddA}, one cannot replace~$\tilde A$ by~$A$ in~\eqref{eq:l:RaddA:0}, not even if~$A\in\Bo{\T}$.
This fact is most evident in the extreme case when~$A$ is both~$\mssm$-negligible and $\T_\mssd$-dense.
In this case,~$\hr{\mu,A}\equiv+\infty$, and therefore~$\rep\rho_{A,n}\equiv n$ everywhere on~$X$ for every~$n\in\N$. Thus,~$\tilde A=\emp$, and~\eqref{eq:l:RaddA:0} yields the (void) conclusion
\end{enumerate*}
\begin{align*}
+\infty\equiv \hr{\mu,A}\leq \inf_{y\in\emp}\mssd(\emparg, y)\equiv +\infty\fstop
\end{align*}
On the contrary,~$\mssd(\emparg, A)=\mssd(\emparg, \cl_{\mssd}A)=\mssd(\emparg,X)$ is identically vanishing.

\begin{enumerate*}[$(a)$]\setcounter{enumi}{1}
\item\label{i:r:dOpen:2} Under the assumptions of Lemma~\ref{l:RaddA}, suppose further that~\eqref{eq:ScL} holds. Then, we may choose~$\rep\rho_{A,n}$ to be additionally $\T$-continuous for every~$n$, and thus we may choose~$B=X$ in the proof. 
Furthermore, since~$\mssm$ has full $\T$-support, then~$C\eqdef \rep\rho_{A,1}^{-1}(\set{0})=\rep\rho_{A,n}^{-1}(\set{0})$ for all~$n$ by continuity.
Again by continuity of~$\rep\rho_{A,1}$, the set~$C$ is closed, and we have~$\mssm(A\triangle C)=0$ by definition of~$\rep\rho_{A,1}$.
Since~$\rep\rho_{A,1}\equiv 0$ $\mssm$-a.e.\ on~$\inter_\T A$, since both $\rep\rho_{A,1}$ and~$0$ are $\T$-continuous on~$\inter_\T A$, and since $\inter_\T A$ is open, then~$\rep\rho_{A,1}\equiv 0$ everywhere on~$\inter_\T A$ by Lemma~\ref{l:AeEquality}. Therefore~$C\supset \inter_\T A$. In fact, since~$C$ is $\T$-closed, then~$C\supset \cl_\T\inter_\T A$. As a consequence, we have~$\tilde A\supset A\cap \cl_\T\inter_\T A$.
Note that, even under~\eqref{eq:ScL}, it does not hold that~$C\supset A$, as it is readily seen by choosing~$A$ an $\mssm$-negligible singleton, so that~$C=\emp$.

\newline

\item\label{i:r:dOpen:2.5} Under the assumptions of Lemma~\ref{l:RaddA}, suppose further that~$\mssd$ is $\A^\otym{2}$-measurable, and that~$\mssm$ has full $\T_\mssd$-support. Since~$\SL{\mu}{\mssd}$ coincides with~$\ScL{\mu}{\T_\mssd}{\mssd}$, the same reasoning as in~\iref{i:r:dOpen:2} holds when replacing the $\T$-interior, resp.\ $\T$-closure with the~$\T_\mssd$-interior, resp.\ $\T_\mssd$-closure.
In particular, we may choose any~$\tilde A$ with~$\inter_\mssd A\subset \tilde A\subset \cl_\mssd \inter_\mssd A$.

\newline

\item\label{i:r:dOpen:3} Finally, since~$\mssd(\emparg, B)$ is increasing as~$B$ is decreasing, the assertion of Lemma~\ref{l:RaddA} remains true if we replace~$\tilde A$ by a smaller set. Thus, if additionally~\eqref{eq:ScL} holds, then we may always choose~$\tilde A=A\cap \cl_\T\inter_\T A$.
\end{enumerate*}
\end{remark}

Note that one is mostly interested in the case~$\mssd=\mssd_\mu$ (and, possibly,~$\mu=\mssm$), in which case the separability of~$\T_{\mssd_\mu}$ grants that the assumptions of Lemma~\ref{l:StoLdA} are satisfied, by Corollary~\ref{c:Separable}.

\begin{theorem}\label{t:Full}
Let~$(\mbbX,\mcE)$ be a quasi-regular strongly local Dirichlet space,~$\mssd\colon X^\tym{2}\rar[0,\infty]$ be an extended pseudo-distance, and~$\mu\in\Msp(\Bo{\T},\Ne{\mcE})$ be $\mcE$-moderate.
Further assume that:
\begin{enumerate}[$(a)$]
%\end{enumerate}
%and that either
%\begin{enumerate}[$(a_{1})$]
%\item\label{i:t:Full:1} $(\mbbX,\mcE,\mssd,\mu)$ possesses~\eqref{eq:Rad} and~\eqref{eq:ScL};
%\item\label{i:t:Full:2} $\mssd(\emparg, A)$ is $\A$-measurable for every~$A\in\A$;
%\end{enumerate}
%or:
%\begin{enumerate}[$(a_{2})$]
\item\label{i:t:Full:3} $(\mbbX,\mcE,\mssd,\mu)$ possesses~\eqref{eq:Rad} and~\eqref{eq:SL};

\item~$\mssd(\emparg, A)$ is $\A$-measurable for every~$A\in\A$, and~$\mssd$ is $\T$-admissible.
%\item\label{i:t:Full:4} 
\end{enumerate}

Then, for each~$A\in\A$ there exists~$\tilde A\in\A$ with~$\tilde A\subset A$ and so that~$A\triangle \tilde A$ is $\mssm$-negligible, and
\begin{align}\label{eq:t:Full:0}
\hr{\mu,A}=\hr{\mu,\tilde A}=\mssd(\emparg, \tilde A)= \mssd_\mu(\emparg, \tilde A)%= \mssd_\mu(\emparg, \cl_\T A)
\quad \as{\mssm}
\end{align}
\end{theorem}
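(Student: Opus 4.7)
The plan is to assemble the statement directly from Lemmas~\ref{l:StoLdA} and~\ref{l:RaddA}, together with the two comparison results Lemma~\ref{l:RadDRad} and Proposition~\ref{p:StoL} that pin $\mssd$ and $\mssd_\mu$ to each other. The identity of $\mssd$ with $\mssd_\mu$ is the reason no work beyond the two lemmas will be needed.

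First I would fix $A\in\A$ and invoke Lemma~\ref{l:RaddA} (whose hypotheses reduce exactly to $\SL{\mu}{\mssd}$, available by~\iref{i:t:Full:3}) to produce a measurable set $\tilde A\subset A$ with $\mssm(A\triangle\tilde A)=0$, and satisfying
\begin{align*}
\hr{\mu,A}=\hr{\mu,\tilde A}\leq \mssd(\emparg,\tilde A) \as{\mssm}\fstop
\end{align*}
Second, I would feed this $\tilde A$ into Lemma~\ref{l:StoLdA}. The measurability assumption~\iref{i:l:StoLdA:2} of that lemma is granted by the hypothesis on $\mssd(\emparg,A)$ being $\A$-measurable for every $A\in\A$, and $\Rad{\mssd}{\mu}$ is also part of~\iref{i:t:Full:3}. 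This gives the reverse inequality
\begin{align*}
\mssd(\emparg,\tilde A)\leq \hr{\mu,\tilde A} \as{\mssm}\comm
\end{align*}
so that chaining the two estimates yields $\hr{\mu,A}=\hr{\mu,\tilde A}=\mssd(\emparg,\tilde A)$ $\mssm$-a.e.

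For the last equality in~\eqref{eq:t:Full:0}, I would observe that the hypotheses give an equality $\mssd=\mssd_\mu$ at the level of the distances themselves, hence in particular for the point-to-set distances. Indeed, by Lemma~\ref{l:RadDRad}, $\Rad{\mssd}{\mu}$ together with the $\T$-admissibility of $\mssd$ yields $\mssd\leq \mssd_\mu$. Conversely, by Proposition~\ref{p:StoL}, $\SL{\mu}{\mssd}$ implies $\dSL{\mssd}{\mu}$, i.e.\ $\mssd\geq \mssd_\mu$. Therefore $\mssd\equiv\mssd_\mu$ on $X^\tym{2}$, and hence $\mssd(\emparg,\tilde A)=\mssd_\mu(\emparg,\tilde A)$ everywhere, which upgrades the previous three-way equality to the full four-way equality of~\eqref{eq:t:Full:0}.

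There is no real obstacle: the theorem is essentially a bookkeeping of Lemmas~\ref{l:StoLdA} and~\ref{l:RaddA}, so once one has noticed that $\tilde A$ must be produced from the Sobolev-to-Lipschitz side (Lemma~\ref{l:RaddA}) and then reinjected into the Rademacher side (Lemma~\ref{l:StoLdA}), the proof essentially writes itself. The only subtle point worth an explicit reminder is that Lemma~\ref{l:StoLdA} has to be applied to $\tilde A$ rather than to $A$, since the inequality $\mssd(\emparg,A)\leq \hr{\mu,A}$ for the original $A$ is not enough to close the loop; this is precisely the phenomenon highlighted in Remark~\ref{r:dOpen}\iref{i:r:dOpen:1}.
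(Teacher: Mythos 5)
Your proposal is correct and is in substance identical to the paper's proof: both use Lemma~\ref{l:RaddA} to produce $\tilde A$ and the upper bound, Lemma~\ref{l:StoLdA} applied to $\tilde A$ for the matching lower bound, and the pair Lemma~\ref{l:RadDRad} (via $\T$-admissibility) plus Proposition~\ref{p:StoL} to pin $\mssd=\mssd_\mu$. The only cosmetic difference is that the paper establishes $\mssd=\mssd_\mu$ first and then chains the two lemmas, whereas you do the lemmas first; your closing remark about why Lemma~\ref{l:StoLdA} must be fed $\tilde A$ rather than $A$ is exactly the right observation and is implicit in the paper's phrasing ``for $\tilde A$ as in Lemma~\ref{l:RaddA}''.
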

\begin{proof}
The property~\eqref{eq:dSL} follows by Proposition~\ref{p:StoL}. Since~$\mssd$ is $\T$-admissible%, either by Lemma~\ref{l:ScLAdmissible} or
by assumption,~\eqref{eq:dRad} follows from~\eqref{eq:Rad} by Lemma~\ref{l:RadDRad}, which concludes that~$\mssd=\mssd_\mu$.

Furthermore, the assumptions of both Lemmas~\ref{l:StoLdA} and~\ref{l:RaddA} hold, and we have, for~$\tilde A$ as in Lemma~\ref{l:RaddA},
\begin{align*}
\hr{\mu,\tilde A}=&\ \mssd(\emparg, \tilde A)=\mssd_\mu(\emparg, \tilde A)\quad \as{\mssm}\comm
\end{align*}
which concludes the proof.
\end{proof}

In particular, we have the following.

\begin{corollary}\label{c:Special}
Let~$(\mbbX,\mcE)$ be a quasi-regular strongly local Dirichlet space,~$\mu\in\Msp(\Bo{\T},\Ne{\mcE})$ be $\mcE$-moderate. If~$\mssd_\mu$ is $\T^\tym{2}$-continuous and~$(\mbbX,\mcE,\mu)$ possesses~$(\SL{\mu}{\mssd_\mu})$, then for each~$A\in\A$ there exists~$\tilde A\in\A$ with~$\tilde A\subset A$ and so that~$A\triangle \tilde A$ is $\mssm$-negligible, and
\begin{align*}
\hr{\mu,A}=\hr{\mu,\tilde A}=\mssd_\mu(\emparg, \tilde A)\quad \as{\mssm}
\end{align*}

If additionally~$\mssm(A\setminus \inter_\T A)=0$ (in particular, if~$A$ is either $\T$-open, or a $\T$-continuity set for~$\mssm$, i.e.~$\mssm(\partial_\T A)=0$), then
\begin{align*}
\mssd_\mu(\emparg, \inter_\T A)=\hr{\mu,A}=\mssd_\mu(\emparg, \cl_\T \inter_\T A)\quad \as{\mssm}
\end{align*}
\end{corollary}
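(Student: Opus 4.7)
The plan is to derive Corollary~\ref{c:Special} from Theorem~\ref{t:Full} applied with $\mssd=\mssd_\mu$, then to refine the choice of $\tilde A$ under the additional hypothesis using Remark~\ref{r:dOpen}. First I would verify the four hypotheses of Theorem~\ref{t:Full}: $\mssd_\mu$ is $\T$-admissible by definition (noted right after \eqref{eq:IntrinsicD}); $\mssd_\mu(\emparg,A)$ is $\A$-measurable for every $A\in\A$ because it is $1$-Lipschitz for $\mssd_\mu$, hence $\T_{\mssd_\mu}$-continuous, and $\T_{\mssd_\mu}\subset \T$ since $\mssd_\mu$ is $\T^\tym{2}$-continuous; $(\SL{\mu}{\mssd_\mu})$ is assumed; and $(\Rad{\mssd_\mu}{\mu})$ follows from Corollary~\ref{c:Separable}, noting that $\T_{\mssd_\mu}$ is separable because any $\T$-dense sequence remains $\T_{\mssd_\mu}$-dense when $\T_{\mssd_\mu}\subset\T$. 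Theorem~\ref{t:Full} then directly produces, for each $A\in\A$, a set $\tilde A\subset A$ with $\mssm(A\triangle \tilde A)=0$ satisfying the first displayed equality.

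To handle the second display under $\mssm(A\setminus\inter_\T A)=0$, I would exploit the flexibility in the construction of $\tilde A$ from the proof of Lemma~\ref{l:RaddA} as analysed in Remark~\ref{r:dOpen}. Because $\mssd_\mu$ is $\T^\tym{2}$-continuous, Remark~\ref{r:SLScL} implies that \eqref{eq:SL} and \eqref{eq:ScL} coincide in this setting, so the $\mssd_\mu$-Lipschitz $\mssm$-representatives $\rep\rho_{A,n}$ from the proof of Lemma~\ref{l:RaddA} may be taken $\T$-continuous. Remark~\ref{r:dOpen}(ii)--(iii) then legitimates the choice $\tilde A\eqdef A\cap \cl_\T\inter_\T A$, which satisfies the inclusions $\inter_\T A\subset \tilde A\subset \cl_\T \inter_\T A$. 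The hypothesis $\mssm(A\setminus\inter_\T A)=0$ ensures that $A\setminus\tilde A\subset A\setminus\inter_\T A$ is still $\mssm$-negligible, so this specific $\tilde A$ satisfies the first display of the corollary.

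Finally, the $\T^\tym{2}$-continuity of $\mssd_\mu$ forces $\mssd_\mu(\emparg,\inter_\T A)=\mssd_\mu(\emparg,\cl_\T\inter_\T A)$ pointwise on $X$ (the infimum of the $\T$-continuous function $\mssd_\mu(x,\emparg)$ over a set agrees with its infimum over the $\T$-closure), and the sandwich $\inter_\T A\subset \tilde A\subset \cl_\T \inter_\T A$ then pins $\mssd_\mu(\emparg,\tilde A)$ to this common value. Combined with $\hr{\mu,A}=\mssd_\mu(\emparg,\tilde A)$ $\mssm$-a.e.\ from the first display, this yields the second. The stated specialisations follow at once: for $\T$-open $A$ one has $A=\inter_\T A$, and for $\T$-continuity sets of~$\mssm$ one has $A\setminus\inter_\T A\subset \partial_\T A$. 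The only genuinely delicate step is tracking the admissible refinements of $\tilde A$ through Remark~\ref{r:dOpen}, which hinges on the equivalence of \eqref{eq:SL} and \eqref{eq:ScL} afforded by the continuity assumption on $\mssd_\mu$; the rest is a direct application of Theorem~\ref{t:Full} and continuity.
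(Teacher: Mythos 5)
Your proof is correct and follows essentially the same route as the paper's: establish $(\Rad{\mssd_\mu}{\mu})$ from separability of~$\T_{\mssd_\mu}$ (via Corollary~\ref{c:Separable}, which packages exactly the combination of Theorems~\ref{t:KuwaeProposition},~\ref{t:Lenz} and Remark~\ref{r:Lenz} that the paper cites), feed this into Theorem~\ref{t:Full}, and then use the flexibility of Remark~\ref{r:dOpen} for the second display. The only cosmetic difference is that the paper applies Theorem~\ref{t:Full} to~$\inter_\T A$ and takes~$\tilde A=\inter_\T A$, whereas you apply it to~$A$ with~$\tilde A=A\cap\cl_\T\inter_\T A$ and then use $\T$-continuity of~$\mssd_\mu(x,\emparg)$ to collapse the sandwich; both are equally valid.
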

\begin{proof}
Since~$\mssd_\mu$ is $\T^\tym{2}$-continuous,~$\T_{\mssd_\mu}$ is separable. Combining Theorem~\ref{t:KuwaeProposition} with Theorem~\ref{t:Lenz} and Remark~\ref{r:Lenz}\iref{i:r:Lenz:3.0}, we have~$(\Rad{\mssd_\mu}{\mu})$, and the first assertion follows by Theorem~\ref{t:Full}. If~$\mssm(A\setminus \inter_\T A)=0$, then~$\hr{\mu,A}=\hr{\mu,\inter_\T A}$. The second assertion follows applying Theorem~\ref{t:Full} with~$\inter_\T A$ in place of~$A$, and thanks to Remark~\ref{r:dOpen}\iref{i:r:dOpen:3}, since we may choose~$\tilde A=\inter_\T A\cap \cl_\T\inter_\T A=\inter_\T A$.
\end{proof}

Whereas it is convenient to have the $\A$-measurability of~$\mssd(\emparg, \tilde A)$ for the same $\sigma$-algebra~$\A$ containing~$\tilde A$, this is not always possible in the applications.
In particular, this is the case for the distance on path groups, for which only the universal measurability of~$\mssd(\emparg, A)$ is known for every Borel set~$A$, see~\cite{AidZha02}, cf.~\cite[Thm.~1.4]{HinRam03}.
In order to partially address this issue, we state a slightly different version of Theorem~\ref{t:Full}, and subsequently provide another example in the case of the Wiener space.

\begin{theorem}\label{t:UniversallyM}
Let~$(\mbbX,\mcE)$ be a quasi-regular strongly local Dirichlet space,~$\mssd\colon X^\tym{2}\rar[0,\infty]$ be an extended pseudo-distance, and~$\mu\in\Msp(\Bo{\T},\Ne{\mcE})$ be $\mcE$-moderate.
Further assume that:
\begin{enumerate}[$(a)$]
\item\label{i:t:UniversallyM:1} $(\mbbX,\mcE,\mssd,\mu)$ possesses~\eqref{eq:Rad} for the $\sigma$-algebra $\Bo{\T}^\mssm$, and~\eqref{eq:SL} for the $\sigma$-algebra $\Bo{\T}^\mssm$;

\item\label{i:t:UniversallyM:2} $\mssd(\emparg, A)$ is $\Bo{\T}^\mssm$-measurable for every~$A\in\A$, and~$\mssd$ is $\T$-admissible.
\end{enumerate}
Then, the same conclusion holds as in Theorem~\ref{t:Full}.
\end{theorem}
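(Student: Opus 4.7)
The strategy is to re-run the proof of Theorem~\ref{t:Full} verbatim, taking care that every step referencing the $\sigma$-algebra~$\A$ is compatible with working instead with the larger completion $\Bo{\T}^\mssm$. Since $\Bo{\T}\subset\A\subset \Bo{\T}^\mssm$, any $\A$-measurable set or function is automatically $\Bo{\T}^\mssm$-measurable, and conversely the conclusions we draw about a set $\tilde A$ living in $\Bo{\T}^\mssm$ suffice, because all the relevant (in)equalities are formulated $\mssm$-almost everywhere and we may, if needed, replace $\tilde A$ by a set in $\A$ agreeing $\mssm$-a.e.\ with it.

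First I would verify that the identification $\mssd=\mssd_\mu$ still goes through. The $\T$-admissibility of~$\mssd$ provides a directed family~$\seq{\mssd_\alpha}_\alpha$ of $\T^\tym{2}$-continuous bounded pseudo-distances with $\mssd=\sup_\alpha \mssd_\alpha$. Since $\T$-continuous functions are Borel, they are $\Bo{\T}^\mssm$-measurable, so the proof of Lemma~\ref{l:RadDRad} applies under the $\Bo{\T}^\mssm$-version of~\eqref{eq:Rad}, yielding $\mssd\leq \mssd_\mu$. Conversely, Proposition~\ref{p:StoL} reduces~\eqref{eq:SL} to~\eqref{eq:dSL} independently of the chosen $\sigma$-algebra, giving $\mssd\geq \mssd_\mu$, hence $\mssd=\mssd_\mu$.

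Next I would apply Lemma~\ref{l:StoLdA} with $\Bo{\T}^\mssm$ in place of~$\A$: its only two inputs are the $\A$-measurability of $\mssd(\emparg,A)$ for $A\in\A$ and the Rademacher property, both of which are provided by assumptions~\iref{i:t:UniversallyM:1} and~\iref{i:t:UniversallyM:2}. This gives
\[
\mssd(\emparg,A)\leq \hr{\mu,A} \qquad \mssm\text{-a.e.}, \qquad A\in\A.
\]
Then I would apply Lemma~\ref{l:RaddA} under the $\Bo{\T}^\mssm$-version of~\eqref{eq:SL}: the construction there produces $\mssd$-Lipschitz representatives $\rep\rho_{A,n}$ that are now $\Bo{\T}^\mssm$-measurable, and the resulting set $\tilde A=\cap_n \rep\rho_{A,n}^{-1}(\set{0})\cap A$ lies a priori in $\Bo{\T}^\mssm$; the proof goes through unchanged and yields $\hr{\mu,A}=\hr{\mu,\tilde A}\leq \mssd(\emparg,\tilde A)$ $\mssm$-a.e. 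Chaining these inequalities with $\mssd(\emparg,\tilde A)\leq \mssd(\emparg,A\cap \tilde A)=\mssd(\emparg,\tilde A)$ and $\mssd=\mssd_\mu$ delivers the full equality~\eqref{eq:t:Full:0}.

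The only delicate point is measurability of the output $\tilde A$: strictly speaking Lemma~\ref{l:RaddA} is formulated for the chosen $\sigma$-algebra $\A$, but the argument produces $\tilde A\in \Bo{\T}^\mssm$. Since $\mssm(A\triangle \tilde A)=0$ and $A\in\A\subset \Bo{\T}^\mssm$, I would replace $\tilde A$, if necessary, by any $\A$-measurable set agreeing with it $\mssm$-a.e.\ (e.g.\ intersect with a Borel kernel of~$A$); all quantities in~\eqref{eq:t:Full:0} depend on $\tilde A$ only through its $\mssm$-class, and $\mssd(\emparg,\tilde A)$ is $\Bo{\T}^\mssm$-measurable by hypothesis~\iref{i:t:UniversallyM:2} applied to this $\A$-version of $\tilde A$. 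This is the only step that requires any care beyond copying the proof of Theorem~\ref{t:Full}; all the analytic content is already encapsulated in Lemmas~\ref{l:StoLdA}--\ref{l:RaddA} once the universal-measurability hypothesis~\iref{i:t:UniversallyM:2} is in place.
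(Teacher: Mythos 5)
There is a genuine gap in the chaining step, and one of the justifications you give is false.

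First, the chain. What Lemma~\ref{l:StoLdA} (in its $\Bo{\T}^\mssm$-version) gives you is $\mssd(\emparg,A)\leq \hr{\mu,A}$ for $A\in\A$, and what Lemma~\ref{l:RaddA} gives you is $\hr{\mu,A}=\hr{\mu,\tilde A}\leq \mssd(\emparg,\tilde A)$. Combined, this yields only $\mssd(\emparg,A)\leq\mssd(\emparg,\tilde A)$, which is trivial (since $\tilde A\subset A$). The extra inequality you insert, $\mssd(\emparg,\tilde A)\leq\mssd(\emparg,A\cap\tilde A)=\mssd(\emparg,\tilde A)$, is a tautology ($A\cap\tilde A=\tilde A$ because $\tilde A\subset A$) and does no work. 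The missing link is the reverse inequality $\mssd(\emparg,\tilde A)\leq\hr{\mu,\tilde A}$, which one gets by applying the lower bound \emph{to $\tilde A$ itself} — and this is precisely what requires $\tilde A$ (or a suitable replacement) to lie in $\A$, so that assumption~\iref{i:t:UniversallyM:2} yields $\Bo{\T}^\mssm$-measurability of $\mssd(\emparg,\tilde A)$ and the $\Bo{\T}^\mssm$-Rademacher property can be invoked. You never make this step explicit, and the chain as written does not deliver~\eqref{eq:t:Full:0}.

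Second, the justification of the $\A$-measurable replacement. You write that ``all quantities in~\eqref{eq:t:Full:0} depend on $\tilde A$ only through its $\mssm$-class.'' This is false, and the paper explicitly warns against it: Remark~\ref{r:dOpen}\iref{i:r:dOpen:1} gives an example where $A$ is $\mssm$-negligible but $\T_\mssd$-dense, so $\mssd(\emparg,A)\equiv 0$ while $\hr{\mu,A}\equiv+\infty$; changing $A$ within its $\mssm$-class can change $\mssd(\emparg,A)$ drastically. The correct way to replace a $\Bo{\T}^\mssm$-measurable $\tilde A$ by a Borel (hence $\A$-measurable) set is to pass to a Borel \emph{kernel} $B\subset\tilde A$ with $\mssm(\tilde A\setminus B)=0$, and then invoke the monotonicity of $\mssd(\emparg,\emparg)$ recorded in Remark~\ref{r:dOpen}\iref{i:r:dOpen:3}: the conclusion of Lemma~\ref{l:RaddA} is preserved by shrinking~$\tilde A$, while $\hr{\mu,B}=\hr{\mu,A}$ and Lemma~\ref{l:StoLdA} now applies to $B\in\Bo{\T}\subset\A$. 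This is an asymmetric argument in the set inclusion, not an $\mssm$-class invariance argument.

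For comparison, the paper's proof does not invoke Lemma~\ref{l:StoLdA} at all for the reverse inequality: it takes a Borel version $\rep f$ of $\mssd(\emparg,\tilde A)$ on a conegligible Borel set $Y$, McShane-extends $\rep f\wedge r|_Y$ (via Lemma~\ref{l:McShane}) to a globally Lipschitz $\Bo{\T}^\mssm$-measurable $\rep f_r$, applies the $\Bo{\T}^\mssm$-Rademacher property to $f_r$, and concludes by maximality. Your idea of bypassing McShane — noting that $\mssd(\emparg,\tilde A)$ is already globally $1$-Lipschitz, so one can apply the Rademacher property to $\mssd(\emparg,\tilde A)\wedge r$ directly — is sound and arguably cleaner; but as written your proof neither spells out this application nor correctly handles the measurability of $\tilde A$.
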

\begin{proof}
As in the proof of Theorem~\ref{t:Full}, we may conclude that~$\mssd=\mssd_\mu$.
Lemma~\ref{l:RaddA}, which can be applied without condition~\iref{i:t:UniversallyM:2} here, yields the existence of~$\tilde A\in\A$ with~$\mssm(A\triangle \tilde A)=0$ and~$\hr{\mu,A}=\hr{\mu,\tilde A}\leq \mssd(\emparg, \tilde A)$ $\mssm$-a.e.
Thus, it suffices to show the opposite inequality for such~$\tilde A$.
To this end, note that, by~\iref{i:t:UniversallyM:2} there exists~$\rep f\colon X\rar [0,\infty]$, $\Bo{\T}$-measurable, and an $\mssm$-conegligible set~$Y\in\Bo{\T}$, so that~$\rep f=\mssd(\emparg, \tilde A)$ on~$Y$.
Thus, for all~$r>0$,
\begin{align*}
\rep f(x)\wedge r -\rep f(y) \wedge r=\mssd(x,\tilde A)\wedge r -\mssd(y,\tilde A)\wedge r\leq \mssd(x,y)\comm \qquad x,y\in Y\fstop
\end{align*}

By Lemma~\ref{l:McShane} applied to the bounded function~$\rep f\wedge r$, there exists a Lipschitz function~$\rep f_r$ satisfying~$\rep f_r=\rep f\wedge r$ on~$Y$. 
Since~$Y$ is $\mssm$-conegligible (and since the Borel $\sigma$-algebra on the real line is countably generated),~$\rep f_r$ is $\Bo{\T}^\mssm$-measurable for every~$r>0$.
Furthermore,~$\rep f_r\equiv 0$ $\mssm$-a.e.\ on~$A$.

By the Rademacher property~($\Rad{\mssd}{\mu}$) for the $\sigma$-algebra~$\Bo{\T}^\mssm$, the $\mssm$-class~$f_r$ of~$\rep f_r$ is an element of~$\Dz{\mu,A}_{\loc,r}$.
By maximality of~$\hr{\mu,\tilde A}\wedge r=\hr{\mu, A}\wedge r$ in~$\Dz{\mu,A}_{\loc,r}$ we have
\begin{align*}
f_r \leq \hr{\mu, A}\wedge r \qquad \as{\mssm}\comm
\end{align*}
and the conclusion follows letting~$r\rar \infty$.
\end{proof}

\begin{example}[Wiener spaces II]\label{ese:Wiener2}
In the same setting of Example~\ref{ese:Wiener1}, let~$A\in\Bo{\T}$. Then, there exists~$\tilde A\in\Bo{\T}$ so that~$A\triangle \tilde A$ is $\mssm$-negligible and
\begin{align*}
\hr{\mssm,\tilde A}=\mssd_H(\emparg, \tilde A) \qquad \as{\mssm}
\end{align*}
\end{example}
\begin{proof}
By a straightforward adaptation of~\cite[Prop.~4.13]{HinRam03}, the function~$\mssd_H(\emparg, \tilde A)$ is universally measurable, and thus in particular $\Bo{\T}^\mssm$-measurable.
The admissibility of~$\mssd_H$ is shown in Example~\ref{ese:Wiener1}.
The Rademacher property~($\Rad{\mssd_H}{\mssm}$) shown in~\cite[Thm., p.~27]{EncStr93} holds, with identical proof, for every $\Bo{\T}^\mssm$-measurable Lipschitz function.
The Sobolev-to-Lipschitz property~$(\SL{\mssm}{\mssd_H})$ for the completed Borel $\sigma$-algebra~$\Bo{\T}^\mssm$ is implied by that for the Borel $\sigma$-algebra~$\Bo{\T}$ (see Example~\ref{ese:Wiener1}). 
The conclusion now follows by Theorem~\ref{t:UniversallyM}.
\end{proof}

\paragraph{Varadhan-type short-time asymptotics} For a Dirichlet space~$(\mbbX,\mcE)$, we let~$T_\bullet\eqdef\seq{T_t}_{t>0}$ be the corresponding Markov semigroup, with heat kernel measure
\begin{align*}
\mssp_t(\emparg, A)\colon X\rar [0,1]\comm \qquad A\in \A \fstop
\end{align*}
We further define the heat kernel bi-measure
\begin{align}\label{eq:PtAB}
P_t(A_1,A_2)\eqdef \int_{A_2} T_t\car_{A_1} \diff\mssm = \int_{A_1} T_t \car_{A_2} \diff\mssm \comm \qquad A_1,A_2\in \A\comm \mssm A_1,\mssm A_2>0 \fstop
\end{align}

The maximal functions defined in Proposition~\ref{p:Hino} have appeared in~\cite{HinRam03,AriHin05} as a key tool in the study of the short-time asymptotics for the Markov semigroup~$T_\bullet$.
In the same setting of Proposition~\ref{p:Hino}, set
\begin{align*}
\bar{\mssd}_\mssm(A_1,A_2)\eqdef \mssm\text{-}\essinf_{x\in A_1} \hr{\mssm,A_2}(x)\comm \qquad A_1,A_2\in \A \fstop
\end{align*}
As a consequence of Remark~\ref{r:AriHino}, we may specialize results in~\cite{AriHin05} to our more restrictive (topological) setting. The next result is a particular case of~\cite[Thm.~2.7, Prop.~3.11]{AriHin05}.

\begin{theorem}[Ariyoshi--Hino~{\cite{AriHin05}}]\label{t:AriHino}
Let~$(\mbbX,\mcE)$ be a quasi-regular strongly local Dirichlet space. Then,
\begin{align}\label{eq:Varadhan}
\lim_{t\rar 0} \ttonde{-2t \log P_t(A_1,A_2)}=\bar{\mssd}_\mssm(A_1,A_2)^2\comm \qquad A_1,A_2\in\A\comm 0<\mssm A_1,\mssm A_2<\infty \fstop
\end{align}
\end{theorem}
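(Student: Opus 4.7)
The plan is to establish~\eqref{eq:Varadhan} as the conjunction of two matching asymptotic bounds. Both sides are intrinsic to $(\mcE,\dom)$ (the bimeasure $P_t$ through~\eqref{eq:Quasihomeo}; the maximal function $\hr{\mssm,\emparg}$ through its variational definition in Proposition~\ref{p:Hino}), so by~\cite[Thm.~3.7]{CheMaRoe94} I may assume $(\mbbX,\mcE)$ is regular. Write $\bar{\mssd}\eqdef\bar{\mssd}_\mssm(A_1,A_2)$.

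For the liminf bound $\liminf_{t\rar 0}(-2t\log P_t(A_1,A_2))\geq\bar{\mssd}^2$, I apply the Davies--Gaffney exponential perturbation method. Fix $r>0$ and a competitor $\phi\in\Dz{\mssm,A_2}_{\loc,r}$ with $\phi\geq 0$ (WLOG, by truncation): so $\phi\equiv 0$ $\mssm$-a.e.\ on~$A_2$, $0\leq\phi\leq r$, and $\sq{\phi}\leq\mssm$. Introduce the tilted operator $P_t^{(s)}f\eqdef e^{s\phi}T_t(e^{-s\phi}f)$. The Leibniz rule~\eqref{eq:Leibniz} and chain rule~\eqref{eq:ChainRuleLoc} together imply that its quadratic form satisfies
\begin{align*}
\mcE^{(s)}(f)=\mcE(f)-s^2\int f^2\diff\sq{\phi}\geq \mcE(f)-s^2\norm{f}_{L^2(\mssm)}^2,
\end{align*}
whence $\norm{P_t^{(s)}}_{L^2\rar L^2}\leq e^{s^2 t/2}$. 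Since $\phi\equiv 0$ $\mssm$-a.e.\ on~$A_2$, one writes $P_t(A_1,A_2)=\int (\car_{A_1}e^{-s\phi})\cdot P_t^{(s)}\car_{A_2}\diff\mssm$, and Cauchy--Schwarz yields
\begin{align*}
P_t(A_1,A_2)\leq e^{-s\,\mssm\text{-}\essinf_{A_1}\phi+s^2 t/2}\sqrt{\mssm A_1\cdot \mssm A_2}.
\end{align*}
Optimizing $s=\mssm\text{-}\essinf_{A_1}\phi/t$ and taking $-2t\log(\emparg)$ yield (using $0<\mssm A_i<\infty$ to absorb the logarithmic error) $\liminf_{t\rar 0}(-2t\log P_t(A_1,A_2))\geq(\mssm\text{-}\essinf_{A_1}\phi)^2$. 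Supping over $\phi\in\Dz{\mssm,A_2}_{\loc,r}$ via the maximality of $\hr{\mssm,A_2}\wedge r$ from Proposition~\ref{p:Hino}, then letting $r\rar\infty$, delivers $\bar{\mssd}^2$ on the right by definition of $\bar{\mssd}_\mssm(A_1,A_2)$.

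For the limsup bound $\limsup_{t\rar 0}(-2t\log P_t(A_1,A_2))\leq\bar{\mssd}^2$, I would follow Ariyoshi--Hino~\cite[\S3]{AriHin05}, refining Hino--Ram\'irez~\cite{HinRam03} to the $\sigma$-finite setting. Set $u_t\eqdef T_{t/2}\car_{A_1}\in\domb$ and, for $r>0$, define the truncated log-transform $\phi_t^{\sym{r}}\eqdef\ttonde{-2t\log\ttonde{u_t\vee e^{-r/(2t)}}}\wedge r$. The chain rule~\eqref{eq:ChainRuleLoc} applied to $x\mapsto-2t\log x$, combined with a careful dissipation estimate for $u_t$ extracted from the identity $\norm{u_t}_{L^2(\mssm)}^2+2\int_0^t\mcE(u_s)\diff s=\mssm A_1$, yields the uniform energy bound $\sq{\phi_t^{\sym{r}}}\leq(1+o(1))\,\mssm$ as $t\rar 0$; moreover $\phi_t^{\sym{r}}\rar 0$ $\mssm$-a.e.\ on $A_1$ since $u_t\to\car_{A_1}$ in $L^2(\mssm)$. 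Weak$^*$-extraction in $L^\infty(\mssm)$ together with Lemma~\ref{l:ConvMeasure} produces a subnet limit $\phi^{\sym{r}}\in\Dz{\mssm,A_1}_{\loc,r}$; maximality of $\hr{\mssm,A_1}\wedge r$ then forces $\phi^{\sym{r}}\leq \hr{\mssm,A_1}\wedge r$ $\mssm$-a.e. Chebyshev's inequality applied to the bound $T_t\car_{A_1}\leq e^{-\phi_{2t}^{\sym{r}}/(4t)}$ on $\ttset{\phi_{2t}^{\sym{r}}<r}$ provides a matching \emph{lower} bound on $P_t(A_1,A_2)=\int_{A_2}T_t\car_{A_1}\diff\mssm$ in terms of $\mssm\text{-}\essinf_{A_2}\phi_{2t}^{\sym{r}}$; passing to the limit, invoking the symmetry $\mssm\text{-}\essinf_{A_2}\hr{\mssm,A_1}=\mssm\text{-}\essinf_{A_1}\hr{\mssm,A_2}=\bar{\mssd}$ established in~\cite[\S3]{AriHin05}, and letting $r\rar\infty$ closes the argument.

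The principal obstacle lies in the limsup: the uniform energy bound $\sq{\phi_t^{\sym{r}}}\leq(1+o(1))\,\mssm$ must be derived purely from the chain rule~\eqref{eq:ChainRuleLoc} on the broad local space $\dotloc{\dom}$ of~\S\ref{ss:BroadLoc}, with no carr\'e du champ available in this generality. The weak$^*$-limit procedure is exactly where~\cite{AriHin05} refines~\cite{HinRam03} from the probability-measure setting to the $\sigma$-finite one; the truncations at height $r$ and floor $e^{-r/(2t)}$ are not cosmetic but enforce the a~priori bounds required by Lemma~\ref{l:ConvMeasure} and by Proposition~\ref{p:Hino} itself. The hypotheses $0<\mssm A_i<\infty$ play complementary roles: $\mssm A_i<\infty$ justifies Cauchy--Schwarz in the liminf step and weak$^*$-compactness in the limsup step; $\mssm A_i>0$ ensures $P_t(A_1,A_2)>0$ and hence $-\log P_t(A_1,A_2)<\infty$ throughout.
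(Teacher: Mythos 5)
The paper does not prove this theorem: it is stated as a citation, prefaced by the remark that it is \emph{``a particular case of \cite[Thm.~2.7, Prop.~3.11]{AriHin05},''} and the only substantive preparatory work is Remark~\ref{r:AriHino}, which matches the paper's notion of $\mcE$-moderate Borel $\mcE$-nest with the \emph{nest} of \cite[Dfn.~2.1]{AriHin05} so that the cited hypotheses are satisfied. Your proposal is a from-scratch reconstruction of the Ariyoshi--Hino argument, so it is qualitatively different in kind: the paper defers to the source, while you sketch its proof. The sketch is structurally aligned with \cite{AriHin05,HinRam03} --- Davies--Gaffney exponential tilting for the lower bound, the truncated logarithmic transform of $u_t=T_{t/2}\car_{A_1}$ together with weak$^*$-extraction and Lemma~\ref{l:ConvMeasure} for the upper bound --- and the role you assign to the $\sigma$-finiteness of $\mssm$ and to the two truncation parameters is correct.

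Two cautions on the sketch itself. First, the symmetry $\mssm\text{-}\essinf_{A_2}\hr{\mssm,A_1}=\mssm\text{-}\essinf_{A_1}\hr{\mssm,A_2}$ that you invoke to close the limsup is exactly what the paper records as a \emph{consequence} of Theorem~\ref{t:AriHino} in the sentence immediately following the statement; to avoid even the appearance of circularity, the cleaner route is to run both bounds a second time with $A_1$ and $A_2$ interchanged (using $P_t(A_1,A_2)=P_t(A_2,A_1)$) and observe that the four resulting inequalities force the symmetry and the existence of the limit simultaneously. Second, the uniform energy estimate $\sq{\phi_t^{\sym{r}}}\leq(1+o(1))\,\mssm$ that you rightly single out as the crux occupies the bulk of \cite[\S 3]{AriHin05}; absent a carr\'e du champ it requires delicate control of the dissipation of $\sq{u_t}$ within the broad local space, and is not a direct output of the chain rule~\eqref{eq:ChainRuleLoc} applied to $x\mapsto-2t\log x$ --- a writeup at the level of detail of your sketch would still owe its reader essentially the entire analytical content of that section.
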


In particular, since the left-hand side of~\eqref{eq:Varadhan} is symmetric, the function~$\hr{\mssm}(\emparg,\emparg)$ is symmetric as well.

Among several examples of diffusion processes satisfying~\eqref{eq:Varadhan} are those for which~$\hr{\mssm,A}$ can be precisely identified.
It is therefore natural to ask for sufficient conditions allowing to identify the maximal function~$\hr{\mssm, A}$ with a given point-to-set distance function~$\mssd(\emparg, A)$, as above.
In particular, combining~\eqref{eq:LipDzLoc} with Theorems~\ref{t:Full} and~\ref{t:AriHino}, we have the following.
\begin{corollary}\label{c:AriyoshiHino}
Let all the assumptions of either Theorem~\ref{t:Full}, or Theorem~\ref{t:UniversallyM} be satisfied with~$\mu=\mssm$. Then, for~$i=1,2$ and~$A_i\in\A$ with~$\mssm A_i>0$, there exists~$\tilde A_i\in\A$ with~$\tilde A_i\subset A_i$ and so that~$A_i\triangle \tilde A_i$ is $\mssm$-negligible, and
\begin{align*}
\lim_{t\rar 0} \ttonde{-2t \log P_t(A_1,A_2)}=\lim_{t\rar 0} \ttonde{-2t \log P_t(\tilde A_1,\tilde A_2)}=\tonde{\mssm\textrm{-}\essinf_{y\in A_j}\mssd_\mssm(y,\tilde A_i)}^2 \fstop
\end{align*}
with~$i\neq j$.

Assume further that $(\mbbX,\mcE)$ is additionally strictly local. %, thus satisfying Theorem~\ref{t:Full} \iref{i:t:Full:1} and~\iref{i:t:Full:2} with~$\mu=\mssm$.
Then, if either~$A_i$ satisfies $\mssm(A_i\setminus \inter_\T A_i)=0$,
\begin{align*}
\lim_{t\rar 0} \ttonde{-2t \log P_t(A_1,A_2)}=\tonde{\mssm\textrm{-}\essinf_{y\in A_j}\mssd_\mssm(y,\tilde A_i)}^2 \comm
\end{align*}
where~$i\neq j$ and~$\tilde A_i$ is any set so that~$\inter_\T A_i\subset \tilde A_i\subset \cl_\T\inter_\T A_i$.
\end{corollary}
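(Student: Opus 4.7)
The plan is to derive the corollary as a direct consequence of Theorem~\ref{t:AriHino} combined with the maximal-function identification furnished by Theorem~\ref{t:Full} (or Theorem~\ref{t:UniversallyM}). First I would invoke Theorem~\ref{t:AriHino} to rewrite
\begin{align*}
\lim_{t\rar 0}\ttonde{-2t \log P_t(A_1,A_2)}=\bar{\mssd}_\mssm(A_1,A_2)^2=\ttonde{\mssm\text{-}\essinf_{y\in A_j}\hr{\mssm,A_i}(y)}^2
\end{align*}
for $i\neq j$, using that $P_t(A_1,A_2)$ is symmetric in its arguments, hence so is $\bar\mssd_\mssm(A_1,A_2)$. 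Then, applying Theorem~\ref{t:Full} (or Theorem~\ref{t:UniversallyM}) with $\mu=\mssm$ to each $A_i$ separately, I obtain $\tilde A_i\subset A_i$ with $\mssm(A_i\triangle \tilde A_i)=0$ and the pointwise $\mssm$-a.e.\ identity $\hr{\mssm,A_i}=\hr{\mssm,\tilde A_i}=\mssd_\mssm(\emparg,\tilde A_i)$. Substituting this identity into the essential infimum yields the claimed formula.

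The equality $P_t(A_1,A_2)=P_t(\tilde A_1,\tilde A_2)$ is immediate: since $\mssm(A_i\triangle \tilde A_i)=0$, we have $\car_{A_i}=\car_{\tilde A_i}$ in $L^2(\mssm)$, hence $T_t\car_{A_i}=T_t\car_{\tilde A_i}$ $\mssm$-a.e., so the two heat-kernel bi-measures in~\eqref{eq:PtAB} agree. Therefore the two $t\to 0$ limits coincide, and their common value is the squared essinf appearing above.

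For the strictly local refinement, I would first observe that $\mssm(A_i\setminus \inter_\T A_i)=0$ implies $P_t(A_1,A_2)=P_t(A_1',A_2')$ for any $A_i'\supset \inter_\T A_i$ with $\mssm(A_i\setminus A_i')=0=\mssm(A_i'\setminus A_i)$, so we may replace $A_i$ by $\inter_\T A_i$ throughout without affecting the left-hand side. Then, by strict locality, $\mssd_\mssm$ is $\T$-admissible, so Corollary~\ref{c:Special} applies and, as spelled out in Remark~\ref{r:dOpen}\iref{i:r:dOpen:3}, the set $\tilde A_i$ produced by Theorem~\ref{t:Full} for $\inter_\T A_i$ can be taken as any set with $\inter_\T A_i\subset \tilde A_i\subset \cl_\T\inter_\T A_i$, since $\mssd_\mssm(\emparg,\tilde A_i)$ is $\mssm$-a.e.\ independent of such a choice.

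The main obstacle, such as it is, will be the bookkeeping: I must be careful that the pointwise identification $\hr{\mssm,A_i}=\mssd_\mssm(\emparg,\tilde A_i)$ holds $\mssm$-a.e.\ on all of $X$ (not merely on $A_j$), so that it passes correctly through the essential infimum over $A_j$; this is guaranteed by Theorem~\ref{t:Full}, which delivers a global $\mssm$-a.e.\ equality. No further technicalities arise, since the symmetry of $\hr{\mssm}(\emparg,\emparg)$ under the exchange $A_1\leftrightarrow A_2$ is forced a posteriori by Theorem~\ref{t:AriHino}, and the strict-locality refinement uses only the purely topological observation that, by $\T$-lower semi-continuity of $\mssd_\mssm$ and full $\T$-support of~$\mssm$, the essinf in~$y$ over $A_j$ is insensitive to perturbing $\tilde A_i$ within $[\inter_\T A_i, \cl_\T\inter_\T A_i]$.
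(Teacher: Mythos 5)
Your proof is correct and follows the route the paper indicates, namely combining Theorem~\ref{t:AriHino} with the maximal-function identification from Theorem~\ref{t:Full} (or~\ref{t:UniversallyM}); the paper's proof is exactly this combination. The bookkeeping you flag — that $\hr{\mssm,A_i}=\mssd_\mssm(\emparg,\tilde A_i)$ holds globally $\mssm$-a.e.\ so it passes through the essinf, and that $\mssd_\mssm(\emparg,\tilde A_i)$ is $\mssm$-a.e.\ insensitive to the choice of $\tilde A_i\in[\inter_\T A_i,\cl_\T\inter_\T A_i]$ by the sandwich in Corollary~\ref{c:Special} — is indeed what needs care, and you handle it correctly.

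One small imprecision in the strictly local part: you write ``by strict locality, $\mssd_\mssm$ is $\T$-admissible, so Corollary~\ref{c:Special} applies.'' The $\T$-admissibility of $\mssd_\mssm$ holds automatically for every $\mcE$-moderate $\mu$ (it is witnessed by the uniformity $\UP_\mu$ right after \eqref{eq:IntrinsicD}) and is not what Corollary~\ref{c:Special} asks for. The hypothesis of Corollary~\ref{c:Special} is rather $\T^\tym{2}$-continuity of $\mssd_\mu$, and that is precisely what strict locality ($\T_{\mssd_\mssm}=\T$) supplies. So the conclusion is right, but the cited property should be continuity rather than admissibility.
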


Corollary~\ref{c:AriyoshiHino} applies to various classes of spaces discussed in the previous sections (see e.g.\ Examples~\ref{ese:MMS},~\ref{ese:Wiener1},~\ref{ese:RCD}).
Here, we only single out the case of $\RCD$ spaces.

\begin{example}[$\RCD(K,\infty)$ spaces]
Let~$(X,\mssd,\mssm)$ be an $\RCD(K,\infty)$ space, e.g.~\cite{AmbGigSav14b,AmbGigMonRaj12}, endowed with its Cheeger energy~$\Ch[\mssd,\mssm]$.
Here~$\mssd$ is a distance on~$X$, and the topology~$\T$ is induced by~$\mssd$.
As discussed in Examples~\ref{ese:MMS} and~\ref{ese:RCD}, the Dirichlet space $(\mbbX,\Ch[\mssd,\mssm])$ possesses both~$(\Rad{\mssd}{\mssm})$ and~$(\ScL{\mssm}{\T_\mssd}{\mssd})$, it is strictly local by definition, and~$\mssd=\mssd_\mssm$.
Furthermore, since~$\mssd$ is continuous, $\mssd_A$ is as well, and therefore, since~$\mssm$ has full support, $\mssm$-$\essinf_U \mssd_A=\inf_U \mssd_A$ for every open set~$U$.
Thus finally, Corollary~\ref{c:AriyoshiHino} applies, and we obtain
\begin{align*}
\lim_{t\to 0} \ttonde{-2t \log P_t(A_1,A_2)} = \mssd(\inter_\mssd A_1,\inter_\mssd A_2) \comm \qquad A_i\in\Bo{\mssd}\comm  \mssm A_i>0\comm \mssm (A_i\setminus \inter_\mssd A_i)=0 \fstop
\end{align*}
\end{example}

\vspace{1cm}

\hfill

\begin{tabular}{c l}
& \textbf{List of notations}
\\
$\triangle$ & symmetric difference of sets
\\
$\ll$ & absolute continuity of measures
\\
$\mrestr$ & restriction of measures
\\
$*$ \emph{as in} $j^*f$ & pull-back of a function~$f$ via a map~$j$
\\
$\pfwd$ \emph{as in} $j_\pfwd \mu$ & push-forward of a measure~$\mu$ via a map~$j$
\\
$\msA$ & any space of functions
\\
$\msA_b$ & space of bounded functions in~$\msA$
\\
$\dotloc{\msA}$ & broad local space of~$\msA$, \S\ref{ss:BroadLoc}
\\
$\Bo{\T}$ & Borel $\sigma$-algebra induced by a topology~$\T$
\\
$B^\mssd_r(x)$ & $\mssd$-ball of radius~$r\in [0,\infty]$ and center~$x$
\\
$\mcC$ & space of continuous functions
\\
$\mssd$ & extended pseudo-distance
\\
$\mssd_\mu$ & intrinsic (extended pseudo-)distance,~\eqref{eq:IntrinsicD}
\\
$\hr{\mu,A}$ & $\mu$-maximal function of a set~$A$, Prop.~\ref{p:Hino}
\\
$(\mcE,\dom)$ & Dirichlet form with domain~$\dom$
\\
$(\mcE^\mu,\dom^\mu)$ & $\mu$-perturbation of~$(\mcE,\dom)$ by a smooth measure~$\mu$,~\eqref{eq:Perturbed}
\\
$\domdom$ & space of functions with minimal dominant energy measure
\\
$\domext$ & extended domain of a Dirichlet form~$(\mcE,\dom)$
\\
$\domloc$ & local domain of a Dirichlet form~$(\mcE,\dom)$
\\
$f$ & class of a measurable function up to a.e.-equivalence
\\
$\rep f$ & representative of the class~$f$
\\
$\reptwo f$ & quasi-continuous representative of the class~$f$
\\
$\msG$, $\msG_0$, $\msG_c$ & families of quasi-open nests, \S\ref{ss:BroadLoc}
\\
$\DzLoc{\mu}$, $\DzLoc{\mu,\T}$ & broad local spaces of functions with $\mu$-bounded energy, \S\ref{sss:LocDom}
\\
$\Dz{\mu}$, $\Dz{\mu,\T}$ & spaces of functions with $\mu$-bounded energy, \S\ref{sss:LocDom}
\\
$\mcL^p$ & space of $p$-integrable functions,~$p\in [0,\infty]$
\\
$L^p$ & space of classes of $p$-integrable functions up to a.e.-equivalence
\\
$\Li[\mssd]{\rep f}$ & global Lipschitz constant of a function~$\rep f$ with respect to~$\mssd$
\\
$\Lip(\mssd,\A)$ & space of $\mssd$-Lipschitz $\A$-measurable functions
\\
$\Lip(\mssd,\T)$ & space of $\mssd$-Lipschitz $\T$-continuous functions
\\
$\Mbp(\A)$ & non-negative bounded measures on a $\sigma$-algebra~$\A$ 
\\
$\Ms(\A)$ & extended signed $\sigma$-finite measures on a $\sigma$-algebra~$\A$
\\
$\Msp(\A)$ & $\sigma$-finite measures on a $\sigma$-algebra~$\A$
\\
$\mcM_0$, $\mcM$ & absolutely moderate, resp.\ moderate, measures, Dfn.~\ref{d:Moderance}
\\
$\mssm$ & reference measure
\\
$\mcS_0$, $\mcS$ & measures of finite energy integral, resp.\ smooth measures, Dfn.~\ref{d:Smooth}
\\
$\A$ & $\sigma$-algebra
\\
$\A^\mu$ & completion of~$\A$ with respect to\ a measure~$\mu\colon \A\rar [0,\infty]$
\\
$\T$ & topology
\end{tabular}

%\section*{Declarations}
%On behalf of all authors, the corresponding author states that there is no conflict of interest.

%{\small
%%\nocite{*}
%%\addcontentsline{toc}{chapter}{References}
%\bibliographystyle{abbrv}
%%\bibliographystyle{alpha}
%%\markboth{References}{References}
%}

\newpage

{
\small

}


\begin{thebibliography}{10}

\bibitem{AidKaw01}
{Aida, S.} and {Kawabi, H.}
\newblock {Short Time Asymptotics of a Certain Infinite Dimensional Diffusion
  Process}.
\newblock In {Decreusefond, L.}, {{\O}ksendal, B.~K.}, and
  {{\"{U}}st{\"{u}}nel, A.~S.}, editors, {\em {Stochastic Analysis and Related
  Topics VII -- Proceedings of the Seventh Silivri Workshop}}, volume~48 of
  {\em {Progress in Probability}}, pages 77--124. {Birkh{\"{a}}user}, 2001.

\bibitem{AidZha02}
{Aida, S.} and {Zhang, T.~S.}
\newblock {On the Small Time Asymptotics of Diffusion Processes on Path
  Groups}.
\newblock {\em {Potential Anal.}}, 16:67--78, 2002.

\bibitem{AlbBraRoe89}
{Albeverio, S.}, {Brasche, J.}, and {R{\"{o}}ckner, M.}
\newblock {Dirichlet forms and generalized Schr{\"{o}}dinger operators}.
\newblock In {Holden, H.} and {Jensen, A.}, editors, {\em {Schr{\"{o}}dinger
  Operators -- Proceedings of the Nordic Summer School in Mathematics --
  Sandbjerg Slot, S{\o}nderborg, Denmark, August 1-12, 1988}}, volume 345 of
  {\em {Lecture Notes in Physics}}, pages 1--42. {Springer-Verlag}, 1989.

\bibitem{AlbKonRoe98}
{Albeverio, S.}, {Kondratiev, Yu. G.}, and {R{\"{o}}ckner, M.}
\newblock {Analysis and Geometry on Configuration Spaces}.
\newblock {\em {J.\ Funct.\ Anal.}}, 154(2):444--500, 1998.

\bibitem{AlbMa91}
{Albeverio, S.} and {Ma, Z.-M.}
\newblock {Diffusion Processes with singular Dirichlet forms}.
\newblock In {Cruzeiro, A.~B.} and {Zambrini, J.~C.}, editors, {\em {Stochastic
  Analysis and Applications -- Proceedings of the 1989 Lisbon Conference}},
  volume~26 of {\em {Progress in Probability}}, pages 11--28.
  {Springer-Verlag}, 1991.

\bibitem{AmbErbSav16}
{Ambrosio, L.}, {Erbar, M.}, and {Savar\'e, G.}
\newblock {Optimal transport, Cheeger energies and contractivity of dynamic
  transport distances in extended spaces}.
\newblock {\em {Nonlinear Anal.}}, 137:77--134, 2016.

\bibitem{AmbGigSav14}
{Ambrosio, L.}, {Gigli, N.}, and {Savar\'e, G.}
\newblock {Calculus and heat flow in metric measure spaces and applications to
  spaces with Ricci bounds from below}.
\newblock {\em {Invent.\ Math.}}, 395:289--391, 2014.

\bibitem{AmbGigSav14b}
{Ambrosio, L.}, {Gigli, N.}, and {Savar{\'{e}}, G.}
\newblock {Metric measure spaces with Riemannian Ricci curvature bounded from
  below}.
\newblock {\em {Duke Math.~J.}}, 163(7):1405--1490, 2014.

\bibitem{AmbGigSav15}
{Ambrosio, L.}, {Gigli, N.}, and {Savar{\'{e}}, G.}
\newblock {Bakry--{\'{E}}mery Curvature-Dimension Condition and Riemannian
  Ricci Curvature Bounds}.
\newblock {\em {Ann.~Probab.}}, 43(1):339--404, 2015.

\bibitem{AmbSavZam09}
{Ambrosio, L.}, {Savar{\'{e}}, G.}, and {Zambotti, L.}
\newblock {Existence and stability for Fokker--Planck equations with
  log-concave reference measure}.
\newblock {\em {Probab.\ Theory Relat.\ Fields}}, 145:517--564, 2009.

\bibitem{AmbGigMonRaj12}
{Ambrosio, L.}, {Gigli, N.}, {Mondino, A.}, and {Rajala, T.}.
\newblock {Riemannian Ricci curvature lower bounds in metric measure spaces
  with $\sigma$-finite measure}.
\newblock {\em {Trans.\ Amer.\ Math.\ Soc.}}, 367:4661--4701, 2015.

\bibitem{AriHin05}
{Ariyoshi, T.} and {Hino, M.}
\newblock {Small-time Asymptotic Estimates in Local Dirichlet Spaces}.
\newblock {\em {Electron.\ J.\ Probab.}}, 10(37):1236--1259, 2005.

\bibitem{BaiNor18}
{Bailleul, I.} and {Norris, J.~R.}
\newblock {Diffusion in small time in incomplete sub-Riemannian manifolds}.
\newblock {\em arXiv:1810.06328}, 2018.

\bibitem{BirMos95}
{Biroli, M.} and {Mosco, U.}
\newblock {A Saint-Venant type principle for Dirichlet forms on discontinuous
  media}.
\newblock {\em {Ann.\ Mat.\ Pura Appl.}}, 169:125--181, 1995.

\bibitem{Bog07}
{Bogachev, V.~I.}
\newblock {\em {Measure Theory}}.
\newblock {Springer-Verlag}, {Berlin}, {2007}.

\bibitem{BogMay96}
{Bogachev, V.~I.} and {Mayer-Wolf, E.}
\newblock {Some Remarks on Rademacher's Theorem in Infinite Dimensions}.
\newblock {\em {Potential Anal.}}, 5:23--30, 1996.

\bibitem{BouHir91}
{Bouleau, N.} and {Hirsch, F.}
\newblock {\em {Dirichlet forms and analysis on Wiener space}}.
\newblock {De Gruyter}, 1991.

\bibitem{Bou69}
{Bourbaki, N.}
\newblock {\em {Int{\'{e}}gration}}, volume {XXXV -- Livre VI} of {\em
  {{\'{E}}l{\'{e}}ments de math{\'{e}}matiques}}.
\newblock {C.C.L.S.}, 1969.

\bibitem{Bou07}
{Bourbaki, N.}
\newblock {\em {Int{\'{e}}gration: Chapitres 1 {\`{a}} 4}}.
\newblock {Springer-Verlag}, {Reprint of the 1965 Edition} edition, 2007.

\bibitem{CarKusStr87}
{Carlen, E.~A.}, {Kusuoka, S.}, and {Stroock, D. W.}
\newblock {Upper Bounds for symmetric Markov transition functions}.
\newblock {\em {Ann.~I.~H.~Poincar{\'{e}}~B}}, 23(S2):245--287, 1987.

\bibitem{Che99}
{Cheeger, J.}
\newblock {Differentiability of Lipschitz Functions on Metric Measure Spaces}.
\newblock {\em {Geom.\ Funct.\ Anal.}}, 9:428--517, 1999.

\bibitem{CheMaRoe94}
{Chen, Z.-Q.}, {Ma, Z.-M.}, and {R\"ockner, M.}
\newblock {Quasi-homeomorphisms of Dirichlet forms}.
\newblock {\em {Nagoya Math. J.}}, 136:1--15, 1994.

\bibitem{CicMorRalRyl07}
{Cicho{\'{n}}, J.}, {Morayne, M.}, {Ryll-Nardzewski, C.}, and {{\.{Z}}eberski,
  S.}
\newblock {On nonmeasurable unions}.
\newblock {\em {Topol.\ Appl.}}, 154:884--893, 2007.

\bibitem{CreSou19}
{Creuz, P.} and {Soultanis, E.}
\newblock {Maximal Metric Surfaces and the Sobolev-to-Lipschitz Property}.
\newblock {\em {arXiv:1909.10385v2}}, 2019.

\bibitem{DeCPal91}
{De Cecco, G.} and {Palmieri, G.}
\newblock {Integral distance on a Lipschitz Riemannian manifold}.
\newblock {\em {Math.~Z.}}, 207:223--243, 1991.

\bibitem{LzDS17+}
{Dello Schiavo, L.}
\newblock {The Dirichlet--Ferguson Diffusion on the Space of Probability
  Measures over a Closed Riemannian Manifold}.
\newblock {\em {arXiv:1811.11598}}, 2018.

\bibitem{LzDS19b}
{Dello Schiavo, L.}
\newblock {A Rademacher-type theorem on $L^2$-Wasserstein spaces over closed
  Riemannian manifolds}.
\newblock {\em {J.\ Funct.\ Anal.}}, 278:108397, 2019.

\bibitem{DonMa93}
{Dong, Z.} and {Ma, Z.-M.}
\newblock {An integral representation theorem for quasi-regular Dirichlet
  spaces}.
\newblock {\em {Chinese Sci.\ Bull.\ (Chinese edition)}}, 38(15):1355--1358,
  1993.
\newblock (in Chinese).

\bibitem{Ebe96}
{Eberle, A.}
\newblock {Girsanov-type transformations of local Dirichlet forms: An analytic
  approach}.
\newblock {\em {Osaka J.~Math.}}, 33(2):497--531, 1996.

\bibitem{EncStr93}
{Enchev, O.} and {Stroock, D. W.}
\newblock {Rademacher's theorem for Wiener functionals}.
\newblock {\em {Ann. Probab.}}, 21(1):25--33, 1993.

\bibitem{Eng89}
{Engelking, R.}
\newblock {\em {General Topology}}, volume~6 of {\em {Sigma series in pure
  mathematics}}.
\newblock {Heldermann}, {Berlin}, {1989}.

\bibitem{Fan94}
{Fang, S.}
\newblock {On the Ornstein--Uhlenbeck Process}.
\newblock {\em {Stoch.\ Stoch.\ Rep.}}, 46:141--159, 1994.

\bibitem{FanZha99}
{Fang, S.} and {Zhang, T.~S.}
\newblock {On the small time behavior of Ornstein--Uhlenbeck processes with
  unbounded linear drifts}.
\newblock {\em {Probab.\ Theory Relat.\ Fields}}, 114:487--504, 1999.

\bibitem{FraLenWin14}
{Frank, R. L.}, {Lenz, D.}, and {Wingert, D.}
\newblock {Intrinsic metrics for non-local symmetric Dirichlet forms and
  applications to spectral theory}.
\newblock {\em {J. Funct. Anal.}}, 266(8):4765--4808, 2014.

\bibitem{Fug71}
{Fuglede, B.}
\newblock {The quasi topology associated with a countably subadditive set
  function}.
\newblock {\em {Ann.\ I.\ Fourier}}, 21(1):123--169, 1971.

\bibitem{FukOshTak11}
{Fukushima, M.}, {Oshima, Y.}, and {Takeda, M.}
\newblock {\em {Dirichlet forms and symmetric Markov processes}}, volume~19 of
  {\em {De Gruyter Studies in Mathematics}}.
\newblock {de Gruyter}, extended edition, 2011.

\bibitem{GarRhoVar14}
{Garban, C.}, {Rhodes, R.}, and {Vargas, V.}
\newblock {On the heat kernel and the Dirichlet form of Liouville Brownian
  motion}.
\newblock {\em {Electron.\ J.\ Probab.}}, 19(96):1--25, 2014.

\bibitem{Gig12a}
{Gigli, N.}
\newblock {On the differential structure of metric measure spaces and
  applications}.
\newblock {\em {Memoirs AMS}}, {236}({1113}), 2015.

\bibitem{Gol87}
{Goldstein, S.}
\newblock {Random Walks and Diffusions on Fractals}.
\newblock In {Kesten, H.}, editor, {\em {Percolation Theory and Ergodic Theory
  of Infinite Particle Systems}}, volume~8 of {\em {The IMA Volumes in
  Mathematics and Its Applications}}, pages 121--129. {Springer}, 1987.

\bibitem{HeiKosShaTys15}
{Heinonen, J.}, {Koskela, P.}, {Shanmugalingam, N.}, and {Tyson, J.~T.}
\newblock {\em Sobolev Spaces on Metric Measure Spaces -- An Approach Based on
  Upper Gradients}, volume~27 of {\em {New Mathematical Monographs}}.
\newblock {Cambridge University Press}, 2015.

\bibitem{Hin09}
{Hino, M.}
\newblock {Energy measures and indices of Dirichlet forms, with applications to
  derivatives on some fractals}.
\newblock {\em {Proc. London Math. Soc.}}, 100:269--302, 2009.

\bibitem{HinRam03}
{Hino, M.} and {Ram{\'{i}}rez, J. A.}
\newblock {Small-Time Gaussian Behavior of Symmetric Diffusion Semigroups}.
\newblock {\em {Ann. Probab.}}, 31(3):1254--1295, 2003.

\bibitem{HinRoeTep13}
{Hinz, M.}, {R{\"{o}}ckner, M.}, and {Teplyaev, A.}
\newblock {Vector analysis for Dirichlet forms and quasilinear PDE and SPDE on
  metric measure spaces}.
\newblock {\em {Stoch.\ Proc.\ Appl.}}, 123:4373--4406, 2013.

\bibitem{Kaj11}
{Kajino, N.}
\newblock {Heat Kernel Asymptotics for the Measurable Riemannian Structure on
  the Sierpinski Gasket}.
\newblock {\em {Potential Anal.}}, 36:67--115, 2012.

\bibitem{KonvRe18}
{Konarovskyi, V. V.} and {Renesse, M.-K. von}.
\newblock {Modified Massive Arratia flow and Wasserstein diffusion}.
\newblock {\em {Comm. Pur. Appl. Math.}}, 2018.
\newblock (to appear).

\bibitem{KosZho12}
{Koskela, P.} and {Zhou, Y.}
\newblock {Geometry and analysis of Dirichlet forms}.
\newblock {\em {Adv. Math.}}, 231:2755--2801, 2012.

\bibitem{Kum97}
{Kumagai, T.}
\newblock {Short time asymptotic behaviour and large deviation for Brownian
  motion on some affine nested fractals}.
\newblock {\em {Publ.~RIMS, Kyoto Univ.}}, 33:223--240, 1997.

\bibitem{Kus89}
{Kusuoka, S.}
\newblock {Dirichlet Forms on Fractals and Products of Random Matrices}.
\newblock {\em {Publ.\ Res.\ I.\ Math.\ Sci., Kyoto Univ.}}, 25:659--680, 1989.

\bibitem{Kuw96}
{Kuwae, K.}
\newblock {On pseudo metrics of Dirichlet forms on separable metric spaces}.
\newblock In {Watanabe, S.}, {Fukushima, M.}, {Prohorov, Yu.~V.}, and
  {Shiryaev, A.~N.}, editors, {\em {Probability Theory and Mathematical
  Statistics -- Proceedings of the Seventh Japan--Russia Symposium: Tokyo 26-30
  July 1995}}, pages 256--265. {World Scientific}, 1996.

\bibitem{Kuw98}
{Kuwae, K.}
\newblock {Functional Calculus for Dirichlet Forms}.
\newblock {\em {Osaka J.~Math.}}, 35:683--715, 1998.

\bibitem{KuwMacShi01}
{Kuwae, K.}, {Machigashira, Y.}, and {Shioya, T.}
\newblock {Sobolev spaces, Laplacian, and heat kernel on Alexandrov spaces}.
\newblock {\em {Math.~Z.}}, 238:269--316, 2001.

\bibitem{Lea87a}
{L{\'{e}}andre, R.}
\newblock {Majoration en temps petit de la densit{\'{e}} d'une diffusion
  d{\'{e}}g{\'{e}}n{\'{e}}r{\'{e}}e}.
\newblock {\em {Probab.\ Theory Relat.\ Fields}}, 74(2):289--294, 1987.

\bibitem{Lea87b}
{L{\'{e}}andre, R.}
\newblock {Minoration en temps petit de la densit{\'{e}} d'une diffusion
  d{\'{e}}g{\'{e}}n{\'{e}}r{\'{e}}e}.
\newblock {\em {J.\ Funct.\ Anal.}}, 74(2):399--414, oct 1987.

\bibitem{MaRoe92}
{Ma, Z.-M.} and {R\"ockner, M.}
\newblock {\em Introduction to the Theory of (Non-Symmetric) Dirichlet Forms}.
\newblock {Graduate Studies in Mathematics}. Springer, 1992.

\bibitem{McS34}
{McShane, E. J.}
\newblock {Extension of Range of Functions}.
\newblock {\em {Bull. Amer. Math. Soc.}}, 40(12):837--842, 1934.

\bibitem{Nor97}
{Norris, J.~R.}
\newblock {Heat kernel asymptotics and the distance function in Lipschitz
  Riemannian manifolds}.
\newblock {\em {Acta Math.}}, 179:79--103, 1997.

\bibitem{Pac13}
{Pachl, J.}
\newblock {\em {Uniform Spaces and Measures}}, volume~30 of {\em {Fields
  Institute Monographs}}.
\newblock {Springer}, 2013.

\bibitem{Pet06}
{Petersen, Peter}.
\newblock {\em {Riemannian Geometry}}, volume {171} of {\em {Graduate Texts in
  Mathematics}}.
\newblock {Springer-Verlag New York}, {2006}.

\bibitem{RenRoe05}
{Ren, J.} and {R{\"{o}}ckner, M.}
\newblock {A Remark on Sets in Infinite Dimensional Spaces with Full or Zero
  Capacity}.
\newblock In {Hida, T.}, editor, {\em {Stochastic Analysis: Classical and
  Quantum: Perspectives of White Noise Theory --- Meijo University, Nagoya,
  Japan, 1--5 November 2004}}, pages 177--186. {Wiley}, 2005.

\bibitem{vReStu09}
{Renesse, M.-K. von} and {Sturm, K.-T.}
\newblock {Entropic measure and Wasserstein diffusion}.
\newblock {\em {Ann.\ Probab.}}, 37(3):1114--1191, 2009.

\bibitem{RoeSch99}
{R{\"o}ckner, M.} and {Schied, A.}
\newblock {Rademacher's Theorem on Configuration Spaces and Applications}.
\newblock {\em {J.\ Funct.\ Anal.}}, 169(2):325--356, 1999.

\bibitem{RoeSch95}
{R{\"{o}}ckner, M.} and {Schmuland, B.}
\newblock {Quasi-regular Dirichlet forms: Examples and Counterexamples}.
\newblock {\em {Can.\ J.\ Math.}}, 47(1):165--200, 1995.

\bibitem{Sav14}
{Savar{\'{e}}, G.}
\newblock {Self-Improvement of the Bakry--{\'{E}}mery Condition and Wasserstein
  Contraction of the Heat Flow in $\mathrm{RCD}(K,\infty)$ Metric Measure
  Spaces}.
\newblock {\em {Discr.\ Cont.\ Dyn.\ Syst.}}, 34(4):1641--1661, 2014.

\bibitem{Sch73}
{Schwartz, L.}
\newblock {\em {Radon measures on arbitrary topological spaces and cylindrical
  measures}}.
\newblock Oxford University Press, 1973.

\bibitem{Sto10}
{Stollmann, P.}
\newblock {A dual characterization of length spaces with application to
  Dirichlet metric spaces}.
\newblock {\em {Studia Math.}}, 198(3):221--233, 2010.

\bibitem{Stu94}
{Sturm, K.-T.}
\newblock {Analysis on local Dirichlet spaces I. Recurrence, conservativeness
  and $L^p$-Liouville properties}.
\newblock {\em {J. reine angew. Math.}}, 456:173--196, 1994.

\bibitem{Stu95}
{Sturm, K.-T.}
\newblock {Sharp estimates for capacities and applications to symmetric
  diffusions}.
\newblock {\em {Probab. Theory Relat. Fields}}, 103:73--89, 1995.

\bibitem{Stu97}
{Sturm, K.-T.}
\newblock {Is a Diffusion Process Determined by Its Intrinsic Metric?}
\newblock {\em {Chaos, Solitons \& Fractals}}, 8(11):1855--1866, 1997.

\bibitem{tElRobSikZhu06}
{ter Elst, A. F. M.}, {Robinson, D. W.}, {Sikora, A.}, and {Zhu, Y.}
\newblock {Dirichlet Forms and Degenerate Elliptic Operators}.
\newblock In {Koelink, E.}, {van Neerven, J.}, {de Pagter, B.}, {Sweers, G.},
  {Luger, A.}, and {Woracek, H.}, editors, {\em {Partial Differential Equations
  and Functional Analysis: The Philippe Cl{\'e}ment Festschrift}}, pages
  73--95. {Birkh{\"a}user}, 2006.

\bibitem{Var67}
{Varadhan, S.~R.~S.}
\newblock {On the Behavior of the Fundamental Solution of the Heat Equation
  with Variable Coefficients}.
\newblock {\em {Comm.\ Pure Appl.\ Math.}}, 20:431--455, 1967.

\bibitem{Zha00}
{Zhang, T.~S.}
\newblock {On the small time asymptotics of diffusion processes on Hilbert
  spaces}.
\newblock {\em {Ann.\ Probab.}}, 28(2):537--557, 2000.

\end{thebibliography}
\end{document}